\renewcommand{\thesubfigure}{(\roman{subfigure})}
\makeatletter \renewcommand{\@thesubfigure}{\thesubfigure \space}
\renewcommand{\p@subfigure}{} \makeatother
\numberwithin{equation}{section}
\numberwithin{figure}{section}
\theoremstyle{plain}
\newtheorem{thm}{\protect\theoremname}[section]
\theoremstyle{plain}
\newtheorem{prop}[thm]{\protect\propositionname}
\newtheorem{proposition}[thm]{Proposition}
\theoremstyle{definition}
\newtheorem{defn}[thm]{\protect\definitionname}
\theoremstyle{plain}
\newtheorem{lem}[thm]{\protect\lemmaname}
\newtheorem{lemma}[thm]{Lemma}
\theoremstyle{remark}
\newtheorem{rem}[thm]{\protect\remarkname}
\theoremstyle{definition}
\newtheorem{definition}[thm]{Definition}
\newtheorem*{claim*}{Claim}
\newtheorem{example}[thm]{Example}
\theoremstyle{remark}
\newtheorem{remark}[thm]{Remark}
\theoremstyle{definition}
\newtheorem*{defn*}{\protect\definitionname}
\providecommand{\definitionname}{Definition}
\providecommand{\lemmaname}{Lemma}
\providecommand{\propositionname}{Proposition}
\providecommand{\remarkname}{Remark}
\providecommand{\theoremname}{Theorem}
\newcommand{\Rmnum}[1]{\expandafter\@slowromancap\romannumeral #1@}
\newcommand{\8}{\infty}
\newcommand{\la}{\langle}
\newcommand{\ra}{\rangle}
\newcommand{\be}{\begin{eqnarray*}}
	\newcommand{\ee}{\end{eqnarray*}}
\newcommand{\beq}{\begin{equation}}
	\newcommand{\eeq}{\end{equation}}
\newcommand{\beqn}{\begin{equation*}}
	\newcommand{\eeqn}{\end{equation*}}
\newcommand{\bs}{\begin{split}}
	\newcommand{\es}{\end{split}}
\begin{document}

	
	
	\title[Complex median method and Schatten class membership of commutators]{Complex median method and Schatten class membership of commutators}
	
	\thanks{{\it 2020 Mathematics Subject Classification:} 42B20, 60G42, 47B47, 30H25.}
	\thanks{{\it Key words:} Martingale Besov spaces, martingale paraproducts, Schatten class, Schatten-Lorentz class, commutators, singular integral operators, Besov space, Sobolev space, complex median method}
	
	\author[Zhenguo Wei]{Zhenguo Wei}
	\address{Laboratoire de Math{\'e}matiques, Universit{\'e} de Bourgogne Franche-Comt{\'e}, 25030 Besan\c{c}on Cedex, France}
	\email{zhenguo.wei@univ-fcomte.fr}

	\author[Hao Zhang]{Hao Zhang}
	\address{
	Department of Mathematics, University of Illinois Urbana-Champaign, USA}
	\email{hzhang06@illinois.edu}
	\date{}
	\maketitle
	
	\begin{abstract}
	This article is devoted to the study of the Schatten class membership of commutators involving singular integral operators. We utilize martingale paraproducts and Hyt\"{o}nen's dyadic martingale technique to obtain sufficient conditions on the weak-type and strong-type Schatten class membership of commutators in terms of Sobolev spaces and Besov spaces respectively. 
	
	We also establish the complex median method, which is applicable to complex-valued functions. We apply it to get the optimal necessary conditions on the weak-type and strong-type  Schatten class membership of commutators associated with non-degenerate kernels. This resolves the problem on the characterization of the weak-type and strong-type  Schatten class membership of commutators.
	
	Our new approach is built on Hyt\"{o}nen's dyadic martingale technique and the complex median method. Compared with all the previous ones, this new one is more powerful in several aspects: $(a)$ it permits us to deal with more general singular integral operators with little smoothness; $(b)$ it allows us to deal with commutators with complex-valued kernels; $(c)$ it turns out to be powerful enough to deal with the weak-type and strong-type  Schatten class of commutators in a universal way.
	\end{abstract}
	
 \tableofcontents{}
	
	\section{Introduction}\label{Introduction}
In harmonic analysis, commutators involving singular integral and multiplication operators are generalizations of Hankel type operators.  Hankel operators were first studied by Hankel in \cite{Hankel}. Since then they have become an important class of operators. Later, Nehari  characterized the boundedness of Hankel operators on the Hardy space $H_2(\mathbb{T})$ in terms of the $BMO$ space in \cite{Neha}, and Hartman discussed their compactness by the $VMO$ space in \cite{Hart}. In \cite{V1}, Peller obtained the strong-type Schatten class criterion of Hankel operators by Besov space for $1\leq p<\8$, while the case $0<p<1$ was investigated by Peller in \cite{V2} and Semmes in \cite{SS}, respectively. The weak-type Schatten class of Hankel operators can be seen in \cite[Theorem 4.4, Chapter 6]{PVV2003}.

The boundedness, compactness and Schatten class membership of commutators have also been studied extensively. In \cite{CRW}, Coifman, Rochberg and Weiss showed the boundedness of commutators with regards to the $BMO$ space on $\mathbb{R}^n$, which yields a new characterization of $BMO$. Soon after their work, Uchiyama sharpened one of their results and showed the compactness of commutators by virtue of the $CMO$ space in \cite{Uchi}. The strong-type Schatten class membership of commutators was developed by Janson and Wolff in terms of Besov spaces in \cite{JW}. Afterwards, Janson and Peetre established a fairly general framework to investigate the boundedness and strong-type Schatten class of commutators in \cite{JPe}. As for the weak-type Schatten class of commutators, it was studied in \cite{RSe} and \cite{SEFD2017}. It is worth pointing out that the weak-type Schatten class is deeply related to noncommutative geometry, and the interested reader is referred to \cite{Alain1997,MSX2019,MSX2020} for more details.

In addition, Petermichl discovered an explicit representation formula for the one-variable Hilbert transform as an average of dyadic shift operators to investigate Hankel operators with matrix symbol in \cite{SPe}. This new idea turns out to be very useful and powerful to deal with problems related to singular integral operators, such as Hankel operators and weighted inequalities. Nazarov, Treil and Volberg in  \cite{NTV} invented another type of dyadic representation of singular integral operators and used it to solve the $T(1)$ and the $T(b)$ theorems on non-homogeneous spaces. Later, Hyt\"{o}nen refined in an essential way the method of Nazarov, Treil and Volberg to establish a novel and beautiful dyadic representation, and finally settled the well-known $A_2$ conjecture \cite{TH1}. Hyt\"{o}nen's dyadic representation will be a paramount tool of our arguments.

We would like to highlight that the aforementioned different kinds of dyadic representation of singular integral operators \cite{SPe,NTV,TH1} are all based on dyadic operators. Meanwhile, as Hankel type operators, dyadic operators serve as crucial tools in harmonic analysis.

Nowadays, the study of the Schatten class of commutators attracts much attention and is going through vast development. We refer the interested reader to \cite{FLL2023,FLLX2024,FLMSZ2024,GLW2023,LLW2023,LLW2024,LLWW2024}. Motivated by these remarkable works, we aim to investigate the weak-type and strong-type Schatten class of commutators in this paper.

To present our main results, we first provide the setup for singular integral operators. Let $T\in B(L_2(\mathbb{R}^n))$ be a singular integral operator with a kernel $K(x, y)$,  i.e. for any $f\in L_2(\mathbb{R}^n)$
\[T(f)(x)=\int_{\mathbb{R}^n}K(x, y)f(y)dy, \quad x\notin \mathrm{supp}f.\]
We assume that $K(x, y)$ is defined for all $x\neq y$ on $\mathbb{R}^n\times \mathbb{R}^n$ and satisfies the following standard kernel estimates:
\begin{equation}\label{standard}
	\begin{cases}
		\displaystyle|K(x,y)|\le \frac{C}{|x-y|^n},\\
		\displaystyle|K(x,y)-K(x',y)|+|K(y,x)-K(y,x')|\le \frac{C |x-x'|^\alpha}{|x-y|^{n+\alpha}}
	\end{cases}
\end{equation}
for all $x,x',y\in\mathbb{R}^n$ with $|x-y|>2|x-x'|>0$ and some fixed $\alpha\in (0, 1]$ and constant $C>0$. 

In particular, if for any $x\neq y$ 
\begin{equation}\label{phi2333}
	K(x, y)= \phi(x-y),
\end{equation}
where $\phi$ is homogeneous of degree $-n$ with mean value zero on the unit sphere, then $T$ is called a Calder\'{o}n-Zygmund transform. 

In the sequel,  $T:L_2(\mathbb{R}^n)\to L_2(\mathbb{R}^n)$ will always be assumed to satisfy the above standard estimates (\ref{standard}) and to be bounded. The celebrated David-Journé $T(1)$ theorem in \cite{DaJou} asserts that for any singular integral operator $T$ satisfying (\ref{standard}), $T$ is bounded on $L_2(\mathbb{R}^n)$ if and only if $T(1)$ and $T^*(1)$ both belong to $BMO(\mathbb{R}^n)$ and $T$ satisfies the weak boundedness property. We recall that $BMO(\mathbb{R}^n)$ is the space consisting of all locally integrable complex-valued functions $b$ such that
$$  \|b\|_{BMO(\mathbb{R}^n)}= \sup _{\substack{Q \subset \mathbb{R}^n \\ Q \operatorname{cube}}} \bigg(\frac{1}{m(Q)} \int_Q\Big|b-\big(\frac{1}{m(Q)} \int_Q b\ d m\big)\Big|^2 d m\bigg)^{1/2}<\infty, $$
where $m$ is Lebesgue measure on $\mathbb{R}^n$.

Assume that $b$ is a locally integrable complex-valued function, and let $M_b$ be the pointwise multiplication by $b$. The commutator is defined by $C_{T, b}=[T, M_b]=T M_b - M_b T$, that is for any $f\in L_2(\mathbb{R}^n)$,
$$  C_{T, b}(f)=T(b\cdot f)-b\cdot T(f).  $$

Suppose that $1<p<\infty$. The homogeneous Besov space $\pmb{B}_p(\mathbb{R}^n)$ is defined as the completion of all locally integrable complex-valued functions $b$ satisfying
\begin{equation}\label{funeq}
	\|b\|_{\pmb{B}_p(\mathbb{R}^n)}=	\bigg(\int_{\mathbb{R}^n\times\mathbb{R}^n}\frac{|b(x)-b(y)|^p}{|x-y|^{2n}}dxdy\bigg)^{\frac{1}{p}}<\infty,
\end{equation}  
with respect to the semi-norm  $\|\cdot \|_{\pmb{B}_p(\mathbb{R}^n)}$. If $p>n$, $\pmb{B}_p(\mathbb{R}^n)$ coincides with the classical Besov space of parameters $(p,p,n/p)$, namely the space $\Lambda_\alpha^{p,q}(\mathbb{R}^n)$ in \cite[Chapter V, \S 5]{ST}. Let $\dot{W}^1_p(\mathbb{R}^n)$ be the homogeneous Sobolev space consisting of all locally integrable complex-valued functions $b$ whose distributional gradient satisfies $\nabla b\in L_p(\mathbb{R}^n)$ endowed with the norm $\| \nabla b\|_{L_p(\mathbb{R}^n)}$.

The study of the Schatten class $S_p$ of commutators can be tracked back to Peller \cite{V1,V2} in the context of complex analysis. As for the Euclidean setting, this dates back to Janson and Wolff \cite{JW}. They obtained the following theorem ({\cite[Theorem 1]{JW}}), which shows a striking difference between the one dimensional and higher dimensional cases: $p=n$ becomes a cut-point for $n\ge 2$. This is now called the Janson-Wolff phenomenon.
\begin{thm}\label{thm1.5}
	Let $T$ be a Calder\'{o}n-Zygmund transform with a kernel $\phi$ defined in \eqref{phi2333}. Assume that $\phi$ is $C^{\infty}(\mathbb{R}^n)$ except at the origin and not identically zero. 
	
	Suppose that $n \geq 2$ and $0<p<\infty$. For $0<p\leq n$, $ C_{T, b}\in S_p(L_2(\mathbb{R}^n))$ if and only if $b$ is constant. For $p>n$, $ C_{T, b}\in S_p(L_2(\mathbb{R}^n))$ if and only if $b\in \pmb{B}_p(\mathbb{R}^n)$.
\end{thm}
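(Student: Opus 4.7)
The plan is to prove the two implications separately, with the sufficiency following from the general upper bounds promised in the abstract (martingale paraproducts plus Hyt\"onen's dyadic representation) and the necessity resting on the new complex median method applied to a non-degenerate kernel.

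For the sufficient direction when $p>n$, I would first invoke Hyt\"onen's dyadic representation theorem to write the Calder\'on--Zygmund operator $T$ as an expectation, over random dyadic grids $\mathcal{D}$, of a weighted sum $\sum_{i,j\ge 0} 2^{-\alpha \max(i,j)/2} S^{ij}_{\mathcal{D}}$ of dyadic shifts of complexity $(i,j)$. Since $[T,M_b] = \mathbb{E}_{\mathcal{D}} \sum_{i,j} 2^{-\alpha \max(i,j)/2}[S^{ij}_{\mathcal{D}},M_b]$, it suffices to control the Schatten norm of each commutator $[S^{ij}_{\mathcal{D}},M_b]$ by the martingale Besov norm of $b$ on $\mathcal{D}$, with polynomial growth in $i,j$ that is absorbed by the exponential factor $2^{-\alpha\max(i,j)/2}$. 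The step $[S^{ij}_{\mathcal{D}},M_b]$ decomposes naturally into martingale paraproducts whose $S_p$ norms are equivalent to $\|b\|_{\pmb{B}_p}$ for $p>n$; averaging over grids then yields $\|C_{T,b}\|_{S_p}\lesssim \|b\|_{\pmb{B}_p}$.

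For the necessary direction, I would exploit the hypothesis that $\phi$ is smooth and non-trivially homogeneous of degree $-n$ on $S^{n-1}$. Non-degeneracy provides a direction $\xi_0\in S^{n-1}$ and a pair of truncated cones in which $|K(x,y)|\gtrsim |x-y|^{-n}$ with arguments of $K(x,y)$ confined to a small arc. This lets one build a large family of well-separated cube pairs $(Q_k,R_k)$, each at a fixed aspect and at comparable scales, on which $C_{T,b}$ can be tested from below. The key is a lower bound of the form $\|C_{T,b}\|_{S_p}^p \gtrsim \sum_{k}\bigl|m_{Q_k}(b)-m_{R_k}(b)\bigr|^p$ for suitable \emph{complex medians} $m_Q(b)$; equivalently, one reduces the analysis to a weighted $\ell_p$ sum of oscillations of $b$. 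Summing such bounds across all scales and translations recovers $\|b\|_{\pmb{B}_p}$ when $p>n$ via the standard Besov square-function characterization. When $p\le n$, the same bound combined with a volume-packing argument (the number of disjoint cube pairs at scale $2^{-j}$ grows like $2^{jn}$, so a nontrivial oscillation forces a divergent sum unless $p>n$) forces $b$ to be constant, which is precisely the Janson--Wolff dichotomy.

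The principal obstacle is the complex median step. For real-valued $b$, the classical Rochberg--Semmes median splits $\mathbb{R}$ into $(-\infty,m]$ and $[m,\infty)$ so that both halves carry at least half the mass of $b$ on $Q$, and one tests $C_{T,b}$ with functions supported on the preimages of these halves. For complex $b$ there is no canonical median; one must choose a real line (equivalently, a pair of complementary half-planes) in $\mathbb{C}$ such that (i) both half-planes capture a definite fraction of the $b$-mass of $Q$ and of $R$, and (ii) the spectral gap between the two half-plane projections of $b$ is comparable to the oscillation of $b$ across $Q\cup R$. Implementing this amounts to an optimization over directions in $\mathbb{C}$ and is the technical heart of the argument; once it is in place, the Schatten lower bound for each pair $(Q_k,R_k)$ follows by choosing unit vectors supported on the two half-plane preimages and computing the off-diagonal matrix entry of $C_{T,b}$, which is bounded below by the chosen oscillation thanks to the non-degeneracy of $K$.
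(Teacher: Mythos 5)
Your overall architecture---Hyt\"onen's dyadic representation and martingale paraproducts for the upper bound, a complex median combined with non-degeneracy for the lower bound, and a scaling/divergence argument for the Janson--Wolff dichotomy---is exactly the route taken by the paper, which obtains Theorem~\ref{thm1.5} as a special case of Theorem~\ref{corollary1.8} together with Proposition~\ref{0pn}. The gap lies in the complex median step. You propose a single line in $\mathbb{C}$ producing two half-planes, each carrying a definite fraction of the mass of $b$ on $I$. What the argument actually needs is Theorem~\ref{divideS}: \emph{two orthogonal lines} producing four closed quadrants, each of measure $\ge\frac{1}{16}\mu(I)$. This is not cosmetic. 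Both the kernel $K$ and the symbol $b$ are complex, and the lower bound has to beat cancellation inside the matrix entry $\langle e_I,[T,M_b]f_I\rangle$. With four quadrants and a common rotation $e^{\mathrm{i}\theta}$, one arranges that for $x\in E^I_s$ and $\hat x\in F^I_s$, both $e^{\mathrm{i}\theta}(b(x)-\alpha_{\hat I}(b))$ and $e^{\mathrm{i}\theta}(\alpha_{\hat I}(b)-b(\hat x))$ lie in the sector $\{|\arg z|\le\pi/4\}$; convexity of that sector then gives $|e^{\mathrm{i}\theta}(b(x)-b(\hat x))|\le 2\,\mathrm{Re}\bigl(e^{\mathrm{i}\theta}(b(x)-b(\hat x))\bigr)$, which is the inequality (\eqref{b-b1}--\eqref{imreb1}) that lets one discard absolute values and recognize the double integral as a matrix entry of $C_{T,b}$. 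A single half-plane split only confines $\arg$ to an interval of length $\pi$, so $\mathrm{Re}$ can be arbitrarily small relative to $|\cdot|$ and the estimate collapses. Producing two orthogonal lines simultaneously satisfying the four mass inequalities is a genuine two-parameter problem (Section~\ref{proofdivide}); it does not follow from a one-parameter intermediate-value argument of the kind that gives the real median.

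Two smaller imprecisions. In the sufficiency step, the decomposition inside the commutator is $M_b=\pi_b+\Lambda_b+R_b$, and the term requiring real work and producing the polynomial growth in the complexity $(i,j)$ is $[S^{ij}_\omega,R_b]$, not a paraproduct; the Schatten bounds for $[S^{ij}_\omega,\pi_b]$ and $[S^{ij}_\omega,\Lambda_b]$ are immediate from Theorem~\ref{lem2.1} and Lemma~\ref{TLambdab} because $S^{ij}_\omega$ is an $L_2$-contraction. For the necessity, the passage from the family of local lower bounds over cube pairs to a global Schatten lower bound requires the Rochberg--Semmes NWO estimate (Lemma~\ref{RSNWO}) together with the covering of all cubes by $n+1$ translated dyadic grids (Lemma~\ref{Domegan1}); merely ``summing across scales and translations'' does not directly yield $\|b\|_{\pmb{B}_p}\lesssim\|C_{T,b}\|_{S_p}$ without these two ingredients.
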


For the cut-point $p=n\geq 2$, we have the following remarkable sharp quantitative characterization of weak-type Schatten class $S_{n,\infty}$ membership, which is first explicitly stated by Lord, McDonald, Sukochev and Zanin in \cite[Theorem 1]{SEFD2017}, and they also gave a complete and different proof of it. However, note that Rochberg and Semmes' innovative work \cite{RSe} describes the $S_{n,\infty}$-norm of commutators by some special type of oscillation spaces. Then these oscillation spaces are shown to be equivalent to the Sobolev spaces $\dot{W}^1_n(\mathbb{R}^n)$ by Rochberg and Semmes for one direction and  Connes, Sullivan and Teleman \cite[Appendix]{CST1994} for the other; see also Frank's recent paper \cite{FRANK} for an alternative proof.

\begin{thm}\label{thmSucochev}
	Suppose that $n\geq 2$ and $b\in L_\8(\mathbb{R}^n)$. Let $R_j$ $(1\leq j\leq n)$ be the Riesz transforms on $\mathbb{R}^n$. Then $ C_{R_j, b}\in S_{n,\infty}(L_2(\mathbb{R}^n))$ if and only if $b\in \dot{W}^1_n(\mathbb{R}^n)$. Moreover, in this case
	$$ \| C_{R_j, b}\|_{S_{n,\infty}(L_2(\mathbb{R}^n))}\approx_n\|b\|_{\dot{W}^1_n(\mathbb{R}^n)}. $$
\end{thm}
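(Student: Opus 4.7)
The plan is to handle the two directions separately, using Hyt\"{o}nen's dyadic representation on the operator side and the complex median method on the symbol side. The Riesz kernels $\phi_j(y)=c_n y_j/|y|^{n+1}$ satisfy \eqref{standard} and are non-degenerate (they do not vanish on any full cone at the origin), which is exactly what lets one both dyadicize the commutator through a martingale paraproduct decomposition and carry out a cube-by-cube testing argument to produce a sharp lower bound.

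\textbf{Sufficiency.} Hyt\"{o}nen's averaging formula writes
$$R_j \;=\; c_n\,\mathbb{E}_\omega \sum_{i,k=0}^\infty 2^{-\max(i,k)\delta}\, \mathbb{S}_\omega^{i,k},$$
with $\mathbb{S}_\omega^{i,k}$ a dyadic shift of complexity $(i,k)$ in a random dyadic system $\mathcal{D}_\omega$. Applying it to $C_{R_j,b}$ and commuting $M_b$ against a single shift decomposes the commutator into a finite number of martingale paraproducts with symbol $b$ of complexity bounded by $\max(i,k)$, plus diagonal corrections expressed through the Haar coefficients of $b$. It then suffices to (i) prove a Schatten-weak paraproduct bound of the form $\|\pi_b^{\mathcal{D}}\|_{S_{n,\infty}}\lesssim \|b\|_{\mathrm{mart.Sob}}$ and (ii) pass to the continuous Sobolev space via the Haar--Poincar\'{e} inequality $|\langle b,h_Q\rangle|\lesssim |Q|^{1/2}\|\nabla b\|_{L_n(Q)}$. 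The dependence on the complexity is managed by $p$-convexity of $S_{n,\infty}$ for any $p<n$, which absorbs the geometric decay $2^{-\max(i,k)\delta}$, while the average over $\omega$ is absorbed into the constant.

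\textbf{Necessity.} For the reverse inequality, I would test $C_{R_j,b}$ against rank-one operators built from disjointly supported bumps. For a dyadic cube $Q$ and a translate $Q'$ at distance $\sim\ell(Q)$, pick normalized bumps $f_Q$ in $Q$ and $g_Q$ in $Q'$; non-degeneracy of $\phi_j$ on $(Q,Q')$ yields
$$|\langle C_{R_j,b} f_Q, g_Q\rangle| \;\gtrsim\; |m_Q(b)-m_{Q'}(b)| - \mathrm{error},$$
where $m_Q(b)\in\mathbb{C}$ is a \emph{complex median} of $b$ on $Q$, i.e.\ a point satisfying the quantitative split $|\{x\in Q:|b(x)-m_Q(b)|>t\}|\le\kappa|Q|$ with $\kappa<1$ for $t$ of order $\mathrm{osc}_Q b$. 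Running a disjointification argument over a large family of such pairs and invoking the weak-Schatten testing lower bound $\|T\|_{S_{n,\infty}}\gtrsim\lambda N^{1/n}$ whenever $T$ admits $N$ bi-orthonormal pairs with diagonal entries at least $\lambda$ turns this into a weak-$\ell^n$ lower bound for the oscillations $(|m_Q(b)-m_{Q'}(b)|)_Q$. This is precisely the Rochberg--Semmes oscillation space characterization, equivalent to $\|b\|_{\dot W^1_n(\mathbb{R}^n)}$ by \cite{CST1994} and \cite{FRANK} as cited in the excerpt.

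\textbf{Main obstacle.} The decisive step is constructing the complex median $m_Q(b)$ with a quantitative split $\kappa<1$ uniform in $Q$. For real-valued $b$ one simply picks a balanced point on the real line, but for complex $b$ there is in general no $m\in\mathbb{C}$ for which every closed half-plane through $m$ contains at least half of the mass of $b(Q)$, so the classical real median fails to control the real and imaginary oscillations simultaneously. One has to relax to a fraction $\kappa<1$ and prove that such a point exists with $\kappa$ independent of $b$ and $Q$, which is a genuinely two-dimensional geometric selection fact; this is what the complex median method of the paper delivers. Once this is in place, the rest of the necessity argument is routine oscillation accounting, while the sufficiency direction reduces to the Schatten-weak paraproduct bound and a summation in the complexity parameters, both routine given $p$-convexity of $S_{n,\infty}$ and Hyt\"{o}nen's dyadic representation.
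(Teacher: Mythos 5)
Your overall plan matches the paper's: Hyt\"{o}nen's dyadic representation combined with the $M_b=\pi_b+\varLambda_b+R_b$ decomposition for sufficiency, and a complex-median testing argument landing in the Rochberg--Semmes oscillation space for necessity. However, the description of the complex median is not correct, and that is the one place where the proof genuinely depends on a precise geometric statement. You define $m_Q(b)$ through a disc-tail bound $|\{x\in Q:|b(x)-m_Q(b)|>t\}|\le\kappa|Q|$; such a condition is not a median (it is vacuous for large $t$) and, more importantly, it gives no information about the \emph{angular} distribution of $b(Q)$ about $m_Q(b)$. The paper's Theorem~\ref{divideS} says something of a different type: there exist two \emph{orthogonal lines} through a point $\alpha$, dividing $\mathbb{C}$ into four closed quadrants $T_1,\dots,T_4$, each receiving at least $\tfrac{1}{16}$ of the measure of $\{x:b(x)\in T_i\}$. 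This four-quadrant lower bound is what drives the necessity proof in Lemma~\ref{conv1}: one restricts $x\in I$ and $\hat x\in\hat I$ to opposite quadrants $E_s^I$ and $F_s^I$, so that $e^{\mathrm{i}\theta}(b(x)-\alpha)$ and $e^{\mathrm{i}\theta}(\alpha-b(\hat x))$ lie in a common cone of aperture $\pi/2$ and cannot cancel; this is exactly what produces $|b(x)-\alpha|\le 2|b(x)-b(\hat x)|$ and $|e^{\mathrm{i}\theta}(b(x)-b(\hat x))|\le 2\,\mathrm{Re}\bigl(e^{\mathrm{i}\theta}(b(x)-b(\hat x))\bigr)$, which, combined with the non-degeneracy estimate $|K(\hat x,x)|\le 2\,\mathrm{Re}\bigl(e^{\mathrm{i}\theta_1}K(\hat x,x)\bigr)$, yields the pointwise lower bound for $(b(x)-b(\hat x))K(\hat x,x)$ without cancellation loss. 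A disc-tail bound delivers none of this.

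A secondary issue: for sufficiency, the per-cube Haar--Poincar\'e estimate $|\langle b,h_Q\rangle|\lesssim|Q|^{1/2}\|\nabla b\|_{L_n(Q)}$ is not enough to conclude that the Haar coefficients lie in $\ell^{n,\infty}$, since $\bigl\{\|\nabla b\|_{L_n(Q)}\bigr\}_{Q\in\mathcal{D}}\notin\ell^{n,\infty}$ whenever $\nabla b\not\equiv0$ (every dyadic ancestor of a cube of positive mass contributes the same constant). One must instead route the estimate through the weak Besov space $\pmb{WB}_{n,\infty}(\mathbb{R}^n)$ and invoke the non-trivial equivalence $\pmb{WB}_{n,\infty}(\mathbb{R}^n)=\dot W^1_n(\mathbb{R}^n)$ due to Rochberg--Semmes and Connes--Sullivan--Teleman, as the paper does in Lemma~\ref{WBNW1N}. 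Apart from these two points, the architecture you sketch --- dyadic representation, block-diagonal structure of $[S^{ij}_\omega,R_b]^*[S^{ij}_\omega,R_b]$ giving polynomial growth in the complexity, $r$-normability of $S_{n,\infty}$ to sum over complexity, and NWO-testing for the lower bound --- is the paper's proof.
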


Note that in Connes' quantized differential calculus language, a commutator $C_{R_j, b}$ can be interpreted as the quantized differential $\dj{} b$ of $b$. Theorem \ref{thmSucochev} is intimately related to the Dixmier trace formula for $\dj{}b$. Indeed, Lord, McDonald, Sukochev and Zanin first proved  this trace formula, then derived Theorem \ref{thmSucochev}. With regard to this, McDonald, Sukochev and Xiong obtained the analogue of Theorem \ref{thmSucochev} for quantum tori and quantum Euclidean spaces and gave the underlying Dixmier trace formula (see \cite{MSX2019,MSX2020}). This is the first truly noncommutative examples on this subject.

\

The following theorems are two of our main results. They largely extend Theorem \ref{thm1.5} and Theorem \ref{thmSucochev}. Moreover, our approach is completely different from all previous ones, and it is applicable to more general settings including the noncommutative case and homogeneous type spaces (see later paragraphs for more discussions). This new approach is one of our main novelties. Recall that $\alpha$ is the fixed constant in (\ref{standard}). We refer to Section \ref{schattenconv} for the definition of non-degenerate kernels.

	\begin{thm}\label{corollary1.8}
	Let $T\in B(L_2(\mathbb{R}^n))$ be a singular integral operator with kernel $K(x,y)$ satisfying \eqref{standard}. Suppose that $b$ is a locally integrable complex-valued function.\\
	$\mathrm{(1)}$ Sufficiency: Suppose that $n< p<\infty$, $0<\alpha\le 1$ when $n\ge 2$, or $2\le p<\infty$, $0<\alpha\le 1$ when $n= 1$, or $1< p<2$, $1/2\le \alpha\le 1$ when $n=1$.  If $b\in \pmb{B}_p(\mathbb{R}^n)$, then $ C_{T, b}\in S_p(L_2(\mathbb{R}^n))$ and 
	$$ \| C_{T, b}\|_{S_p(L_2(\mathbb{R}^n))}\lesssim_{n, p,T}\big(1+\|T(1)\|_{BMO(\mathbb{R}^n)}+\|T^*(1)\|_{BMO(\mathbb{R}^n)}\big)\|b\|_{\pmb{B}_p(\mathbb{R}^n)}. $$
	$\mathrm{(2)}$ Necessity: Suppose that $1<p<\infty$ and $0<\alpha\leq 1$. Assume in addition that $K(x, y)$ is non-degenerate. If $C_{T, b}\in S_p(L_2(\mathbb{R}^n))$, 
	then $b\in \pmb{B}_p(\mathbb{R}^n)$ and
	\begin{equation*}
		\|b\|_{\pmb{B}_p(\mathbb{R}^n)}\lesssim_{n,p,T} \|C_{T, b}\|_{S_p(L_2(\mathbb{R}^n))}.
	\end{equation*}
	In particular, when $n\geq 2$ and $0<p\le n$, if $C_{T, b}\in S_p(L_2(\mathbb{R}^n))$, then $b$ is constant.\\
	$\mathrm{(3)}$ Suppose that $n< p<\infty$, $0<\alpha\le 1$ when $n\ge 2$, or $2\le p<\infty$, $0<\alpha\le 1$ when $n= 1$, or $1< p<2$, $1/2\le \alpha\le 1$ when $n=1$. Assume that $K(x, y)$ is non-degenerate. Then 
	$ C_{T, b}\in S_p(L_2(\mathbb{R}^n))$ if and only if $b\in \pmb{B}_p(\mathbb{R}^n)$. Moreover, in this case
	$$ \| C_{T, b}\|_{S_p(L_2(\mathbb{R}^n))}\approx_{n,p,T} 	\|b\|_{\pmb{B}_p(\mathbb{R}^n)}.  $$
	In addition, when $n\geq 2$ and $0<p\leq n$, then $ C_{T, b}\in S_p(L_2(\mathbb{R}^n))$ if and only if $b$ is constant.
\end{thm}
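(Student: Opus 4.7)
The plan is to prove (1) via Hyt\"{o}nen's dyadic representation combined with Schatten bounds for martingale paraproducts, and to prove (2) via the complex median method introduced in this paper. Part (3) is then immediate from (1) and (2), together with the observation that $\pmb{B}_p(\mathbb{R}^n)$ contains only constants when $n\ge 2$ and $p\le n$ (by \eqref{funeq}), which yields the Janson--Wolff dichotomy at the end of the statement.

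For the sufficiency in (1), first apply Hyt\"{o}nen's representation to write $T$ as an expectation over random dyadic grids $\mathcal{D}$ of a sum of dyadic shift operators $S^{\mathcal{D}}_{i,j}$ with coefficients decaying like $2^{-\alpha\max(i,j)}$. Commuting with $M_b$ reduces matters to estimating $\|[S^{\mathcal{D}}_{i,j},M_b]\|_{S_p}\le C(i,j)\,\|b\|_{\pmb{B}_p}$ uniformly in $\mathcal{D}$, with $C(i,j)$ summable against the dyadic coefficients. Each commutator $[S^{\mathcal{D}}_{i,j},M_b]$ decomposes as a finite linear combination of martingale paraproducts of $b$ acting on $f$, to which one applies the $S_p$ boundedness of such paraproducts in the ranges of $(n,p,\alpha)$ stated in the theorem. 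The off-diagonal paraproduct pieces yield directly $\|b\|_{\pmb{B}_p}$, while the diagonal pieces absorb $T(1)$ and $T^\ast(1)$ through their martingale $BMO$ norms, producing the factor $\bigl(1+\|T(1)\|_{BMO}+\|T^\ast(1)\|_{BMO}\bigr)$. A final averaging over dyadic grids identifies the dyadic martingale Besov seminorm with $\|b\|_{\pmb{B}_p(\mathbb{R}^n)}$ from \eqref{funeq}.

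For the necessity in (2), the heart of the argument is the construction, for each cube $Q\subset\mathbb{R}^n$, of a \emph{complex median} $m_Q(b)\in\mathbb{C}$ with the property that every closed half-plane whose boundary line passes through $m_Q(b)$ intersects $\{x\in Q : b(x)\in \text{that half-plane}\}$ in a set of measure at least a fixed fraction of $m(Q)$. With $m_Q(b)$ in hand, the non-degeneracy of $K$ provides for each $Q$ a pair of well-separated sub-cubes $Q',Q''$ of comparable size on which $|K(x,y)|\gtrsim|x-y|^{-n}$. Choosing unit $L_2$ functions $f_Q,g_Q$ supported on these sub-cubes and adapted to opposing half-planes through $m_{Q'}(b)$ and $m_{Q''}(b)$, one obtains a rank-one lower test
\[|\langle C_{T,b}f_Q,g_Q\rangle|\gtrsim |m_{Q'}(b)-m_{Q''}(b)|.\]
Running these tests along a nearly orthogonal family of pairs indexed by a dyadic grid, and pairing with $S_p$-to-$\ell_p$ duality, converts the left-hand side into a lower bound on $\|C_{T,b}\|_{S_p}^p$, while summing on the right reproduces $\|b\|_{\pmb{B}_p(\mathbb{R}^n)}^p$ via \eqref{funeq}. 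The main obstacle is the correct quantitative construction of $m_Q(b)$: unlike the real-valued median there is no canonical bisecting line in $\mathbb{C}$, so the existence of a single point realizing balanced splits in \emph{every} direction simultaneously must be secured by a topological continuity/fixed-point argument. A secondary difficulty is pushing the paraproduct estimate in (1) into the range $1<p<2$ when $n=1$, which forces the smoothness restriction $\alpha\ge 1/2$ and a careful interpolation around the Hilbert--Schmidt endpoint $p=2$.
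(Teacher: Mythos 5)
Your high-level roadmap (Hyt\"onen's dyadic representation plus martingale paraproducts for sufficiency, a complex median method for necessity, and the $\pmb{B}_p$-trivial-when-$p\le n$ observation for the dichotomy) matches the structure of the paper's proof, but in both directions the key technical step is different from or missing relative to what actually makes the argument close.

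For the necessity in (2), the version of the ``complex median'' you state --- a single point $m_Q(b)$ such that every closed half-plane whose boundary passes through $m_Q(b)$ captures a fixed fraction of the measure --- is a center-point statement, which is not the content of the paper's Theorem \ref{divideS}. The paper produces \emph{two orthogonal lines} $L_1,L_2$ dividing $\mathbb{C}$ into four closed quadrants each capturing at least $\tfrac{1}{16}$ of the measure, and the orthogonal-quadrant structure is exactly what is used afterward. In the proof of Lemma \ref{conv1}, the far cube $\hat I$ is split via the rotated quadrants into sets $F_s^I$, the near cube $I$ into matching sets $E_s^I$, and the crucial inequality $|b(x)-\alpha_{\hat I}(b)|\le 2|b(x)-b(\hat x)|$ for $x\in E_s^I$, $\hat x\in F_s^I$ relies on the fact that both $e^{\mathrm{i}\theta}(b(x)-\alpha_{\hat I}(b))$ and $e^{\mathrm{i}\theta}(\alpha_{\hat I}(b)-b(\hat x))$ lie in the \emph{same} $90^\circ$ sector, so their real parts reinforce. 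A center point controls half-plane masses, not quadrant masses (you cannot intersect two $\ge\tfrac13$ events to conclude anything positive), so it does not yield this alignment. Your subsequent rank-one lower test $|\langle C_{T,b}f_Q,g_Q\rangle|\gtrsim|m_{Q'}(b)-m_{Q''}(b)|$ likewise leaves the constructive interference between the phase of $K(\hat x,x)$ and the phase of $b(x)-b(\hat x)$ unaddressed; the paper handles this by combining the quadrant decomposition with Lemma \ref{ball} (so that $|\mathrm{Im}(e^{\mathrm{i}\theta_1}K)|\le\varrho\,\mathrm{Re}(e^{\mathrm{i}\theta_1}K)$), and then converts the resulting bound $MO_1(b;I)\lesssim\sum_s|\langle e_{I,s},C_{T,b}f_{I,s}\rangle|$ into $\|b\|_{\pmb{B}_p^{\omega,2^n}}\lesssim\|C_{T,b}\|_{S_p}$ via the Rochberg--Semmes near-weak-orthogonality lemma (Lemma \ref{RSNWO}), not via a generic $S_p$-to-$\ell_p$ duality. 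You would also need the covering result (Proposition \ref{pdayun}) to pass from finitely many shifted dyadic Besov norms back to $\|b\|_{\pmb{B}_p(\mathbb{R}^n)}$.

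For the sufficiency in (1), the statement that $[S^{ij}_\omega,M_b]$ ``decomposes as a finite linear combination of martingale paraproducts'' is not what happens, and it hides the part of the argument that imposes the restrictions $p\ge 2$ (or $\alpha\ge 1/2$ when $n=1$, $1<p<2$). The paper writes $M_b=\pi_b+\varLambda_b+R_b$; the commutators with $\pi_b$ and $\varLambda_b$ follow from Theorem \ref{lem2.1} and Lemma \ref{TLambdab}, but $[S^{ij}_\omega,R_b]$ is not a paraproduct --- one has to exploit the block-diagonal structure of $\varPhi^*\varPhi=\sum_K B_K^*B_K$, bound each block in $S_{p/2}$, and control the dependence on the shift complexity $(i,j)$ so that the sum against $\tau(i,j)\lesssim(1+\max\{i,j\})^{2(n+\alpha)}2^{-\alpha\max\{i,j\}}$ converges; this is precisely where the case $n=1$, $1<p<2$ produces a factor $2^{i(1/p-1/2)}$ and forces $\alpha>1/p-1/2$. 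Finally, the factor $1+\|T(1)\|_{BMO}+\|T^*(1)\|_{BMO}$ does not arise from ``diagonal pieces'' of the shift commutators; it comes from the genuine paraproduct terms $\pi^\omega_{T(1)}$ and $(\pi^\omega_{T^*(1)})^*$ in Hyt\"onen's representation, handled by Proposition \ref{T0est}.
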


\begin{thm}\label{corollary1.10}
	Suppose that $n\geq 2$, $0<\alpha\le 1$ and $T\in B(L_2(\mathbb{R}^n))$ is a singular integral operator with kernel $K(x,y)$ satisfying \eqref{standard}. Suppose that $b$ is a locally integrable complex-valued function.\\
	$\mathrm{(1)}$ Sufficiency: If $b\in \dot{W}^1_n(\mathbb{R}^n)$, then $ C_{T, b}\in S_{n,\infty}(L_2(\mathbb{R}^n))$ and 
	$$ \| C_{T, b}\|_{S_{n,\infty}(L_2(\mathbb{R}^n))}\lesssim_{n, T}\big(1+\|T(1)\|_{BMO(\mathbb{R}^n)}+\|T^*(1)\|_{BMO(\mathbb{R}^n)}\big)\|b\|_{\dot{W}^1_n(\mathbb{R}^n)}. $$
	$\mathrm{(2)}$ Necessity: Assume in addition that $K(x, y)$ is non-degenerate. If $C_{T, b}\in S_{n,\infty}(L_2(\mathbb{R}^n))$, then $b\in\dot{W}^1_n(\mathbb{R}^n)$ and
	\begin{equation*}
		\|b\|_{\dot{W}^1_n(\mathbb{R}^n)}\lesssim_{n,T} \|C_{T, b}\|_{S_{n,\infty}(L_2(\mathbb{R}^n))}.
	\end{equation*}
	$\mathrm{(3)}$ Assume that $K(x, y)$ is non-degenerate. Then $C_{T, b}\in S_{n,\infty}(L_2(\mathbb{R}^n))$ if and only if $b\in\dot{W}^1_n(\mathbb{R}^n)$. Moreover, in this case
	\begin{equation*}
		\|C_{T, b}\|_{S_{n,\infty}(L_2(\mathbb{R}^n))}\approx_{n,T} 	\|b\|_{\dot{W}^1_n(\mathbb{R}^n)}.
	\end{equation*}
\end{thm}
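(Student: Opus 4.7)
The plan is to prove parts (1) and (2) separately; part (3) then follows immediately by combining them. Following the strategy advertised in the introduction, I would decompose $T$ via Hyt\"{o}nen's dyadic representation theorem as an average over random dyadic lattices $\mathcal{D}$ of a norm-convergent series of dyadic shifts and dyadic paraproducts with symbols $T(1)$ and $T^*(1)$. The commutator $[T,M_b]$ then splits into commutators with these dyadic building blocks, which after a routine Haar calculation reduce to sums of martingale paraproducts $\pi^\mathcal{D}_b$ with symbol $b$, plus bounded error terms absorbed into the factor $1+\|T(1)\|_{BMO(\mathbb{R}^n)}+\|T^*(1)\|_{BMO(\mathbb{R}^n)}$. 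The whole theorem is thereby reduced to two statements about paraproducts: the upper bound $\|\pi^\mathcal{D}_b\|_{S_{n,\infty}(L_2(\mathbb{R}^n))}\lesssim_n\|b\|_{\dot{W}^1_n(\mathbb{R}^n)}$, and, under the non-degeneracy hypothesis on $K$, a matching lower bound on $\|C_{T,b}\|_{S_{n,\infty}(L_2(\mathbb{R}^n))}$ from which $\|b\|_{\dot{W}^1_n(\mathbb{R}^n)}$ can be read off.

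For (1), I would prove the paraproduct estimate by expanding $\pi^\mathcal{D}_b$ in the Haar basis of $\mathcal{D}$, which renders the operator essentially block-diagonal, and estimating its singular values against the dyadic Sobolev semi-norm
\[
\|b\|_{\dot{W}^1_n(\mathcal{D})}:=\bigg(\sum_{Q\in\mathcal{D}}\frac{1}{m(Q)}\int_Q|b-\langle b\rangle_Q|^n\,dm\bigg)^{1/n}.
\]
The Poincar\'e inequality on cubes, combined with an average over random dyadic lattices, yields $\|b\|_{\dot{W}^1_n(\mathcal{D})}\lesssim_n\|b\|_{\dot{W}^1_n(\mathbb{R}^n)}$, and a Cwikel--Solomyak-type weak Schatten lemma converts the dyadic Sobolev control into the desired $S_{n,\infty}$ bound on $\pi^\mathcal{D}_b$. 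Summing Hyt\"{o}nen's series in the quasi-Banach ideal $S_{n,\infty}$, using the exponential decay in the complexity parameters of the dyadic shifts, then yields the claimed estimate on $C_{T,b}$.

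For (2), the main tool is the complex median method, the principal innovation of the paper. The classical real median method produces, for each cube $Q$, subsets $E_Q^\pm\subset Q$ of comparable measure on which a real-valued $b$ lies above or below its median on $Q$; non-degeneracy of $K$ then supplies a companion cube $Q'$ so that the localized matrix entry $\langle C_{T,b}\mathbf{1}_{E_Q^+-E_Q^-},\mathbf{1}_{Q'}\rangle$ is bounded below by $m(Q)^{1/2}m(Q')^{1/2}\cdot\mathrm{osc}_Q(b)$. For complex-valued $b$ there is no canonical median, so I would rotate in $\mathbb{C}$ and take the real median of $\mathrm{Re}(e^{i\theta}b)$ at an optimally chosen angle $\theta=\theta_Q$, then verify $\sup_\theta\mathrm{osc}_Q(\mathrm{Re}(e^{i\theta}b))\approx\mathrm{osc}_Q(b)$ so that no oscillation is lost. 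Assembling the resulting test vectors into an almost-orthogonal family indexed by $Q$ and testing $C_{T,b}\in S_{n,\infty}$ against them via weak-Schatten trace duality yields $\big(\sum_Q\mathrm{osc}_Q(b)^n\big)^{1/n}\lesssim\|C_{T,b}\|_{S_{n,\infty}(L_2(\mathbb{R}^n))}$, and the Rochberg--Semmes together with Connes--Sullivan--Teleman identification of this oscillation sum with $\|b\|_{\dot{W}^1_n(\mathbb{R}^n)}$ finishes the necessity.

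The hard part will be the complex median step. The optimal angle $\theta_Q$ varies with $Q$, so the companion cube $Q'=Q'(Q,\theta_Q)$ from the non-degeneracy hypothesis must be chosen stably enough that the global family of test vectors retains the almost-orthogonality needed for the weak-Schatten duality to sum, and the pointwise ``$\sup_\theta$'' lower bound must be shown to persist after summation over the entire dyadic grid. A secondary difficulty, pervading both directions, is that $\dot{W}^1_n$ and $S_{n,\infty}$ are both borderline spaces: every Hyt\"{o}nen-style summation must be handled with the quasi-triangle inequality of $S_{n,\infty}$ rather than classical Schur/duality tools, which is precisely why Cwikel--Solomyak-type weak Schatten estimates drive the argument throughout.
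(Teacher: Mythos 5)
Your plan for part (1) is broadly aligned with the paper's framework, but it glosses over the hardest term. After Hyt\"{o}nen's dyadic representation and the splitting $M_b = \pi_b + \Lambda_b + R_b$, the commutators $[S_\omega^{ij},\pi_b]$ and $[S_\omega^{ij},\Lambda_b]$ are indeed routine, but $[S_\omega^{ij},R_b]$ is \emph{not} a paraproduct in $b$: the paper has to exhibit $\varPhi := [S_\omega^{ij},R_b]$ as a block-diagonal family, with blocks $B_K^*B_K$, and estimate the singular values of each block against its trace. In the borderline case $n=p=2$ (the only $n\ge 2$ instance of $p=n$ with $p\le 2$), this requires a separate argument via an explicit tensor-product rearrangement of singular values, because $S_{1,\infty}$ is only quasi-Banach and the quasi-triangle inequality no longer propagates through NWO sums. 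Your Cwikel--Solomyak instinct is in the right spirit, but the paper instead relies on the Rochberg--Semmes NWO estimate, Lemma \ref{WEAKnonNWOpre1}, which is tailored to the dyadic block structure at play.

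Part (2) contains a genuine gap: the ``complex median'' you propose — take the 1D real median of $\mathrm{Re}(e^{i\theta}b)$ at an optimal angle $\theta_Q$ — is \emph{not} the complex median the paper constructs and would fail. A single rotated real median gives sets $E^\pm_Q$ on which $\mathrm{Re}\bigl(e^{i\theta}(b(x)-b(\hat x))\bigr)\ge 0$ for $x\in E^+_Q$, $\hat x\in E^-_Q$, but $\arg(b(x)-b(\hat x))$ is only pinned to a half-plane, so $|\mathrm{Im}|/\mathrm{Re}$ is unbounded there. When this difference is paired against the complex kernel $K(\hat x,x)$ — whose near-constant argument $\theta_1$ on the companion cube you do \emph{not} get to choose — the real part of $(b(x)-b(\hat x))K(\hat x,x)$ is not sign-definite and the integral can cancel; you cannot simultaneously capture the oscillation of $b$ and match the phase of $K$ with one angle. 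The point of Theorem \ref{divideS} is precisely to replace the single cut with \emph{two orthogonal lines} partitioning $\mathbb{C}$ into four closed quadrants, each carrying at least $1/16$ of the mass. Restricting $b$ to one quadrant on both $I$ and the companion $\hat I$ constrains $\arg\bigl(e^{i\theta}(b(x)-b(\hat x))\bigr)$ to a $\pi/2$-sector, giving $|\mathrm{Im}|\le\mathrm{Re}$, and combined with Lemma \ref{ball} (which gives $|\mathrm{Im}(e^{i\theta_1}K)|\le\varrho\,\mathrm{Re}(e^{i\theta_1}K)$ with $\varrho$ small) one obtains $\mathrm{Re}(z_1)\mathrm{Re}(z_2)\le 2\mathrm{Re}(z_1z_2)$ and hence a genuine lower bound on the bilinear pairing with $C_{T,b}$. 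Without this four-quadrant structure the weak Schatten lower bound cannot be extracted.
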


The proofs of the necessity parts of Theorem \ref{corollary1.8} and Theorem \ref{corollary1.10} in most of the recent special cases in the literature are based on the so-called real median method, this prevents one from considering complex-valued kernels or functions (see later discussions for more details on this point). To overcome this obstacle, we invent the complex median method that is stated as follows. This theorem will allow us to treat the necessity parts of both Theorem \ref{corollary1.8} and Theorem \ref{corollary1.10} in essentially a same way. It is perhaps the most original and most innovative result of this article.

\begin{thm}\label{divideS}
	Let $(\Omega,\mathcal{F},\mu)$ be a measure space. Let $I\in \mathcal{F}$ be of finite measure, and $b$ be a measurable function on $I$.
	Then there exist two orthogonal lines $L_1$ and $L_2$ in $\mathbb{C}$ which divide $\mathbb{C}$ into four closed quadrants $T_1$, $T_2$, $T_3$, $T_4$ such that
	\begin{equation*}
		\mu(\{x\in I:b(x)\in T_i\})\ge \frac{1}{16}\mu(I),\quad i\in \{1,2,3,4\}.
	\end{equation*}
\end{thm}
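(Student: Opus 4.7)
This statement reads as a two-dimensional generalisation of the classical real median method, so my strategy is to apply the real method twice in a suitably chosen coordinate frame, and then use a rotation argument to handle the generic case. Without loss of generality assume $\mu(I)=1$.

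\textbf{Iterated real medians in a fixed frame.} Apply the real median method to $\mathrm{Re}(b)$ on $I$ to obtain $c_1\in\mathbb{R}$ for which both
\[
I_R:=\{x\in I:\mathrm{Re}(b(x))\ge c_1\},\qquad I_L:=\{x\in I:\mathrm{Re}(b(x))\le c_1\}
\]
have measure at least $\tfrac12$; set tentatively $L_1:=\{z\in\mathbb{C}:\mathrm{Re}(z)=c_1\}$. Next apply the real median method to $\mathrm{Im}(b)$ on $I_R$ and on $I_L$ separately, obtaining conditional medians $c_2^R$ and $c_2^L$, each splitting its side into two pieces of measure $\ge \tfrac14$. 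In the lucky case $c_2^R=c_2^L$ the horizontal line $L_2:=\{z:\mathrm{Im}(z)=c_2^R\}$ orthogonal to $L_1$ already gives four closed quadrants of mass $\ge \tfrac14\ge \tfrac{1}{16}$.

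\textbf{Rotation for the generic case.} When $c_2^R\ne c_2^L$, no single horizontal line realises the conditional median on both halves simultaneously. To remedy this, I would parametrise orthogonal line pairs by an angle $\theta\in[0,\pi/2]$ and rerun the above construction on the rotated function $e^{-i\theta}b$, producing $c_1(\theta)$, $c_2^R(\theta)$ and $c_2^L(\theta)$ in the rotated frame. Increasing $\theta$ by $\pi/2$ swaps the roles of $\mathrm{Re}(b)$ and $\mathrm{Im}(b)$, and hence of the halves, so the difference $c_2^R(\theta)-c_2^L(\theta)$ eventually changes behaviour; an intermediate value / compactness argument on $\theta$ locates $\theta^*$ at which the two conditional medians are aligned (or close enough that $L_2(\theta^*)$ can be placed between them) so that all four closed quadrants have mass at least $\tfrac1{16}$.

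\textbf{Main obstacle.} The decisive difficulty is the rotation step: the conditional medians $c_1(\theta),c_2^R(\theta),c_2^L(\theta)$ are generally only well-defined up to an interval and can jump discontinuously when the push-forward measure $b_{\ast}(\mu\!\restriction_I)$ has atoms on the relevant lines. Making the intermediate value argument rigorous therefore requires choosing upper- and lower-semicontinuous representatives of the medians and running a discrete IVT on their difference. The loss from the $\tfrac14$ of the lucky case to the stated constant $\tfrac1{16}$ is precisely the worst-case price of being forced to place $L_2(\theta^*)$ strictly between two non-coinciding conditional medians, which cuts two of the four quadrant masses down to $\tfrac12\cdot\tfrac12\cdot\tfrac14=\tfrac1{16}$.
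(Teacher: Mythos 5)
Your iterated real-median construction giving four regions of mass $\ge\frac14\mu(I)$ with two possibly distinct conditional medians $c_2^R\neq c_2^L$ is exactly the content of Lemma~\ref{S1S2S3S4S}, where the two perpendicular rays may have distinct feet $\alpha_1\neq\alpha_2$ on the median line $l$, so you have correctly located the difficulty. From there your route diverges from the paper's and, as sketched, it has two genuine gaps. The first is the rotation step itself: $c_2^R(\theta)-c_2^L(\theta)$ is a difference of two set-valued, generally discontinuous medians taken over the sets $I_R(\theta),I_L(\theta)$, which in turn depend (discontinuously whenever the push-forward of $\mu$ has atoms on the dividing line) on the outer median $c_1(\theta)$; the observation that $\theta\mapsto\theta+\pi/2$ exchanges $\mathrm{Re}$ and $\mathrm{Im}$ does not translate into a sign change of any single-valued continuous function, so there is no intermediate value theorem to invoke. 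The second gap is worse: even granted a $\theta^*$ where $c_2^L(\theta^*)<c_2^R(\theta^*)$ are merely close, placing $L_2$ at some $c$ strictly between them does \emph{not} bound all four quadrants. The conditional median only gives $\mu(\{x\in I_R:\mathrm{Im}(b(x))\le c_2^R\})\ge\frac14\mu(I)$, which yields no lower bound whatsoever on $\mu(\{x\in I_R:\mathrm{Im}(b(x))\le c\})$ for $c<c_2^R$; that quadrant can be arbitrarily small. So the $\frac12\cdot\frac12\cdot\frac14=\frac1{16}$ accounting at the end of your sketch is not justified by anything preceding it.

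The paper's proof is built precisely to avoid both problems, and it does so by a genuinely different mechanism. It never rotates the coordinate frame. After Lemma~\ref{S1S2S3S4S} produces the line $l$, the regions $S_1,\dots,S_4$ each of mass $\ge\frac14\mu(I)$, and the feet $\alpha_1<\alpha_2$, the proof keeps $l$ fixed and pivots a single tilted ray about a foot $x$ moving along a segment $[A,B]\subset l$. It introduces four \emph{monotone} critical-angle functions $r_1(x),\dots,r_4(x)$ recording the angles at which the pivoted ray from $x$ further bisects $S_1$ (resp.\ $S_4$) into two pieces of relative mass $\ge\frac14$. The core of the argument (Case~3 and Subcases~3.1--3.3 in Section~\ref{proofdivide}) is a case analysis using only monotonicity and one-sided limits of the $r_i$ to exhibit $y\in[A,B]$ with $[r_1(y),r_2(y)]\cap[r_3(y),r_4(y)]\neq\emptyset$; this is the rigorous substitute for your IVT and, because it relies on one-sided limits of monotone functions rather than continuity, it is immune to the atomic jumps you correctly flagged as the obstacle. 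Then $L_1$ is the extension of the ray at the common angle, $L_2$ is any perpendicular through a point of $(\alpha_1,\alpha_2)$, and the bound comes out as $\mu(\{x\in I:b(x)\in T_i\})\ge\frac14\mu(\{x\in I:b(x)\in S_i\})\ge\frac1{16}\mu(I)$, i.e.\ $\frac14\cdot\frac14$, not $\frac12\cdot\frac12\cdot\frac14$.
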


Let us make comments on Theorem \ref{corollary1.8}, Theorem \ref{corollary1.10} and Theorem \ref{divideS}, and compare them with the existing results. First, we would like to emphasize that Theorem \ref{corollary1.8} is more general than Theorem \ref{thm1.5} because it deals with commutators involving general singular integral operators, while \cite{JPe} and \cite{JW} focus on Calder\'{o}n-Zygmund transforms with $C^\infty$ kernels. For the same reason, Theorem \ref{corollary1.10} is also more general than Theorem \ref{thmSucochev}, while \cite{FLLX2024,FLMSZ2024,SEFD2017} handle the Riesz transforms, and \cite{LXY2024} focuses on Calder\'{o}n-Zygmund transforms with $C^\infty$ kernels. In addition, the smoothness of associated kernels is not required both in Theorem \ref{corollary1.8} and Theorem \ref{corollary1.10}.

Theorem \ref{corollary1.8} directly implies Theorem \ref{thm1.5} for $p>n\geq 2$. In particular, when $T$ is a convolution operator whose kernel is smooth and homogeneous of degree $-n$, we give an alternative proof of Theorem \ref{thm1.5} for $p>n\geq 2$ by virtue of dyadic martingale techniques from the perspective of `probability method'. Note that if $T$ is the Hilbert transform (so $n=1$), Theorem \ref{corollary1.8} becomes Peller's celebrated theorem; recently, Pott and Smith \cite{PS} gave a different proof of the sufficiency of Theorem 1.3 in this special case by using the dyadic representation of the Hilbert transform established by Petermichl in \cite{SPe}. Note that Petermichl's representation is much easier than Hyt\"{o}nen's dyadic representation for general singular integral operators that is one of the key techniques in our proofs of the sufficiency of Theorem \ref{corollary1.8} and Theorem \ref{corollary1.10}.

Regarding the estimate of the weak-type Schatten class membership of commutators, it should be pointed out that the condition $b\in L_\infty(\mathbb{R}^n)$ must be required to prove Theorem \ref{thmSucochev} in \cite{SEFD2017}, and their approach is based on the technique of double operator integrals and the theory of pseudodifferential operators. However, Theorem \ref{thmSucochev} is a special case of our Theorem \ref{corollary1.10}. Similarly to Theorem \ref{corollary1.8}, we still use the dyadic martingale techniques to prove Theorem \ref{corollary1.10}. Besides, we only require that $b$ be a locally integrable complex-valued function. More importantly, we extend it to general singular integral operators, instead of just Riesz transforms or Calder\'{o}n-Zygmund transforms.

Therefore, Theorem \ref{corollary1.8} and Theorem \ref{corollary1.10} give a complete picture of the strong-type and weak-type Schatten class membership of commutators involving general singular integral operators for $n\geq 2$; in the one-dimensional case, our method does not allow one to treat the case $0<p\leq1$ while Peller's theorem holds for all $p>0$. Besides, Theorem \ref{corollary1.8} and Theorem \ref{corollary1.10} have also been extended to the semicommutative setting which can be found in \cite{WZ2024,WZ2024non}.

We now comment on Theorem \ref{divideS}. The concept of `median' for real-valued functions was first given by Carleson in \cite{CL1980}. Fujii applied this concept `median' to the theory of weighted norm inequalities in \cite{FN1991}. Lerner improved their work to establish the celebrated median decomposition in \cite{LAK10}. We would like to stress that Lerner's median decomposition is very powerful and has been used to demonstrate many beautiful and important results on weighted norm inequalities. Besides, Lerner, Ombrosi and Rivera-R\'{\i}os employed `median' to estimate the lower bound of the boundedness of commutators in \cite{LOR}. This is now called the real median method.

However, no matter what the median decomposition or the median method, the concept `median' could be available only for real-valued functions before our work. In addition, it is well-acknowledged that the real median method is only applicable to real-valued functions, and unfortunately cannot be applied to complex-valued ones. We extend the definition of `median' to complex-valued functions with the help of our new complex median method. Thus this allows us to deal with singular integral operators with complex-valued kernels. This is one of the main novelties of this article.

\

Now we illustrate the methodologies of the proofs of Theorem \ref{corollary1.8} and Theorem \ref{corollary1.10}. Note that the sufficiency parts do not need the non-degeneracy of kernels. The proofs of the sufficiency parts of Theorem \ref{corollary1.8} and Theorem \ref{corollary1.10} are both based on martingale paraproducts, and the dyadic representation of singular integral operators developed by Hyt\"{o}nen in \cite{TH1} and \cite{TH2}. This is quite surprising as the estimates of the strong-type and weak-type Schatten classes can be unified in the same way. To the best of our knowledge, this is the first time that Hyt\"{o}nen's dyadic representation is utilized to estimate Schatten class of commutators involving general singular integral operators. Compared with the dyadic representation of the Hilbert transform used in \cite{PS}, Hyt\"{o}nen's dyadic representation is much more complicated because of the appearance of dyadic shifts with high complexity and martingale paraproducts. This obviously gives rise to some essential difficulties to overcome. We would like to stress that because of such high complexity of dyadic shift operators, more technical works on martingale paraproducts and related commutators must be done. This is also one novelty of this paper.

 We will show the sufficiency of Theorem \ref{corollary1.8} and Theorem \ref{corollary1.10} in a unified way for $p\geq 2$, $0<\alpha\le 1$ and $n\geq 1$. As for the case $n=1$, the situation is much more delicate, and our method can still work well for $1< p<2$, $1/2\le \alpha\le 1$ after some non-trivial technical modifications of the proof for $p\geq 2$. At the time of this writing, we do not know whether the sufficiency part of Theorem \ref{corollary1.8} holds for $1<p<2$, $0<\alpha< 1/2$ when $n=1$. 
 
 However it should be noted that the reason why the estimate of the strong-type and weak-type Schatten classes in \cite{V1, FLLX2024} can be achieved for $p<2$ is that these papers dealt with strong smoothness assumptions on the kernels of singular integral operators. But in our setting, we just require the standard estimates \eqref{standard}. Indeed, when $n=1$, the sufficiency of Theorem \ref{corollary1.8} still holds for all $1<p<\8$ if the kernel of $T$ satisfies $1/2\leq\alpha\leq1$. This means in this case that the kernel $K(x,y)$ is very regular in some sense. But this does not imply that $K(x, y)$ is smooth. So Theorem \ref{corollary1.8} is also more general than all the previous known results even when $n=1$ except the case $0<p\leq 1$.
 
 The proofs of the necessity parts of Theorem \ref{corollary1.8} and Theorem \ref{corollary1.10} are more delicate. In fact, in order to get lower bounds of  $\| C_{T, b}\|_{S_p(L_2(\mathbb{R}^n))}$ and  $\| C_{T, b}\|_{S_{n,\infty}(L_2(\mathbb{R}^n))}$ in terms of the Besov norm or Sobolev norm of $b$, the kernel $K(x, y)$ associated with $T\in B(L_2(\mathbb{R}^n))$ should not be very small. So we deal with ``non-degenerate'' kernels, which is inspired by Hyt\"{o}nen's recent work \cite{TH3}. Our main ingredient of the proofs for the necessity parts of Theorem \ref{corollary1.8} and Theorem \ref{corollary1.10} is the complex median method, i.e. Theorem \ref{divideS}. We will use this new complex median method to give a new proof of some results in \cite{TH3}, which is about to appear in our subsequent paper \cite{WZ3}.

We also would like to stress that the method in \cite{JPe} which heavily relies on Schur multipliers fails in our current case due to the lack of the smoothness condition of the kernels. For the same reason, Rochberg and Semmes' method in \cite{RSe} does not work either. Our approach turns out to be very generally applicable since the smoothness of kernels associated with singular integral operators is not required. Hence, this approach can be utilized to commutators on spaces of homogeneous spaces, which is about to appear in subsequent papers \cite{FWZ2024,WXZZ2024fi}. In addition, our method also works well for the noncommutative setting, which is already done in  \cite{WZ2024}.


\

From the previous description, we see that our approach is completely different from all previous used methods. Moreover, it just requires some regularity of kernels associated with singular integral operators. Since little smoothness of kernels is required, our method can be employed to establish the Schatten class of commutators on spaces of homogeneous type both in the classical and semicommutative settings, which will appear in our future work \cite{FWZ2024,WXZZ2024fi}. We would like to stress that in the framework of spaces of homogeneous type, most of known results about Schatten-Lorentz class properties of Hankel operators and commutators will be covered. This also reflects that our approach is very powerful and generally applicable to most cases.

A summary of the main techniques and the contents seems to be in order. The remaining of this paper is devoted to the proofs of Theorem \ref{corollary1.8}, Theorem \ref{corollary1.10} and Theorem \ref{divideS}. Section \ref{pre2} presents notation and background, such as $d$-adic martingales, martingale paraproducts and martingale Besov spaces. In Section \ref{key lemma}, we will give some vital lemmas which will be used in the proofs of our main results. More specifically, we describe the strong-type and weak-type Schatten class of commutators involving martingale paraproducts, which is of independent interest (see Proposition \ref{T0est} for more details). Then we introduce
Hyt\"{o}nen's dyadic representation and weak Besov spaces in Section \ref{repre}.

 In Section \ref{Application 3}, by virtue of the Schatten class of martingale paraproducts established in Section \ref{key lemma}, and by Hyt\"{o}nen's dyadic representation for general singular integral operators, we will show the sufficiency of Theorem \ref{corollary1.8}. Our approach differs from all the previous ones in similar situations as it is the first time that Hyt\"{o}nen's dyadic representation is utilized to estimate the weak-type and strong-type Schatten norm. This approach has also been used on spaces of homogeneous type \cite{FWZ2024,WXZZ2024fi}.
  
 In Section \ref{proofdivide}, we invent the complex median method.  As is well-known, the real median method turns out to be very powerful to deal with the lower bound of the boundedness and Schatten class of commutators (see \cite{LOR,TH3,FLL2023,DGKLWY2021} and references therein). However, all previous investigations of the real median method into lower bounds of commutators $[T,M_b]$ require that the kernel of $T$ and $b$ be real-valued functions. With the help of our new complex median method,  we treat complex-valued kernels. This constitutes perhaps one of the most important ideas of this article. We will also use this complex median method to investigate the Schatten-Lorentz class of commutators on spaces of homogeneous type, which is about to appear in subsequent papers \cite{FWZ2024,WXZZ2024fi}. Then we show the necessity part of Theorem \ref{corollary1.8} in Section \ref{schattenconv} by the complex median method.

In Section \ref{WEAK}, we study the weak-type Schatten class of commutators. We prove Theorem \ref{corollary1.10}, which concerns the endpoint estimates of the weak-type Schatten class of commutators, still by virtue of martingale paraproducts, Hyt\"{o}nen's dyadic representation and the complex median method. Indeed, we will prove some stronger results, which generalize the sufficiency part and the necessity part of Theorem \ref{corollary1.10}. Finally, as a byproduct, by the real interpolation, we obtain the Schatten-Lorentz class of commutators.

Throughout this paper, we will use the following notation: $A\lesssim B$ (resp. $A\lesssim_\varepsilon B$) means that $A\le CB$ (resp. $A\le C_\varepsilon B$) for some absolute positive constant $C$ (resp. a positive constant $C_\varepsilon$ depending only on $\varepsilon$). $A\approx B$ or $A\approx_\varepsilon B$ means that these inequalities as well as their inverses hold.  For $1\leq p\leq \infty$, denote by $p'=\frac{p}{p-1}$ the conjugate exponent of $p$.



\bigskip

\section{Preliminaries}\label{pre2}

In this section, we provide notation and background that will be used in this paper.


\subsection{$d$-adic martingales}\label{commumar}

Let $d\ge 2$ be a fixed integer. We are particularly interested in $d$-adic martingales since it is closely related to dyadic martingales on Euclidean spaces. In this subsection, we give a general definition of $d$-adic martingales. Afterwards we will present an orthonormal basis of Haar wavelets for $d$-adic martingales, which will be used to represent martingale paraproducts and to define martingale Besov spaces for $d$-adic martingales (to be defined in Subsection \ref{sec2.3}).

Let $\Omega$ be a measure space endowed with a $\sigma$-finite measure $\mu$. Assume that in $\Omega$, there exists a family of measurable sets $I_{n,k}$ for $n, k\in \mathbb{Z}$ satisfying the following properties:
\begin{enumerate}
	\item  $I_{n,k}$  are pairwise disjoint for any $k$ if $n$ is fixed;
	\item $\cup_{k\in \mathbb{Z}}I_{n,k}=\Omega$ for every $n$;
	\item $I_{n,k}=\cup_{q=1}^d I_{n+1,kd+q-1}$ for any $n, k$, so each $I_{n,k}$ is a union of $d$ disjoint subsets $I_{n+1,kd+q-1}$;
	\item $\mu(I_{n,k})=d^{-n}$ for any $n, k$.
\end{enumerate}
Then $I_{n,k}$ are called $d$-adic intervals, and let $\mathcal{ D}$ be the family of all such $d$-adic intervals. We say that $\mathcal{D}$ is a $d$-adic system on $\Omega$. For $I\in \mathcal{D}$, let $\tilde{I}$ be the parent interval of $I$, and $I{(j)}$ the $j$-th subinterval of $I$, namely
\[(I_{n,k}){(j)}=I_{n+1,kd+j-1},\quad \forall n,k\in \mathbb{Z},1\le j\le d.\] 
Denote by $\mathcal{D}_n$ the collection of $d$-adic intervals of length $d^{-n}$ in $\mathcal{D}$. Given $I\in \mathcal{D}$, let $\mathcal{D}(I)$ be the collection of $d$-adic intervals contained in $I$, and $\mathcal{D}_n(I)$ the intersection of $\mathcal{D}_n$ and $\mathcal{D}(I)$.
For each $n\in \mathbb{Z}$, denote by $\mathcal{F}_n$ the $\sigma$-algebra generated by the $d$-adic intervals $I_{n,k}$, $\forall k\in \mathbb{Z}$. Denote by $\mathcal{F}$ the $\sigma$-algebra generated by all $d$-adic intervals for all $I_{n,k}$, $\forall n,k\in\mathbb{Z}$. 

Then $(\mathcal{F}_n)_{n\in \mathbb{Z}}$ is a filtration associated with the measure space $(\Omega, \mathcal{F}, \mu)$. Denote by $L^{\rm{loc}}_1(\Omega)$ the family of all locally integrable complex-valued functions $g$ on $\Omega$, that is, $g\in L_1(I_{n, k})$ for all $n, k\in \mathbb{Z}$. For a locally integrable complex-valued function $g\in L^{\rm{loc}}_1(\Omega)$, the sequence $(g_n)_{n\in\mathbb{Z}}$ is called a $d$-adic martingale, where
$$g_n= \mathbb{E}(g|\mathcal{F}_n)=\sum_{k=-\8}^{\8}\frac{\mathbbm{1}_{I_{n,k}}}{\mu(I_{n,k})}\int_{I_{n,k}}g\ d\mu.  $$
The martingale differences are defined as $d_n g=g_n-g_{n-1}$ for any $n\in\mathbb{Z}$. We also denote $g_n$ by $\mathbb{E}_n(g)$ ($n\in\mathbb{Z}$) as usual.

\begin{definition}\label{haar}
	Let $\omega=e^{\frac{2\pi \mathrm{i}}{d}}$ (here $\mathrm{i} $ is the imaginary number). For any $I=I_{n,k}\in \mathcal{D}$, define
	\[h_I^i=d^{n/2}\sum_{j=0}^{d-1}  \omega^{i{(j+1)}}\mathbbm{1}_{I_{n+1,kd+j}}, \quad \forall \ 1\leq i\leq d-1,\]
	and $h_I^0=d^{n/2}\mathbbm{1}_I$.
\end{definition}
Then $\{h_I^i\}_{I\in\mathcal{D},1\leq i\leq d-1}$ is an orthonormal basis on $L_2(\Omega)$ because $\forall g\in L_2(\Omega)$
\begin{equation}
	g=\sum_{k=-\8}^{\8} d_kg=\sum_{k=-\8}^{\8} \biggl(\sum_{|I|=d^{-k+1}}\sum_{i=1}^{d-1}h_I^i\langle h_I^i,g\rangle\biggr).
\end{equation}   
We call $\{h_I^i\}_{I\in\mathcal{D},1\leq i\leq d-1}$  the system of Haar wavelets. Note that for any $1\leq i, j\leq d-1$,
\begin{equation}\label{formula2.2}
	h_I^i\cdot h_I^j=\mu(I)^{-1/2} h_I^{\overline{i+j}},
\end{equation}
where $\overline{i+j}$ is the remainder in $[1, d]$ modulo $d$. The equality (\ref{formula2.2}) is vital in our proofs of Lemma \ref{TLambdab}.

\begin{example}\label{example1.4.3}
	A natural example of $d$-adic martingales is where $\Omega=\mathbb{R}$, $\mu=m$ and $I_{n,k}$ are defined as follows
	\[ I_{n,k}=[kd^{-n},(k+1)d^{-n}), \quad \forall n,k\in\mathbb{Z}.\]
\end{example}
\begin{example}	
	For $d=2^n$ and $\Omega=\mathbb{R}^n$, define
	\[ \mathcal{ D}_k=\{2^{-k}([0,1)^n+q):q\in\mathbb{Z}^n\}, \quad \forall k\in \mathbb{Z}.\]
	Then $\mathcal{D}=\{2^{-k}([0,1)^n+q):k\in\mathbb{Z},q\in\mathbb{Z}^n\} $ is the family of all $2^n$-adic intervals. Indeed, this is the dyadic filtration on $\mathbb{R}^n$.
\end{example}
\noindent$\mathit{\textbf{Convention}}$. In the sequel, for simplicity of notation, we will always assume that $\Omega=\mathbb{R}$ as this does not change the $d$-adic martingale structure. Denote also by $|I|$ the length $m(I)$ of $I\in\mathcal{ D}$.

\subsection{Martingale paraproducts and martingale Besov spaces}\label{sec2.3}
We need to employ martingale paraproducts to show Theorem \ref{corollary1.8} and Theorem \ref{corollary1.10}. At first, we introduce martingale paraproducts. Given a $d$-adic martingale $b=(b_k)_{k\in\mathbb{Z}}\in L_2(\mathbb{R})$, recall that the martingale paraproduct with symbol $b$ is defined as
\begin{equation*}
	\pi_b(f)=\sum_{k=-\infty}^{\infty}d_kb \cdot f_{k-1}, \quad \forall f=(f_k)_{k\in\mathbb{Z}}\in L_2(\mathbb{R}),
\end{equation*}
where $d_kb=b_k-b_{k-1}$ for any $k\in\mathbb{Z}$. Now we calculate $\pi_b$ in terms of Haar wavelets. Let $b\in  L_1^{\rm{loc}}(\mathbb{R})$. For $f\in L_2(\mathbb{R})$, we have 
\begin{equation}\label{calcu}
	\begin{aligned}
		\pi_b(f){}&=\sum_{k=-\infty}^{\infty}d_kb\cdot f_{k-1}\\&=\sum_{k=-\infty}^{\infty}\biggl(\sum_{|I|=d^{-k+1}}\sum_{i=1}^{d-1}\langle h_I^i,b\rangle h_I^i\biggr)\biggl(\sum_{|I|=d^{-k+1}}\biggl\langle \frac{\mathbbm{1}_I}{|I|},f\biggr\rangle\mathbbm{1}_I\biggr)\\&=\sum_{I\in \mathcal{D}}\sum_{i=1}^{d-1}\langle h_I^i,b\rangle \biggl\langle \frac{\mathbbm{1}_I}{|I|},f\biggr\rangle h^i_I,
	\end{aligned}
\end{equation}
The adjoint operator of $\pi_b$ is given by $\forall f\in L_2(\mathbb{R})$
\begin{equation}\label{pistar}
	\begin{aligned}
		\pi_b^*(f)&=\sum_{k\in\mathbb{Z}} \mathbb{E}_{k-1}(d_k\overline{b}\cdot d_kf) \\
		&=\sum_{I\in \mathcal{D}}\sum_{i=1}^{d-1}\overline{\langle h_I^i, b\rangle} \langle h_I^i,f\rangle\frac{\mathbbm{1}_I}{|I|}\\
		&=\sum_{I\in \mathcal{D}}\sum_{i=1}^{d-1}\langle b,h_I^i\rangle \langle h_I^i,f\rangle\frac{\mathbbm{1}_I}{|I|}. 
	\end{aligned}
\end{equation}

As mentioned before, we use Haar wavelets to define martingale Besov spaces for $d$-adic martingales.
\begin{definition}\label{mbs1}
	The martingale Besov space $\pmb{B}_p^d(\mathbb{R})$ $(0<p<\8)$ of $d$-adic martingales is the space of all locally integrable complex-valued functions $b$ such that
	\begin{equation}\label{e1.2}
		\|b\|_{\pmb{B}_p^d(\mathbb{R})}=\biggl(\sum_{I\in \mathcal{D}}\sum_{i=1}^{d-1}\frac{|\langle h_I^i,b\rangle|^p}{|I|^{p/2}}\biggr)^{1/p}<\infty.
	\end{equation}
\end{definition}

\begin{rem}\label{prop2.100}
	When $1\leq p<\8$, $\pmb{B}_p^d(\mathbb{R})$ is a Banach space. When $0<p<1$, $\pmb{B}_p^d(\mathbb{R})$ is a quasi-Banach space.
\end{rem}

The following two propositions give equivalent descriptions of $\pmb{B}_p^d(\mathbb{R})$.
\begin{proposition}\label{bbpdrmdp}
For any $0<p<\8$ and $b\in \pmb{B}_p^d(\mathbb{R})$,
	\[\|b\|_{\pmb{B}_p^d(\mathbb{R})}\approx_{d, p}\biggl(\sum_{k=-\infty}^\infty d^{k}\|d_kb\|_{L_p(\mathbb{R})}^p\biggr)^{1/p}.\]
\end{proposition}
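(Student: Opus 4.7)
\emph{Proof proposal.} The plan is to reduce the claim to a single finite-dimensional norm equivalence on the Haar coefficients at each scale. First I would fix $k\in\mathbb{Z}$ and expand the martingale difference in Haar wavelets: by the convention linking the Haar system on $\mathcal{D}_{k-1}$ with the jump $\mathbb{E}_k-\mathbb{E}_{k-1}$, one has
\[d_kb=\sum_{|I|=d^{-k+1}}\sum_{i=1}^{d-1}\langle h_I^i,b\rangle\, h_I^i.\]
On each child $I(j+1)$ of $I\in\mathcal{D}_{k-1}$ (so $|I(j+1)|=d^{-k}$), the definition of $h_I^i$ gives $h_I^i(x)=d^{(k-1)/2}\omega^{i(j+1)}$, and hence $d_kb$ is a step function whose value on $I(j+1)$ equals $d^{(k-1)/2}\sum_{i=1}^{d-1}\langle h_I^i,b\rangle\,\omega^{i(j+1)}$. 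Integrating $|d_kb|^p$ over $\mathbb{R}$ (the intervals $I$ and their children are pairwise disjoint) and multiplying by $d^k$ yields
\[d^k\|d_kb\|_{L_p(\mathbb{R})}^p=\sum_{|I|=d^{-k+1}}d^{(k-1)p/2}\sum_{j=0}^{d-1}\Bigl|\sum_{i=1}^{d-1}\langle h_I^i,b\rangle\,\omega^{i(j+1)}\Bigr|^p,\]
which I would match against the contribution of level $k-1$ to the Besov (quasi-)norm, namely $\sum_{|I|=d^{-k+1}}d^{(k-1)p/2}\sum_{i=1}^{d-1}|\langle h_I^i,b\rangle|^p$, coming from \eqref{e1.2} and $|I|=d^{-k+1}$.

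Thus the whole proposition boils down to the elementary inequality
\[\sum_{j=0}^{d-1}\Bigl|\sum_{i=1}^{d-1}c_i\omega^{i(j+1)}\Bigr|^p\;\approx_{d,p}\;\sum_{i=1}^{d-1}|c_i|^p,\qquad c=(c_1,\dots,c_{d-1})\in\mathbb{C}^{d-1},\]
which I would justify by noting that the linear map $c\mapsto\bigl(\sum_{i=1}^{d-1}c_i\omega^{i(j+1)}\bigr)_{j=0}^{d-1}$ from $\mathbb{C}^{d-1}$ to $\mathbb{C}^{d}$ is injective: its matrix is a $d\times(d-1)$ submatrix of the invertible DFT matrix associated with the $d$-th roots of unity $\omega^{j+1}$, so its columns are linearly independent. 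Consequently both sides are (quasi-)norms on the same finite-dimensional space, and any two such are equivalent with constants depending only on $d$ and $p$. (For $p=2$ the equivalence is the explicit identity $d\sum_{i}|c_i|^2$ obtained from the orthogonality of characters.)

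Summing the resulting per-scale equivalence over $|I|=d^{-k+1}$ and then over $k\in\mathbb{Z}$ produces $\sum_{k\in\mathbb{Z}}d^{k}\|d_kb\|_{L_p(\mathbb{R})}^p\approx_{d,p}\|b\|_{\pmb{B}_p^d(\mathbb{R})}^p$, which is the asserted equivalence after taking $p$-th roots. I do not anticipate a real obstacle in this argument: the only point requiring care is the bookkeeping of the exponents, ensuring that the pointwise value $d^{(k-1)/2}$ of $h_I^i$, the child measure $|I(j+1)|=d^{-k}$, and the weight $d^k$ on the left combine to leave exactly the factor $d^{(k-1)p/2}=|I|^{-p/2}$ matching the definition of $\|\cdot\|_{\pmb{B}_p^d(\mathbb{R})}$.
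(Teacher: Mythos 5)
Your proposal is correct and follows essentially the same route as the paper: compute $\|d_kb\|_{L_p(\mathbb{R})}^p$ scale by scale from the explicit Haar step-function representation, reduce everything to the finite-dimensional equivalence $\sum_{j=0}^{d-1}\bigl|\sum_{i=1}^{d-1}c_i\omega^{i(j+1)}\bigr|^p\approx_{d,p}\sum_{i=1}^{d-1}|c_i|^p$ coming from the DFT matrix, and sum over scales. The only cosmetic difference is that the paper pads the coefficient vector with a zero and works with the full invertible $d\times d$ DFT matrix $\tilde B_I$, whereas you argue directly from the injectivity of the $d\times(d-1)$ submatrix; both are the same finite-dimensional quasi-norm equivalence.
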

\begin{proof}
	$\forall k\in\mathbb{Z}$, one has
	\begin{equation*}
		\begin{aligned}
			\|d_kb\|_{L_p(\mathbb{R})}^p{}&=\sum_{I\in\mathcal{D}_{k-1}}\bigg\|\sum_{i=1}^{d-1}h_I^i\cdot \langle h_I^i,b\rangle  \bigg\|_{L_p(\mathbb{R})}^p
			=d^{-k}\cdot\sum_{I\in\mathcal{D}_{k-1}}\frac{1}{|I|^{p/2}}\sum_{j=0}^{d-1}\bigg|\sum_{i=1}^{d-1}\omega^{i(j+1)} \langle h_I^i,b\rangle\bigg|^p.
		\end{aligned}
	\end{equation*}
 Let
	\begin{equation*}
		\xi_I=(\langle h_I^1,b\rangle, \langle h_I^2,b\rangle, \cdots, \langle h_I^{d-1},b\rangle)^\top,\quad \tilde{\xi}_I=(\langle h_I^1,b\rangle, \langle h_I^2,b\rangle, \cdots, \langle h_I^{d-1},b\rangle, 0)^\top
	\end{equation*}
	and
	\begin{equation*}
		B_I=\sum\limits_{i=1}^{d-1}\sum\limits_{j=0}^{d-1}\omega^{i(j+1)}e_{j+1,i},\quad \tilde{B}_I=\sum\limits_{i=1}^{d}\sum\limits_{j=0}^{d-1}\omega^{i(j+1)}e_{j+1,i}.
	\end{equation*}
	Then
	\begin{equation*}
		\begin{aligned}
			\sum_{j=0}^{d-1}\bigg|\sum_{i=1}^{d-1}\omega^{i(j+1)} \langle h_I^i,b\rangle\bigg|^p=\|B_I\xi_I\|^p_{\ell_p^{d-1}} =\|\tilde{B}_I\tilde{\xi}_I\|^p_{\ell_p^{d}}\approx_{d, p} \|\tilde{\xi}_I\|^p_{\ell_p^d}=\sum_{i=1}^{d-1}|\langle h_I^i,b\rangle|^p.
		\end{aligned}
	\end{equation*}
	It implies that
	\begin{equation*}
		\begin{aligned}
			\biggl(\sum_{k=-\infty}^\infty d^{k}\|d_kb\|_{L_p(\mathbb{R})}^p\biggr)^{1/p}\approx_{d, p}  \biggl(\sum_{I\in \mathcal{D}}\sum_{i=1}^{d-1}\biggl(\frac{|\langle h_I^i,b\rangle|}{|I|^{1/2}}\biggr)^p\biggr)^{1/p}=\|b\|_{\pmb{B}_p^d(\mathbb{R})}.
		\end{aligned}
	\end{equation*}
	This finishes the proof.
\end{proof}
	
\begin{prop}\label{equbbk}
	Let $1\le p<\infty$. Suppose that $b$ is a locally integrable complex-valued function and $b\in \pmb{B}_p^{d}(\mathbb{R})$. Then
	\begin{equation*}
			\|b\|_{\pmb{B}_p^d(\mathbb{R})} \approx_{d,p} \left(\sum_{k\in\mathbb{Z}} d^k \|b-b_k\|^p_{L_p(\mathbb{R})}\right)^{1/p}.
	\end{equation*}
\end{prop}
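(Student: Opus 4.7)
My plan is to invoke Proposition \ref{bbpdrmdp} to replace $\|b\|_{\pmb{B}_p^d(\mathbb{R})}^p$ by the equivalent quantity $\sum_k d^k \|d_k b\|_{L_p(\mathbb{R})}^p$; this reduces the statement to a purely martingale-theoretic equivalence between $\sum_k d^k \|d_k b\|_{L_p}^p$ and $\sum_k d^k \|b - b_k\|_{L_p}^p$, in which the Haar system no longer appears.

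The easy direction ($\sum_k d^k\|d_k b\|_{L_p}^p \lesssim_{d,p} \sum_k d^k\|b - b_k\|_{L_p}^p$) follows from the identity $d_k b = (b - b_{k-1}) - (b - b_k)$ via the triangle inequality in $L_p$ and the scalar bound $(x+y)^p \le 2^{p-1}(x^p + y^p)$ valid for $p \ge 1$. After observing that a shift of index yields $\sum_k d^k \|b - b_{k-1}\|_{L_p}^p = d\sum_k d^k \|b - b_k\|_{L_p}^p$, one obtains this inequality with explicit constant $2^{p-1}(d+1)$.

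The opposite direction is the heart of the proof. I would introduce the auxiliary sequences $a_j := d^{j/p}\|d_j b\|_{L_p(\mathbb{R})}$ and $c_k := d^{k/p}\|b - b_k\|_{L_p(\mathbb{R})}$, and use the representation $b - b_k = \sum_{j > k} d_j b$ together with Minkowski's inequality in $L_p$ (valid since $p \ge 1$) to obtain
\[
c_k \;\le\; \sum_{j > k} d^{-(j-k)/p}\, a_j \;=\; (K \ast a)(k), \qquad K(m) := d^{-m/p}\,\mathbbm{1}_{m \ge 1}.
\]
Since $d \ge 2$ and $p \ge 1$, the kernel satisfies $\|K\|_{\ell_1(\mathbb{Z})} = (d^{1/p}-1)^{-1} < \infty$, so discrete Young's convolution inequality delivers $\|c\|_{\ell_p} \lesssim_{d,p} \|a\|_{\ell_p}$, which is exactly $\sum_k d^k\|b - b_k\|_{L_p}^p \lesssim_{d,p} \sum_k d^k\|d_k b\|_{L_p}^p$.

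The one point requiring care is justifying $b - b_k = \sum_{j > k} d_j b$ as a convergent series in $L_p(\mathbb{R})$. Since $(a_j) \in \ell_p$ is in particular bounded, $\|d_j b\|_{L_p} \lesssim d^{-j/p}$, so the partial sums $\sum_{j = k+1}^{N} d_j b = b_N - b_k$ form a Cauchy sequence in $L_p(\mathbb{R})$; the limit must coincide with $b - b_k$ almost everywhere by Lebesgue differentiation applied inside each $I \in \mathcal{D}_k$. I expect this convergence verification to be the sole technical subtlety; the remainder of the argument is just Minkowski, Young, and index-shifting.
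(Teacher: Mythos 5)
Your proof is correct and follows essentially the same route as the paper's: reduce to martingale differences via Proposition \ref{bbpdrmdp}, control $\sum_k d^k\|b-b_k\|_{L_p}^p$ by $\sum_k d^k\|d_k b\|_{L_p}^p$ through the telescoping expansion $b-b_k=\sum_{j>k}d_j b$ plus a geometric-weight summation (your discrete Young's inequality with $K(m)=d^{-m/p}\mathbbm{1}_{m\ge 1}$ is exactly the Minkowski-inequality estimate the paper writes out), and then handle the reverse inequality elementarily. The only real divergence is in that reverse inequality: the paper observes $d_k b=\mathbb{E}_k(b-b_{k-1})$ and uses contractivity of conditional expectation to get $\|d_k b\|_{L_p}\le\|b-b_{k-1}\|_{L_p}$ with constant $d$, whereas you split $d_k b=(b-b_{k-1})-(b-b_k)$ and apply $(x+y)^p\le 2^{p-1}(x^p+y^p)$, which is equally valid but yields the worse constant $2^{p-1}(d+1)$. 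Your added paragraph justifying $L_p$-convergence of $\sum_{j>k}d_j b$ to $b-b_k$ is a welcome point of rigor that the paper passes over silently, and the argument you sketch (Cauchy in $L_p$ from the $\ell_\infty$-bound on $a_j$, identification of the limit via a.e.\ martingale convergence / Lebesgue differentiation) is sound.
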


\begin{proof}
	On the one hand, since $b\in \pmb{B}_p^{d}(\mathbb{R})$, from Proposition \ref{bbpdrmdp} we know that
	$$\biggl(\sum_{k\in\mathbb{Z}} d^{k}\|d_kb\|_{L_p(\mathbb{R})}^p\biggr)^{1/p}<\infty.$$
	Then from the Minkowski inequality we obtain
	\begin{equation*}
		\begin{aligned}
			{}&\biggl(\sum_{k\in\mathbb{Z}}  \Big(\sum_{j=1}^\infty d^{-j/p}\big\| d^{(j+k)/p}\cdot d_{j+k}b\big\|_{L_p(\mathbb{R})}\Big)^p\biggr)^{1/p}\\
			&\le \sum_{j=1}^\infty  d^{-j/p} \biggl(\sum_{k\in\mathbb{Z}} \big\| d^{(j+k)/p}\cdot d_{j+k}b\big\|^p_{L_p(\mathbb{R})}\biggr)^{1/p}\\
			&=\sum_{j=1}^\infty  d^{-j/p} \biggl(\sum_{k\in\mathbb{Z}} d^k \|d_{k}b\|^p_{L_p(\mathbb{R})}\biggr)^{1/p}\\
			&=\frac{1}{d^{1/p}-1}\biggl(\sum_{k\in\mathbb{Z}} d^k \|d_{k}b\|^p_{L_p(\mathbb{R})}\biggr)^{1/p}<\infty.
		\end{aligned}
	\end{equation*}
	Besides, for any $k\in\mathbb{Z}$, notice that
	\begin{equation*}
		\begin{aligned}
			\|b-b_k\|_{L_p(\mathbb{R})}{}&\le  \sum_{j=k+1}^\infty \| d_jb\|_{L_p(\mathbb{R})}\\
			&=\sum_{j=1}^\infty\| d_{j+k}b\|_{L_p(\mathbb{R})}\\
			&=d^{-k/p}\cdot\sum_{j=1}^\infty d^{-j/p} \big\| d^{(j+k)/p}\cdot d_{j+k}b\big\|_{L_p(\mathbb{R})}.
		\end{aligned}
	\end{equation*}
	Thus 
	\begin{equation*}
		\begin{aligned}
			\biggl(\sum_{k\in\mathbb{Z}} d^k \|b-b_k\|^p_{L_p(\mathbb{R})}\biggr)^{1/p}{}&\le \biggl(\sum_{k\in\mathbb{Z}}  \Big(\sum_{j=1}^\infty d^{-j/p}\big\| d^{(j+k)/p}\cdot d_{j+k}b\big\|_{L_p(\mathbb{R})}\Big)^p\biggr)^{1/p}\\
			&\le \frac{1}{d^{1/p}-1}\biggl(\sum_{k\in\mathbb{Z}} d^k \|d_{k}b\|^p_{L_p(\mathbb{R})}\biggr)^{1/p}.
		\end{aligned}
	\end{equation*}
	On the other hand, notice that for any $k\in\mathbb{Z}$,
	\begin{equation*}
		\|d_kb\|_{L_p(\mathbb{R})}=\|\mathbb{E}_k(b-b_{k-1})\|_{L_p(\mathbb{R})}\le \|b-b_{k-1}\|_{L_p(\mathbb{R})},
	\end{equation*}
	which implies that
	\begin{equation*}
		\sum_{k\in\mathbb{Z}} d^k \|d_kb\|^p_{L_p(\mathbb{R})} \le d\cdot\sum_{k\in\mathbb{Z}} d^k \|b-b_k\|^p_{L_p(\mathbb{R})}.
	\end{equation*}
	The proof is completed.
\end{proof}

Chao and Peng described the Schatten class membership of martingale paraproducts by virtue of the martingale Besov spaces. They showed the following theorem (see {\cite[Theorem 3.1]{CP}}):
\begin{thm}\label{lem2.1}
	Let $0<p<\infty$ and $b$ be a locally integrable complex-valued function. Then $\pi_b\in S_p(L_2(\mathbb{R}))$ if and only if $b\in \pmb{B}_p^d(\mathbb{R})$.
\end{thm}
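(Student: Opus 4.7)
The plan is to build the proof around the explicit rank-one decomposition of $\pi_b$ extracted from (\ref{calcu}):
$$\pi_b=\sum_{I\in\mathcal{D}}R_I,\qquad R_I(f)=\Big\langle\frac{\mathbbm{1}_I}{|I|},f\Big\rangle\sum_{i=1}^{d-1}\langle h_I^i,b\rangle\, h_I^i.$$
Each $R_I$ is a rank-one operator with unique non-zero singular value $s_I:=\big(\sum_{i=1}^{d-1}|\langle h_I^i,b\rangle|^2\big)^{1/2}/|I|^{1/2}$, and by equivalence of $\ell_2$ and $\ell_p$ norms on the $(d-1)$-dimensional fibre one has $\|b\|_{\pmb{B}_p^d(\mathbb{R})}^p\approx_{d,p}\sum_I s_I^p$. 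Setting $c_I:=\big(\sum_i\langle h_I^i,b\rangle h_I^i\big)/(s_I|I|^{1/2})$ (a unit vector) and $w_I:=\mathbbm{1}_I/|I|^{1/2}$, one can write $R_I=s_I\,c_I\otimes w_I$. Since $\{c_I\}_{I\in\mathcal{D}}$ is an orthonormal system in $L_2(\mathbb{R})$ (Haar functions attached to distinct dyadic intervals are orthogonal even across levels), $R_I^*R_J=0$ whenever $I\neq J$, and therefore
$$\pi_b^*\pi_b=\sum_{I\in\mathcal{D}}s_I^2\,(w_I\otimes w_I).$$
The $p=2$ case then falls out by tracing: $\|\pi_b\|_{S_2}^2=\operatorname{tr}(\pi_b^*\pi_b)=\sum_I s_I^2\approx\|b\|_{\pmb{B}_2^d(\mathbb{R})}^2$.

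For the sufficiency at general $p$, I would first treat $0<p\le 1$ by the $p$-triangle inequality $\|A+B\|_{S_p}^p\le\|A\|_{S_p}^p+\|B\|_{S_p}^p$, which together with $\|R_I\|_{S_p}=s_I$ yields $\|\pi_b\|_{S_p}^p\le\sum_I s_I^p\approx\|b\|_{\pmb{B}_p^d(\mathbb{R})}^p$ immediately. For $1<p<\infty$, I would interpolate the linear map $b\mapsto\pi_b$: for $1<p<2$ between the $p=1$ and $p=2$ estimates above (using the standard interpolation of the Besov scale $\pmb{B}_p^d(\mathbb{R})$, which reduces to $\ell_p$-interpolation on wavelet coefficients), and for $2<p<\infty$ between $p=2$ and the boundedness endpoint $b\in BMO^d\Rightarrow\pi_b\in B(L_2(\mathbb{R}))$, invoking the martingale analogue of the interpolation identity $[\pmb{B}_2^d(\mathbb{R}),BMO^d]_\theta=\pmb{B}_p^d(\mathbb{R})$.

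For the necessity, I would exploit the identity $\pi_b^* c_I=s_I w_I$ obtained from the decomposition above. For any finite $\mathcal{F}\subset\mathcal{D}$, the compression $\pi_b^*|_{\operatorname{span}\{c_I\}_{I\in\mathcal{F}}}$ has $S_p$-norm majorised by $\|\pi_b\|_{S_p}$. If $\mathcal{F}$ is chosen so that $\{w_I\}_{I\in\mathcal{F}}$ is almost orthonormal---which can be arranged by a stopping-time selection that extracts from the nested dyadic tree a sub-tree whose members are separated by at least a fixed number of generations, so that the Gram matrix of $\{w_I\}_{I\in\mathcal{F}}$ is a small perturbation of the identity---then the singular values of this compression are comparable to $\{s_I\}_{I\in\mathcal{F}}$ and one obtains $\sum_{I\in\mathcal{F}}s_I^p\lesssim_{d,p}\|\pi_b\|_{S_p}^p$. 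Partitioning $\mathcal{D}$ into finitely many such sparse sub-families by a level-shift argument then yields $\sum_I s_I^p\lesssim_{d,p}\|\pi_b\|_{S_p}^p\approx\|b\|_{\pmb{B}_p^d(\mathbb{R})}^p$.

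The hard part will be the non-orthogonality of $\{w_I\}_{I\in\mathcal{D}}$: their Gram matrix $[\langle w_I,w_J\rangle]$, which encodes the nested structure of dyadic intervals (with entries $(\min(|I|,|J|)/\max(|I|,|J|))^{1/2}$ for nested pairs and zero otherwise), is actually unbounded on $\ell_2(\mathcal{D})$. This prevents a naive spectral reduction of $\pi_b^*\pi_b=\sum_I s_I^2(w_I\otimes w_I)$ to a diagonal operator with eigenvalues $s_I^2$, and both the sufficiency at $p\neq 2$ (where off-diagonal Gram cross-terms must be absorbed via interpolation rather than crude triangle inequalities) and the necessity (where stopping-time sparsification must discard the tree overlaps without losing too much $s_I$-mass) ultimately come down to carefully taming this cross-level interaction on the dyadic tree.
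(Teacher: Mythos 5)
Your proposal has a genuine gap in the necessity direction, and the sufficiency, while plausible, skips over some non-trivial interpolation facts. Note also that the paper itself does not prove this theorem: it is cited from Chao--Peng \cite{CP}, and the paper only proves the sufficiency (in Proposition \ref{WEAKlem2.1}) via the Rochberg--Semmes NWO estimate, Lemma \ref{WEAKnonNWOpre1}. That NWO lemma gives the sufficiency for all $0<p<\infty$ in one stroke, with no case split and no interpolation; your three-regime plan ($p=2$ directly, $p\le 1$ by $p$-convexity, $1<p<2$ and $p>2$ by interpolation) is a different and longer route, and it leans on the martingale interpolation identity $[\pmb{B}_2^d(\mathbb{R}),BMO^d(\mathbb{R})]_\theta=\pmb{B}_p^d(\mathbb{R})$ for $p>2$ which you state without proof; this is a real fact but it is not free, and it is not established anywhere in the paper, so you would have to prove or cite it.

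The necessity plan does not work as written, and the difficulty you flag at the end is fatal rather than a detail to be ``carefully tamed''. Your idea is to compress $\pi_b^*$ to $\operatorname{span}\{c_I\}_{I\in\mathcal{F}}$, observe that its matrix against $\{c_I\}$ and $\{w_I\}$ is $\operatorname{diag}(s_I)$, and recover a lower bound on singular values provided the Gram matrix $G_{\mathcal{F}}=(\langle w_I,w_J\rangle)_{I,J\in\mathcal{F}}$ is a small perturbation of the identity. But generation separation cannot make this happen: if $I\in\mathcal{F}$ has all of its $d^m$ descendants $J\subset I$ at depth $m$ also in $\mathcal{F}$, then
\begin{equation*}
w_I-d^{-m/2}\sum_{\substack{J\subset I \\ |J|=d^{-m}|I|}} w_J=\frac{\mathbbm{1}_I}{|I|^{1/2}}-d^{-m/2}\sum_{J}\frac{\mathbbm{1}_J}{|J|^{1/2}}=0,
\end{equation*}
so the synthesis operator $\{a_I\}\mapsto\sum_I a_I w_I$ has a nontrivial kernel and $G_{\mathcal{F}}$ is singular --- not merely unbounded. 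No stopping time that keeps more than one interval per branch can make $\{w_I\}_{I\in\mathcal{F}}$ a Riesz system, and a genuine antichain gives orthogonality but cannot exhaust $\mathcal{D}$ with finitely many pieces. The singular values of the compression are therefore not controlled from below by $\{s_I\}_{I\in\mathcal{F}}$, and the inequality $\sum_{I\in\mathcal{F}}s_I^p\lesssim\|\pi_b\|_{S_p}^p$ does not follow from this construction. For $1<p<\infty$ the standard way around this is exactly the Rochberg--Semmes NWO lemma (Lemma \ref{RSNWO} in the paper): taking $e_I=h_I^i$ and $f_I=\mathbbm{1}_I/|I|^{1/2}$ one computes $\langle h_I^i,\pi_b(\mathbbm{1}_I/|I|^{1/2})\rangle=\langle h_I^i,b\rangle/|I|^{1/2}$ and the lemma yields $\sum_{I,i}|I|^{-p/2}|\langle h_I^i,b\rangle|^p\lesssim_{d,p}\|\pi_b\|_{S_p}^p$ without any almost-orthogonality on the averaging functions; the price is that the range $0<p\le 1$ requires a separate argument, as in Chao--Peng.
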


 In addition, we refer the interested reader to  \cite{CL1,CP} for more information on the the boundedness and compactness of martingale paraproducts for $d$-adic martingales.
 
 We would also like to mention that  \'{E}ric Ricard and the second named author gave a new proof of the Schatten class of martingale paraproducts based on Schur multipliers in \cite{ERZH2024}.

\subsection{The Schatten-Lorentz classes}
Given $0<p<\infty$ and $0<q\le\infty$, let $\ell_{p,q}(\Lambda)$ denote the usual Lorentz space on an index set $\Lambda$. $\ell_{p,q}$ is a quasi-Banach space and admits an equivalent norm $\| \cdot \|_{\tilde{\ell}_{p,q}}$ if $1<p<\infty$ and $1\le q\le\infty$. 
Now we introduce the definition of the weak-type and strong-type Schatten class. We define the Schatten-Lorentz class by virtue of the Lorentz space $\ell_{p,q}$. Let $H$ be a separable Hilbert space. A compact operator $A\in \mathcal{K}(H)$ has a singular value decomposition:
\begin{equation}\label{SVD}
	A(\xi)=\sum_{n}\lambda_n \langle e_n,\xi\rangle f_n, \quad\forall \xi\in H,
\end{equation}
where $\{\lambda_n\}_{n}\subset \mathbb{C}$, and $\{e_n\}_n$, $\{f_n\}_n$ are two orthonormal bases of $H$. For $0<p,q\le\infty$, the Schatten-Lorentz class $S_{p,q}(H)$ is defined as follows:
\begin{equation*}
	S_{p,q}(H)=\bigg\{A\in \mathcal{K}(H):\|A\|_{S_{p,q}(H)}:=\|\{\lambda_n\}_{n\in\mathbb{Z}}\|_{\ell_{p,q}}<\infty\bigg\}.
\end{equation*}
When $0<p<\infty$ and $q=\infty$, $S_{p,\infty}(H)$ is the weak-type Schatten class. When $0<p=q\leq \infty$, $S_{p}(H):=S_{p,p}(H)$ is namely the usual Schatten $p$-class, which is also called the strong-type Schatten class.

We introduce some properties for Schatten-Lorentz classes $S_{p,q}(H)$ which can be found in \cite{BS1977}. For any $0<p,q\le\infty$, $S_{p,q}(H)$ is a quasi-Banach space. When $1<p<\infty$ and $1\le q< \infty$, the dual space $(S_{p,q}(H))^*$ is isomorphic to $S_{p',q'}(H)$. In addition, if $1<p<\infty$ and $1\le q\le \infty$, $\tilde{S}_{p,q}(H)$ admits an equivalent norm $\|\cdot\|_{\tilde{S}_{p,q}(H)}$ induced by $\| \cdot \|_{\tilde{\ell}_{p,q}}$.

\begin{rem}\label{Spqquasi}
	If $1<p<\infty$ and $1\le q\le \infty$, then for any $A_i\in S_{p,q}(H)$,
	\begin{equation*}
		\begin{aligned}
			\Big\|\sum_{i}A_i\Big\|_{{S}_{p,q}}\le \Big\|\sum_{i}A_i\Big\|_{\tilde{S}_{p,q}}\le \sum_{i}\|A_i\|_{\tilde{S}_{p,q}}\lesssim_{p} \sum_{i}\|A_i\|_{{S}_{p,q}}.
		\end{aligned}
	\end{equation*}
\end{rem}
The following vital lemma is from \cite{RSe}.
\begin{lem}\label{WEAKnonNWOpre1}
	Let $0<p<\infty$ and $0<q\le\infty$. Let $\mathcal{D}$ be the $d$-adic system defined in Subsection \ref{commumar}. Assume that $\{e_{I}\}_{I\in\mathcal{D}}$ and $\{f_{I}\}_{I\in\mathcal{D}}$ are two function sequences in $L_2(\mathbb{R})$ satisfying $\mathrm{supp}e_{I}, \,\mathrm{supp}f_{I}\subseteq I$, and
	$\|e_{I}\|_{\infty},\,\|f_{I}\|_{\infty}\le |I|^{-\frac{1}{2}}$, or more generally, $\|e_{I}\|_{r},\,\|f_{I}\|_{r}\le |I|^{\frac{1}{r}-\frac{1}{2}}$ for some $r>2$. Define
	\begin{equation*}
		A(f)=\sum_{I\in \mathcal{D}} \lambda_{I} \langle e_{I},f\rangle f_{I},\quad \forall f\in L_2(\mathbb{R}),
	\end{equation*}
	where $\{\lambda_{I}\}_{I\in\mathcal{D}}\subset \mathbb{C}$.
	Then
	\begin{equation*}
		\begin{aligned}
			\|A\|_{S_{p,q}(L_2(\mathbb{R}))}\lesssim_{d,p,q}	\big\|\{\lambda_{I}\}_{I\in\mathcal{D}}\big\|_{\ell_{p,q}}.
		\end{aligned}
	\end{equation*}
In particular, when $0<p=q<\infty$,
	\begin{equation*}
		\begin{aligned}
			\|A\|_{S_{p}(L_2(\mathbb{R}))}\lesssim_{d,p}	\big\|\{\lambda_{I}\}_{I\in\mathcal{D}}\big\|_{\ell_{p}}.
		\end{aligned}
	\end{equation*}
\end{lem}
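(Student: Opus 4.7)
The plan is to adapt the nearly-weakly-orthogonal (NWO) systems approach of Rochberg--Semmes, which is where this lemma originates. First I would use H\"older's inequality to exploit the key assumption $r>2$. The $L^r$-normalization gives $\|e_I\|_2 \le 1$ on each $I$, and, more usefully, for nested dyadic intervals $I\subseteq J$,
\[|\langle e_I,e_J\rangle|\le \|e_I\|_r\,\|e_J\mathbbm{1}_I\|_{r'}\le |I|^{1/r-1/2}\cdot |I|^{1/r'-1/r}|J|^{1/r-1/2}=(|I|/|J|)^{\delta},\]
with $\delta=1/2-1/r>0$, and the same for $\{f_I\}$; disjoint $I,J$ contribute zero. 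This quantitative almost-orthogonality is what drives the entire argument.

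Next I would split $A=\sum_{n\in\mathbb{Z}} A_n$ with $A_n=\sum_{|I|=d^{-n}}\lambda_I\langle e_I,\cdot\rangle f_I$. At a fixed scale $n$ the intervals are pairwise disjoint, so $A_n$ is a direct sum of rank-one operators with pairwise orthogonal ranges and co-ranges; its singular values are exactly $|\lambda_I|\|e_I\|_2\|f_I\|_2\le |\lambda_I|$, giving $\|A_n\|_{S_{p,q}}\le C_{d}\|\{\lambda_I\}_{|I|=d^{-n}}\|_{\ell_{p,q}}$. The easy corner $p=q\le 1$ then drops out: by $p$-convexity of $S_p$,
\[\|A\|_{S_p}^p\le \sum_n\|A_n\|_{S_p}^p\le \sum_n\sum_{|I|=d^{-n}}|\lambda_I|^p=\|\{\lambda_I\}\|_{\ell_p}^p,\]
and the $p=q=1$ bound follows from the triangle inequality together with $\|e_I\|_2\|f_I\|_2\le 1$.

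The heart of the argument is to establish the singular-value comparison $\mu_N(A)\le C\,\lambda^*_{\lfloor N/C\rfloor}$ for $N\ge 1$, where $\lambda^*$ denotes the decreasing rearrangement of $\{|\lambda_I|\}_{I\in\mathcal{D}}$; once this is in hand, the Schatten--Lorentz inequality follows directly from the definition of the Lorentz quasi-norm. I would prove this via the min-max principle: pick the $N$ indices with largest $|\lambda_I|$, call this set $\Sigma$, set $A_\Sigma=\sum_{I\in\Sigma}\lambda_I\langle e_I,\cdot\rangle f_I$ (of rank at most $N$), and estimate the tail $\|A-A_\Sigma\|_{B(L_2(\mathbb{R}))}\lesssim \lambda^*_N$ by a Calder\'on--Zygmund stopping-time decomposition on the index set $\{I\notin\Sigma\}$, using the decay $(|I|/|J|)^\delta$ from Step 1 to propagate orthogonality across scales and collect the remainder into a controllable almost-orthogonal sum.

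The main obstacle is exactly this tail operator-norm bound: a naive Cotlar--Stein scheme across scales would demand decay exponent $\delta>1/2$, while only $\delta>0$ is available. The resolution, following Rochberg--Semmes, is to intertwine the scale-decay with a stopping-time decomposition on the coefficient sequence $\{|\lambda_I|\}$ itself --- at each threshold $2^k$, retain only the maximal dyadic intervals with $|\lambda_I|>2^k$ --- so that the ``bad'' portion, whose cardinality is controlled by membership in $\ell_{p,\infty}$ via $\#\{I:|\lambda_I|>\alpha\}\le \alpha^{-p}\|\{\lambda_I\}\|_{\ell_{p,\infty}}^p$, is absorbed into the rank-$N$ piece, while the ``good'' remainder inherits just enough almost-orthogonality from Step 1 to be summed. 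Finally, real interpolation between the resulting weak-type bound $\|A\|_{S_{p,\infty}}\lesssim \|\{\lambda_I\}\|_{\ell_{p,\infty}}$ and the trivial $\ell_1\to S_1$ bound yields the full statement $\|A\|_{S_{p,q}}\lesssim_{d,p,q}\|\{\lambda_I\}\|_{\ell_{p,q}}$ for all $0<p<\infty$ and $0<q\le \infty$, which includes the strong-type assertion at $p=q$ as a special case.
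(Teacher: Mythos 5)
Since the paper does not prove this lemma but simply cites Rochberg--Semmes \cite{RSe}, I will evaluate your proposal on its own merits. Your Steps 1 and 2 are correct: the H\"older computation giving $|\langle e_I,e_J\rangle|\le(|I|/|J|)^{1/2-1/r}$ for $I\subseteq J$ is right, and the scale-by-scale $p$-triangle inequality does give the result for $0<p\le 1$, where the disjoint supports at a fixed scale make $A_n$ a direct sum of orthogonal rank-one operators.

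The genuine gap is in Step~3. The pointwise singular-value comparison $\mu_N(A)\le C\,\lambda^*_{\lfloor N/C\rfloor}$ that you declare to be ``the heart of the argument'' is simply false, and no stopping-time refinement will save it, because even the weaker tail estimate $\|A-A_\Sigma\|_{B(L_2)}\lesssim \lambda^*_N$ fails. Take $e_I=f_I=|I|^{-1/2}\mathbbm{1}_I$, $\lambda_I=1$ for every dyadic $I\subseteq[0,1)$ with $|I|\ge d^{-K}$, and $\lambda_I=0$ otherwise. Then
\[
A=\sum_{k=0}^{K}\,\sum_{I\in\mathcal D_k,\,I\subseteq[0,1)}\Big\langle \tfrac{\mathbbm 1_I}{|I|},\cdot\Big\rangle\mathbbm 1_I
\]
acts on $L_2[0,1)$ as $\sum_{k=0}^{K}\mathbb E_k$, a sum of nested conditional expectations, so $A(\mathbbm 1_{[0,1)})=(K+1)\mathbbm 1_{[0,1)}$ and $\mu_1(A)=K+1$, while $\lambda^*_{\lfloor 1/C\rfloor}\le\lambda^*_1=1$. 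Taking $K$ large violates $\mu_1(A)\le C\lambda^*_{\lfloor 1/C\rfloor}$ for any fixed $C$; and removing any $N\ll d^K$ of the intervals leaves the remaining operator norm still of order $K$, so $\|A-A_\Sigma\|\lesssim\lambda^*_N=1$ also fails. (This family satisfies all the hypotheses: $\operatorname{supp}e_I\subseteq I$ and $\|e_I\|_\infty=|I|^{-1/2}$.) The claim you want would actually give $\|A\|_{S_{\infty,\infty}}\lesssim\sup_I|\lambda_I|$, i.e.\ a bound on the operator norm by $\sup|\lambda_I|$, which the example above shows cannot hold.

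What the Rochberg--Semmes argument actually does for $p>1$ is dualize: one first proves the ``reverse'' estimate $\sum_I|\langle Te_I,f_I\rangle|^{p'}\lesssim\|T\|_{S_{p'}}^{p'}$ (this is Lemma~\ref{RSNWO} in the present paper) and then bounds $\|A\|_{S_p}=\sup\{|\operatorname{Tr}(AT^*)|:\|T\|_{S_{p'}}\le1\}$ by H\"older, since $\operatorname{Tr}(AT^*)=\sum_I\lambda_I\langle Te_I,f_I\rangle$; the Lorentz scale follows from the same duality together with real interpolation. The weak-type $\ell_{p,\infty}\to S_{p,\infty}$ endpoint is obtained as a distributional consequence of that inequality, not from a pointwise domination of singular values. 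If you replace your Step~3 by this duality step (and keep your Steps~1--2 for the range $0<p\le1$), the plan becomes sound.
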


From Lemma \ref{WEAKnonNWOpre1}, one immediately has the following proposition involving the Schatten-Lorentz class of martingale paraproducts.
\begin{proposition}\label{WEAKlem2.1}
	Let $0<p<\infty$ and $0< q\le\infty$. Let $	\pi_b$ be the martingale paraproduct associated with the $d$-adic system. If $\{|I|^{-1/2}|\langle h_I^i,b\rangle|\}_{I\in\mathcal{D},1\le i\le d-1}\in \ell_{p,q}$, then $	\pi_b\in S_{p,q}(L_2(\mathbb{R}))$ and
	\begin{equation*}
		\|\pi_b\|_{S_{p,q}(L_2(\mathbb{R}))}\lesssim_{d,p,q}\Big\|\big\{|I|^{-1/2}|\langle h_I^i,b\rangle|\big\}_{I\in\mathcal{D},1\le i\le d-1}\Big\|_{\ell_{p,q}}.
	\end{equation*}
\end{proposition}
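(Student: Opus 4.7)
The plan is to recognize that the paraproduct $\pi_b$ already has, via the Haar expansion computed in \eqref{calcu}, precisely the structural form required by Lemma \ref{WEAKnonNWOpre1}; the proposition is then a direct application of that lemma, modulo the extra index $i \in \{1,\dots,d-1\}$. Starting from
\begin{equation*}
\pi_b(f) = \sum_{I\in\mathcal{D}} \sum_{i=1}^{d-1} \langle h_I^i, b\rangle \Big\langle \frac{\mathbbm{1}_I}{|I|}, f\Big\rangle h_I^i,
\end{equation*}
I set $\lambda_{I,i} = |I|^{-1/2}\langle h_I^i, b\rangle$, $e_{I,i} = |I|^{-1/2}\mathbbm{1}_I$, and $f_{I,i} = h_I^i$. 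Then
\begin{equation*}
\pi_b(f) = \sum_{I\in\mathcal{D}} \sum_{i=1}^{d-1} \lambda_{I,i}\, \langle e_{I,i}, f\rangle\, f_{I,i},
\end{equation*}
with $\mathrm{supp}\, e_{I,i},\ \mathrm{supp}\, f_{I,i} \subseteq I$ and, from the definition of the Haar wavelets, $\|e_{I,i}\|_\infty = |I|^{-1/2}$ and $\|h_I^i\|_\infty = |I|^{-1/2}$. Thus the hypotheses of Lemma \ref{WEAKnonNWOpre1} are met for each fixed $i$.

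Next I would handle the auxiliary index $i$ by decomposing $\pi_b = \sum_{i=1}^{d-1} \pi_b^{(i)}$, where $\pi_b^{(i)}(f) = \sum_{I\in\mathcal{D}} \lambda_{I,i} \langle e_{I,i}, f\rangle f_{I,i}$. Applying Lemma \ref{WEAKnonNWOpre1} to each $\pi_b^{(i)}$ (which is genuinely indexed by the single $d$-adic family $\mathcal{D}$) gives
\begin{equation*}
\|\pi_b^{(i)}\|_{S_{p,q}(L_2(\mathbb{R}))} \lesssim_{d,p,q} \|\{\lambda_{I,i}\}_{I\in\mathcal{D}}\|_{\ell_{p,q}} \le \big\|\{\lambda_{I,i}\}_{I\in\mathcal{D},\,1\le i\le d-1}\big\|_{\ell_{p,q}}.
\end{equation*}
Summing the $d-1$ contributions via the quasi-triangle inequality on $S_{p,q}$ (the usual triangle inequality when $1 \le p<\infty$ and $1 \le q \le \infty$ as per Remark \ref{Spqquasi}, and the $r$-quasi-triangle inequality for $r = \min(p,q,1)$ otherwise) absorbs the factor $d-1$ into the implicit constant $C_{d,p,q}$, yielding the stated bound.

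The potential obstacle is essentially notational: Lemma \ref{WEAKnonNWOpre1} is formulated with a single index per dyadic interval, while the Haar expansion of $\pi_b$ attaches $d-1$ coefficients to each $I$. The decomposition into $\pi_b^{(i)}$ makes this cosmetic. No other technical ingredient is needed, so the proof should be short and reduce to bookkeeping of the constants from Lemma \ref{WEAKnonNWOpre1}, the quasi-triangle inequality on $S_{p,q}$, and the trivial inequality $\|\{\lambda_{I,i}\}_{I}\|_{\ell_{p,q}} \le \|\{\lambda_{I,i}\}_{I,i}\|_{\ell_{p,q}}$.
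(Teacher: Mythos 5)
Your proposal is correct and follows essentially the same approach as the paper: fix the Haar index $i$, observe that $\pi_b^{(i)}$ is exactly of the NWO form required by Lemma \ref{WEAKnonNWOpre1}, apply that lemma for each $i$, and sum the $d-1$ pieces, absorbing the resulting constant into $\lesssim_{d,p,q}$. The only (minor) difference is that you are more explicit than the paper that for $p<1$ or $q<1$ one must invoke the general quasi-triangle inequality on $S_{p,q}$ rather than Remark \ref{Spqquasi}, which the paper cites but which is stated only for $1<p<\infty$, $1\le q\le\infty$; this is a small improvement in bookkeeping, not a different argument.
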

\begin{proof}
	Fix $1\le i\le d-1$. For any $I\in\mathcal{D}$, let 
	\begin{equation*}
		e_{I}=\frac{\mathbbm{1}_{I}}{|I|^{1/2}},\quad f_{I}=h_I^i,\quad \text{and}\quad \lambda_{I}=\frac{\langle h_I^i,b\rangle}{|I|^{1/2}}.
	\end{equation*}
	One checks that $\mathrm{supp}e_{I}, \,\mathrm{supp}f_{I}\subseteq I$ and
	$\|e_{I}\|_{\infty},\,\|f_{I}\|_{\infty}\le |I|^{-\frac{1}{2}}$. Hence from Remark \ref{Spqquasi} and Lemma \ref{WEAKnonNWOpre1}, we have
	\begin{equation*}
		\|\pi_b\|_{S_{p,q}(L_2(\mathbb{R}))}\lesssim_{d,p,q}	\Big\|\big\{|I|^{-1/2}|\langle h_I^i,b\rangle|\big\}_{I\in\mathcal{D},1\le i\le d-1}\Big\|_{\ell_{p,q}}.
	\end{equation*}
	The proof is finished.
\end{proof}

\begin{rem}
	Note that when $0<p=q<\infty$, Proposition \ref{WEAKlem2.1} directly implies the sufficiency of Theorem \ref{lem2.1}. But the necessity of Theorem \ref{lem2.1} is more delicate.
\end{rem}

The following two lemmas will be needed for the proofs
of the sufficiency parts of Theorem \ref{corollary1.8} and Theorem \ref{corollary1.10}. Let $(e_i)_{i\in \mathbb{N}}$ be the standard orthonormal basis on $\ell_2$. For any $A\in S_{p,q}(\ell_2)$, denote by $A_{i, j}$ the $(i, j)$-th entry defined as
$$ A_{i, j}= \la e_i, A e_j\ra.  $$
The following lemma is from \cite[Lemma 6.9]{WZ2024}.
\begin{lem}\label{lem5.1.9AAApre}
	Let $1< p<\infty$ and $1\le q\le\infty$. Suppose that $(A_\gamma)_{\gamma\in \Gamma}$ is a net of operators in $S_{p.q}(\ell_2)$. Let $A\in S_{p,q}(\ell_2)$. If for any $i, j\in \mathbb{N}$,
	$$ \lim_{\gamma} (A_\gamma)_{i, j}=A_{i, j},  $$
	then
	$$ \|A\|_{S_{p,q}(\ell_2)} \leq \sup_{\gamma}  \|A_\gamma\|_{S_{p,q}(\ell_2)}. $$
\end{lem}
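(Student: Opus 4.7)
The plan is to reduce the infinite-dimensional statement to finite-dimensional compressions, where entrywise convergence trivially upgrades to quasi-norm convergence, and then to exploit a monotone exhaustion of $A$ by such compressions in the Lorentz norm. Let $P_N$ denote the orthogonal projection of $\ell_2$ onto the span of $\{e_1,\ldots,e_N\}$. First I would verify that compression is contractive on $S_{p,q}$: for every bounded operator $B$ and every $n\ge 1$, the Ky Fan/min-max characterisation gives
\begin{equation*}
\mu_n(P_N B P_N)\le \mu_n(B),
\end{equation*}
since any approximation $F$ of $B$ of rank $<n$ yields an approximation $P_N F P_N$ of $P_N B P_N$ of rank $<n$. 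Taking the $\ell_{p,q}$ quasi-norm of the singular value sequences then yields $\|P_N B P_N\|_{S_{p,q}(\ell_2)}\le \|B\|_{S_{p,q}(\ell_2)}$.

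Second, I would establish the monotone exhaustion identity
\begin{equation*}
\|A\|_{S_{p,q}(\ell_2)}=\sup_N \|P_N A P_N\|_{S_{p,q}(\ell_2)}.
\end{equation*}
Since $P_{N_1}AP_{N_1}=P_{N_1}(P_{N_2}AP_{N_2})P_{N_1}$ for $N_1\le N_2$, the previous step shows that $\mu_n(P_N A P_N)$ is non-decreasing in $N$ for every fixed $n$. Because $A\in S_{p,q}(\ell_2)$ is compact (as $p<\infty$) and $P_N\to I$ strongly, one has $P_N A P_N\to A$ in the operator norm, and continuity of singular values under operator-norm convergence gives $\mu_n(P_N A P_N)\to \mu_n(A)$ for each $n$. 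Monotone convergence in the Lorentz quasi-norm $\ell_{p,q}$ then yields the claimed identity.

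Third, for each fixed $N$ the operators $P_N A_\gamma P_N$ and $P_N A P_N$ live inside the finite-dimensional matrix algebra $M_N(\mathbb{C})$, on which all quasi-norms are equivalent. The entrywise convergence hypothesis therefore implies $P_N A_\gamma P_N \to P_N A P_N$ in the $S_{p,q}$ quasi-norm, so
\begin{equation*}
\|P_N A P_N\|_{S_{p,q}(\ell_2)}=\lim_\gamma \|P_N A_\gamma P_N\|_{S_{p,q}(\ell_2)}.
\end{equation*}
Combining the three ingredients with contractivity of compression applied to each $A_\gamma$,
\begin{equation*}
\|A\|_{S_{p,q}(\ell_2)}=\sup_N \lim_\gamma \|P_N A_\gamma P_N\|_{S_{p,q}(\ell_2)}\le \sup_N \sup_\gamma \|P_N A_\gamma P_N\|_{S_{p,q}(\ell_2)}\le \sup_\gamma \|A_\gamma\|_{S_{p,q}(\ell_2)}.
\end{equation*}

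The main obstacle I anticipate is the monotone exhaustion step, particularly the pointwise convergence $\mu_n(P_N A P_N)\to \mu_n(A)$, which genuinely uses the compactness of $A$ guaranteed by $p<\infty$; the case $q=\infty$ deserves a little care, since $P_N A P_N$ need not converge to $A$ in the $S_{p,\infty}$ quasi-norm, but the monotonicity of $\{\mu_n(P_N A P_N)\}_N$ together with its pointwise limit $\mu_n(A)$ is enough to conclude $\sup_N\|P_N A P_N\|_{S_{p,\infty}}=\|A\|_{S_{p,\infty}}$ via monotone convergence in $\ell_{p,\infty}$.
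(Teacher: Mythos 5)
Your proof is correct and takes a genuinely different route from the paper. The paper proves the lemma by duality: for any finite-rank projection $\rho$ and any $B\in S_{p',q'}$ it observes $\mathrm{Tr}(\rho A\rho B)=\lim_\gamma\mathrm{Tr}(\rho A_\gamma\rho B)$, deduces $\|\rho A\rho\|_{S_{p,q}}\le\sup_\gamma\|\rho A_\gamma\rho\|_{S_{p,q}}$ from the pairing with $S_{p',q'}$, and then passes to the supremum over finite-rank projections. You instead work entirely with singular values: compression is contractive (Ky Fan), $\mu_n(P_N A P_N)\nearrow\mu_n(A)$ by nestedness of the compressions plus operator-norm convergence coming from compactness, and the finite-dimensional blocks $P_N A_\gamma P_N$ converge to $P_N A P_N$ in any matrix quasi-norm by entrywise convergence. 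The two arguments share the exhaustion step $\|A\|_{S_{p,q}}=\sup\|\rho A\rho\|_{S_{p,q}}$ but diverge in how they upgrade the entrywise hypothesis. Your route has the advantage of bypassing the duality $(S_{p,q})^*\cong S_{p',q'}$ entirely --- which the paper itself records only as an isomorphism, not an isometry, so the duality step arguably only gives the bound up to a constant for the raw quasi-norm, whereas your argument yields the stated inequality exactly and also covers $q=\infty$ without any appeal to a predual. The price you pay is the need for continuity of $\mu_n$ and the monotone convergence in $\ell_{p,q}$, which you handle correctly, including the $q=\infty$ caveat.
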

\begin{proof}
	For any projection $\rho\in B(\ell_2)$ with finite rank, one has for any $B\in S_{p',q'}(\ell_2)$
	$$  \text{Tr}(\rho A\rho B) =\lim_\gamma  \text{Tr} (\rho A_\gamma \rho B).  $$
	By duality, this implies that
	$$  \|\rho A\rho \|_{S_{p,q}(\ell_2)} \leq \sup_\gamma  \|\rho A_\gamma\rho \|_{S_{p,q}(\ell_2)}.   $$
	Therefore, we have
	\begin{equation*}
		\begin{aligned}
			\|A\|_{S_{p,q}(\ell_2)}&=\sup_{\substack{\rho^2=\rho=\rho* \\ \text{finite rank}}}\|\rho A\rho \|_{S_{p,q}(\ell_2)} \\
			&\leq \sup_{\substack{\rho^2=\rho=\rho* \\ \text{finite rank}}}\sup_\gamma  \|\rho A_\gamma\rho \|_{S_{p,q}(\ell_2)}\\
			&\leq \sup_{\gamma}  \|A_\gamma\|_{S_{p,q}(\ell_2)},
		\end{aligned}
	\end{equation*}
	as desired.
\end{proof}
Then following lemma is an extension of \cite[Lemma 4.3]{PS}.
\begin{lem}\label{weakconver}
	Let $1< p<\infty$ and $1\le q\le\infty$. Suppose that $(T_\gamma)_{\gamma\in \Gamma}$ is a bounded net of operators in $B(L_p(\mathbb{R}^n))$ for each $p\in (1, \8)$. Assume that $(T_\gamma)_{\gamma\in \Gamma}$ converges to $T\in B(L_p(\mathbb{R}^n))$ with respect to the weak operator topology for each $p\in (1, \8)$. If $1< p<\8$ and $b\in L_p(\mathbb{R}^n)$, then
	$$  \| [T,M_b]\|_{S_{p,q}(L_2(\mathbb{R}^n))} \leq \sup_\gamma  \| [T_\gamma,M_b]\|_{S_{p,q}(L_2(\mathbb{R}^n))}. $$
	In particular, when $1<p=q<\infty$,
	$$  \| [T,M_b]\|_{S_{p}(L_2(\mathbb{R}^n))} \leq \sup_\gamma  \| [T_\gamma,M_b]\|_{S_{p}(L_2(\mathbb{R}^n))}. $$
\end{lem}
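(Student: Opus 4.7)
The plan is to transport the operators to $\ell_2$ via a carefully chosen orthonormal basis and then apply Lemma \ref{lem5.1.9AAApre} entrywise. First I would fix an orthonormal basis $(\phi_i)_{i\in\mathbb{N}}$ of $L_2(\mathbb{R}^n)$ whose elements are bounded and compactly supported --- for instance, a Haar system attached to a fixed dyadic grid. Each such $\phi_i$ lies in $L_r(\mathbb{R}^n)$ for every $r\in[1,\infty]$, and because $b\in L_p(\mathbb{R}^n)$ with $\phi_i$ bounded and compactly supported, the products $b\phi_i$ and $\bar b\,\phi_i$ are compactly supported functions in $L_p(\mathbb{R}^n)$, hence in $L_r(\mathbb{R}^n)$ for every $r\in[1,p]$. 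The unitary identification of $L_2(\mathbb{R}^n)$ with $\ell_2$ through $(\phi_i)$ converts $[T_\gamma,M_b]$ and $[T,M_b]$ into operators on $\ell_2$ whose Schatten--Lorentz norms are unchanged.

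Next I would verify matrix-entry convergence. Expanding the commutator gives
\[\bigl\langle \phi_i, [T_\gamma, M_b]\phi_j\bigr\rangle_{L_2} = \bigl\langle \phi_i, T_\gamma(b\phi_j)\bigr\rangle_{L_2} - \bigl\langle \bar b\,\phi_i,\, T_\gamma \phi_j\bigr\rangle_{L_2},\]
and similarly with $T$ in place of $T_\gamma$. For the first inner product one has $b\phi_j\in L_p(\mathbb{R}^n)$ and $\phi_i\in L_{p'}(\mathbb{R}^n)$, so weak-operator convergence of $T_\gamma\to T$ on $L_p(\mathbb{R}^n)$ yields $\langle \phi_i, T_\gamma(b\phi_j)\rangle \to \langle \phi_i, T(b\phi_j)\rangle$. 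For the second inner product one has $\phi_j\in L_{p'}(\mathbb{R}^n)$ and $\bar b\,\phi_i\in L_p(\mathbb{R}^n)$, so applying the weak-operator-topology hypothesis on $L_{p'}(\mathbb{R}^n)$ instead gives $\langle \bar b\,\phi_i, T_\gamma \phi_j\rangle \to \langle \bar b\,\phi_i, T\phi_j\rangle$. Hence each matrix entry of $[T_\gamma, M_b]$ converges to the corresponding entry of $[T, M_b]$.

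Finally, we may assume $\sup_\gamma \|[T_\gamma,M_b]\|_{S_{p,q}(L_2(\mathbb{R}^n))}<\infty$ (otherwise the inequality is trivial). Applying Lemma \ref{lem5.1.9AAApre} to the net $([T_\gamma,M_b])_{\gamma\in\Gamma}$ with limit $[T,M_b]$ --- all viewed as operators on $\ell_2$ via the chosen basis --- immediately yields
\[\|[T,M_b]\|_{S_{p,q}(L_2(\mathbb{R}^n))} \le \sup_\gamma \|[T_\gamma, M_b]\|_{S_{p,q}(L_2(\mathbb{R}^n))},\]
and the specialization $p=q$ delivers the strong-type assertion. The only real point of care is arranging the basis and the exponents so that both pieces of the commutator fall within the regime of the weak-operator-topology hypothesis (this is where the flexibility ``for each $p\in(1,\infty)$'' is used, applied once at exponent $p$ and once at exponent $p'$); once that is done, the conclusion is a direct transfer from Lemma \ref{lem5.1.9AAApre}.
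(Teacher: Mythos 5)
Your proposal is correct and follows the same route as the paper: establish weak convergence of matrix entries of the commutators and then invoke Lemma~\ref{lem5.1.9AAApre}. The only cosmetic difference is that the paper tests against indicator functions $\mathbbm{1}_I,\mathbbm{1}_J$ of finite cubes (and leaves the passage to an orthonormal basis implicit), whereas you pass directly to a Haar-type orthonormal basis of bounded, compactly supported functions; you also make explicit the point that the two halves of the commutator require the weak-operator-topology hypothesis at the dual exponents $p$ and $p'$, which the paper leaves unstated. These are clarifications rather than a different method.
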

\begin{proof}
	For any finite cubes $I, J\subset \mathbb{R}^n $, one has
	$$  \lim_\gamma\big\la \mathbbm{1}_I, [T_\gamma,M_b]( \mathbbm{1}_J)\big\ra= \big\la \mathbbm{1}_I, [T,M_b]( \mathbbm{1}_J)\big\ra. $$
	Hence, the desired result follows from Lemma \ref{lem5.1.9AAApre}.
\end{proof}

\bigskip

\section{Some key lemmas}\label{key lemma}
This section focuses on the estimate of Schatten class of several dyadic operators associated with the dyadic system $\mathcal{D}$. Later, these estimates will be used in the proofs of Theorem \ref{corollary1.8} and Theorem \ref{corollary1.10}.
\begin{lem}\label{TLambdab}
	Assume that $0< p<\infty$ and $0< q\le\infty$. For any $d$-adic martingale $ f=(f_k)_{k\in\mathbb{Z} }\in L_2(\mathbb{R})$, we define
	\[\varLambda_b(f)=\sum_{k\in\mathbb{Z}}d_kb\cdot d_kf.\]
	If $\big\{|I|^{-1/2}|\langle h_I^i,b\rangle|\big\}_{I\in\mathcal{D},1\le i\le d-1}\in \ell_{p,q}$, then $\varLambda_b\in S_{p,q}(L_2(\mathbb{R}))$  and 
	$$ \|\varLambda_b\|_{S_{p,q}(L_2(\mathbb{R}))} \lesssim_{d,p,q} \Big\|\big\{|I|^{-1/2}|\langle h_I^i,b\rangle|\big\}_{I\in\mathcal{D},1\le i\le d-1}\Big\|_{\ell_{p,q}}. $$
	Particularly, if $b\in\pmb{B}_p^d(\mathbb{R})$, then $\varLambda_b\in S_p(L_2(\mathbb{R}))$ and 
	$$ \|\varLambda_b\|_{S_p(L_2(\mathbb{R}))} \lesssim_{d, p} \|b\|_{\pmb{B}_p^d(\mathbb{R})}. $$
\end{lem}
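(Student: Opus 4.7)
The strategy is to expand $\varLambda_b$ in the Haar wavelet basis, use the multiplicative formula \eqref{formula2.2} to recognize $\varLambda_b$ as a finite sum of operators of the type covered by Lemma \ref{WEAKnonNWOpre1}, and then conclude by the quasi-triangle inequality for $S_{p,q}$.

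First I would write $d_kb=\sum_{|I|=d^{-k+1}}\sum_{i=1}^{d-1}\langle h_I^i,b\rangle h_I^i$ and $d_kf=\sum_{|I|=d^{-k+1}}\sum_{j=1}^{d-1}\langle h_I^j,f\rangle h_I^j$. Since the Haar functions $h_I^i$ and $h_{I'}^j$ attached to distinct $d$-adic intervals $I\ne I'$ at the same level are supported on disjoint sets, the product $d_kb\cdot d_kf$ collapses to a sum over a single index $I$. Using \eqref{formula2.2} we get
\begin{equation*}
\varLambda_b(f)=\sum_{I\in\mathcal{D}}\sum_{i,j=1}^{d-1}|I|^{-1/2}\langle h_I^i,b\rangle\langle h_I^j,f\rangle\, h_I^{\overline{i+j}},
\end{equation*}
where $h_I^{\overline{i+j}}$ is interpreted as $h_I^d=|I|^{-1/2}\mathbbm{1}_I$ when $i+j\equiv 0 \pmod d$. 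Hence $\varLambda_b=\sum_{i,j=1}^{d-1}A_{i,j}$ with
\begin{equation*}
A_{i,j}(f)=\sum_{I\in\mathcal{D}}\lambda_I^{(i)}\,\langle e_I^{(j)},f\rangle\, f_I^{(i,j)},\qquad \lambda_I^{(i)}=|I|^{-1/2}\langle h_I^i,b\rangle,\quad e_I^{(j)}=h_I^j,\quad f_I^{(i,j)}=h_I^{\overline{i+j}}.
\end{equation*}

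Next, for each fixed pair $(i,j)$ the sequences $\{e_I^{(j)}\}_{I\in\mathcal{D}}$ and $\{f_I^{(i,j)}\}_{I\in\mathcal{D}}$ satisfy the hypotheses of Lemma \ref{WEAKnonNWOpre1}: each is supported in $I$ and has $L_\infty$-norm bounded by $|I|^{-1/2}$ (this is equally true in the degenerate case $h_I^{\overline{i+j}}=|I|^{-1/2}\mathbbm{1}_I$). Therefore Lemma \ref{WEAKnonNWOpre1} gives
\begin{equation*}
\|A_{i,j}\|_{S_{p,q}(L_2(\mathbb{R}))}\lesssim_{d,p,q}\Big\|\{|I|^{-1/2}|\langle h_I^i,b\rangle|\}_{I\in\mathcal{D}}\Big\|_{\ell_{p,q}}.
\end{equation*}

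Summing the $(d-1)^2$ operators $A_{i,j}$ and using the quasi-triangle inequality for $S_{p,q}$ (Remark \ref{Spqquasi} when $1<p<\infty,\ 1\le q\le\infty$; the $r$-triangle inequality with $r=\min(p,q,1)$ in the remaining quasi-Banach range, which only costs a constant depending on $d,p,q$ since the sum is finite) yields
\begin{equation*}
\|\varLambda_b\|_{S_{p,q}(L_2(\mathbb{R}))}\lesssim_{d,p,q}\Big\|\{|I|^{-1/2}|\langle h_I^i,b\rangle|\}_{I\in\mathcal{D},\,1\le i\le d-1}\Big\|_{\ell_{p,q}}.
\end{equation*}
The final assertion follows by specializing to $p=q$ and invoking the definition \eqref{e1.2} of $\|b\|_{\pmb{B}_p^d(\mathbb{R})}$. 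The only subtle point is bookkeeping the case $\overline{i+j}=d$, where $h_I^{\overline{i+j}}$ is no longer a mean-zero Haar function; fortunately Lemma \ref{WEAKnonNWOpre1} only requires support and $L_\infty$ (or $L_r$) control, which still hold, so this causes no real obstacle.
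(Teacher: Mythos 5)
Your proposal is correct and takes essentially the same route as the paper: both decompose $\varLambda_b=\sum_{i,j=1}^{d-1}A_{ij}$ with $A_{ij}(f)=\sum_I |I|^{-1/2}\langle h_I^i,b\rangle\langle h_I^j,f\rangle h_I^{\overline{i+j}}$, identify $e_I=h_I^j$, $f_I=h_I^{\overline{i+j}}$, $\lambda_I=|I|^{-1/2}\langle h_I^i,b\rangle$ as satisfying the hypotheses of Lemma~\ref{WEAKnonNWOpre1}, and then sum the $(d-1)^2$ finitely many pieces. Your explicit handling of the degenerate case $\overline{i+j}=d$ (where $h_I^{\overline{i+j}}=h_I^0=|I|^{-1/2}\mathbbm{1}_I$ is not mean-zero but still satisfies the support and $L_\infty$ bounds) and your remark on using the $r$-triangle inequality in the quasi-Banach range $0<p\le 1$ or $0<q<1$ are minor clarifications that the paper's proof glosses over by citing Remark~\ref{Spqquasi}, which is only stated for $1<p<\infty$, $1\le q\le\infty$.
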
	

\begin{proof}
    We write ${\varLambda}_b$ as follows:
	\begin{equation}\label{WEAKLambdasum}
		\begin{aligned}
			{\varLambda}_b(f){}
			&=\sum_{k\in\mathbb{Z}}\sum_{I\in\mathcal{D}_{k-1}}\sum_{i=1}^{d-1}\langle h_I^i,b\rangle h_I^i\cdot\sum_{J\in\mathcal{D}_{k-1}}\sum_{j=1}^{d-1}\langle h_J^j,f\rangle h_J^j\\
			&=\sum_{i,j=1}^{d-1}\sum_{k\in\mathbb{Z}}\sum_{I\in\mathcal{D}_{k-1}}\langle h_I^i,b\rangle \langle h_I^j,f\rangle h_I^i h_I^j=:\sum_{i,j=1}^{d-1}A_{ij}.
		\end{aligned}
	\end{equation}
	Now fix $1\le i,j\le d-1$. In terms of $A_{st}$, for any given $I\in\mathcal{D}$, let 
	\begin{equation*}
		e_{I}=h_I^j,\quad  f_{I}=|I|^{1/2}h_I^i h_I^j\quad \text{and} \quad\lambda_{I}=\frac{\langle h_I^i,b\rangle}{|I|^{1/2}}.
	\end{equation*}
	Note that $\mathrm{supp}e_{I}, \,\mathrm{supp}f_{I}\subseteq I$ and
	$\|e_{I}\|_{\infty},\,\|f_{I}\|_{\infty}\lesssim |I|^{-\frac{1}{2}}$. 
	Then from Lemma \ref{WEAKnonNWOpre1}, we have
	\begin{equation*}
		\begin{aligned}
			\|A_{ij}\|_{S_{p,q}(L_2(\mathbb{R}))}{}&\lesssim_{d,p,q}	\big\|\{\lambda_{I}\}_{I\in\mathcal{D}}\big\|_{\ell_{p,q}}
			=\Big\|\big\{|I|^{-1/2}|\langle h_I^i,b\rangle|\big\}_{I\in\mathcal{D}}\Big\|_{\ell_{p,q}}.
		\end{aligned}
	\end{equation*}
	Hence from Remark \ref{Spqquasi},
	$$ \|\varLambda_b\|_{S_{p,q}(L_2(\mathbb{R}))} \lesssim_{d,p,q} \Big\|\big\{|I|^{-1/2}|\langle h_I^i,b\rangle|\big\}_{I\in\mathcal{D},1\le i\le d-1}\Big\|_{\ell_{p,q}}. $$
	Particularly, when $0<p=q<\infty$,
		$$ \|\varLambda_b\|_{S_p(L_2(\mathbb{R}))} \lesssim_{d, p}\Big\|\big\{|I|^{-1/2}|\langle h_I^i,b\rangle|\big\}_{I\in\mathcal{D},1\le i\le d-1}\Big\|_{\ell_{p,p}}=\|b\|_{\pmb{B}_p^d(\mathbb{R})}. $$
\end{proof}
We define the $d$-adic martingale $BMO$ space as follows:
\begin{defn}
	The martingale $BMO$ space of $d$-adic martingale denoted by $BMO^d(\mathbb{R})$ is the space of all locally integrable complex-valued functions $b$ such that
	\begin{equation}\label{BMOd}
		\|b\|_{BMO^d(\mathbb{R})}= \sup _{n\in\mathbb{Z}}\ \bigg\|\mathbb{E}_n\bigg(\sum_{k=n+1}^{\8}|d_k b|^2\bigg)\bigg\|_\8^{1/2} <\infty. 
	\end{equation} 
\end{defn}

In order to prove the sufficiency parts of Theorem \ref{corollary1.8} and Theorem \ref{corollary1.10}, we give the following proposition, which concerns the Schatten-Lorentz class of commutators involving martingale paraproducts and the pointwise multiplication operator $M_b$.
\begin{proposition}\label{T0est}
	Let $0< p<\infty$ and $0< q\le\infty$. Suppose that $a\in BMO^d(\mathbb{R})$.
	If $\big\{|I|^{-1/2}|\langle h_I^i,b\rangle|\big\}_{I\in\mathcal{D},1\le i\le d-1}\in\ell_{p,q}$, then $[\pi_a,M_b]$ and $[\pi^*_a,M_b]$ both belong to $S_{p,q}(L_2(\mathbb{R}))$. Moreover,
	\begin{equation*}
		\|[\pi_a,M_b]\|_{S_{p,q}(L_2(\mathbb{R}))}\lesssim_{d,p,q} \|a\|_{BMO^{d}(\mathbb{R})} \Big\|\big\{|I|^{-1/2}|\langle h_I^i,b\rangle|\big\}_{I\in\mathcal{D},1\le i\le d-1}\Big\|_{\ell_{p,q}}
	\end{equation*}
	and
	\begin{equation*}
		\|[\pi_a^*,M_b]\|_{S_{p,q}(L_2(\mathbb{R}))}\lesssim_{d,p,q} \|a\|_{BMO^{d}(\mathbb{R})} \Big\|\big\{|I|^{-1/2}|\langle h_I^i,b\rangle|\big\}_{I\in\mathcal{D},1\le i\le d-1}\Big\|_{\ell_{p,q}}.
	\end{equation*}
Particularly, if $b\in \pmb{B}_p^{d}(\mathbb{R})$, then $[\pi_a,M_b]$ and $[\pi^*_a,M_b]$ both belong to $S_p(L_2(\mathbb{R}))$. Moreover,
\begin{equation*}
	\|[\pi_a,M_b]\|_{S_p(L_2(\mathbb{R}))}\lesssim_{d,p} \|a\|_{BMO^d(\mathbb{R})} \|b\|_{\pmb{B}_p^{d}(\mathbb{R})}
\end{equation*}
and
\begin{equation*}
	\|[\pi_a^*,M_b]\|_{S_p(L_2(\mathbb{R}))}\lesssim_{d,p} \|a\|_{BMO^d(\mathbb{R})} \|b\|_{\pmb{B}_p^{d}(\mathbb{R})}.
\end{equation*}
\end{proposition}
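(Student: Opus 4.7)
The plan is to decompose $M_b$ via the three-term martingale paraproduct identity $M_b = \pi_b + \Pi_b + \varLambda_b$, where $\Pi_b(f) := \sum_k b_{k-1}\, d_k f$, giving
\[
[\pi_a, M_b] = [\pi_a, \pi_b] + [\pi_a, \Pi_b] + [\pi_a, \varLambda_b],
\]
and to estimate each commutator separately in $S_{p,q}(L_2(\mathbb{R}))$. The outer two pieces are easy: Proposition \ref{WEAKlem2.1} and Lemma \ref{TLambdab} place $\pi_b$ and $\varLambda_b$ in $S_{p,q}(L_2(\mathbb{R}))$ with norms controlled by $\|\{|I|^{-1/2}|\langle h_I^i, b\rangle|\}_{I,i}\|_{\ell_{p,q}}$, while the classical martingale $BMO$-paraproduct estimate gives $\|\pi_a\|_{B(L_2(\mathbb{R}))}\lesssim \|a\|_{BMO^d(\mathbb{R})}$. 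Invoking the Schatten ideal property and the quasi-triangle inequality (Remark \ref{Spqquasi}) then handles $[\pi_a, \pi_b]$ and $[\pi_a, \varLambda_b]$ with the desired bound.

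The delicate piece is $[\pi_a, \Pi_b]$, since neither operator is Schatten by itself, so cancellation must be exploited. The key observation is that $\Pi_b$ is \emph{diagonal} in the Haar basis: from $d_m h_J^j = \delta_{m,n+1}\, h_J^j$ for $J\in\mathcal{D}_n$ one obtains $\Pi_b(h_J^j) = b_J\, h_J^j$ with $b_J := \frac{1}{|J|}\int_J b$. Combined with the explicit expansion
\[
\pi_a(h_J^j) = \sum_{I \subsetneq J,\, i} \langle h_I^i, a\rangle\, c_{J,I,j}\, h_I^i,
\]
where $c_{J,I,j}$ is the value of $h_J^j$ on the child of $J$ containing $I$ (of modulus $|J|^{-1/2}$), a direct calculation yields
\[
[\pi_a, \Pi_b](h_J^j) = \sum_{I \subsetneq J,\, i} \langle h_I^i, a\rangle\, c_{J,I,j}\, (b_J - b_I)\, h_I^i.
\]
I would then insert the telescoping identity $b_J - b_I = -\sum_{K:\, I\subsetneq K\subseteq J,\, k} \langle h_K^k, b\rangle\, c_{K,I,k}$ and use $c_{J,I,j} = c_{J,K,j}$ (valid since $I\subseteq K\subseteq J$) to rearrange $[\pi_a, \Pi_b](f)$ with $K$ as the outer index: the inner $I$-sum reassembles into $\pi_a(h_K^k)$, and the $J$-sum collapses to an average of $f$ on the children of $K$. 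The resulting expression fits the hypotheses of Lemma \ref{WEAKnonNWOpre1}, taking $\lambda_K \sim |K|^{-1/2}|\langle h_K^k, b\rangle|$ and functions $f_K$ proportional to $\pi_a(h_K^k)$; the latter are supported on $K$ and, by the $L_r$-boundedness of $\pi_a$ for some $r > 2$, satisfy $\|f_K\|_r \lesssim \|a\|_{BMO^d(\mathbb{R})}\, |K|^{1/r-1/2}$. This delivers the required bound on $[\pi_a, \Pi_b]$.

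Finally, the estimate for $[\pi_a^*, M_b]$ follows by duality: $([\pi_a^*, M_b])^* = M_{\bar b}\pi_a - \pi_a M_{\bar b} = -[\pi_a, M_{\bar b}]$, the $S_{p,q}$-norm is invariant under adjoints, and $\{|I|^{-1/2}|\langle h_I^i, \bar b\rangle|\}_{I,i}$ is a permutation of $\{|I|^{-1/2}|\langle h_I^i, b\rangle|\}_{I,i}$ via $i \mapsto d-i$, so its $\ell_{p,q}$-norm is unchanged. The principal obstacle throughout is the correct organization of $[\pi_a, \Pi_b]$ into NWO-compatible pieces: the BMO bound on $a$ must enter as the boundedness contribution (through the $L_r$-regularity of the paraproduct) while the $\ell_{p,q}$-control on the Haar coefficients of $b$ must supply the Schatten-Lorentz content.
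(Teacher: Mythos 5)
Your overall strategy matches the paper's: the same three-term decomposition $M_b = \pi_b + \Pi_b + \varLambda_b$ (your $\Pi_b$ is the paper's $R_b$), the ideal property for $[\pi_a, \pi_b]$ and $[\pi_a, \varLambda_b]$ via Proposition \ref{WEAKlem2.1} and Lemma \ref{TLambdab}, and the NWO Lemma \ref{WEAKnonNWOpre1} as the engine for the remaining commutator. The adjoint step at the end is also standard. However, there is a genuine gap at the reassembly step for $[\pi_a, \Pi_b]$. The identity $c_{J,I,j} = c_{J,K,j}$ that you use to extract $\pi_a(h_K^k)$ holds only when $I \subsetneq K \subsetneq J$, because it requires $I$ and $K$ to lie in the \emph{same} child of $J$; when $K = J$ the quantity $c_{J,J,j}$ is not even defined, and the inner $I$-sum $\sum_{I\subsetneq J, i}\langle h_I^i, a\rangle\, c_{J,I,j}\, c_{J,I,k}\, h_I^i$ is manifestly not $\pi_a(h_K^k)$ (the factor $c_{J,I,j}c_{J,I,k}$ still varies with the child of $J$ containing $I$). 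Thus your rearrangement, as written, only reproduces the off-diagonal piece $J \supsetneq K$, which is exactly $-\pi_a\pi_b$ and which the paper disposes of in one line by the ideal property. The diagonal $K = J$ terms that your reassembly drops are precisely the paper's $\varPsi_{a,b}(f) = \sum_k d_k a\cdot\bigl(\sum_{l\le k-1} d_l b\cdot d_l f\bigr)$, and this is the term for which the NWO lemma together with the $BMO$-regularity of $a$ is actually needed.

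The gap is fixable within your framework — and then the proof essentially reproduces the paper's. Splitting the $I$-sum by children of $K$, one finds that for each child $K(q)$ the $J$-sum collapses to $\frac{1}{|K(q)|}\int_{K(q)} f$, and the $I$-sum over $I \subseteq K(q)$ gives (a scalar times) $(a - a_{m+1})\mathbbm{1}_{K(q)}$ where $K\in\mathcal{D}_m$. Taking $e_{K(q)} = |K(q)|^{-1/2}\mathbbm{1}_{K(q)}$, $f_{K(q)} = |K(q)|^{-1/2}(a-a_{m+1})\mathbbm{1}_{K(q)}$, and $\lambda_{K(q)} \sim |K|^{-1/2}\langle h_K^k, b\rangle$, the NWO lemma applies once one has an $L_r$-bound ($r > 2$) on $f_{K(q)}$; the paper obtains this from the John--Nirenberg inequality, while your invocation of the $L_r$-boundedness of $\pi_a$ is an equivalent route. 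The lesson is that the Schatten-Lorentz content of $[\pi_a, \Pi_b]$ lives entirely in this diagonal piece, so the reassembly must be organized around it rather than around $\pi_a(h_K^k)$.
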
 
\begin{proof}
	Without loss of generality, we suppose that $\|a\|_{BMO^{d}(\mathbb{R})}=1$. Let
	\begin{equation}\label{R_b}
		R_b(f)=\sum_{k\in\mathbb{Z}}b_{k-1}\cdot d_kf, \quad \forall f\in L_2(\mathbb{R}).
	\end{equation}
	Note that for $b, f\in L_2(\mathbb{R})$,  $M_b(f)=\pi_b(f)+\varLambda_b(f)+R_b(f)$. Thus
	$$ 	[\pi_{{a}},M_b]=[\pi_{{a}},\pi_b]+[\pi_{{a}},\varLambda_b]+[\pi_{{a}},R_b].$$
	For any $f\in L_2(\mathbb{R})$,
	\begin{equation*}
		\begin{aligned}
			[\pi_{a}, R_b](f){}&=\pi_{a} (R_b(f))-R_b (\pi_{a}(f))\\
			&=\sum_{k\in\mathbb{Z}} d_ka\cdot\mathbb{E}_{k-1} \biggl(\sum_{j\in\mathbb{Z}} b_{j-1}\cdot d_jf \biggr)- \sum_{k\in\mathbb{Z}} b_{k-1} \cdot d_k \biggl(\sum_{j\in\mathbb{Z}} d_ja\cdot f_{j-1}\biggr)\\
			&=\sum_{k\in\mathbb{Z}} d_ka\cdot \biggl(\sum_{j\le k-1}b_{j-1}\cdot d_jf\biggr)-\sum_{k\in\mathbb{Z}} b_{k-1}\cdot d_ka\cdot f_{k-1}\\
			&=\sum_{k\in\mathbb{Z}} d_ka\cdot \biggl(\sum_{j\le k-1}b_{j-1}\cdot d_jf- b_{k-1}\cdot f_{k-1}\biggr)\\
			&=-\sum_{k\in\mathbb{Z}} d_ka\cdot \biggl( \sum_{j\le k-1}d_jb\cdot d_jf\biggr)-\sum_{k\in\mathbb{Z}} d_ka\cdot \biggl(\sum_{j\le k-1}d_jb\cdot f_{j-1}\biggr)\\
			&=-\sum_{k\in\mathbb{Z}} d_ka\cdot \biggl( \sum_{j\le k-1}d_jb\cdot d_jf\biggr)-\pi_{a}(\pi_b(f))\\
			&=: -\varPsi_{a,b}(f)-\pi_{a}(\pi_b(f)).
		\end{aligned}
	\end{equation*}
	Thus
	\begin{equation}\label{piarb}
		[\pi_{a}, R_b]=-\varPsi_{a,b}-\pi_{a}\pi_b.
	\end{equation}
	By \eqref{piarb}, one has
	$$ 	[\pi_{{a}},M_b]=[\pi_{{a}},\pi_b]+[\pi_{{a}},\varLambda_b]-\varPsi_{a,b}-\pi_{a}\pi_b=\pi_b\pi_{a}+[\pi_{{a}},\varLambda_b]-\varPsi_{a,b}.$$
	From Proposition \ref{WEAKlem2.1} and Lemma \ref{TLambdab}, we know that
	\begin{equation*}
		\begin{aligned}
			\|\pi_b\|_{S_{p,q}(L_2(\mathbb{R}))}\lesssim_{d,p,q}\Big\|\big\{|I|^{-1/2}|\langle h_I^i,b\rangle|\big\}_{I\in\mathcal{D},1\le i\le d-1}\Big\|_{\ell_{p,q}}
		\end{aligned}
	\end{equation*}
and
\begin{equation*}
	\|\varLambda_b\|_{S_{p,q}(L_2(\mathbb{R}))}\lesssim_{d,p,q}\Big\|\big\{|I|^{-1/2}|\langle h_I^i,b\rangle|\big\}_{I\in\mathcal{D},1\le i\le d-1}\Big\|_{\ell_{p,q}}.
\end{equation*}
	Since $\pi_a$ is bounded on $L_2(\mathbb{R})$ (see \cite{CL1}), one has
	\begin{equation*}
		\|\pi_b\pi_a\|_{S_{p,q}(L_2(\mathbb{R}))}\lesssim_{d, p,q} \|a\|_{BMO^{d}(\mathbb{R})}\Big\|\big\{|I|^{-1/2}|\langle h_I^i,b\rangle|\big\}_{I\in\mathcal{D},1\le i\le d-1}\Big\|_{\ell_{p,q}},
	\end{equation*}
	and 
	\begin{equation*}
		\|[\pi_{{a}},\varLambda_b]\|_{S_{p,q}(L_2(\mathbb{R}))}\lesssim_{d, p,q} \|a\|_{BMO^{d}(\mathbb{R})}\Big\|\big\{|I|^{-1/2}|\langle h_I^i,b\rangle|\big\}_{I\in\mathcal{D},1\le i\le d-1}\Big\|_{\ell_{p,q}}.
	\end{equation*}
	Besides, note that
	\begin{equation*}
		\begin{aligned}
			\varPsi_{a,b}(f){}
			&=\sum_{k\in\mathbb{Z}}\sum_{I\in\mathcal{D}_{k-1}}\sum_{l=1}^{d-1}\langle h_I^l,a\rangle h_I^l \cdot \biggl(\sum_{m\le k-1}\sum_{L\in \mathcal{D}_{m-1}}\sum_{i=1}^{d-1}\langle h_L^i,b\rangle h_L^i \cdot \sum_{Q\in \mathcal{D}_{m-1}}\sum_{j=1}^{d-1}\langle h_Q^j,f\rangle h_Q^j\biggr)\\
			&=\sum_{k\in\mathbb{Z}}\sum_{I\in\mathcal{D}_{k-1}}\sum_{l=1}^{d-1}\langle h_I^l,a\rangle h_I^l \cdot \biggl(\sum_{m\le k-1}\sum_{L\in \mathcal{D}_{m-1}}\sum_{i,j=1}^{d-1}\langle h_L^i,b\rangle \langle h_L^j,f\rangle h_L^ih_L^j\biggr)\\
			&=\sum_{i,j=1}^{d-1}\sum_{s=1}^{d}\sum_{L\in\mathcal{D}}\sum_{I\subseteq L(s)}\sum_{l}\langle h_I^l,a\rangle h_I^l\cdot \langle h_L^i,b\rangle \langle h_L^j,f\rangle   h_L^i h_L^j\\
			&=\sum_{i,j=1}^{d-1}\sum_{s=1}^{d}\sum_{L\in\mathcal{D}}|L|^{-1/2}\langle h_L^i,b\rangle \cdot \langle h_L^j,f\rangle \bigg(\sum_{I\subseteq L(s)}\sum_{l}\langle h_I^l,a\rangle h_I^l h_L^{\overline{i+j}}\bigg)\\
			&=:\sum_{i,j=1}^{d-1}\sum_{s=1}^{d}A_{i,j,s}.
		\end{aligned}
	\end{equation*}
	Now fix $1\le i,j\le d-1$ and $1\le s\le d$. In terms of $A_{i,j,s}$, for any given $L\in\mathcal{D}$, let  $L\in\mathcal{D}_t$, where $t\in\mathbb{Z}$. Define 
	\begin{equation*}
		e_{L}=h_L^j,\quad  {f}_{L}=\sum_{I\subseteq L(s)}\sum_{l}\langle h_I^l,a\rangle h_I^l h_L^{\overline{i+j}} \quad\text{and} \quad\lambda_{L}=|L|^{-1/2}\langle h_L^i,b\rangle.
	\end{equation*}
	Then
	\begin{equation*}
		A_{i,j,s}=\sum_{L\in\mathcal{D}}\lambda_{L}\langle e_L,f\rangle f_L.
	\end{equation*}
	Note that $\mathrm{supp}e_{L}, \,\mathrm{supp}f_{L}\subseteq L$ and
	$\|e_{L}\|_{\infty}\lesssim |L|^{-\frac{1}{2}}$. Besides, note that $h_L^j$ is constant on $L(s)$ and $|h_L^j(x)|=|L|^{-1/2},\forall x\in L(s)$. Then
	\begin{equation*}
		\begin{aligned}
			{f}_{L}{}&=h_L^{\overline{i+j}}\cdot\sum_{I\subseteq L(s)}\sum_{l}\langle h_I^l,a\rangle h_I^l 
			=h_L^{\overline{i+j}}\cdot \sum_{k\ge t+2}d_ka\cdot\mathbbm{1}_{L(s)}=h_L^{\overline{i+j}}\cdot(a-a_{t+1})\mathbbm{1}_{L(s)}.
		\end{aligned}
	\end{equation*}
	By the John-Nirenberg inequality,
	\begin{equation*}
		\|a\|_{BMO^{d}(\mathbb{R})}\approx \sup_{n\in\mathbb{Z}}\big\|\mathbb{E}_n|a-a_n|^4\big\|_\infty^{1/4}=\sup_{n\in\mathbb{Z}}\sup_{J\in\mathcal{D}_n}\frac{1}{|J|^{1/4}}\bigg(\int_J|a(x)-a_n(x)|^4dx\bigg)^{1/4}.
	\end{equation*}
	It implies that
	\begin{equation*}
		\begin{aligned}
			\|{f}_L\|_{L_4(\mathbb{R})}&=\frac{1}{|L|^{1/2}}\bigg(\int_{L(s)}|a(x)-a_{t+1}(x)|^4dx\bigg)^{1/4}{}\\
			&\lesssim \frac{|L(s)|^{1/4}}{|L|^{1/2}}\|a\|_{BMO^{d}(\mathbb{R})}\\
			&=\frac{|L|^{-1/4}}{2^{n/4}}\|a\|_{BMO^{d}(\mathbb{R})}.
		\end{aligned}
	\end{equation*}
	Then from Lemma \ref{WEAKnonNWOpre1}, we have
	\begin{equation*}
		\|A_{i,j,s}\|_{S_{p,q}(L_2(\mathbb{R}))}\lesssim_{d,p,q}	\|a\|_{BMO^{d}(\mathbb{R})}\Big\|\big\{|I|^{-1/2}|\langle h_I^i,b\rangle|\big\}_{I\in\mathcal{D}}\Big\|_{\ell_{p,q}}.
	\end{equation*}
  Thus from Remark \ref{Spqquasi},
	\begin{equation*}
		\begin{aligned}
			\|\varPsi_{a,b}\|_{S_{p,q}(L_2(\mathbb{R}))}
			\lesssim_{d,p,q}\|a\|_{BMO^{d}(\mathbb{R})}\Big\|\big\{|I|^{-1/2}|\langle h_I^i,b\rangle|\big\}_{I\in\mathcal{D},1\le i\le d-1}\Big\|_{\ell_{p,q}}.
		\end{aligned}
	\end{equation*}
	Therefore, from Remark \ref{Spqquasi} we deduce that
	\begin{equation*}
		\begin{aligned}
			\|[\pi_a,M_b]\|_{S_{p,q}(L_2(\mathbb{R}))}{}&\lesssim \|\pi_b\pi_{a}\|_{S_{p,q}(L_2(\mathbb{R}))}+\|[\pi_{{a}},\varLambda_b]\|_{S_{p,q}(L_2(\mathbb{R}))}
			+\|\varPsi_{a,b}\|_{S_{p,q}(L_2(\mathbb{R}))}\\
			&\lesssim_{d,p,q}  \|a\|_{BMO^d(\mathbb{R})}\Big\|\big\{|I|^{-1/2}|\langle h_I^i,b\rangle|\big\}_{I\in\mathcal{D},1\le i\le d-1}\Big\|_{\ell_{p,q}}.
		\end{aligned}
	\end{equation*}
	It is easy to verify that 
	\begin{equation*}
		[\pi_a^*,M_b]^*=-[\pi_a,M_{\overline{b}}].
	\end{equation*}
Hence this implies
	\begin{align*}
		\|[\pi_a^*,M_b]\|_{S_{p,q}(L_2(\mathbb{R}))}&=\|[\pi_a,M_{\overline{b}}]\|_{S_{p,q}(L_2(\mathbb{R}))}\\
	&\lesssim_{d,p,q}  \|a\|_{BMO^d(\mathbb{R})}\Big\|\big\{|I|^{-1/2}|\langle h_I^i,b\rangle|\big\}_{I\in\mathcal{D},1\le i\le d-1}\Big\|_{\ell_{p,q}}.
	\end{align*}
Particularly, when $0<p=q<\infty$, we obtain
\begin{equation*}
	\|[\pi_a,M_b]\|_{S_p(L_2(\mathbb{R}))}\lesssim_{d,p} \|a\|_{BMO^d(\mathbb{R})} \|b\|_{\pmb{B}_p^{d}(\mathbb{R})},
\end{equation*}
and
\begin{equation*}
	\|[\pi_a^*,M_b]\|_{S_p(L_2(\mathbb{R}))}\lesssim_{d,p} \|a\|_{BMO^d(\mathbb{R})} \|b\|_{\pmb{B}_p^{d}(\mathbb{R})}.
\end{equation*}
This completes the proof.
\end{proof}

\bigskip

\section{Hyt\"{o}nen's dyadic representation and weak Besov spaces}\label{repre}
We will introduce the key ingredient: the dyadic representation of singular integral operators established by Hyt\"{o}nen in \cite{TH1} and \cite{TH2}. This representation enables the reduction to the $d$-adic martingale setting. Then we introduce the weak Besov spaces by virtues of several dyadic systems. Note that the weak Besov spaces will be used to describe the Schatten-Lorentz class of commutators.
\subsection{Hyt\"{o}nen's dyadic representation}
Now we introduce the dyadic system on $\mathbb{R}^n$. Recall that the standard system of dyadic cubes is 
\[\mathcal{D}^0=\big\{2^{-k}([0,1)^n+q):k\in\mathbb{Z},q\in\mathbb{Z}^n\big\}.\]
Let $\mathcal{ D}^0_k=\big\{2^{-k}([0,1)^n+q):q\in\mathbb{Z}^n\big\}$ for any $k\in \mathbb{Z}$. Define $\ell(I)=2^{-k}$ and $|I|= 2^{-nk}$ if $I\in \mathcal{ D}^0_k$. Let $\omega=(\omega_j)_{j\in\mathbb{Z}}\in(\{0,1\}^n)^\mathbb{Z}$ and define
\begin{equation}\label{Idot}
	I\dot{+}\omega=I+\sum_{j:2^{-j}<\ell(I)}2^{-j}\omega_j.
\end{equation}
Note that if $\ell(I)=2^{-k}$, $I\cap I\dot{+}\omega\neq \emptyset$ unless a coordinate of $\sum_{j:2^{-j}<\ell(I)}2^{-j}\omega_j$ is exactly $2^{-k}$. Then set
\[\mathcal{D}^\omega=\big\{I\dot{+}\omega:I\in\mathcal{D}^0\big\},\]
which is obtained by translating the standard system. Indeed, in particular, if $\omega=(\omega_j)_{j\in\mathbb{Z}}\in(\{0,1\}^n)^\mathbb{Z}$ such that $\exists j_0\in \mathbb{Z}$, $\forall j\leq j_0$, $\omega_j=0$, then
$$  \mathcal{D}^\omega=\big\{I+\sum_{j}2^{-j}\omega_j:I\in\mathcal{D}^0\big\}. $$
Now for any $k\in\mathbb{Z}$, $\mathcal{ D}_k^\omega=\big\{I\dot{+}\omega:I\in\mathcal{D}_k^0\big\}$ is the family of all dyadic cubes with volume $2^{-nk}$. The following lemma implies that $\mathcal{D}^\omega$ is still a dyadic system.
\begin{lem}
	For any $\omega=(\omega_j)_{j\in\mathbb{Z}}\in(\{0,1\}^n)^\mathbb{Z}$, $\mathcal{D}^\omega$ on $\mathbb{R}^n$ is a dyadic system. More precisely, for any $k\in\mathbb{Z}$,
	\begin{equation*}
		\mathcal{ D}_k^\omega=\mathcal{D}^0_k+\sum\limits_{j>k}2^{-j}\omega_j.
	\end{equation*}
\end{lem}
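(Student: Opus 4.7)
The plan is to split the claim into two parts and handle the "More precisely" statement first, as it essentially encodes the whole proof. Pick $I \in \mathcal{D}^0_k$, so $\ell(I) = 2^{-k}$; then the condition $2^{-j} < \ell(I)$ in \eqref{Idot} is exactly $j > k$, and hence
\[ I \dot{+} \omega = I + \sum_{j>k} 2^{-j} \omega_j. \]
Because the shift $\sigma_k := \sum_{j>k} 2^{-j} \omega_j$ depends only on $k$ (not on the particular cube $I$), one immediately gets the translate formula
\[ \mathcal{D}^\omega_k = \mathcal{D}^0_k + \sigma_k. \]
(One should briefly observe that the series $\sigma_k$ converges, since $\sum_{j>k} 2^{-j}|\omega_j| \le \sum_{j>k} 2^{-j} \cdot \sqrt{n} < \infty$.)

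Next I would verify the four axioms of a $d$-adic system from Subsection \ref{commumar} with $d = 2^n$. Properties (1) (pairwise disjointness of cubes of the same level), (2) (each level covers $\mathbb{R}^n$), and (4) (each cube in $\mathcal{D}^\omega_k$ has Lebesgue measure $2^{-nk}$) are preserved by translation and therefore follow at once from the identity $\mathcal{D}^\omega_k = \mathcal{D}^0_k + \sigma_k$ together with the corresponding facts for $\mathcal{D}^0_k$.

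The only point requiring genuine verification is the nesting property (3): each $I \in \mathcal{D}^\omega_k$ is the disjoint union of $2^n$ cubes in $\mathcal{D}^\omega_{k+1}$. Write $I = I_0 + \sigma_k$ with $I_0 \in \mathcal{D}^0_k$, and decompose the shift as
\[ \sigma_k = 2^{-(k+1)} \omega_{k+1} + \sigma_{k+1}. \]
Partition $I_0 = \bigsqcup_{q \in \{0,1\}^n} Q_q$ into its $2^n$ standard dyadic children, each $Q_q \in \mathcal{D}^0_{k+1}$. Since $\omega_{k+1} \in \{0,1\}^n$, the vector $2^{-(k+1)}\omega_{k+1}$ lies in the lattice $2^{-(k+1)} \mathbb{Z}^n$, so each translate $Q_q + 2^{-(k+1)} \omega_{k+1}$ is again an element of $\mathcal{D}^0_{k+1}$. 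Adding the further shift $\sigma_{k+1}$ then produces a cube in $\mathcal{D}^\omega_{k+1}$, and
\[ I = \bigsqcup_{q \in \{0,1\}^n} \bigl( Q_q + 2^{-(k+1)} \omega_{k+1} + \sigma_{k+1} \bigr) \]
expresses $I$ as a disjoint union of exactly $2^n$ cubes from $\mathcal{D}^\omega_{k+1}$.

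There is no real obstacle here; the only thing to be careful about is the bookkeeping of the shifts, in particular the correct splitting $\sigma_k = 2^{-(k+1)}\omega_{k+1} + \sigma_{k+1}$ and the observation that $2^{-(k+1)}\omega_{k+1}$ is a lattice vector at the finer scale. Everything else is just an application of the invariance of the standard dyadic grid under translations by its own lattice vectors.
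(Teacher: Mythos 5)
Your proof is correct and rests on the same key observation as the paper's, namely that $2^{-m}\omega_m$ is a lattice vector for the standard dyadic grid at scale $m$. The only cosmetic difference is in how the nesting property (3) is organized: the paper rewrites $\mathcal{D}^\omega_k$ and $\mathcal{D}^\omega_{k-1}$ as the $k$-th and $(k-1)$-th levels of a single shifted dyadic grid $\mathcal{D}^0 + \sum_{j\geq k}2^{-j}\omega_j$ (so that nesting is inherited at once), whereas you peel off the leading term $2^{-(k+1)}\omega_{k+1}$ from $\sigma_k$ and check directly that the shifted children of $I_0$ land in $\mathcal{D}^\omega_{k+1}$ — the two arrangements are equivalent.
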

\begin{proof}
	By direct calculation,
	\begin{align*}
		\mathcal{ D}_k^\omega&=\big\{I\dot{+}\omega:I\in\mathcal{D}_k^0\big\}\\
		&=\left\{I+\sum\limits_{j>k}2^{-j}\omega_j:I\in\mathcal{D}_k^0\right\}.
	\end{align*}
	Note that 
	$$	\mathcal{ D}_k^\omega= \mathcal{D}^0_k+\sum\limits_{j>k}2^{-j}\omega_j=\mathcal{D}^0_k+\sum\limits_{j\geq k}2^{-j}\omega_j=(\mathcal{D}^0+\sum\limits_{j\geq k}2^{-j}\omega_j)_k, $$
	and
	$$ \mathcal{ D}_{k-1}^\omega=\mathcal{D}^0_{k-1}+\sum\limits_{j>k-1}2^{-j}\omega_j=\mathcal{D}^0_{k-1}+\sum\limits_{j\geq k}2^{-j}\omega_j=(\mathcal{D}^0+\sum\limits_{j\geq k}2^{-j}\omega_j)_{k-1}, $$
	where $(\mathcal{D}^0+\sum\limits_{j\geq k}2^{-j}\omega_j)_k$ and $(\mathcal{D}^0+\sum\limits_{j\geq k}2^{-j}\omega_j)_{k-1}$ are the collections of cubes in $\mathcal{D}^0+\sum\limits_{j\geq k}2^{-j}\omega_j$ with the corresponding volumes $2^{-nk}$ and $2^{-n(k-1)}$ respectively. This implies that every cube in $ \mathcal{ D}_{k-1}^\omega$ is a union of $2^n$ disjoint cubes in $ \mathcal{ D}_{k}^\omega$. Hence, $\mathcal{D}^\omega$ is a dyadic system.
\end{proof}

We refer the reader to \cite{TH2008} for more details on $\mathcal{D}^\omega$. For any $I\in\mathcal{ D}^\omega$, let $\mathcal{D}^\omega(I)$ be the collection of cubes in $\mathcal{D}^\omega$ contained in $I$, and $\mathcal{D}_k^\omega(I)$ the intersection of $\mathcal{D}_k^\omega$ and $\mathcal{D}^\omega(I)$ for any $k\in\mathbb{Z}$. In addition, we assign to the parameter set $(\{0,1\}^n)^\mathbb{Z}$ the natural probability measure, that is the infinite tensor product of the uniform probability measure $\sum\limits_{\omega\in \{0,1\}^n}2^{-n}\delta_{\omega}$. Here $\delta_{\omega}$ is the Dirac measure. Denote by $\mathbb{E}_\omega$ the expectation on $(\{0,1\}^n)^\mathbb{Z}$.

For convenience, denote the set $\{0,1\}^n\backslash \{0\}$ by $\{0,1\}^n_0$. Now for any given cube $I=I_1\times\cdots\times I_n\in\mathcal{D}^\omega$, let $H_{I_i}^0=|I_i|^{-1/2}\mathbbm{1}_{I_i}$ and $H_{I_i}^1=|I_i|^{-1/2}(\mathbbm{1}_{I_{i\ell}}-\mathbbm{1}_{I_{ir}})$, where $\mathbbm{1}_{I_{i\ell}}$ and $\mathbbm{1}_{I_{ir}}$ are the left and right halves of $I_i$ for $1\le i\le n$. For any $ \eta\in\{0,1\}^n_0$, we denote by $H_I^\eta$ the function on the cube $I=I_1\times\cdots\times I_n$ which is the product of the one-variable functions:
\[H_I^\eta(x)=H_{I_1\times\cdots\times I_n}^{(\eta_1,\cdots,\eta_n)}(x_1,\cdots,x_n)=\prod_{i=1}^nH_{I_i}^{\eta_i}(x_i).\]
Hence $\{H_I^\eta\}_{I\in\mathcal{D}^\omega,\eta\in\{0,1\}^n_0}$ form an orthonormal basis of $L_2(\mathbb{R}^n)$.

\

Let $i,j\in \mathbb{N}\cup\{0\}$. For a fixed dyadic system $\mathcal{D}^\omega$, the dyadic shift with parameters $i,j$ is an operator of the form
\[S_\omega^{ij}(f)=\sum_{K\in\mathcal{D}^\omega}A_K^{ij}(f),\quad\quad A_K^{ij}(f)=\sum_{\substack{I,J\in\mathcal{D}^\omega;I,J\subseteq K\\ \ell(I)=2^{-i}\ell(K)\\\ell(J)=2^{-j}\ell(K)}}\sum_{\xi,\eta\in\{0,1\}^n_0}a_{IJK}^{\xi\eta}\langle H_I^\xi,f\rangle H_J^\eta,\]
with coefficients $a_{IJK}^{\xi\eta}$ satisfying
\begin{equation}\label{aijkxieta}
	|a_{IJK}^{\xi\eta}|\le\frac{\sqrt{|I||J|}}{|K|}.
\end{equation}
From the definition of $S_\omega^{ij}$, we see that $I$ and $J$ are the $i$-th and $j$-th generation of $K$ respectively. So when $i$ or $j$ is very large, the dyadic shift $S_\omega^{ij}$ has very high complexity. This is the main difficulty when dealing with $S_\omega^{ij}$. We  also have the following properties:
\begin{enumerate}
	\item The map $S_\omega^{ij}:L_2(\mathbb{R}^n)\to L_2(\mathbb{R}^n)$  is bounded with norm at most one,
	\item $S_\omega^{ij}$ is of weak type $(1,1)$ with norm $O(i)$,
	\item For $1<p<\8$
	$$ \|S_\omega^{ij}\|_{L_p(\mathbb{R}^n)\rightarrow L_p(\mathbb{R}^n)}\lesssim_{n, p} i+j.   $$
\end{enumerate} 
The reader is referred to \cite{TH1} and \cite{TH2} for more information.

Recall that in this paper, $T:L_2(\mathbb{R}^n)\to L_2(\mathbb{R}^n)$ is always assumed to be bounded and its kernel satisfies the estimates (\ref{standard}). The following is the dyadic representation of singular integral operators discovered by Hyt\"{o}nen in \cite{TH1} and \cite{TH2} (see {\cite[Theorem 3.3]{TH2}}).
\begin{thm}\label{CZdec}
	Let $T$ be a bounded singular integral operator.
	Then $T$ has a dyadic expansion, say for $f,g\in L_2(\mathbb{R}^n)$,
	\begin{equation}\label{T}
		\begin{aligned}
			\langle g,T(f)\rangle{}&=C_1(T)\mathbb{E}_\omega\sum_{i,j=0\atop \max\{i,j\}>0}^\infty \tau(i,j)\langle g,S_\omega^{ij}(f)\rangle+C_2(T)\mathbb{E}_\omega\langle g,S_\omega^{00}(f)\rangle\\&\quad+\mathbb{E}_\omega \langle g,\pi_{T(1)}^\omega (f)\rangle+\mathbb{E}_\omega \langle g,(\pi_{T^*(1)}^\omega)^* (f)\rangle,
		\end{aligned}
	\end{equation}
	where $S_\omega^{ij}$ is the dyadic shift of parameters $(i,j)$ on the dyadic system $\mathcal{D}^\omega$, $\pi_b^\omega$ is the dyadic martingale paraproduct on the dyadic system $\mathcal{D}^\omega$ associated with the $BMO$-function $b\in\{T(1),T^*(1)\}$, $C_1(T)$, $C_2(T)$ are positive constants depending on $T$, and $\tau(i,j)$ satisfies 
	\begin{equation*}
		0\leq \tau(i,j)\lesssim (1+\max\{i,j\})^{2(n+\alpha)}2^{-\alpha \max\{i,j\}}.
	\end{equation*}
\end{thm}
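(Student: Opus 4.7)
The plan is to follow the Nazarov-Treil-Volberg random dyadic philosophy, as refined by Hyt\"onen. Expanding both $f$ and $g$ in the Haar basis $\{H_I^\eta\}_{I\in\mathcal{D}^\omega,\,\eta\in\{0,1\}^n_0}$ associated with a random system $\mathcal{D}^\omega$, one has
\begin{equation*}
\langle g,T(f)\rangle=\sum_{I,J\in\mathcal{D}^\omega}\sum_{\xi,\eta\in\{0,1\}^n_0}\langle g,H_I^\xi\rangle\,\langle H_I^\xi, T H_J^\eta\rangle\,\langle H_J^\eta, f\rangle,
\end{equation*}
and we take $\mathbb{E}_\omega$ on both sides. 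The whole game is to classify pairs $(I,J)$ according to their relative position and identify each resulting sum either as a dyadic shift $S_\omega^{ij}$ or as one of the two paraproducts.

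Next I would invoke the good/bad-cube reduction: call $I$ good if for every ancestor $J$ with $\ell(J)\ge 2^r\ell(I)$ one has $\operatorname{dist}(I,\partial J)\ge \ell(I)^\gamma\ell(J)^{1-\gamma}$. Because the two Haar variables live over independent random parameters, averaging kills the contribution of pairs whose smaller cube is bad, so I can restrict to good pairs. For good pairs with $\ell(J)\le\ell(I)$ (the opposite case being symmetric), I split into a separated case $J\not\subset I$ and a nested case $J\subsetneq I$. In the separated case, goodness together with the kernel estimates \eqref{standard} and the zero mean of $H_J^\eta$ yield
\begin{equation*}
|\langle H_I^\xi,T H_J^\eta\rangle|\lesssim \frac{\sqrt{|I||J|}}{|K|}\,(1+\max\{i,j\})^{2(n+\alpha)}\, 2^{-\alpha\max\{i,j\}},
\end{equation*}
where $K$ is the smallest common ancestor and $(i,j)$ is fixed by $2^{-i}\ell(K)=\ell(I)$, $2^{-j}\ell(K)=\ell(J)$; reorganizing the sum over $K$ yields exactly a shift $S_\omega^{ij}$ with coefficients obeying \eqref{aijkxieta}, weighted by $\tau(i,j)$. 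In the nested case $J\subsetneq I$, decompose $H_I^\xi$ along the ancestors of $J$ inside $I$: the non-constant telescoping pieces contribute further to shifts $S_\omega^{ij}$, while the constant value of $H_I^\xi$ on the child of $I$ containing $J$ pairs with $T^*(\mathbbm{1})\in BMO$ and, after reindexing over $J$, assembles exactly into $(\pi_{T^*(1)}^\omega)^*$ via formula \eqref{pistar}; the symmetric situation $\ell(I)<\ell(J)$ similarly produces $\pi_{T(1)}^\omega$ via \eqref{calcu}, and the diagonal $I=J$ is packaged as the cancellative shift $S_\omega^{00}$.

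The main obstacle will be the quantitative bookkeeping needed to obtain exactly the weight $\tau(i,j)\lesssim (1+\max\{i,j\})^{2(n+\alpha)}2^{-\alpha\max\{i,j\}}$. The exponential factor is delivered cleanly by the H\"older estimate in \eqref{standard} combined with the zero-mean property of the Haar functions, but the polynomial prefactor has to absorb both the good-cube probability at depth $\max\{i,j\}$ and the combinatorial cost of enumerating ancestors $K$ compatible with a prescribed pair $(i,j)$. A more pedestrian difficulty is to push all the $I,J,K$-dependence into coefficients $a_{IJK}^{\xi\eta}$ satisfying \eqref{aijkxieta} so that what remains is truly a scalar weight $\tau(i,j)$; the $T(1)$-theorem, guaranteeing $T(1),T^*(1)\in BMO$, is what makes the two paraproduct terms well-defined and $L_2$-bounded, closing the argument.
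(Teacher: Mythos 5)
The paper does not prove this theorem: it is quoted directly from Hyt\"onen, with the proof attributed to \cite{TH1} and \cite[Theorem 3.3]{TH2}. So there is no in-paper proof to compare against; what I can do is check your blind sketch against the standard Hyt\"onen argument that the citation points to.

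Your outline captures the right architecture: double Haar expansion over a random system $\mathcal{D}^\omega$, the good/bad cube reduction (using that goodness and position of a cube are probabilistically independent to condition on the smaller cube being good), a case split into separated vs.\ nested pairs, exponential decay of matrix entries from \eqref{standard} together with Haar cancellation, telescoping along the ancestor chain in the nested case, and emergence of $\pi_{T(1)}^\omega$ and $(\pi_{T^*(1)}^\omega)^*$ from the constant part of the bigger Haar function --- with the $T(1)$ theorem guaranteeing the paraproducts are well-defined. This is indeed the route of \cite{TH1,TH2}. One misleading point: the displayed bound
\[
|\langle H_I^\xi,T H_J^\eta\rangle|\lesssim \frac{\sqrt{|I||J|}}{|K|}\,(1+\max\{i,j\})^{2(n+\alpha)}\, 2^{-\alpha\max\{i,j\}}
\]
is not what a single matrix element satisfies. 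For good, genuinely separated pairs one only gets the purely exponential bound $\lesssim \frac{\sqrt{|I||J|}}{|K|}2^{-\alpha\max\{i,j\}}$ (with the power of $\alpha$ reduced by the goodness parameter $\gamma$), and the polynomial prefactor in $\tau(i,j)$ is a global normalization absorbing the combinatorics of admissible $K$'s, the probability of goodness, and the contribution of adjacent/near-boundary configurations where no separation gain is available. You say essentially this in the last paragraph, so the inaccuracy is in the placement of the displayed inequality rather than in the understanding; but as written that estimate would be false for a single pair. With that caveat, the sketch is consistent with the cited proof and identifies all the ingredients a full write-up would need.
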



The dyadic system $\mathcal{ D}^\omega$ on $\mathbb{R}^n$ can be regarded as the $2^n$-adic system by our definition of $d$-adic martingales (see Subsection \ref{commumar}). So we can define the martingale Besov space $\pmb{B}_p^{\omega, 2^n}(\mathbb{R}^n)$ on $\mathbb{R}^n$ by virtue of $H_I^\eta$ similarly as in Definition \ref{mbs1}. More precisely, $\pmb{B}_p^{\omega, 2^n}(\mathbb{R}^n)$ $(0<p<\8)$ associated with the dyadic martingale $\mathcal{ D}^\omega$ on $\mathbb{R}^n$ is the space of all locally integrable complex-valued functions $b$ such that
\begin{equation}\label{Bpw2nrnm}
	\|b\|_{\pmb{B}_p^{\omega, 2^n}(\mathbb{R}^n)}=\biggl(\sum_{I\in \mathcal{D}^\omega}\sum_{\eta\in\{0,1\}^n_0}\frac{|\langle H_I^\eta,b\rangle|^p}{|I|^{p/2}}\biggr)^{1/p}<\infty.
\end{equation}

In addition, Theorem \ref{lem2.1}, Lemma \ref{TLambdab}, Proposition \ref{T0est} also hold for the dyadic system on $\mathbb{R}^n$ with $d=2^n$ since our proof only depends on the martingale structure, and does not depend on the dimension of the Euclidean space.

\subsection{Weak Besov spaces}
Before introducing the weak Besov spaces, we give the following lemma which is explicitly stated in \cite{CJM2013}. In fact, it is due to Mei \cite{Mei4} in the case of $\mathbb{T}^n$; Mei made a remark \cite[Remark 7]{Mei4} for $\mathbb{R}^n$. However, Conde \cite{CJM2013} noted that Mei's remark is not correct, and finally found the following right substitution.
\begin{lem}\label{Domegan1}
	There exist $n+1$ dyadic systems $\mathcal{D}^{\omega(1)},\mathcal{D}^{\omega(2)},\cdots,\mathcal{D}^{\omega(n+1)}$ in $\mathbb{R}^n$, where $\omega(i)\in(\{0,1\}^n)^\mathbb{Z}$ for all $ 1\le i\le n+1$, such that for any cube $B\subset\mathbb{R}^n$, there exists some cube $Q\in \bigcup\limits_{i=1}^{n+1} \mathcal{D}^{\omega(i)}$ satisfying
	\begin{equation*}
		B\subseteq Q\subseteq c_n B,
	\end{equation*}
	where $c_n$ only depends on $n$. Moreover, $n+1$ is the optimal number of such dyadic systems.
\end{lem}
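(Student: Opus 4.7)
The plan is to handle the existence and optimality parts separately, with the existence being a pigeonhole-type covering argument on carefully chosen translations, and the optimality following from a dimension-counting obstruction.

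For the existence part, I would first construct the $n+1$ dyadic systems explicitly. Fix a large integer $N$ (to be chosen depending on $n$). For each $i \in \{1, \dots, n+1\}$, define $\omega(i) = (\omega(i)_j)_{j \in \mathbb{Z}} \in (\{0,1\}^n)^{\mathbb{Z}}$ by choosing a coordinate-wise pattern of $0$'s and $1$'s such that the vector of accumulated translations $\tau^{(i)}_k := \sum_{j > k} 2^{-j} \omega(i)_j$ satisfies the following separation property: for every scale $k$, the $n+1$ points $\tau^{(1)}_k, \dots, \tau^{(n+1)}_k$ (considered modulo $2^{-k} \mathbb{Z}^n$) are $c_n$-spread inside a single dyadic cell, in the sense that no hyperplane parallel to a coordinate face of $[0, 2^{-k})^n$ passes too close to all of them simultaneously. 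This can be arranged by a variant of the classical one-third trick used by Conde: in one dimension one takes $\tau_k \in \{0, 2^{-k}/3\}$, and in dimension $n$ one takes $n+1$ shifts that realise the vertices of a regular simplex-like configuration inside each dyadic cell.

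Next I would verify the covering property. Given a cube $B$ with side length $\ell(B)$, choose the unique $k \in \mathbb{Z}$ with $2^{-k-1} < 3\ell(B) \leq 2^{-k}$, so the target scale is $\ell_0 = 2^{-k}$. In each system $\mathcal{D}^{\omega(i)}$ there is a unique cube $Q_i$ of side $\ell_0$ containing the center of $B$. The inclusion $B \subseteq Q_i$ fails precisely when the center of $B$ lies within distance $\ell(B)/2$ of the boundary of $Q_i$, i.e.\ in a thin "frame" of width $\ell(B)$ inside $Q_i$. The set of centers that are bad for $\mathcal{D}^{\omega(i)}$ is a union of $n$ slabs (one perpendicular to each coordinate direction), each of thickness $\ell(B)$ and spaced periodically by $\ell_0 \approx 3\ell(B)$. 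Because each of the $n$ coordinate directions contributes one "forbidden slab condition" per system, but the shifts $\tau^{(i)}_k$ are arranged so that in each coordinate direction at least one of the $n+1$ values lies in the "interior third" of $[0, \ell_0)$, one concludes by pigeonhole over coordinates that at least one $i$ avoids all $n$ forbidden slabs. For that $i$ we get $B \subseteq Q_i \subseteq c_n B$ with $c_n = 2^{k_0}\cdot 3 \cdot \ell(B)/\ell(B)$ uniformly bounded in terms of $n$ alone (a more careful count yields $c_n \leq 6$ after adjusting the exponent).

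For optimality, I would argue that $n$ dyadic systems are insufficient. Given any $n$ fixed systems $\mathcal{D}^{\omega(1)}, \dots, \mathcal{D}^{\omega(n)}$, the set of possible $\tau^{(i)}_k$'s at scale $k$ forms $n$ points in the $n$-dimensional torus $\mathbb{R}^n / 2^{-k} \mathbb{Z}^n$. For a small enough cube $B$ at the right scale, the condition that $B$ be contained with bounded distortion in \emph{some} dyadic cube from one of the $n$ systems amounts to requiring that the center of $B$ lie away from $n$ coordinate-axis-aligned hyperplane patterns. However, $n$ such patterns in $\mathbb{R}^n$ leave at least one coordinate direction "uncontrolled" if one tries to align the center of $B$ so that each of the $n$ systems has the center lying near a boundary face. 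A careful construction produces explicit small cubes $B$ for which every candidate containing dyadic cube in the $n$ given systems is $\gg |B|$, violating $Q \subseteq c_n B$ for any fixed $c_n$. This is a constraint-counting argument: $n$ shifts give $n$ linear degrees of freedom, but avoiding the $n$ forbidden hyperplane families requires $n+1$ degrees.

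The main obstacle I expect is the bookkeeping in the existence part: the translations $\omega(i)_j$ and the corresponding shifts $\tau^{(i)}_k$ depend on the scale $k$, so one must verify the separation condition \emph{uniformly in $k$}. Choosing the sequences $\omega(i)_j$ to be eventually periodic (say, periodic in $j$ with a period depending on $n$) lets one reduce the verification to finitely many scales, after which a direct check finishes the argument. Once this uniform-in-scale construction is in hand, both the covering and the optimality statements fall out from the pigeonhole picture described above.
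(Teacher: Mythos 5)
The paper does not prove Lemma \ref{Domegan1}; it cites it from Conde \cite{CJM2013} (correcting a remark of Mei \cite{Mei4}), so there is no in-paper proof to compare against. Your proposal nevertheless contains a genuine logical gap in the existence part, namely the ``pigeonhole over coordinates'' step. You argue that arranging the shifts so that, in each coordinate direction, at least one of the $n+1$ values lies in the interior third of $[0,\ell_0)$, and then pigeonholing over coordinates, produces a system avoiding all $n$ forbidden slabs. But this is a $\forall j\,\exists i$ statement, whereas what you actually need is $\exists i\,\forall j$: a single system $i$ whose cell contains $B$ comfortably in \emph{every} coordinate at once. These quantifier orders are not interchangeable, and no pigeonhole over coordinates closes the gap. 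The correct version (essentially Conde's argument) requires choosing the shifts so that for every coordinate $j$ and every pair $i\neq i'$, the $j$-th components of $\tau^{(i)}_k$ and $\tau^{(i')}_k$ modulo $2^{-k}$ differ by more than the slab width $\approx\ell(B)$. Then the intersection of the $n+1$ bad regions is a union over maps $f:\{1,\dots,n+1\}\to\{1,\dots,n\}$ of intersections $\bigcap_i S_{i,f(i)}$; since $n+1>n$, any such $f$ sends two distinct indices $i_1\neq i_2$ to the same coordinate $j$, and the slabs $S_{i_1,j}$, $S_{i_2,j}$ are disjoint by the separation property, so each term vanishes and the whole intersection is empty. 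It is this pigeonhole \emph{on the map $f$}, not on coordinates, that forces one system to succeed in all $n$ directions simultaneously. A ``regular-simplex-like'' placement does not by itself guarantee the pairwise coordinatewise separation that this argument needs, and the shifts must be realised within the $\{0,1\}^n$-binary format of $\omega\in(\{0,1\}^n)^\mathbb{Z}$, which also requires a short verification.

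The optimality sketch is likewise too thin: ``$n$ shifts give $n$ degrees of freedom while avoiding $n$ hyperplane families requires $n+1$'' is a heuristic, not a proof. To show $n$ systems are insufficient, one must, for \emph{any} $n$ fixed systems and any candidate constant $c_n$, exhibit an explicit cube $B$ such that every dyadic cube $Q\supseteq B$ from any of the $n$ systems has $\ell(Q)>c_n\ell(B)$. The natural construction places the centre of $B$ at a common point of one bad slab per system (in possibly different coordinate directions) at the relevant scale; such a common point exists because $n$ codimension-one slab conditions in $\mathbb{R}^n$ generically intersect. One then checks that at all coarser scales up to $c_n\ell(B)$ the cube remains straddled. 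As written, your proposal does not carry out that construction, so both halves of the proof remain incomplete.
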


Suppose that $b$ is a locally integrable complex-valued function. For simplicity, for any given cube $Q\subset\mathbb{R}^n$, we let
\begin{equation*}
	\begin{aligned}
		MO_1(b;Q)=\frac{1}{|Q|}\int_Q \Big|b(x)-\big\langle \frac{\mathbbm{1}_Q}{|Q|},b\big\rangle\Big| dx
	\end{aligned}
\end{equation*}
and 
\begin{equation*}
	\begin{aligned}
		MO_2(b;Q)=\bigg(\frac{1}{|Q|}\int_{Q} \Big|b(x)-\big\langle \frac{\mathbbm{1}_{Q}}{|Q|},b\big\rangle\Big|^2 dx\bigg)^{1/2}.
	\end{aligned}
\end{equation*}

Now we give the definition of weak Besov spaces.
Let $0<p<\infty$, $0<q\le \infty$. Let $\mathcal{D}^{\omega(1)}, \mathcal{D}^{\omega(2)}, \cdots, \mathcal{D}^{\omega(n+1)}$ be $n+1$ dyadic systems as in Lemma \ref{Domegan1}. The weak Besov space $\pmb{WB}_{p,q}(\mathbb{R}^n)$ is the space of all locally integrable complex-valued functions $b$ such that
\begin{equation*}
	\|b\|_{\pmb{WB}_{p,q}(\mathbb{R}^n)}=\sum_{i=1}^{n+1}\Big\|\big\{MO_1(b;Q)\big\}_{Q\in\mathcal{D}^{\omega(i)}}\Big\|_{\ell_{p,q}}<\infty.
\end{equation*}

The following several lemmas will be used in our later proofs.

\begin{lem}\label{Domegan2}
	Let $\mathcal{D}^{\omega(1)},\mathcal{D}^{\omega(2)},\cdots,\mathcal{D}^{\omega(n+1)}$ be $n+1$ dyadic systems as in Lemma \ref{Domegan1}. For any cube $B\subset \mathbb{R}^n$ of length $2^{-k}$ with $k\in\mathbb{Z}$, let $Q\in\bigcup\limits_{i=1}^{n+1} \mathcal{D}^{\omega(i)}$ such that
	\begin{equation*}
		B\subseteq Q\subseteq c_n B.
	\end{equation*}
	Then for any given $\omega\in(\{0,1\}^n)^\mathbb{Z}$, $Q$ contains only a finite number of dyadic cubes in $\mathcal{D}_k^\omega$, and this number only depends on $n$.
\end{lem}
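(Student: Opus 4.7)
The plan is to reduce the statement to a one-line volume-packing argument. All cubes in $\mathcal{D}_k^\omega$ have the same side length $2^{-k}$ and tile $\mathbb{R}^n$ with pairwise interior-disjoint members, while the inclusion $Q\subseteq c_nB$ forces the volume of $Q$ to be comparable to that of $B$. Comparing these two facts immediately yields a dimensional bound on the number of cubes of $\mathcal{D}_k^\omega$ that can fit inside $Q$.

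More concretely, I would first note that, with the standard convention that $c_nB$ denotes the cube concentric with $B$ of side length $c_n\cdot\ell(B)=c_n\cdot 2^{-k}$, the inclusion $B\subseteq Q\subseteq c_nB$ gives
\[|Q|\le|c_nB|=c_n^n|B|=c_n^n\cdot 2^{-kn}.\]
Next, if $P_1,\ldots,P_N$ are distinct cubes of $\mathcal{D}_k^\omega$ all contained in $Q$, then they have pairwise disjoint interiors and each has volume $2^{-kn}$, so summing volumes yields
\[N\cdot 2^{-kn}=\sum_{j=1}^N|P_j|\le|Q|\le c_n^n\cdot 2^{-kn},\]
hence $N\le c_n^n$, a quantity depending only on the dimension $n$.

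I do not foresee any genuine obstacle here: the assertion is essentially a packing fact, once the side-length scales of $Q$ and of the cubes in $\mathcal{D}_k^\omega$ are matched via the inclusion $Q\subseteq c_n B$. The only small point requiring care is the normalization of $c_nB$ (always interpreted as the concentric dilation of $B$ by the factor $c_n$), after which the single volume estimate above gives the conclusion with no need for any delicate argument about dyadic alignment between the systems $\mathcal{D}^{\omega(i)}$ and $\mathcal{D}^\omega$.
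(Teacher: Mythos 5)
Your proof is correct and takes essentially the same approach as the paper: both reduce the lemma to the observation that $Q\subseteq c_nB$ forces $\ell(Q)\le c_n 2^{-k}$, so only a dimension-dependent number of interior-disjoint cubes of side $2^{-k}$ can fit inside $Q$. The paper counts cells per coordinate to reach the bound $\lfloor c_n\rfloor^n$, whereas you compare volumes to get $c_n^n$; these are two phrasings of the same packing argument and yield equivalent conclusions.
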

\begin{proof}
	Fix $k\in\mathbb{Z}$. From Lemma \ref{Domegan1}, we have
	\begin{equation*}
		\ell(Q)\le c_n \ell(B)=c_n2^{-k}.
	\end{equation*}
	This implies that $Q$ contains at most $\lfloor c_n\rfloor^n$ dyadic cubes in $\mathcal{D}_k^\omega$.
\end{proof}

\begin{lem}\label{IHbxiaolemma}
	Let $0<p<\infty$, $0<q\le\infty$ and $\omega\in(\{0,1\}^n)^\mathbb{Z}$. If $b\in\pmb{WB}_{p,q}(\mathbb{R}^n)$, then for any $\xi\in\{0,1\}^n_0$, 
	\begin{equation*}
		\Big\|\big\{|I|^{-1/2}|\langle H_I^\xi,b\rangle|\big\}_{I\in\mathcal{D}^\omega}\Big\|_{\ell_{p,q}}\lesssim_{n,p,q}\|b\|_{\pmb{WB}_{p,q}(\mathbb{R}^n)}.
	\end{equation*}
\end{lem}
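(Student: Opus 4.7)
The plan is to combine three ingredients: (a) the cancellation of the Haar function $H_I^\xi$ for $\xi\in\{0,1\}_0^n$, (b) the containment from Lemma~\ref{Domegan1} that lets us dominate a mean oscillation on $I\in\mathcal{D}^\omega$ by one on some cube in $\bigcup_{i=1}^{n+1}\mathcal{D}^{\omega(i)}$, and (c) the bounded multiplicity counting from Lemma~\ref{Domegan2}.

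First, for every $I\in\mathcal{D}^\omega$ and $\xi\in\{0,1\}_0^n$, the Haar function $H_I^\xi$ is supported on $I$, has mean zero over $I$, and satisfies $\|H_I^\xi\|_\infty=|I|^{-1/2}$. Denoting $b_I=|I|^{-1}\int_I b$, the cancellation gives $\langle H_I^\xi,b\rangle=\langle H_I^\xi,b-b_I\rangle$, and then H\"older's inequality yields
\begin{equation*}
	|I|^{-1/2}|\langle H_I^\xi,b\rangle|\le \frac{1}{|I|}\int_I |b(x)-b_I|\,dx= MO_1(b;I).
\end{equation*}

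Second, by Lemma~\ref{Domegan1}, for each $I\in\mathcal{D}^\omega$ I select a cube $Q(I)\in\bigcup_{i=1}^{n+1}\mathcal{D}^{\omega(i)}$ such that $I\subseteq Q(I)\subseteq c_n I$. The standard comparison
\begin{equation*}
	MO_1(b;I)\le \frac{2}{|I|}\int_I |b-b_{Q(I)}|\,dx\le \frac{2|Q(I)|}{|I|}\,MO_1(b;Q(I))\le 2c_n^n\,MO_1(b;Q(I))
\end{equation*}
then upgrades the previous estimate to $|I|^{-1/2}|\langle H_I^\xi,b\rangle|\lesssim_n MO_1(b;Q(I))$.

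Third, I bound the $\ell_{p,q}$ quasi-norm of $\{MO_1(b;Q(I))\}_{I\in\mathcal{D}^\omega}$ by that of the underlying $\{MO_1(b;Q)\}_{Q\in\bigcup_i\mathcal{D}^{\omega(i)}}$. For this I need that the map $I\mapsto Q(I)$ has multiplicity bounded by a constant depending only on $n$: if $Q(I)=Q$ then $I\subseteq Q$ and $\ell(I)\ge c_n^{-1}\ell(Q)$, so $I$ lies in a bounded collection of dyadic generations of $\mathcal{D}^\omega$, and Lemma~\ref{Domegan2} shows each such generation contributes at most $O_n(1)$ cubes inside $Q$. A standard layer-cake/decreasing-rearrangement argument then shows that for any sequence $(a_Q)_Q$ and any map of multiplicity at most $C_n$,
\begin{equation*}
	\big\|(a_{Q(I)})_{I\in\mathcal{D}^\omega}\big\|_{\ell_{p,q}}\lesssim_{n,p,q}\big\|(a_Q)_{Q\in\bigcup_i\mathcal{D}^{\omega(i)}}\big\|_{\ell_{p,q}}\lesssim_{n,p,q}\sum_{i=1}^{n+1}\big\|\{MO_1(b;Q)\}_{Q\in\mathcal{D}^{\omega(i)}}\big\|_{\ell_{p,q}},
\end{equation*}
and the right-hand side is $\|b\|_{\pmb{WB}_{p,q}(\mathbb{R}^n)}$. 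Chaining the three estimates finishes the proof.

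The main technical point is the last step. Comparison in $\ell_{p,q}$ under a bounded-multiplicity map is routine for $1\le q<\infty$ via layer cake, but since Lemma~\ref{IHbxiaolemma} allows $0<p<\infty$ and $0<q\le\infty$ one should check it using the distribution function: if $\phi\colon J\to K$ has multiplicity at most $C$, then $|\{j:|a_{\phi(j)}|>\lambda\}|\le C\,|\{k:|a_k|>\lambda\}|$ for every $\lambda>0$, which translates into $(a_{\phi(\cdot)})^*(t)\le a^*(t/C)$ and hence $\|(a_{\phi(j)})\|_{\ell_{p,q}}\le C^{1/p}\|(a_k)\|_{\ell_{p,q}}$ for all $0<p<\infty$, $0<q\le\infty$. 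The passage from $\bigsqcup_i\mathcal{D}^{\omega(i)}$ to the sum over $i=1,\dots,n+1$ costs only a quasi-triangle constant depending on $p,q$.
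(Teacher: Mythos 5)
Your proposal follows the same route as the paper: cancellation of $H_I^\xi$ gives $|I|^{-1/2}|\langle H_I^\xi,b\rangle|\le MO_1(b;I)$, then Lemma~\ref{Domegan1} produces $Q(I)$ with $I\subseteq Q(I)\subseteq c_n I$ and the standard comparison yields $MO_1(b;I)\lesssim_n MO_1(b;Q(I))$, and finally the bounded multiplicity of $I\mapsto Q(I)$ together with Lemma~\ref{Domegan2} gives the $\ell_{p,q}$ bound. The only difference is that you make explicit two points the paper treats tersely: that the containment $I\subseteq Q(I)\subseteq c_n I$ forces $\ell(I)$ into boundedly many generations (so Lemma~\ref{Domegan2}, stated per generation, really does control the full multiplicity), and that the bounded-multiplicity comparison in $\ell_{p,q}$ for all $0<p<\infty$, $0<q\le\infty$ should be justified via the distribution function rather than taken for granted; both are correct and worth spelling out.
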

\begin{proof}
	Note that
	\begin{equation*}
		\frac{1}{|I|^{1/2}}|\langle H_I^\xi,b\rangle|=\frac{1}{|I|^{1/2}}\bigg|\int_I \overline{H_I^\xi(x)}\Big(b(x)-\big\langle \frac{\mathbbm{1}_I}{|I|},b\big\rangle\Big)dx\bigg|\le MO_1(b;I). 
	\end{equation*}
	For any given $I\in\mathcal{D}^\omega_k$, by Lemma \ref{Domegan1}, there exists some cube $Q(I)\in \bigcup\limits_{i=1}^{n+1} \mathcal{D}^{\omega(i)}$ satisfying
	\begin{equation*}
		I\subseteq Q(I)\subseteq c_n I,
	\end{equation*}
	where $c_n$ only depends on $n$. Thus
	\begin{equation}\label{IHbxiao}
		\begin{aligned}
			MO_1(b;I){}&\le \frac{1}{|I|}\int_I \Big|b(x)-\big\langle \frac{\mathbbm{1}_{Q(I)}}{|Q(I)|},b\big\rangle\Big| dx+\bigg|\big\langle \frac{\mathbbm{1}_{Q(I)}}{|Q(I)|},b\big\rangle-\big\langle \frac{\mathbbm{1}_I}{|I|},b\big\rangle\bigg|\\
			&\le \frac{2}{|I|}\int_I \Big|b(x)-\big\langle \frac{\mathbbm{1}_{Q(I)}}{|Q(I)|},b\big\rangle\Big| dx\\
			&\lesssim_{n} MO_1(b;Q(I)).
		\end{aligned}
	\end{equation}
	From Lemma \ref{Domegan2}, it implies that $Q(I)$ contains only a finite number of dyadic cubes in $\mathcal{D}_k^\omega$, and the number only depends on $n$.
	Then from \eqref{IHbxiao} we have
	\begin{equation}\label{IHbxiao222}
		\begin{aligned}
			\Big\|\big\{|I|^{-1/2}|\langle H_I^\xi,b\rangle|\big\}_{I\in\mathcal{D}^\omega}\Big\|_{\ell_{p,q}}
			&\lesssim_{n}\Big\|\big\{MO_1(b;Q(I))\big\}_{I\in\mathcal{D}^{\omega}}\Big\|_{\ell_{p,q}}\\
			&\lesssim_{n}\bigg\|\big\{MO_1(b;Q)\big\}_{Q\in\bigcup\limits_{i=1}^{n+1}\mathcal{D}^{\omega^{(i)}}}\bigg\|_{\ell_{p,q}}\\
			&\lesssim_{n,p,q}\sum_{i=1}^{n+1}\Big\|\big\{MO_1(b;Q)\big\}_{Q\in\mathcal{D}^{\omega(i)}}\Big\|_{\ell_{p,q}}
			=\|b\|_{\pmb{WB}_{p,q}(\mathbb{R}^n)}.
		\end{aligned}
	\end{equation}
\end{proof}

\begin{lem}\label{TECHRS}
	Let $0<p<\infty$, $0<q\le \infty$ and $\omega\in(\{0,1\}^n)^\mathbb{Z}$. If $b\in \pmb{WB}_{p,q}(\mathbb{R}^n)$, then
	\begin{equation*}
		\Big\|\big\{MO_2(b;I)\big\}_{I\in\mathcal{D}^\omega}\Big\|_{\ell_{p,q}}\lesssim_{n,p,q} \|b\|_{\pmb{WB}_{p,q}(\mathbb{R}^n)}.
	\end{equation*}
\end{lem}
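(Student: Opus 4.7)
The plan is to expand $b-b_I$ in the Haar basis, apply Parseval's identity, and reduce the estimate to the boundedness of a dyadic Hardy-type operator on Lorentz sequence spaces; combined with Lemma \ref{IHbxiaolemma} this will yield the claim. Since $\{|I|^{-1/2}\mathbbm{1}_I\}\cup\{H_J^\eta:J\in\mathcal{D}^\omega(I),\,\eta\in\{0,1\}^n_0\}$ is an orthonormal basis of $L_2(I)$, Parseval gives
\begin{equation*}
MO_2(b;I)^2=\frac{1}{|I|}\int_I|b-b_I|^2\,dx=\sum_{\eta\in\{0,1\}^n_0}\sum_{J\in\mathcal{D}^\omega(I)}\frac{|J|}{|I|}(a_J^\eta)^2,\qquad a_J^\eta:=\frac{|\langle H_J^\eta,b\rangle|}{|J|^{1/2}},
\end{equation*}
where for merely locally integrable $b$ one first applies this to the truncation $\mathbb{E}_{k+N}(b)$ on $I\in\mathcal{D}^\omega_k$ and passes to the limit $N\to\infty$ by monotone convergence. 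Using $(\sum_\eta x_\eta)^{1/2}\le\sum_\eta x_\eta^{1/2}$, the quasi-triangle inequality in $\ell_{p,q}$ over the finite set $\eta\in\{0,1\}^n_0$, and the identity $\|y^{1/2}\|_{\ell_{p,q}}=\|y\|_{\ell_{p/2,q/2}}^{1/2}$ for $y\ge 0$, the problem reduces to the $\ell_{p/2,q/2}$-boundedness of the dyadic Hardy-type operator
\begin{equation*}
(Sc)_I:=\sum_{J\in\mathcal{D}^\omega(I)}\frac{|J|}{|I|}\,c_J,\qquad c\ge 0,
\end{equation*}
combined with Lemma \ref{IHbxiaolemma} (applied with $\xi=\eta$), which supplies $\|\{a_I^\eta\}_{I\in\mathcal{D}^\omega}\|_{\ell_{p,q}}\lesssim_{n,p,q}\|b\|_{\pmb{WB}_{p,q}(\mathbb{R}^n)}$ for each $\eta$.

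To establish the $\ell_{r,s}$-boundedness of $S$ for $r=p/2\in(0,\infty)$ and $s=q/2\in(0,\infty]$, I would decompose $S=\sum_{k\ge 0}S_k$ according to the generation gap between $I$ and $J$, setting
\begin{equation*}
(S_k c)_I:=2^{-nk}\sum_{J\in\mathcal{D}^\omega_{m+k}(I)}c_J,\qquad I\in\mathcal{D}^\omega_m.
\end{equation*}
Because every $J$ contributes to the unique $I$ that is its $k$-th ancestor, $\|S_k\|_{\ell_1\to\ell_1}\le 2^{-nk}$ while trivially $\|S_k\|_{\ell_\infty\to\ell_\infty}\le 1$. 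Real interpolation between $\ell_1$ and $\ell_\infty$ then yields $\|S_k\|_{\ell_{r,s}\to\ell_{r,s}}\lesssim_{r,s}2^{-nk/r}$ for $r\in(1,\infty)$, $s\in(0,\infty]$; in the quasi-Banach range $r\in(0,1]$, the subadditivity $(\sum c_J)^r\le\sum c_J^r$ gives directly $\|S_k\|_{\ell_r\to\ell_r}\le 2^{-nk}$, and interpolation with a larger index restores the analogous decay on $\ell_{r,s}$. Summing the resulting geometric series in $k$ (which converges because $r<\infty$) produces boundedness of $S$ on $\ell_{r,s}$.

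Combining the preceding two steps with Lemma \ref{IHbxiaolemma} and summing the contributions of the finite index set $\eta\in\{0,1\}^n_0$ via the quasi-triangle inequality in $\ell_{p,q}$ completes the proof. The main technical point will be the Lorentz-space interpolation for the dyadic averaging operator $S$ in the quasi-Banach regime $0<r\le 1$; this is standard but requires some care in tracking the interpolation constants, and, together with the elementary but slightly delicate Parseval reduction, is the only nontrivial ingredient.
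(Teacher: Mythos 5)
Your proof is correct and it takes a genuinely different route from the paper's.

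The paper disposes of the $MO_2\to MO_1$ reduction in two short steps: it first shows $MO_2(b;I)\lesssim_n MO_2(b;4I)$ and then invokes \cite[Proposition 4.1]{RSe} as a black box to get $\|\{MO_2(b;4I)\}\|_{\ell_{p,q}}\lesssim_{n,p,q}\|\{MO_1(b;4I)\}\|_{\ell_{p,q}}$; after that, the same covering argument used for Lemma \ref{IHbxiaolemma} finishes. You instead re-derive the $MO_2$ control from scratch: Parseval for the restricted Haar system gives $MO_2(b;I)^2=\sum_\eta\sum_{J\subseteq I}\frac{|J|}{|I|}(a_J^\eta)^2$, and the problem becomes the $\ell_{p/2,q/2}$-boundedness of the dyadic averaging operator $(Sc)_I=\sum_{J\subseteq I}\frac{|J|}{|I|}c_J$, which you handle via the generation-gap decomposition $S=\sum_k S_k$, the bounds $\|S_k\|_{\ell_1\to\ell_1}\le 2^{-nk}$ and $\|S_k\|_{\ell_\infty\to\ell_\infty}\le 1$, real interpolation, and geometric summation. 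Lemma \ref{IHbxiaolemma} then supplies $\|\{a_J^\eta\}\|_{\ell_{p,q}}\lesssim\|b\|_{\pmb{WB}_{p,q}}$. The chain, including the power-weight identity $\|y^{1/2}\|_{\ell_{p,q}}=\|y\|_{\ell_{p/2,q/2}}^{1/2}$ and the truncation/monotone-convergence step for merely locally integrable $b$, is sound. The trade-off: the paper's proof is shorter because it cites Rochberg--Semmes' $L^1$-to-$L^2$ oscillation comparison in Lorentz norm, which is essentially a John--Nirenberg type result whose proof lives elsewhere; yours is longer but self-contained and exposes the underlying dyadic Hardy-operator mechanism. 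Two places you should write out carefully when fleshing this out: (i) the real interpolation in the quasi-Banach range $p/2\le 1$ (you should pick a fixed $r_0<p/2$, use $\|S_k\|_{\ell_{r_0}\to\ell_{r_0}}\le 2^{-nk}$ from $r_0$-subadditivity of positive sequences, and interpolate against the $\ell_\infty$ bound to get $\|S_k\|_{\ell_{p/2,q/2}}\lesssim 2^{-\beta k}$ with some $\beta>0$); and (ii) the summation of $\sum_k S_k$ in the quasi-Banach $\ell_{p/2,q/2}$, for which you should cite the Aoki--Rolewicz $\rho$-norm or argue via the equivalent $\rho$-subadditive quasi-norm to turn geometric decay of $\|S_k\|$ into convergence of the sum.
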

\begin{proof}
	Note that for any $I\in\mathcal{D}^\omega$, by the Minkovski inequality and the H\"{o}lder inequality,
	\begin{equation*}
		\begin{aligned}
			MO_2(b;I){}&\le \bigg(\frac{1}{|I|}\int_I \Big|b(x)-\big\langle \frac{\mathbbm{1}_{4I}}{|4I|},b\big\rangle\Big|^2 dx\bigg)^{1/2}+\bigg|\big\langle \frac{\mathbbm{1}_{4I}}{|4I|},b\big\rangle-\big\langle \frac{\mathbbm{1}_I}{|I|},b\big\rangle\bigg|\\
			&\le \bigg(\frac{1}{|I|}\int_I \Big|b(x)-\big\langle \frac{\mathbbm{1}_{4I}}{|4I|},b\big\rangle\Big|^2 dx\bigg)^{1/2}+\frac{1}{|I|}\int_I \Big|b(x)-\big\langle \frac{\mathbbm{1}_{4I}}{|4I|},b\big\rangle\Big| dx\\
			&\le 2\bigg(\frac{1}{|I|}\int_I \Big|b(x)-\big\langle \frac{\mathbbm{1}_{4I}}{|4I|},b\big\rangle\Big|^2 dx\bigg)^{1/2}\lesssim_n MO_2(b;4I).
		\end{aligned}
	\end{equation*}
	It implies that
	\begin{equation*}
		\Big\|\big\{MO_2(b;I)\big\}_{I\in\mathcal{D}^\omega}\Big\|_{\ell_{p,q}}\lesssim_n \Big\|\big\{MO_2(b;4I)\big\}_{I\in\mathcal{D}^\omega}\Big\|_{\ell_{p,q}}.
	\end{equation*}
	Besides, by \cite[Proposition 4.1]{RSe},
	\begin{equation*}
		\Big\|\big\{MO_2(b;4I)\big\}_{I\in\mathcal{D}^\omega}\Big\|_{\ell_{p,q}}\lesssim_{n,p,q}\Big\|\big\{MO_1(b;4I)\big\}_{I\in\mathcal{D}^\omega}\Big\|_{\ell_{p,q}}.
	\end{equation*}
In the same way as in Lemma \ref{IHbxiaolemma}, one has
	\begin{equation*}
		\begin{aligned}
			\Big\|\big\{MO_1(b;4I)\big\}_{I\in\mathcal{D}^\omega}\Big\|_{\ell_{p,q}}
			&\lesssim_{n,p,q}\|b\|_{\pmb{WB}_{p,q}(\mathbb{R}^n)}.
		\end{aligned}
	\end{equation*}
	Therefore, 
	\begin{equation*}
		\Big\|\big\{MO_2(b;I)\big\}_{I\in\mathcal{D}^\omega}\Big\|_{\ell_{p,q}}\lesssim_{n,p,q} \|b\|_{\pmb{WB}_{p,q}(\mathbb{R}^n)}.
	\end{equation*}
	This finishes the proof.
\end{proof}
The following lemma reveals the relationship between weak Besov spaces and homogeneous Sobolev spaces.
\begin{lem}\label{WBNW1N}
	Let $n\ge 2$. Suppose that $b$ is a locally integrable complex-valued function. Then $b\in \pmb{WB}_{n,\infty}(\mathbb{R}^n)$ if and only if $b\in \dot{W}^1_n(\mathbb{R}^n)$. Moreover,
	\begin{equation*}
		\|b\|_{\pmb{WB}_{n,\infty}(\mathbb{R}^n)}\approx_{n} \|b\|_{\dot{W}^1_n(\mathbb{R}^n)}.
	\end{equation*}
\end{lem}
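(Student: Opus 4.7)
The strategy is to prove both inclusions of Lemma \ref{WBNW1N} by combining the Poincaré–Sobolev inequality with the covering Lemma \ref{Domegan1} and the classical oscillation-space characterization of $\dot{W}^1_n(\mathbb{R}^n)$ due to Rochberg–Semmes \cite{RSe} and Connes–Sullivan–Teleman \cite{CST1994}, whose statements are also reproved in Frank \cite{FRANK}.

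For the inclusion $\dot{W}^1_n(\mathbb{R}^n)\subseteq \pmb{WB}_{n,\infty}(\mathbb{R}^n)$, I would first fix one of the dyadic systems $\mathcal{D}^{\omega(i)}$. Combining the $L_1$-Poincaré inequality with Hölder's inequality gives the pointwise-in-cube bound
$$MO_1(b;Q)\,\lesssim_n\,\ell(Q)^{1-n}\|\nabla b\|_{L_1(Q)}\,\lesssim_n\,\|\nabla b\|_{L_n(Q)},\qquad\forall\, Q\in\mathcal{D}^{\omega(i)}.$$
The real work is to pass from this cube-by-cube estimate to the weak-$\ell_n$ bound
$$\big\|\{MO_1(b;Q)\}_{Q\in\mathcal{D}^{\omega(i)}}\big\|_{\ell_{n,\infty}}\,\lesssim_n\,\|\nabla b\|_{L_n(\mathbb{R}^n)},$$
which is the distributional / stopping-time argument of Rochberg–Semmes (see also Frank \cite{FRANK}). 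That argument additionally exploits the complementary trivial bound $MO_1(b;Q)\lesssim \|b\|_{L_1(Q)}/|Q|$ at large scales to keep the cardinality of level sets finite. Summing over $i=1,\dots,n+1$ then delivers $\|b\|_{\pmb{WB}_{n,\infty}(\mathbb{R}^n)}\lesssim_n\|b\|_{\dot{W}^1_n(\mathbb{R}^n)}$.

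For the reverse inclusion $\pmb{WB}_{n,\infty}(\mathbb{R}^n)\subseteq \dot{W}^1_n(\mathbb{R}^n)$, I would invoke Lemma \ref{Domegan1} to transfer weak-$\ell_n$ control from the dyadic systems to the full family of cubes in $\mathbb{R}^n$. Indeed, every cube $B\subset \mathbb{R}^n$ is contained in some $Q\in\bigcup_{i=1}^{n+1}\mathcal{D}^{\omega(i)}$ with $|Q|\le c_n^n|B|$, and a standard averaging argument (as in the proof of Lemma \ref{IHbxiaolemma}) yields $MO_1(b;B)\lesssim_n MO_1(b;Q)$. A counting step analogous to Lemma \ref{Domegan2} then bounds the weak-$\ell_n$ norm of $\{MO_1(b;B)\}$ taken over \emph{all} cubes $B\subset \mathbb{R}^n$ by $\|b\|_{\pmb{WB}_{n,\infty}(\mathbb{R}^n)}$. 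At this stage one applies the Connes–Sullivan–Teleman theorem (\cite{CST1994}, \cite{FRANK}), which states that this weak oscillation norm over all cubes is comparable to $\|\nabla b\|_{L_n(\mathbb{R}^n)}$, closing the equivalence.

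The main obstacle is the first direction. The pointwise Poincaré bound $MO_1(b;Q)\lesssim \|\nabla b\|_{L_n(Q)}$ alone is \emph{insufficient} to yield a weak-$\ell_n$ estimate, because $Q\mapsto \|\nabla b\|_{L_n(Q)}$ is monotone non-decreasing, so the level set $\{Q:\|\nabla b\|_{L_n(Q)}>t\}$ is upward-closed in the dyadic tree and typically has infinite cardinality (e.g.\ for $\|\nabla b\|_{L_n(\mathbb{R}^n)}>t$, every sufficiently large dyadic ancestor is included). Overcoming this requires the delicate interplay—pioneered by Rochberg–Semmes and streamlined in \cite{FRANK}—between the Poincaré bound (sharp at small scales) and the cancellation bound $MO_1(b;Q)\lesssim \|b\|_{L_1(Q)}/|Q|$ (sharp at large scales), combined with careful stopping-time truncations in the dyadic tree.
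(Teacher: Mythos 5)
Your proposal follows the paper's route exactly: the paper "proves" Lemma \ref{WBNW1N} simply by remarking that $\dot{W}^1_n(\mathbb{R}^n)\subseteq \pmb{WB}_{n,\infty}(\mathbb{R}^n)$ is due to Rochberg–Semmes, the converse inclusion to Connes–Sullivan–Teleman, and that both follow from Frank's theorem. You cite the same two ingredients and correctly fill in the dyadic-systems-versus-all-cubes transfer (via Lemma \ref{Domegan1} and Lemma \ref{Domegan2}) that the paper's brief remark leaves implicit.
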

\begin{rem}
	Let $n\ge 2$. Rochberg and Semmes first showed $\dot{W}^1_n(\mathbb{R}^n)\subseteq \pmb{WB}_{n,\infty}(\mathbb{R}^n)$  in \cite[Theorem 2.2]{RS1988}, and Connes, Sullivan
	and Teleman proved the converse inclusion $\pmb{WB}_{n,\infty}(\mathbb{R}^n)\subseteq \dot{W}^1_n(\mathbb{R}^n)$ in \cite[Appendix]{CST1994}. Alternatively, Lemma \ref{WBNW1N} can be obtained by \cite[Theorem 1]{FRANK}. 
\end{rem}
In \cite{RS1988}, Rochberg and Semmes also showed implicitly the following lemma, which is the relation between weak Besov spaces and homogeneous Besov spaces.
\begin{lem}\label{RS4.9}
	Let $p>n$. Suppose that $b$ is a locally integrable complex-valued function. Then $b\in\pmb{WB}_{p,p}(\mathbb{R}^n)$ if and only if $b\in \pmb{B}_p(\mathbb{R}^n)$. Moreover,
	\begin{equation*}
		\|b\|_{\pmb{WB}_{p,p}(\mathbb{R}^n)}\approx_{n,p} \|b\|_{\pmb{B}_p(\mathbb{R}^n)}.
	\end{equation*}
\end{lem}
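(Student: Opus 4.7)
The plan is to prove both inclusions directly, for any $p > 1$; the restriction $p > n$ in the statement just ensures that both spaces contain non-constant functions.

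\emph{Direction $\pmb{B}_p(\mathbb{R}^n) \hookrightarrow \pmb{WB}_{p,p}(\mathbb{R}^n)$.} For any cube $Q$, writing $b_Q = \langle \mathbbm{1}_Q/|Q|, b\rangle$, I would use $|b(x) - b_Q| \leq |Q|^{-1} \int_Q |b(x)-b(y)|\,dy$ together with Jensen's inequality to obtain
\begin{equation*}
MO_1(b;Q)^p \leq \frac{1}{|Q|^2}\int_Q \int_Q |b(x)-b(y)|^p\,dx\,dy.
\end{equation*}
Summing over $Q \in \mathcal{D}^{\omega(i)}$ and exchanging the sum and the integral, the task reduces to estimating $\sum_{Q \in \mathcal{D}^{\omega(i)},\, Q \ni x, y} |Q|^{-2}$. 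Since the cubes of a single dyadic system containing both $x$ and $y$ form a nested chain whose volumes are in geometric progression and whose smallest member has volume at least $n^{-n/2}|x-y|^n$, this sum is bounded by $C_n |x-y|^{-2n}$. Summing over the $n+1$ dyadic systems then yields $\|b\|_{\pmb{WB}_{p,p}(\mathbb{R}^n)} \lesssim_{n,p} \|b\|_{\pmb{B}_p(\mathbb{R}^n)}$.

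\emph{Direction $\pmb{WB}_{p,p}(\mathbb{R}^n) \hookrightarrow \pmb{B}_p(\mathbb{R}^n)$.} By Lemma \ref{Domegan1}, to each pair $(x,y)$ with $x \neq y$ I can attach some $i(x,y) \in \{1,\ldots,n+1\}$ and some cube $Q(x,y) \in \mathcal{D}^{\omega(i(x,y))}$ containing both $x$ and $y$ with $|Q(x,y)| \approx_n |x-y|^n$. Partitioning $\mathbb{R}^n\times\mathbb{R}^n$ along such an assignment and using $|x-y|^{-2n} \approx_n |Q(x,y)|^{-2}$ gives
\begin{equation*}
\|b\|_{\pmb{B}_p(\mathbb{R}^n)}^p \lesssim_n \sum_{i=1}^{n+1} \sum_{Q \in \mathcal{D}^{\omega(i)}} \frac{1}{|Q|^2}\int_Q\int_Q |b(x)-b(y)|^p\,dx\,dy.
\end{equation*}
Applying $|b(x)-b(y)|^p \lesssim_p |b(x)-b_Q|^p + |b(y)-b_Q|^p$ converts the right-hand side into $\sum_i \sum_Q MO_p(b;Q)^p$, where $MO_p(b;Q)^p := |Q|^{-1}\int_Q|b-b_Q|^p\,dx$. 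It therefore remains to show the universal inequality $\sum_{Q\in\mathcal{D}^\omega} MO_p(b;Q)^p \lesssim_{n,p} \sum_{R\in\mathcal{D}^\omega} MO_1(b;R)^p$.

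\emph{Main step.} For $x$ in a cube $Q \in \mathcal{D}^\omega$, let $Q = Q_0^x \supsetneq Q_1^x \supsetneq \cdots$ be the chain of dyadic descendants of $Q$ containing $x$. By Lebesgue differentiation, for a.e.\ $x$ one has the telescoping identity $b(x)-b_Q = \sum_{k\geq 0} (b_{Q_{k+1}^x}-b_{Q_k^x})$ with $|b_{Q_{k+1}^x}-b_{Q_k^x}| \leq 2^n MO_1(b;Q_k^x)$, hence
\begin{equation*}
|b(x)-b_Q| \leq 2^n \sum_{k=0}^{\infty} MO_1(b;Q_k^x).
\end{equation*}
A weighted H\"older inequality with weights $(1+k)^{-\theta}$ for some $\theta > 1/p'$ upgrades this to $|b(x)-b_Q|^p \lesssim_{p,\theta} \sum_k (1+k)^{\theta p} MO_1(b;Q_k^x)^p$. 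Since the map $x \mapsto MO_1(b;Q_k^x)^p$ is constant on each $R \in \mathcal{D}_k^\omega(Q)$ with value $MO_1(b;R)^p$, integrating over $x \in Q$ and summing over $Q$, and noting that each $R\in\mathcal{D}^\omega$ is the $k$-th descendant of a unique ancestor $Q$, the double sum collapses to
\begin{equation*}
\sum_Q MO_p(b;Q)^p \;\lesssim_{p,\theta}\; \Big(\sum_{k\geq 0}(1+k)^{\theta p}\,2^{-nk}\Big)\sum_{R\in\mathcal{D}^\omega} MO_1(b;R)^p,
\end{equation*}
where the geometric factor $2^{-nk}$ forces convergence of the series. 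Combined with the previous step and summed over the $n+1$ dyadic systems, this yields $\|b\|_{\pmb{B}_p(\mathbb{R}^n)} \lesssim_{n,p} \|b\|_{\pmb{WB}_{p,p}(\mathbb{R}^n)}$. The main obstacle is this last step controlling $MO_p$ by $MO_1$ in $\ell_p$: without the interplay between the telescoping wavelet-type decomposition and the weighted H\"older trick (made possible because $p > 1$), one cannot convert the first-order oscillation $MO_1$ into the higher-order one $MO_p$ while preserving summability.
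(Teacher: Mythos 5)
Your proof is correct, and it fills a genuine gap in the paper's exposition: the paper simply states that the result is shown ``implicitly'' in Rochberg--Semmes \cite{RS1988} and gives no argument. Both directions check out. For $\pmb{B}_p\hookrightarrow\pmb{WB}_{p,p}$, your use of Jensen and the geometric-series bound $\sum_{Q\ni x,y}|Q|^{-2}\lesssim_n|x-y|^{-2n}$ (since the cubes of one dyadic system containing both $x$ and $y$ form a nested chain whose smallest member has volume $\gtrsim_n|x-y|^n$) is clean and correct. For the converse, the covering step is essentially the same decomposition the paper uses in Proposition \ref{pdayun}, and the key inequality $\sum_{Q\in\mathcal{D}^\omega}MO_p(b;Q)^p\lesssim_{n,p}\sum_{R\in\mathcal{D}^\omega}MO_1(b;R)^p$ is established correctly via the martingale telescope $b(x)-b_Q=\sum_{k\ge0}(b_{Q_{k+1}^x}-b_{Q_k^x})$, the pointwise bound $|b_{Q_{k+1}^x}-b_{Q_k^x}|\le 2^nMO_1(b;Q_k^x)$, and the weighted H\"older argument with weights $(1+k)^{-\theta}$, $\theta>1/p'$; the factor $2^{-nk}$ coming from $|R|/|Q|$ after integration over $Q$ then makes the double sum collapse. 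Two remarks on how this sits relative to the paper's machinery. First, your telescoping step is exactly the kind of argument that underlies \cite[Proposition 4.1]{RSe}, which the paper invokes (without proof) in Lemma \ref{TECHRS} to compare $MO_2$ with $MO_1$ in $\ell_{p,q}$; you have effectively given the $(p,p)$ version of that comparison from scratch. Second, an alternative route using only the paper's internal lemmas is available: Proposition \ref{bbpdrmdp} together with Proposition \ref{equbbk} shows that $\|b\|^p_{\pmb{B}_p^{\omega,2^n}}\approx_{n,p}\sum_{Q\in\mathcal{D}^\omega}MO_p(b;Q)^p$, and Proposition \ref{pdayun} shows $\|b\|_{\pmb{B}_p}\approx_{n,p}\sum_i\|b\|_{\pmb{B}_p^{\omega(i),2^n}}$; combining these with your telescoping reduction of $MO_p$ to $MO_1$ recovers the lemma, so your argument and the paper's scaffolding are consistent. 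Your observation that the norm equivalence actually holds for all $p>1$ (the hypothesis $p>n$ serving only to guarantee nontriviality of the two spaces when $n\ge2$, by Proposition \ref{0pn}) is also correct.
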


From Lemma \ref{WBNW1N} and Lemma \ref{RS4.9}, we obtain the following real interpolation.
\begin{lem}\label{RS4.10}
	Let $p_1>p>n\geq 2$ and $1\leq q\leq \infty$. Then
	\begin{equation*}
		\pmb{WB}_{p,q}(\mathbb{R}^n)=\left(\dot{W}^1_n(\mathbb{R}^n), \pmb{B}_{p_1}(\mathbb{R}^n)\right)_{\theta, q},
	\end{equation*}
	where $1/p=(1-\theta)/n +\theta/{p_1}$.
\end{lem}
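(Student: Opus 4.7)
The plan is to use Lemmas \ref{WBNW1N} and \ref{RS4.9} to identify the endpoints of the interpolation scale with weak Besov spaces: $\dot{W}^1_n(\mathbb{R}^n)\simeq \pmb{WB}_{n,\infty}(\mathbb{R}^n)$ and $\pmb{B}_{p_1}(\mathbb{R}^n)\simeq \pmb{WB}_{p_1,p_1}(\mathbb{R}^n)$ with equivalent norms. Then it suffices to establish
\[
(\pmb{WB}_{n,\infty}(\mathbb{R}^n),\,\pmb{WB}_{p_1,p_1}(\mathbb{R}^n))_{\theta,q}=\pmb{WB}_{p,q}(\mathbb{R}^n),\qquad \tfrac{1}{p}=\tfrac{1-\theta}{n}+\tfrac{\theta}{p_1}.
\]
The structural fact driving the argument is that the weak Besov quasi-norm is Lorentz-sequence-valued: setting $\Phi_i(b):=\{MO_1(b;Q)\}_{Q\in \mathcal{D}^{\omega(i)}}$, we have $\|b\|_{\pmb{WB}_{p,q}}=\sum_{i=1}^{n+1}\|\Phi_i(b)\|_{\ell_{p,q}}$, and each $\Phi_i$ is termwise sublinear, i.e.\ $\Phi_i(b_0+b_1)\leq \Phi_i(b_0)+\Phi_i(b_1)$ coordinatewise.

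The continuous embedding $(\pmb{WB}_{n,\infty},\pmb{WB}_{p_1,p_1})_{\theta,q}\hookrightarrow \pmb{WB}_{p,q}$ follows immediately from the classical real-interpolation identity for Lorentz sequence spaces $(\ell_{n,\infty},\ell_{p_1,p_1})_{\theta,q}=\ell_{p,q}$: any function-level splitting $b=b_0+b_1$ induces, by sublinearity, the domination $\Phi_i(b)\leq \Phi_i(b_0)+\Phi_i(b_1)$ of sequences, so that $K(t,\Phi_i(b);\ell_{n,\infty},\ell_{p_1,p_1})\leq 2\,K(t,b;\pmb{WB}_{n,\infty},\pmb{WB}_{p_1,p_1})$ for each $i$. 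Integrating against the weight $t^{-\theta}$ in $L_q(dt/t)$ and using the Peetre K-method characterization of both sides yields the embedding.

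The main obstacle is the reverse embedding $\pmb{WB}_{p,q}\hookrightarrow (\pmb{WB}_{n,\infty},\pmb{WB}_{p_1,p_1})_{\theta,q}$: here one must produce, for each threshold $\lambda>0$, a concrete decomposition $b=b_0^\lambda+b_1^\lambda$ whose norms are controlled in the correct quantitative way. The plan is a Calder\'on--Zygmund-type stopping-time decomposition based on the $MO_1$-sequences $\Phi_i(b)$: in each $\mathcal{D}^{\omega(i)}$ select the maximal cubes $\{Q_k^{(i)}\}$ on which $MO_1(b;Q)>\lambda$, form $b_0^\lambda$ by subtracting the local means $c_{Q_k^{(i)}}(b)\mathbbm{1}_{Q_k^{(i)}}$ on those cubes, and set $b_1^\lambda=b-b_0^\lambda$. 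By maximality $b_1^\lambda$ has $MO_1$ bounded by a fixed multiple of $\lambda$ on every cube of $\bigcup_i\mathcal{D}^{\omega(i)}$, while the measure of the bad region is controlled by the distribution function of $\Phi_i(b)$, yielding $\|b_0^\lambda\|_{\pmb{WB}_{n,\infty}}$ and $\|b_1^\lambda\|_{\pmb{WB}_{p_1,p_1}}$ in the form required. The delicate step is coordinating the stopping cubes across the $n+1$ distinct dyadic systems; here Lemma \ref{Domegan1} plays the crucial role of ensuring that every Euclidean cube is boundedly comparable to some cube in $\bigcup_i \mathcal{D}^{\omega(i)}$, so the decompositions in the different systems can be merged with only $n$-dependent constants, after which the remaining step is the standard conversion from distribution-function bounds to K-functional bounds via a change of variables $\lambda=\lambda(t)$.
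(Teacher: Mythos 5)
Your opening reduction is exactly what the paper has in mind: using Lemmas \ref{WBNW1N} and \ref{RS4.9} to replace the endpoints by $\pmb{WB}_{n,\infty}$ and $\pmb{WB}_{p_1,p_1}$, so that the task becomes the real-interpolation identity for the $\pmb{WB}$ scale itself. The paper does not actually supply a proof of that identity --- it states Lemma \ref{RS4.10} with only the one-line remark that it ``follows'' from the two endpoint identifications, i.e.\ it is implicitly relying on Rochberg--Semmes' interpolation theory for these oscillation sequence spaces. You are attempting to prove it from scratch, which is more ambitious.

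Your argument for the forward embedding $(\pmb{WB}_{n,\infty},\pmb{WB}_{p_1,p_1})_{\theta,q}\hookrightarrow \pmb{WB}_{p,q}$ is correct. The key point is that from the coordinatewise inequality $\Phi_i(b)\le \Phi_i(b_0)+\Phi_i(b_1)$ one extracts an honest splitting of the sequence $\Phi_i(b)=c_0+c_1$ with $0\le c_0\le\Phi_i(b_0)$ and $0\le c_1\le\Phi_i(b_1)$ (take $c_0=\min(\Phi_i(b),\Phi_i(b_0))$), and then the lattice monotonicity of Lorentz norms gives $K\big(t,\Phi_i(b);\ell_{n,\infty},\ell_{p_1,p_1}\big)\le \|b_0\|_{\pmb{WB}_{n,\infty}}+t\|b_1\|_{\pmb{WB}_{p_1,p_1}}$; infimizing, applying the K-method, and using $(\ell_{n,\infty},\ell_{p_1,p_1})_{\theta,q}=\ell_{p,q}$ finishes this half. (The factor $2$ you wrote is unnecessary.)

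The reverse embedding is where the real content sits, and your sketch has a genuine gap. You argue that the Calder\'on--Zygmund good part $b_1^\lambda$ has $MO_1(b_1^\lambda;Q)\lesssim\lambda$ for every cube $Q$, and then assert this yields control of $\|b_1^\lambda\|_{\pmb{WB}_{p_1,p_1}}$. But a uniform bound $\sup_Q MO_1(b_1^\lambda;Q)\lesssim\lambda$ is only a $BMO$-type estimate, i.e.\ an $\ell_\infty$ bound on the sequence $\Phi_i(b_1^\lambda)$; it says nothing about its membership in $\ell_{p_1,p_1}$, which requires decay. What is actually needed is a pointwise domination of the form $MO_1(b_1^\lambda;Q)\lesssim\min\big(MO_1(b;Q'),\,\lambda\big)$ for a comparable cube $Q'$, so that $\|\Phi_i(b_1^\lambda)\|_{\ell_{p_1,p_1}}$ can be estimated by $\|\min(\Phi_i(b),C\lambda)\|_{\ell_{p_1,p_1}}$ and hence, using $\Phi_i(b)\in\ell_{p,q}$, by a quantity that tends to $0$ appropriately as $\lambda\to\infty$. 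Verifying that the mean-replacement on the stopping cubes does not increase oscillation beyond this bound on cubes that straddle the boundaries of several $Q_k$'s is not automatic and requires a dedicated argument. The claimed control of $\|b_0^\lambda\|_{\pmb{WB}_{n,\infty}}$ by ``the measure of the bad region'' has the same problem --- $b_0^\lambda$ has large internal oscillation and its $MO_1$ profile must likewise be dominated pointwise, not just on the bad cubes. Finally, the merging of the $n+1$ independent stopping decompositions (which you flag as ``delicate'') is a third unresolved step. These are precisely the points the paper sidesteps by treating the interpolation of the $\pmb{WB}$ scale as known (effectively citing \cite{RSe}, \cite{RS1988}), so while your overall direction (CZ stopping-time + lattice K-functional estimate) is a reasonable route and broadly consistent with Rochberg--Semmes' own methods, the proof as written is incomplete.
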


\bigskip

\section{Proof of the sufficiency of Theorem \ref{corollary1.8}}\label{Application 3}

We start with the following lemma which shows that $\|b\|_{\pmb{B}_p^{\omega, 2^n}(\mathbb{R}^n)}$ can be dominated by $\|b\|_{\pmb{B}_p(\mathbb{R}^n)}$. The converse to this lemma can be found in Proposition \ref{pdayun}.

\begin{lemma}\label{Comparison}
	Let $1\leq p<\infty$. If $b\in\pmb{B}_p(\mathbb{R}^n)$, then $b\in\pmb{B}_p^{\omega, 2^n}(\mathbb{R}^n)$. Moreover, in this case, $\|b\|_{\pmb{B}_p^{\omega, 2^n}(\mathbb{R}^n)}\lesssim_{n} \|b\|_{\pmb{B}_p(\mathbb{R}^n)}$.
\end{lemma}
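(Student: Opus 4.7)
The plan is to bound each Haar coefficient of $b$ by the Besov integrand restricted to the supporting cube, and then use a geometric tail estimate to sum over all cubes in $\mathcal{D}^\omega$.

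First I would use the mean-zero property of $H_I^\eta$ for $\eta \in \{0,1\}^n_0$ to replace $b$ by $b-b_I$ (the average over $I$) inside $\langle H_I^\eta, b\rangle$. Applying H\"older's inequality with exponents $p,p'$ and using $\|H_I^\eta\|_{p'}=|I|^{1/p'-1/2}$ yields
\begin{equation*}
\frac{|\langle H_I^\eta,b\rangle|^p}{|I|^{p/2}}\;\le\;\frac{1}{|I|}\int_I |b(x)-b_I|^p\,dx.
\end{equation*}
A standard Jensen step, writing $b(x)-b_I=\tfrac{1}{|I|}\int_I (b(x)-b(y))\,dy$, then gives
\begin{equation*}
\frac{|\langle H_I^\eta,b\rangle|^p}{|I|^{p/2}}\;\lesssim\;\frac{1}{|I|^2}\iint_{I\times I}|b(x)-b(y)|^p\,dx\,dy.
\end{equation*}

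Next I would sum over $I\in\mathcal{D}^\omega$ and swap the order of summation and integration to obtain a weight
\begin{equation*}
W(x,y):=\sum_{I\in\mathcal{D}^\omega:\, x,y\in I}\frac{1}{|I|^2}.
\end{equation*}
The main technical point, and the only step that really uses geometry, is the estimate $W(x,y)\lesssim_n |x-y|^{-2n}$. This follows because any cube $I\in\mathcal{D}^\omega$ containing both $x$ and $y$ has side length $\ell(I)\ge |x-y|/\sqrt{n}$; the cubes in $\mathcal{D}^\omega$ containing both form a nested chain indexed by $\ell(I)=2^{-k}$ with $k\le k_0(x,y,\omega)$ where $2^{-k_0}\ge |x-y|/\sqrt{n}$, so a geometric series bound gives the claim uniformly in $\omega$.

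Finally I would combine these to get
\begin{equation*}
\sum_{I\in\mathcal{D}^\omega}\frac{|\langle H_I^\eta,b\rangle|^p}{|I|^{p/2}}\;\lesssim_n\;\iint_{\mathbb{R}^n\times\mathbb{R}^n}\frac{|b(x)-b(y)|^p}{|x-y|^{2n}}\,dx\,dy\;=\;\|b\|_{\pmb{B}_p(\mathbb{R}^n)}^p,
\end{equation*}
and since $\eta$ ranges over the finite set $\{0,1\}^n_0$ of cardinality $2^n-1$, summing in $\eta$ yields $\|b\|_{\pmb{B}_p^{\omega,2^n}(\mathbb{R}^n)}\lesssim_n \|b\|_{\pmb{B}_p(\mathbb{R}^n)}$. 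I do not expect any serious obstacle; the only delicate issue is the uniformity of the weight estimate in $\omega$, but this is handled cleanly by the diameter lower bound on any cube containing two given points.
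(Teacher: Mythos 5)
Your proof is correct and follows essentially the same strategy as the paper's: bound each Haar coefficient by a local oscillation integral, pass to the double integral $\frac{1}{|I|^2}\iint_{I\times I}|b(x)-b(y)|^p\,dx\,dy$ per cube, and then estimate the summed weight by a geometric series using the diameter lower bound $\ell(I)\ge|x-y|/\sqrt{n}$. The only cosmetic difference is that you sum directly over all of $\mathcal{D}^\omega$ (justified by Tonelli since everything is nonnegative), whereas the paper first fixes a top cube at scale $t$ and then lets $t\to-\infty$; the underlying weight estimate is identical.
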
	

\begin{proof}
	Without loss of generality, assume $\omega=0$. For any given $J\in\mathcal{D}^0$ and $\eta\in\{0,1\}^n_0$,
	\begin{equation*}
		\begin{aligned}
			\frac{|\langle H_J^\eta,b\rangle|}{|J|^{1/2}}
			{}&=\frac{1}{|J|^{1/2}}\bigg|\biggl\langle H_J^\eta,b-\biggl\langle \frac{\mathbbm{1}_{J}}{|J|},b\biggr\rangle\biggr\rangle\bigg|\\
			&\le \frac{1}{|J|}\int_J\biggl|b(x)-\biggl\langle \frac{\mathbbm{1}_J}{|J|},b\biggr\rangle\biggr| dx\\
			&\le \frac{1}{|J|^2}\int_{J\times J}|b(x)-b(y)|dxdy.
		\end{aligned}
	\end{equation*}
	Given $t\in\mathbb{Z}$, $I\in \mathcal{D}^0_t$ and $\eta\in\{0,1\}^n_0$, by the H\"{o}lder inequality, one has
	\begin{equation*}
		\begin{aligned}
			\sum_{J\in\mathcal{D}^0(I)}\frac{|\langle H_J^\eta,b\rangle|^p}{|J|^{p/2}}{}&\le  \sum_{J\in\mathcal{D}^0(I)} \frac{1}{|J|^2}\int_{J\times J}|b(x)-b(y)|^pdxdy\\
			&=\sum_{s=t}^\infty\sum_{J\in\mathcal{D}^0_s(I)}\frac{1}{(2^{n(t-s)}|I|)^2}\int_{J\times J}|b(x)-b(y)|^pdxdy\\
			&=\frac{1}{|I|^2}\int_{\mathbb{R}^n\times\mathbb{R}^n}K_I(x,y)|b(x)-b(y)|^pdxdy,
		\end{aligned}
	\end{equation*}
	where
	\begin{equation*}
		K_I(x,y)=\sum_{s=t}^\infty\sum_{J\in\mathcal{D}^0_s(I)}2^{2n(s-t)}\mathbbm{1}_J(x)\mathbbm{1}_J(y).
	\end{equation*}
	Clearly,  if $x\notin I$ or $y\notin I$, then $K_I(x,y)=0$. On the other hand, suppose that $x,y\in I$, and $|x-y|>\sqrt{n}\ell(J)$ for some $J\in \mathcal{D}^0_s(I)$. Then $\exists 1\le k\le n$ such that $|x_k-y_k|>\ell(J)$, where $x_k$ is the $k$-th coordinate of $x$. We then deduce that $\mathbbm{1}_J(x)\mathbbm{1}_J(y)=0$. Hence
	\begin{equation*}
		K_I(x,y)\le \mathbbm{1}_I(x)\mathbbm{1}_I(y)\sum_{s=t}^{t+\lfloor\log_2(\sqrt{n}\ell(I)/|x-y|)\rfloor}2^{2n(s-t)}\le \frac{(4n)^n}{4^n-1}\frac{|I|^2}{|x-y|^{2n}}\mathbbm{1}_I(x)\mathbbm{1}_I(y),
	\end{equation*}
	where $\lfloor\cdot\rfloor$ is the floor function.
	
	Therefore, for a given $t\in\mathbb{Z}$, we sum up all $I\in \mathcal{ D}^0_t$, and obtain
	\begin{equation*}
		\sum_{I\in\mathcal{D}^0_t}\sum_{J\in\mathcal{D}^0(I)}\sum_{\eta}\frac{|\langle H_J^\eta,b\rangle|^p}{|J|^{p/2}}\le\frac{(4n)^n(2^n-1)}{4^n-1}\int_{\mathbb{R}^n\times\mathbb{R}^n}\frac{|b(x)-b(y)|^p}{|x-y|^{2n}}dxdy.
	\end{equation*}
	Finally letting $t\to -\infty$, we have
	\begin{equation*}
		\|b\|^p_{\pmb{B}_p^{0, 2^n}(\mathbb{R}^n)}\lesssim_{n}\|b\|^p_{\pmb{B}_p(\mathbb{R}^n)}.
	\end{equation*}

\end{proof}	

\subsection{Proof of the sufficiency of Theorem \ref{corollary1.8}}
The subsection is devoted to the proof of the sufficiency of Theorem \ref{corollary1.8}. We divide the proof into two cases:
 \begin{enumerate}
 	\item $2\le p<\infty$, $0<\alpha\le 1$ when $n\ge 1$,
 	\item $1< p<2$, $1/2\le \alpha\le 1$ when $n=1$.
 \end{enumerate}
  The second one is much subtler. In addition, in the second case, we will prove a stronger result, that is, the sufficiency part of Theorem \ref{corollary1.8} holds for $1< p<2$, $1/p-1/2< \alpha\le 1$ when $n=1$.

\begin{proof}[Proof of the sufficiency of Theorem \ref{corollary1.8}]
	From  Proposition \ref{T0est} and Lemma \ref{Comparison}, we have
	\begin{equation*}
		\|[\pi_{T(1)}^{\omega}, M_b]\|_{S_p(L_2(\mathbb{R}^n))}\lesssim_{n,p}\|T(1)\|_{BMO(\mathbb{R}^n)} \|b\|_{\pmb{B}_p(\mathbb{R}^n)}
	\end{equation*}
	and
	\begin{equation*}
		\|[(\pi_{T^*(1)}^{\omega})^*, M_b]\|_{S_p(L_2(\mathbb{R}^n))}\lesssim_{n,p} \|T^*(1)\|_{BMO(\mathbb{R}^n)} \|b\|_{\pmb{B}_p(\mathbb{R}^n)}.
	\end{equation*}
	Hence by Theorem \ref{CZdec}, it remains to estimate $\|[S_\omega^{ij}, M_b]\|_{S_p(L_2(\mathbb{R}^n))}$ for any $i, j\in \mathbb{N}\cup\{0\}$. By the triangle inequality
	\begin{equation*}
		\begin{aligned}
			\|[S_\omega^{ij},M_b]\|_{S_p(L_2(\mathbb{R}^n))}{}&\le \|[S_\omega^{ij},\pi_b]\|_{S_p(L_2(\mathbb{R}^n))}+\|[S_\omega^{ij},\varLambda_b]\|_{S_p(L_2(\mathbb{R}^n))}
			+\|[S_\omega^{ij},R_b]\|_{S_p(L_2(\mathbb{R}^n))}.
		\end{aligned}
	\end{equation*}
	Here the operators $\pi_b$, $\varLambda_b$ and $R_b$ are associated with the dyadic system $\mathcal{ D}^\omega$.
	
	From Theorem \ref{lem2.1} and Lemma \ref{TLambdab}, we know that
	\begin{equation*}
		\begin{aligned}
			\|\pi_b\|_{S_p(L_2(\mathbb{R}^n))}\lesssim_{n, p}\|b\|_{\pmb{B}_p^{\omega, 2^n}(\mathbb{R}^n)},\quad \|\varLambda_b\|_{S_p(L_2(\mathbb{R}^n))}\lesssim_{n, p}\|b\|_{\pmb{B}_p^{\omega, 2^n}(\mathbb{R}^n)}.
		\end{aligned}
	\end{equation*}
	Meanwhile, recall that $S_\omega^{ij}\in B(L_2(\mathbb{R}^n))$ is contractive. Thus, using Lemma \ref{Comparison}, one gets
	\begin{equation*}
		\begin{aligned}
			\|[S_\omega^{ij},\pi_b]\|_{S_p(L_2(\mathbb{R}^n))}&\lesssim \|S_\omega^{ij}\|\|\pi_b\|_{S_p(L_2(\mathbb{R}^n))}
			\lesssim_{n,p} \|b\|_{\pmb{B}_p^{\omega, 2^n}(\mathbb{R}^n)}\lesssim_{n} \|b\|_{\pmb{B}_p(\mathbb{R}^n)}.
		\end{aligned}
	\end{equation*}
	Similarly,
	$$ \|[S_\omega^{ij},\varLambda_b]\|_{S_p(L_2(\mathbb{R}^n))}\lesssim_{n,p} \|b\|_{\pmb{B}_p(\mathbb{R}^n)}.$$
	
	For any $i, j\in \mathbb{N}\cup\{0\}$, we will show that $\|[S^{ij}_\omega,R_b]\|_{S_p(L_2(\mathbb{R}^n))}$ increases with polynomial growth with respect to $i$ and $j$ uniformly on $\omega$. Then from Theorem \ref{CZdec} and the triangle inequality, the desired result will follow.
	
	Without loss of generality, we assume $\omega=0$.
	Let $\varPhi=[S^{ij}_0,R_b]$. Then
	\begin{equation}\label{Ndef}
		\begin{aligned}
			\varPhi(f)=\sum_{K\in\mathcal{D}^0}\sum_{\substack{I,J\in\mathcal{D}^0;I,J\subseteq K\\ \ell(I)=2^{-i}\ell(K)\\\ell(J)=2^{-j}\ell(K)}}\sum_{\xi,\eta\in\{0,1\}^n_0}a^{\xi\eta}_{IJK}\langle H^\xi_I,R_b(f)\rangle H^\eta_J-\sum_{k\in \mathbb{Z}}b_{k-1}d_k(S^{ij}_0(f)).
		\end{aligned}
	\end{equation}
	Note that for any $k\in\mathbb{Z}$ and $\xi\in\{0,1\}^n_0$, if $I\in\mathcal{D}^0_k$, then $d_{k+1}H^\xi_I=H^\xi_I$. Hence, for any $I\in \mathcal{ D}_k^0$,
	\begin{equation*}
		\begin{aligned}
			\langle H^\xi_I,R_b(f)\rangle{}&=\biggl\langle H^\xi_I,\sum_{l\in\mathbb{Z}}b_{l-1}d_lf\biggr\rangle
			=\biggl\langle d_{k+1}H^\xi_I,\sum_{l\in\mathbb{Z}}b_{l-1}d_lf\biggr\rangle\\
			&=\biggl\langle H^\xi_I,d_{k+1}\biggl(\sum_{l\in\mathbb{Z}}b_{l-1}d_lf\biggr)\biggr\rangle
			=\langle H^\xi_I,b_{k}d_{k+1}f\rangle\\
			&=\langle H^\xi_I,b_{k}\mathbbm{1}_Id_{k+1}f\rangle
			=\biggl\langle H^\xi_I,\biggl\langle \frac{\mathbbm{1}_I}{|I|},b\biggr\rangle\mathbbm{1}_Id_{k+1}f\biggr\rangle\\
			&=\biggl\langle \frac{\mathbbm{1}_I}{|I|},b\biggr\rangle\langle H^\xi_I,d_{k+1}f\rangle=\biggl\langle \frac{\mathbbm{1}_I}{|I|},b\biggr\rangle\langle H^\xi_I,f\rangle.
		\end{aligned}
	\end{equation*}
	For the second term in \eqref{Ndef}, one has
	\begin{equation*}
		\begin{aligned}
			\sum_{k\in \mathbb{Z}}b_{k-1}d_k(S^{ij}_0(f)){}&=\sum_{k\in \mathbb{Z}}b_{k-1}d_k\biggl(\sum_{K\in\mathcal{D}^0}\sum_{\substack{I,J\in\mathcal{D}^0;I,J\subseteq K\\ \ell(I)=2^{-i}\ell(K)\\\ell(J)=2^{-j}\ell(K)}}\sum_{\xi,\eta}a_{IJK}^{\xi\eta}\langle H^\xi_I,f\rangle H^\eta_J\biggr)\\&=\sum_{k\in \mathbb{Z}}\sum_{K\in\mathcal{D}^0_{k-1-j}}\sum_{\substack{I,J\in\mathcal{D}^0;I,J\subseteq K\\ \ell(I)=2^{-i}\ell(K)\\\ell(J)=2^{-j}\ell(K)}}\sum_{\xi,\eta}a_{IJK}^{\xi\eta}b_{k-1}\langle H^\xi_I,f\rangle H^\eta_J\\
			&=\sum_{k\in \mathbb{Z}}\sum_{K\in\mathcal{D}^0_{k-1-j}}\sum_{\substack{I,J\in\mathcal{D}^0;I,J\subseteq K\\ \ell(I)=2^{-i}\ell(K)\\\ell(J)=2^{-j}\ell(K)}}\sum_{\xi,\eta}a_{IJK}^{\xi\eta}\biggl\langle \frac{\mathbbm{1}_J}{|J|},b\biggr\rangle\langle H^\xi_I,f\rangle  H^\eta_J\\
			&=\sum_{K\in\mathcal{D}^0}\sum_{\substack{I,J\in\mathcal{D}^0;I,J\subseteq K\\ \ell(I)=2^{-i}\ell(K)\\\ell(J)=2^{-j}\ell(K)}}\sum_{\xi,\eta}a_{IJK}^{\xi\eta}\biggl\langle \frac{\mathbbm{1}_J}{|J|},b\biggr\rangle \langle H^\xi_I,f\rangle H^\eta_J.
		\end{aligned}
	\end{equation*}
	Therefore,
	\begin{equation}\label{BKdef}
		\begin{aligned}
			\varPhi(f){}&=\sum_{K\in\mathcal{D}^0}\sum_{\substack{I,J\in\mathcal{D}^0;I,J\subseteq K\\ \ell(I)=2^{-i}\ell(K)\\\ell(J)=2^{-j}\ell(K)}}\sum_{\xi,\eta}a_{IJK}^{\xi\eta}\biggl(\biggl\langle \frac{\mathbbm{1}_I}{|I|},b\biggr\rangle-\biggl\langle \frac{\mathbbm{1}_J}{|J|},b\biggr\rangle\biggr)\langle H^\xi_I,f\rangle H^\eta_J
			=:\sum_{K\in\mathcal{D}^0}B_K(f).
		\end{aligned}
	\end{equation}
	Let $b_{IJ}= \langle \frac{\mathbbm{1}_I}{|I|},b\rangle-\langle \frac{\mathbbm{1}_J}{|J|},b\rangle$.
	Since $B_{K_1}$, $B_{K_2}$ have orthogonal ranges when $K_1\neq K_2$, we see
	\[B_{K_1}^*B_{K_2}=0,\quad \forall K_1\neq K_2, K_1,K_2\in\mathcal{D}^0,\]
	which yields $\varPhi^*\varPhi=\sum\limits_{K\in \mathcal{ D}^0}B_K^*B_K$.
	Note that $\forall f\in L_2(\mathbb{R}^n)$, 
	\begin{equation}\label{BkBK}
		B_K^*B_K(f)=\sum_{\substack{I,\tilde{I},J\in\mathcal{D}^0;I,\tilde{I},J\subseteq K\\ \ell(I)=\ell(\tilde{I})=2^{-i}\ell(K)\\\ell(J)=2^{-j}\ell(K)}}\sum_{\xi,\tilde{\xi},\eta}a_{IJK}^{\xi\eta}\overline{a_{\tilde{I}JK}^{\tilde{\xi}\eta}}\overline{b_{\tilde{I}J}} b_{IJ}\la H^\xi_I,f\rangle H^{\tilde{\xi}}_{\tilde{I}},
	\end{equation}   
	which implies that $\varPhi^*\varPhi$ is a block diagonal matrix with blocks $B_K^*B_K$ for all $K\in \mathcal{ D}^0$.
	Consequently, we have
	\begin{equation}\label{Nf1}
		\begin{aligned}
			\|\varPhi\|_{S_p(L_2(\mathbb{R}^n))}^p&=\|\varPhi^*\varPhi\|_{S_{p/2}(L_2(\mathbb{R}^n))}^{p/2}
			=\sum_{k\in\mathbb{Z}}\sum_{K\in\mathcal{D}^0_k}\|B^*_KB_K\|_{S_{p/2}(L_2(\mathbb{R}^n))}^{p/2}.
		\end{aligned}
	\end{equation}
Denote by 
\begin{equation*}
	[B_K^*B_K]=\biggl((B_K^*B_K)_{(\tilde{I},\tilde{\zeta}),(I,\zeta)}\biggr)_{\tilde{I},I\in \mathcal{D}^0;\tilde{I},I\subseteq K , \ell(\tilde{I})=\ell(I)=2^{-i}\ell(K), \tilde{\zeta},\zeta\in\{0,1\}^n_0}
\end{equation*}
the matrix form of $B_K^*B_K$ with respect to the basis $\{H_I^\zeta\}_{I\in\mathcal{D}^0;I\subseteq K , \ell(I)=2^{-i}\ell(K), \zeta\in\{0,1\}^n_0}$, where $(B_K^*B_K)_{(\tilde{I},\tilde{\zeta}),(I,\zeta)}=\langle  H_{\tilde{I}}^{\tilde{\zeta}},B_K^*B_K H_I^\zeta\rangle$. We also denote the  $2^{in}(2^n-1)\times 2^{in}(2^n-1)$ matrix by
\begin{equation}\label{W}
	W^{K,J,\eta}=\bigg(W_{(\tilde{I},\tilde{\zeta}),(I,\zeta)}^{K,J,\eta}\bigg)_{\tilde{I},I\in \mathcal{D}^0;\tilde{I},I\subseteq K , \ell(\tilde{I})=\ell(I)=2^{-i}\ell(K), \tilde{\zeta},\zeta\in\{0,1\}^n_0},
\end{equation}
where $W_{(\tilde{I},\tilde{\zeta}),(I,\zeta)}^{K,J,\eta}=a_{IJK}^{\zeta\eta}\overline{a_{\tilde{I}JK}^{\tilde{\zeta}\eta}}\overline{b_{\tilde{I}J}} b_{IJ}$. Then by \eqref{BkBK} one has
\begin{equation*}
	[B_K^*B_K]=\sum_{\substack{J\in\mathcal{D}^0;J\subseteq K\\ \ell(J)=2^{-j}\ell(K)}}\sum_{\eta} W^{K,J,\eta}.
\end{equation*}

	Now we divide the proof into two cases: \\
	$\bullet$ $2\le p<\infty$, $0<\alpha\le 1$ when $n\ge 1$;\\
	$\bullet$ $1< p<2$, $1/p-1/2< \alpha\le 1$ when $n=1$.\\
	
	\noindent (1) We first consider the case $n\ge 1$, $2\le p<\infty$ and $0<\alpha\le 1$. By using the triangle inequality, one has
	\begin{equation*}
		\begin{aligned}
			\|B^*_KB_K\|_{S_{p/2}(L_2(\mathbb{R}^n))}{}&=\|[B_K^*B_K]\|_{S_{p/2}(\mathbb{M}_{2^{in}(2^n-1)})}\le \sum_{\substack{J\in\mathcal{D}^0;J\subseteq K\\ \ell(J)=2^{-j}\ell(K)}}\sum_{\eta}\big\|W^{K,J,\eta}\big\|_{S_{p/2}(\mathbb{M}_{2^{in}(2^n-1)})}.
		\end{aligned}
	\end{equation*}
Notice that
\begin{equation}\label{WVV}
	\begin{aligned}
		\big\|W^{K,J,\eta}\big\|_{S_{p/2}(\mathbb{M}_{2^{in}(2^n-1)})}{}
		&=\sum_{\substack{{I}\in\mathcal{D}^0;{I}\subseteq K \\ \ell({I})=2^{-i}\ell(K)\\ {\zeta}\in\{0,1\}^n_0}}a_{{I}JK}^{{\zeta}\eta}\overline{a_{{I}JK}^{{\zeta}\eta}}b_{{I}J}\overline{b_{{I}J}} \\
		&=\sum_{\substack{{I}\in\mathcal{D}^0;{I}\subseteq K \\ \ell({I})=2^{-i}\ell(K)\\ {\zeta}\in\{0,1\}^n_0}}\Big|a_{{I}JK}^{{\zeta}\eta} b_{{I}J}\Big|^2.
	\end{aligned}
\end{equation}
Besides, note that
$|a_{IJK}^{\xi\eta}|\le 2^{-(i+j)n/2}$ in \eqref{aijkxieta}. Hence by (\ref{Nf1}) and (\ref{WVV}),
\begin{equation}\label{Nf}
	\begin{aligned}
		\|\varPhi\|_{S_p(L_2(\mathbb{R}^n))}^p{}&\le \sum_{k\in\mathbb{Z}}\sum_{K\in\mathcal{D}^0_k}\biggl(\sum_{\substack{J\in\mathcal{D}^0;J\subseteq K\\ \ell(J)=2^{-j}\ell(K)}}\sum_{\eta}\big\|W^{K,J,\eta}\big\|_{S_{p/2}(\mathbb{M}_{2^{in}(2^n-1)})}\biggr)^{p/2}\\
		&\le\sum_{k\in\mathbb{Z}}\sum_{K\in\mathcal{D}^0_k}\biggl(\sum_{\substack{J\in\mathcal{D}^0;J\subseteq K\\ \ell(J)=2^{-j}\ell(K)}}\sum_{\eta}\sum_{\substack{I\in\mathcal{D}^0;I\subseteq K \\ \ell(I)=2^{-i}\ell(K)}}\sum_{ \xi}\Big|a_{IJK}^{\xi\eta} b_{IJ}\Big|^2\biggr)^{p/2}\\
		&\le(2^n-1)^p\sum_{k\in\mathbb{Z}}\sum_{K\in\mathcal{D}^0_k}\biggl(2^{-(i+j)n}\sum_{\substack{J\in\mathcal{D}^0;J\subseteq K\\ \ell(J)=2^{-j}\ell(K)}}\sum_{\substack{I\in\mathcal{D}^0;I\subseteq K\\ \ell(I)=2^{-i}\ell(K)}}|b_{IJ}|^2\biggr)^{p/2}.
	\end{aligned}
\end{equation}
	Fix $k\in\mathbb{Z}$ and $K\in\mathcal{D}_k^0$. Since $b_{IJ}=b_{IK}-b_{JK}$, by the triangle inequality and the Cauchy-Schwarz inequality, we have
	\begin{equation}
		\begin{aligned}
			{}&\sum_{\substack{J\in\mathcal{D}^0;J\subseteq K\\ \ell(J)=2^{-j}\ell(K)}}\sum_{\substack{I\in\mathcal{D}^0;I\subseteq K\\ \ell(I)=2^{-i}\ell(K)}}|b_{IJ}|^2\\
			&\leq \sum_{\substack{J\in\mathcal{D}^0;J\subseteq K\\ \ell(J)=2^{-j}\ell(K)}}\sum_{\substack{I\in\mathcal{D}^0;I\subseteq K\\ \ell(I)=2^{-i}\ell(K)}} 2(|b_{IK}|^2+|b_{JK}|^2)\\
			&\le 2^{jn+1}\sum_{\substack{I\in\mathcal{D}^0;I\subseteq K\\ \ell(I)=2^{-i}\ell(K)}}|b_{IK}|^2+2^{in+1}\sum_{\substack{J\in\mathcal{D}^0;J\subseteq K\\ \ell(J)=2^{-j}\ell(K)}}|b_{JK}|^2.
		\end{aligned}
	\end{equation}
	Note that $b_{IK}\cdot \mathbbm{1}_I=(b_{k+i}-b_{k})\cdot \mathbbm{1}_I$, and sum all $I$ and $J$, one has
	\begin{equation}\label{2ijnbij}
		\begin{aligned}
			{}&2^{-(i+j)n}\sum_{\substack{J\in\mathcal{D}^0;J\subseteq K\\ \ell(J)=2^{-j}\ell(K)}}\sum_{\substack{I\in\mathcal{D}^0;I\subseteq K\\ \ell(I)=2^{-i}\ell(K)}}|b_{IJ}|^2\\
			&\le 2^{kn+1}\bigg(\sum_{\substack{I\in\mathcal{D}^0;I\subseteq K\\ \ell(I)=2^{-i}\ell(K)}}\|b_{IK}\cdot \mathbbm{1}_I\|_{L_2(\mathbb{R}^n)}^2+\sum_{\substack{J\in\mathcal{D}^0;J\subseteq K\\ \ell(J)=2^{-j}\ell(K)}}\|b_{JK}\cdot \mathbbm{1}_J\|_{L_2(\mathbb{R}^n)}^2\bigg)\\
			&= 2^{kn+1}\bigg(\|(b_{k+i}-b_{k})\mathbbm{1}_K\|_{L_2(\mathbb{R}^n)}^2+\|(b_{k+j}-b_{k})\mathbbm{1}_K\|_{L_2(\mathbb{R}^n)}^2\bigg)\\
			&\le 
			2^{1+2nk/p}\biggl(i\sum_{l=k+1}^{k+i}\|d_lb\cdot \mathbbm{1}_K\|_{L_p(\mathbb{R}^n)}^2+j\sum_{l=k+1}^{k+j}\|d_l b\cdot \mathbbm{1}_K\|_{L_p(\mathbb{R}^n)}^2\biggr).
		\end{aligned}
	\end{equation}
	Hence using the convex inequality, we obtain
	\begin{equation*}
		\begin{aligned}
			{}&\|\varPhi\|_{S_p(L_2(\mathbb{R}^n))}^p\\
			&\le (2^n-1)^p2^{p}\sum_{k\in\mathbb{Z}}\sum_{K\in\mathcal{D}^0_k}2^{nk}\biggl(i^{p-1}\sum_{l=k+1}^{k+i}\|d_lb\cdot\mathbbm{1}_K\|_{L_p(\mathbb{R}^n)}^p+j^{p-1}\sum_{l=k+1}^{k+j}\|d_lb\cdot\mathbbm{1}_K\|_{L_p(\mathbb{R}^n)}^p\biggr)\\
			&=(2^n-1)^p2^{p}(i^{p}+j^{p})\sum_{k\in\mathbb{Z}}2^{nk}\|d_kb\|_{L_p(\mathbb{R}^n)}^p\\
			&\lesssim_{n,p}(i^{p}+j^{p})\|b\|_{\pmb{B}_p^{0, 2^n}(\mathbb{R}^n)}^p\lesssim_{n} (i^{p}+j^{p})\|b\|_{\pmb{B}_p(\mathbb{R}^n)}^p.
		\end{aligned}
	\end{equation*}
	Since the above estimation is independent of the choice of $\omega$, one has
	$$ \|[S_\omega^{ij}, R_b]\|_{S_p(L_2(\mathbb{R}^n))}\lesssim_{n, p} (i^p+j^p)^{1/p}  \|b\|_{\pmb{B}_p(\mathbb{R}^n)}, $$
	which yields
	$$ \|[S_\omega^{ij}, M_b]\|_{S_p(L_2(\mathbb{R}^n))}\lesssim_{n, p} (i^p+j^p+1)^{1/p}  \|b\|_{\pmb{B}_p(\mathbb{R}^n)}. $$
	Therefore by Lemma \ref{weakconver} and the triangle inequality,
	\begin{equation*}
		\begin{aligned}
			{}&\|[T,M_b]\|_{S_p(L_2(\mathbb{R}^n))}\\
			&=\biggl\|\biggl[C_1(T)\mathbb{E}_\omega\sum_{i,j=0\atop \max\{i,j\}>0}^\infty \tau(i,j)S_\omega^{ij}+C_2(T)\mathbb{E}_\omega S_\omega^{00}+\mathbb{E}_\omega \pi^\omega_{T(1)}+\mathbb{E}_\omega (\pi^\omega_{T^*(1)})^*,M_b\biggr]\biggr\|_{S_p(L_2(\mathbb{R}^n))}\\
			&\lesssim_T \sum_{i,j=0}^\infty \tau(i,j)\mathbb{E}_\omega\|[S_\omega^{ij},M_b]\|_{S_p(L_2(\mathbb{R}^n))}+\mathbb{E}_\omega\|[ \pi^\omega_{T(1)}+ (\pi^\omega_{T^*(1)})^*,M_b]\|_{S_p(L_2(\mathbb{R}^n))} \\
			&\lesssim_{n,p, T} \big(1+\|T(1)\|_{BMO(\mathbb{R}^n)}+\|T^*(1)\|_{BMO(\mathbb{R}^n)}\big) \|b\|_{\pmb{B}_p(\mathbb{R}^n)}.
		\end{aligned}
	\end{equation*}
	This finishes the proof for $2\leq p<\infty$, $0<\alpha\le 1$ when $n\ge 1$.\\
	
\noindent (2) Next, we consider $n=1$, $1<p<2$ and $1/p-1/2<\alpha\le 1$. Fix $k\in\mathbb{Z}$ and $K\in\mathcal{D}_k^0$. Let the eigenvalues of $B_K^*B_K$ be
\begin{equation*}
	s_1(K)\ge s_2(K)\ge \cdots\ge s_{2^{i}}(K)\ge 0.
\end{equation*}
Then it is clear that
\begin{equation}\label{smTrb}
	0\le s_m(K)\le \frac{1}{m}\mathrm{Tr}(B_K^*B_K),\quad \forall 1\le m\le 2^{i}.
\end{equation}
Note that $|a_{IJK}^{\xi\eta}|\le 2^{-(i+j)/2}$ in \eqref{aijkxieta}, by \eqref{WVV} and \eqref{2ijnbij},
\begin{equation}\label{Trbkbk}
	\begin{aligned}
		\mathrm{Tr}(B_K^*B_K){}&=\|B_K^*B_K\|_{S_1(\mathbb{M}_{2^{i}})}\\
		&\le \sum_{\substack{J\in\mathcal{D}^0;J\subseteq K\\ \ell(J)=2^{-j}\ell(K)}}\sum_{\eta}\big\|W^{K,J,\eta}\big\|_{S_{1}(\mathbb{M}_{2^{i}})}\\
		&=\sum_{\substack{J\in\mathcal{D}^0;J\subseteq K\\ \ell(J)=2^{-j}\ell(K)}}\sum_{\eta}\sum_{\substack{{I}\in\mathcal{D}^0;{I}\subseteq K \\ \ell({I})=2^{-i}\ell(K)\\ {\zeta}\in\{0,1\}^1_0}}\Big|a_{{I}JK}^{{\zeta}\eta} b_{{I}J}\Big|^2\\
		&\le 2^{-(i+j)}\sum_{\substack{J\in\mathcal{D}^0;J\subseteq K\\ \ell(J)=2^{-j}\ell(K)}}\sum_{\substack{{I}\in\mathcal{D}^0;{I}\subseteq K \\ \ell({I})=2^{-i}\ell(K)}}| b_{{I}J}|^2\\
		&\le 2^{k+1}\big(\|(b_{k+i}-b_{k})\mathbbm{1}_K\|_{L_2(\mathbb{R})}^2+\|(b_{k+j}-b_{k})\mathbbm{1}_K\|_{L_2(\mathbb{R})}^2\big).
	\end{aligned}
\end{equation}
Thus by the convex inequality,
\begin{equation*}
	\begin{aligned}
		\|B_K^*B_K\|_{S_{p/2}(L_2(\mathbb{R}))}^{p/2}{}&=\sum_{m=1}^{2^i}s_m(K)^{p/2}\\
		&\le \sum_{m=1}^{2^i}\frac{1}{m^{p/2}}(\mathrm{Tr}(B_K^*B_K))^{p/2}\\
		&\le \frac{2^{i(1-p/2)}}{1-p/2}\bigg(2^{k+1}\Big(\|(b_{k+i}-b_{k})\mathbbm{1}_K\|_{L_2(\mathbb{R})}^2+\|(b_{k+j}-b_{k})\mathbbm{1}_K\|_{L_2(\mathbb{R})}^2\Big)\bigg)^{p/2}\\
		&\le \frac{2^{i(1-p/2)}}{1-p/2}\cdot 2^{(k+1)p/2}\Big(\|(b_{k+i}-b_{k})\mathbbm{1}_K\|_{L_2(\mathbb{R})}^p+\|(b_{k+j}-b_{k})\mathbbm{1}_K\|_{L_2(\mathbb{R})}^p\Big).
	\end{aligned}
\end{equation*}
It implies that
\begin{equation*}
	\begin{aligned}
		\|\varPhi\|_{S_p(L_2(\mathbb{R}))}^p{}&=\sum_{k\in\mathbb{Z}}\sum_{K\in\mathcal{D}^0_k}\|B^*_KB_K\|_{S_{p/2}(L_2(\mathbb{R}))}^{p/2}\\
		&\le \frac{2^{i(1-p/2)+p/2}}{1-p/2}\sum_{k\in\mathbb{Z}}\sum_{K\in\mathcal{D}^0_k}|K|^{-p/2}\Big(\|(b_{k+i}-b_{k})\mathbbm{1}_K\|_{L_2(\mathbb{R})}^p+\|(b_{k+j}-b_{k})\mathbbm{1}_K\|_{L_2(\mathbb{R})}^p\Big).
	\end{aligned}
\end{equation*}
Note also that
\begin{equation*}
	\begin{aligned}
		\|(b_{k+i}-b_{k})\mathbbm{1}_K\|_{L_2(\mathbb{R})}^2{}&
		=\bigg\|\sum_{l=1}^{i}d_{k+l}b\cdot \mathbbm{1}_K\bigg\|_{L_2(\mathbb{R})}^2\\
		&=\bigg\langle \sum_{l=1}^{i}\sum_{L\in\mathcal{D}^0_{k+l-1}}\sum_{\zeta}\langle H_L^\zeta,b\rangle H_L^\zeta\cdot \mathbbm{1}_K,\sum_{l=1}^{i}\sum_{L\in\mathcal{D}^0_{k+l-1}}\sum_{\zeta}\langle H_L^\zeta,b\rangle H_L^\zeta\cdot \mathbbm{1}_K\bigg\rangle\\
		&=\sum_{l=1}^{i}\sum_{\substack{L\in\mathcal{D}^0_{k+l-1}\\ L\subseteq K}}\sum_{\zeta}\big|\langle H_L^\zeta,b\rangle\big|^2.
	\end{aligned}
\end{equation*}
Thus by the convex inequality again,
\begin{equation*}
	\begin{aligned}
		{}&\sum_{k\in\mathbb{Z}}\sum_{K\in\mathcal{D}^0_k}|K|^{-p/2}\|(b_{k+i}-b_{k})\mathbbm{1}_K\|_{L_2(\mathbb{R})}^p\\
		&\le \sum_{k\in\mathbb{Z}}\sum_{K\in\mathcal{D}^0_k}|K|^{-p/2}\sum_{l=1}^{i}\sum_{\substack{L\in\mathcal{D}^0_{k+l-1}\\ L\subseteq K}}\sum_{\zeta}\big|\langle H_L^\zeta,b\rangle\big|^p\\
		&=\sum_{l=1}^{i}2^{-(l-1)p/2}\sum_{k\in\mathbb{Z}}\sum_{K\in\mathcal{D}^0_k}\sum_{\substack{L\in\mathcal{D}^0_{k+l-1}\\ L\subseteq K}}\sum_{\zeta}|L|^{-p/2}\big|\langle H_L^\zeta,b\rangle\big|^p\\
		&\le \frac{1}{1-2^{-p/2}}\|b\|_{\pmb{B}_p^{0, 2}(\mathbb{R})}^p.
	\end{aligned}
\end{equation*}
Similarly,
\begin{equation*}
	\begin{aligned}
		{}&\sum_{k\in\mathbb{Z}}\sum_{K\in\mathcal{D}^0_k}|K|^{-p/2}\|(b_{k+j}-b_{k})\mathbbm{1}_K\|_{L_2(\mathbb{R})}^p
		\le \frac{1}{1-2^{-p/2}}\|b\|_{\pmb{B}_p^{0, 2}(\mathbb{R})}^p.
	\end{aligned}
\end{equation*}
Hence we obtain
\begin{equation*}
	\|\varPhi\|_{S_p(L_2(\mathbb{R}))}^p\le \frac{2^{i(1-p/2)+p/2+1}}{(1-2^{-p/2})(1-p/2)}\|b\|_{\pmb{B}_p^{0, 2}(\mathbb{R})}^p\lesssim_{p} 2^{i(1-p/2)}\|b\|_{\pmb{B}_p(\mathbb{R})}^p.
\end{equation*}
Since the above estimate is independent of the choice of $\omega$, one has
$$ \|[S_\omega^{ij}, R_b]\|_{S_p(L_2(\mathbb{R}))}\lesssim_{p} 2^{i(1/p-1/2)} \|b\|_{\pmb{B}_p(\mathbb{R})}, $$
which yields
\begin{equation*}
	\begin{aligned}
		\|[S_\omega^{ij}, M_b]\|_{S_p(L_2(\mathbb{R}))}{}&\lesssim_{p} 2^{i(1/p-1/2)} \|b\|_{\pmb{B}_p(\mathbb{R})}.
	\end{aligned}
\end{equation*}
Since $1/p-1/2<\alpha\le 1$, we get
\begin{equation*}
	\begin{aligned}
		\sum_{i,j=0}^\infty &\tau(i,j)\|[S_\omega^{ij}, M_b]\|_{S_p(L_2(\mathbb{R}))}\\
		&\lesssim_{p} \sum_{i,j=0}^\infty (1+\max\{i,j\})^{2(1+\alpha)}2^{\max\{i,j\}\big(1/p-1/2-\alpha\big)}\|b\|_{\pmb{B}_p(\mathbb{R})}<\infty.
	\end{aligned}
\end{equation*}
Therefore by Lemma \ref{weakconver} and the triangle inequality,
\begin{equation*}
	\begin{aligned}
		{}&\quad\|[T,M_b]\|_{S_p(L_2(\mathbb{R}))}
		\lesssim_{p, T} \big(1+\|T(1)\|_{BMO(\mathbb{R})}+\|T^*(1)\|_{BMO(\mathbb{R})}\big) \|b\|_{\pmb{B}_p(\mathbb{R})}.
	\end{aligned}
\end{equation*}
This completes the proof of the sufficiency of Theorem \ref{corollary1.8}.
\end{proof}

\subsection{Comparison between  Theorem \ref{thm1.5} and Theorem \ref{corollary1.8}}
From our proof of Theorem \ref{corollary1.8}, we see that when $p\geq 2$, one always has
$$ \|[T,M_b]\|^p_{S_p(L_2(\mathbb{R}^n))}\lesssim_{n, p} \int_{\mathbb{R}^n\times\mathbb{R}^n}\frac{|b(x)-b(y)|^p}{|x-y|^{2n}}dxdy. $$
However, this does not contradict with Theorem \ref{thm1.5} for $p\leq n$ and $n\geq 2$ due to the following fact.
\begin{proposition}\label{0pn}
	Let $n\ge 1$ and $1\leq p\leq n$. Assume that $b$ is a locally integrable complex-valued function. Then $b$ is constant if
	$$  \int_{\mathbb{R}^n\times\mathbb{R}^n}\frac{|b(x)-b(y)|^p}{|x-y|^{2n}}dxdy<\8. $$
\end{proposition}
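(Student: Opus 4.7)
The plan is to reduce the statement to smooth $b$ via mollification, and then to exhibit a local divergence of the integrand whenever the mollified function is non-constant. Without loss of generality one may assume $b$ is real-valued, since otherwise we can argue separately on $\mathrm{Re}\,b$ and $\mathrm{Im}\,b$, both of which inherit finiteness of the double integral. Let $\phi\in C_c^\infty(\mathbb{R}^n)$ be a non-negative mollifier with $\int\phi=1$, put $\phi_\epsilon(x)=\epsilon^{-n}\phi(x/\epsilon)$, and set $b_\epsilon:=b*\phi_\epsilon\in C^\infty(\mathbb{R}^n)$. By Jensen's inequality applied to the convolution, Fubini, and the translation invariance of the kernel $|x-y|^{-2n}\,dx\,dy$,
\begin{equation*}
\int_{\mathbb{R}^n\times\mathbb{R}^n}\frac{|b_\epsilon(x)-b_\epsilon(y)|^p}{|x-y|^{2n}}dxdy\;\le\;\int_{\mathbb{R}^n\times\mathbb{R}^n}\frac{|b(x)-b(y)|^p}{|x-y|^{2n}}dxdy\;<\;\infty,
\end{equation*}
so each $b_\epsilon$ inherits a finite Besov-type seminorm uniformly in $\epsilon$.

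Next I would show that every smooth $b_\epsilon$ with finite seminorm must be constant when $p\le n$. If not, there exist a point $x_0$, a unit vector $\omega_0$, and constants $r_0,v_0>0$ with $\partial_{\omega_0}b_\epsilon(x)\ge v_0$ throughout $B(x_0,r_0)$. Restricting attention to a narrow cone $\Gamma_\theta=\{h\in\mathbb{R}^n:h\cdot\omega_0\ge(1-\theta)|h|\}$ about $\omega_0$, the fundamental theorem of calculus on the segment from $x$ to $x+h$, combined with the transverse estimate $|h-(h\cdot\omega_0)\omega_0|\le\sqrt{2\theta}\,|h|$, yields, for $\theta$ small enough and every $x\in B(x_0,r_0/2)$ and $h\in\Gamma_\theta$ with $|h|<r_0/2$, the linear lower bound $|b_\epsilon(x+h)-b_\epsilon(x)|\ge c_1|h|$. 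Passing to polar coordinates $h=r\omega$ and integrating also over $x\in B(x_0,r_0/2)$ gives
\begin{equation*}
\int_{B(x_0,r_0/2)}\int_{\Gamma_\theta\cap\{|h|<r_0/2\}}\frac{|b_\epsilon(x+h)-b_\epsilon(x)|^p}{|h|^{2n}}dh\,dx\;\gtrsim\;c_1^p\,|B(x_0,r_0/2)|\,\sigma(\Gamma_\theta\cap S^{n-1})\int_0^{r_0/2}r^{p-n-1}\,dr,
\end{equation*}
and the radial integral $\int_0^{r_0/2}r^{p-n-1}\,dr$ diverges at $r=0$ because $p\le n$. This contradicts the finiteness furnished by the first step, hence $b_\epsilon\equiv c_\epsilon$ for some constant. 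Since $b_\epsilon\to b$ in $L^1_{\mathrm{loc}}$ as $\epsilon\to 0^+$, the function $b$ is itself a.e. constant.

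The main obstacle is the conical lower bound in the second step: one cannot hope for $|b_\epsilon(x+h)-b_\epsilon(x)|\gtrsim|h|$ for all $h$, since the increment vanishes along the level sets of $b_\epsilon$; and a naive Taylor expansion is delicate because the quadratic remainder could a priori cancel the linear main term. Restricting the $h$-integration to a narrow cone around the gradient direction circumvents both issues by forcing the displacement to lie essentially along $\omega_0$, where the linear term dominates. Once this is in hand, the divergence $\int_0 r^{p-n-1}\,dr=\infty$ for $p\le n$ is the scaling obstruction one expects from the fact that the Besov-type seminorm with denominator $|x-y|^{2n}$ corresponds to smoothness index $n/p\ge 1$.
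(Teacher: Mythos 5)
Your proof is correct and takes essentially the same approach as the paper's: reduce to smooth $b$ by mollification (you use Jensen plus translation invariance of the kernel, the paper uses Young's inequality on the equivalent seminorm $\int\|b(\cdot+t)-b\|_{L_p}^p|t|^{-2n}\,dt$, which is the same computation), then obtain a linear lower bound $|b_\epsilon(x+h)-b_\epsilon(x)|\gtrsim|h|$ for $h$ in a cone about the gradient direction (you fix a narrow cone $\Gamma_\theta$ and use the transverse estimate, the paper fixes the cone $\{|t_1|>|t|/2\}$ after rotating $\nabla b(\tilde{x})$ onto $e_1$ and uses $|\nabla b-\nabla b(\tilde{x})|\le M/4$), and conclude that $\int_0 r^{p-n-1}\,dr$ diverges when $p\le n$. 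The only genuine refinement in yours is the explicit reduction to real-valued $b$, which cleans up a small implicit assumption in the paper's rotation step.
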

\begin{proof}
	We proceed with the proof by contradiction. Assume that $b$ is not constant. Then there exists $\varphi\in C_c^\infty(\mathbb{R}^n)$ such that $b\ast\varphi\in C^\infty(\mathbb{R}^n)$ is not constant either.
	By changing the variables, we have
	\[\int_{\mathbb{R}^n\times\mathbb{R}^n}\frac{|b(x)-b(y)|^p}{|x-y|^{2n}}dxdy=\int_{\mathbb{R}^n}\frac{\|b(x+t)-b(x)\|^p_{L_p(\mathbb{R}^n)}}{|t|^{2n}}dt.\]
	 Since by the Young inequality
	\[\|\varphi\ast b(x+t)-\varphi\ast b(x)\|^p_{L_p(\mathbb{R}^n)}\le \|\varphi\|^p_{L_1(\mathbb{R}^n)}\|b(x+t)-b(x)\|^p_{L_p(\mathbb{R}^n)},\]
	we get
	\[\int_{\mathbb{R}^n}\frac{\|\varphi\ast b(x+t)-\varphi\ast b(x)\|^p_{L_p(\mathbb{R}^n)}}{|t|^{2n}}dt<\infty.\]
	Hence we can assume that $b\in C^\infty(\mathbb{R}^n)$, otherwise we replace $b$ with $b\ast\varphi$.
	
	Since $b$ is not constant, there exists $\tilde{x}=(\tilde{x}_1,\cdots,\tilde{x}_n)\in\mathbb{R}^n$, such that $\nabla b(\tilde{x})\neq 0$. Let $U$ be a unitary matrix in $\mathbb{M}_n$ such that $\nabla b(\tilde{x})\cdot U=(|\nabla b(\tilde{x})|, 0, \cdots, 0)$. We substitute $\tilde{b}(y):=b(y\cdot U)$ for $b$.
	So we can also assume that there exists $\tilde{x}\in\mathbb{R}^n$ with $\nabla b(\tilde{x})=(M, 0, \cdots, 0)$ and $M>0$.
	
	Since $b\in C^\infty(\mathbb{R}^n)$, $\exists\ \delta>0$ such that $\forall \ |y-\tilde{x}|<2\delta$ with
	$$  |\nabla b(y)-\nabla b(\tilde{x})|\leq \dfrac{M}{4}.  $$
	Thus for any $ |x-\tilde{x}|<\delta$ and $|t|<\delta$ with $|t_1|>\frac{|t|}{2}$, by the mean value theorem,
	\begin{equation*}
		\begin{aligned}
			|b(x+t)-b(x)|&=|\nabla b(x+\theta\cdot t)\cdot t| \quad (0<\theta<1)\\
			&\geq |\nabla b(\tilde{x})\cdot t|-|\big(\nabla b(x+\theta\cdot t)-\nabla b(\tilde{x})\big)\cdot t|\\
			&\geq M|t_1|-\dfrac{M|t|}{4}\geq \dfrac{M|t_1|}{2}.
		\end{aligned}
	\end{equation*}
	This yields that
	\begin{equation}\label{phibb66}
		\|b(x+t)-b(x)\|^p_{L_p(\mathbb{R}^n)}\gtrsim _{n, p}\delta^n M^p |t_1|^p.
	\end{equation}
	Consequently, one has
	\begin{equation}\label{bbcont2n}
		\begin{aligned}
			\int_{\mathbb{R}^n}\frac{\|b(x+t)-b(x)\|^p_{L_p(\mathbb{R}^n)}}{|t|^{2n}}dt&\gtrsim_{n, p} \int\limits_{t\in \mathbb{R}^n, |t|<\delta \atop |t_1|>\frac{|t|}{2}} \dfrac{|t_1|^p}{|t|^{2n}}dt\\
			&\gtrsim_{n, p} \int_{0}^\delta \dfrac{r^p}{r^{2n}}\cdot r^{n-1} dr\\
			&= \int_{0}^\delta \dfrac{1}{r^{n+1-p}}dr=\8.
		\end{aligned}
	\end{equation}
	This leads to a contradiction.
\end{proof}

\bigskip

\section{Complex median method}\label{proofdivide}	

This section is devoted to the proof of the complex median method, i.e. Theorem \ref{divideS}. We proceed the proof with some fundamental lemmas. In the sequel, we will always assume that $(\Omega,\mathcal{F},\mu)$ is a measure space. Besides, suppose that $I\in \mathcal{F}$ is of finite measure, and $b$ is always a measurable function on $I$.
\begin{lem}\label{S1S2S}
	There exists a line $l\subset \mathbb{C}$ that divides $\mathbb{C}$ into two closed half-planes $S_1$ and $S_2$ whose intersection is $l$, such that 
	\begin{equation*}
		\mu(\{x\in I:b(x)\in S_i\})\ge \frac{1}{2}\mu(I),\quad i\in\{1, 2\}.
	\end{equation*}
\end{lem}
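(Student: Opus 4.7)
The plan is to reduce the statement to the classical existence of a median for real-valued measurable functions. More precisely, I will project the complex-valued function $b$ onto a single real axis, take a median of the resulting real-valued function, and lift that median back to a line in $\mathbb{C}$. No rotation argument or intermediate-value scheme is required at this stage; a fixed direction will suffice.

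First, I would recall the standard fact that any measurable function $f\colon I\to\mathbb{R}$ on a finite measure space admits a median, namely a real number $m$ satisfying $\mu(\{f\le m\})\ge\tfrac12\mu(I)$ and $\mu(\{f\ge m\})\ge\tfrac12\mu(I)$. The usual choice
\[
 m=\inf\bigl\{t\in\mathbb{R}:\mu(\{f\le t\})\ge \tfrac12\mu(I)\bigr\}
\]
works: the right-continuity of the distribution function gives the first inequality, and taking $t\uparrow m$ through values strictly below $m$ together with continuity from below of $\mu$ on $\{f<m\}^c=\{f\ge m\}$ gives the second.

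Next, I would apply this fact to the real-valued measurable function $f(x)=\mathrm{Re}\,b(x)$ on $I$, obtaining a median $m\in\mathbb{R}$. Setting
\[
 l=\{z\in\mathbb{C}:\mathrm{Re}\,z=m\},\qquad S_1=\{z:\mathrm{Re}\,z\le m\},\qquad S_2=\{z:\mathrm{Re}\,z\ge m\},
\]
we have $S_1\cap S_2=l$, and
\[
 \{x\in I:b(x)\in S_i\}=\{x\in I:\mathrm{Re}\,b(x)\ \diamond_i\ m\},\qquad i\in\{1,2\},
\]
where $\diamond_1={\le}$ and $\diamond_2={\ge}$. By the median property, each of these sets has measure at least $\tfrac12\mu(I)$, which is the desired conclusion.

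There is essentially no obstacle: the only subtle point is the standard verification that the infimum defining the median actually achieves both inequalities simultaneously, and this is a one-line continuity-of-measure argument. Note that the direction of the line $l$ is entirely arbitrary here (horizontal was a convenient choice); the real work of producing a non-trivial partition of $\mathbb{C}$ into \emph{four} balanced quadrants will come in Theorem \ref{divideS}, where Lemma \ref{S1S2S} will be applied twice together with a compactness/rotation argument to ensure the two bisecting lines can be chosen orthogonal.
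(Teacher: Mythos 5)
Your proof is correct and takes essentially the same route as the paper: the paper also works with vertical lines $l_x$ and the distribution function $x\mapsto\mu(\{y\in I:\mathrm{Re}\,b(y)\le x\})$, defines $\alpha$ as the infimum of the half-level set, and uses right-continuity and monotone convergence on $\{f<\alpha\}$ exactly as you do. Phrasing it as "take a median of $\mathrm{Re}\,b$" is just a cleaner packaging of the same construction, and your final remark about the direction being arbitrary and the real work coming later in Theorem \ref{divideS} is also accurate.
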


\begin{proof}
	For any $x\in \mathbb{R}$, let $l_x$ be the line that is perpendicular to the real axis at $x$. The left side of $l_x$ on the complex plane, including $l_x$, is denoted by $P_x$ (see Figure \ref{picture1}). Let 
	\begin{equation*}
		f(x)=\frac{\mu(\{y\in I:b(y)\in P_x\})}{\mu(I)}, \quad x\in \mathbb{R}.
	\end{equation*}
	It is not hard to check that:\\
    $\bullet$ $f$ is increasing,\\
    $\bullet$ $f$ is right-continuous,\\
	$\bullet$ $\lim_{x\to +\infty}f(x)=1$, $\lim_{x\to -\infty}f(x)=0$.

	Define 
	\begin{equation*}
		\alpha=\inf \Big\{x\in \mathbb{R}: f(x)\ge \frac{1}{2}\Big\}.
	\end{equation*}
	On the one hand, since $f$ is right-continuous, we have 
	\begin{equation}\label{S1122}
		f(\alpha)\ge \frac{1}{2}.
	\end{equation}
	On the other hand, note that for any $\varepsilon>0$, $f(\alpha-\varepsilon)<\frac{1}{2}$. Namely, 
	\begin{equation*}
		\mu(\{x\in I:b(x)\in P_{\alpha-\varepsilon}\})<\frac{1}{2}\mu(I).
	\end{equation*}
	Let $\varepsilon\to 0$, then
	\begin{equation*}
		\mu(\{x\in I:b(x)\in P_{\alpha} \backslash l_{\alpha}\})\le \frac{1}{2}\mu(I).
	\end{equation*}
	Denote by $Q_x$ the right side of $l_x$ on the complex plane, including $l_x$. Thus
	\begin{equation}\label{S2122}
		\mu(\{x\in I:b(x)\in Q_{\alpha}\})\ge \frac{1}{2}\mu(I).
	\end{equation}
	Let $l=l_{\alpha}$, $S_1=P_{\alpha}$ and $S_2=Q_{\alpha}$. Then $l$ divides $\mathbb{C}$ into two parts $S_1$ and $S_2$ whose intersection is $l$. Besides, from \eqref{S1122} and \eqref{S2122} we get
	\begin{equation*}
		\mu(\{x\in I:b(x)\in S_i\})\ge \frac{1}{2}\mu(I),\quad i\in\{1, 2\}.
	\end{equation*}	
\end{proof}

\begin{figure}[H] 
	\centering 
	\includegraphics[width=0.6\textwidth]{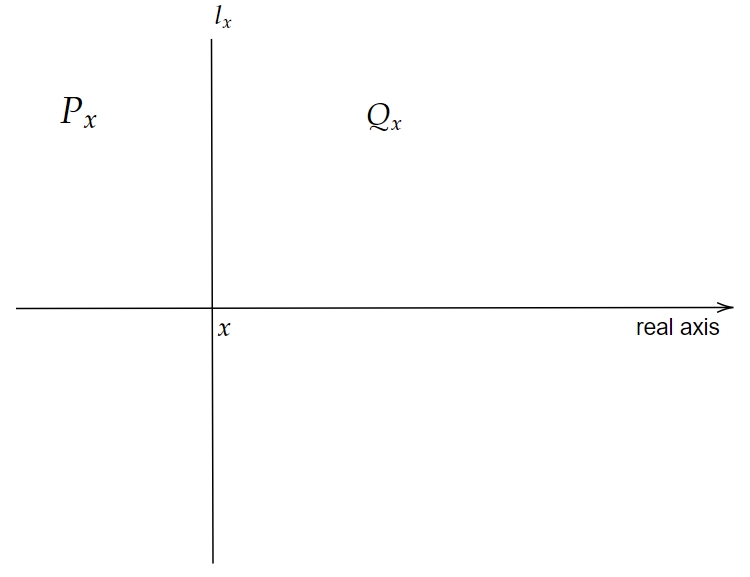} 
	\caption{} 
	\label{picture1} 
\end{figure}

The following lemma is derived by the above one.
\begin{lemma}\label{S1S2S3S4S}
	There exist a line $l$ and two rays $l_1,l_2$ in $\mathbb{C}$ satisfying $l\perp l_1$ and $l\perp l_2$, and they divide $\mathbb{C}$ into four closed quadrants $S_1$, $S_2$, $S_3$ and $S_4$ (see Figure \ref{picture2}), such that
	\begin{equation*}
		\mu(\{x\in I:b(x)\in S_i\})\ge \frac{1}{4}\mu(I),\quad i\in \{1,2,3,4\}.
	\end{equation*}
\end{lemma}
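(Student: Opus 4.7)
The plan is to iterate Lemma \ref{S1S2S} twice: once to split $\mathbb{C}$ by a line $l$ into two half-planes of mass at least $\tfrac12\mu(I)$ each, and then once on each half-plane to split it further by a ray perpendicular to $l$, each piece retaining at least half of the half-plane's mass, hence at least $\tfrac14\mu(I)$ of the total.

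More precisely, I would first apply Lemma \ref{S1S2S} to obtain a line $l\subset\mathbb{C}$ and two closed half-planes $A_1,A_2$ meeting along $l$ such that
\begin{equation*}
\mu(I_j):=\mu(\{x\in I:b(x)\in A_j\})\ge \tfrac12\mu(I),\qquad j\in\{1,2\}.
\end{equation*}
Next, for each $j$, I would apply Lemma \ref{S1S2S} to the measurable set $I_j$ (of finite measure, since $I_j\subseteq I$) equipped with the restriction of $\mu$, and the function $b|_{I_j}$, \textbf{but with the dividing line required to be perpendicular to $l$} rather than to the real axis. The proof of Lemma \ref{S1S2S} is manifestly rotation-invariant: rewriting the argument with $l_x$ replaced by the line perpendicular to $l$ at signed distance $x$ from a fixed reference point, the function $f(x)=\mu(\{y\in I_j:b(y)\in P_x\})/\mu(I_j)$ is still increasing, right-continuous, and tends to $0$ and $1$ at $\pm\infty$, so the same infimum argument produces a line $m_j\perp l$ and two closed half-planes $B_{j,1},B_{j,2}$ meeting along $m_j$ with
\begin{equation*}
\mu(\{x\in I_j:b(x)\in B_{j,k}\})\ge \tfrac12\mu(I_j)\ge \tfrac14\mu(I),\qquad k\in\{1,2\}.
\end{equation*}

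Finally, I would set $l_j:=m_j\cap A_j$, which is a closed ray perpendicular to $l$ with endpoint on $l$, and define the four quadrants
\begin{equation*}
S_1=A_1\cap B_{1,1},\quad S_2=A_1\cap B_{1,2},\quad S_3=A_2\cap B_{2,1},\quad S_4=A_2\cap B_{2,2}.
\end{equation*}
Since $b(x)\in A_j$ for every $x\in I_j$, the inequalities above read $\mu(\{x\in I:b(x)\in S_i\})\ge \tfrac14\mu(I)$ for $i=1,2,3,4$, and by construction $l$, $l_1$, $l_2$ partition $\mathbb{C}$ into the four closed regions $S_1,\dots,S_4$ as depicted in Figure \ref{picture2}. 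There is no serious obstacle here: the only point that requires a brief justification is the rotation-invariance of Lemma \ref{S1S2S}, which is immediate from its proof.
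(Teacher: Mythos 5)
Your proof is correct and follows essentially the same approach as the paper: both apply Lemma \ref{S1S2S} once to obtain the splitting line $l$, then repeat the sliding-line argument of that lemma in the perpendicular direction within each half-plane to obtain the rays $l_1$, $l_2$. The paper phrases the second step as "repeating the proof of Lemma \ref{S1S2S}" after rotating coordinates, which is the same rotation-invariance you invoke explicitly.
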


\begin{proof}
	By Lemma \ref{S1S2S}, there exists a line $l$ which divides $\mathbb{C}$ into two closed half-planes $U_1$ and $U_2$, such that 
	\begin{equation*}
		\mu(\{x\in I:b(x)\in U_j\})\ge \frac{1}{2}\mu(I),\quad j\in\{1,2\}.
	\end{equation*} 
	By rotating and translating the axes, we assume that $l$ is the real axis. In terms of $U_1$, repeating the proof of Lemma \ref{S1S2S}, we prove that there exists a ray $l_1$ satisfying $l\perp l_1$, where the origin of $l_1$ is $\alpha_1\in l$, such that $l_1$ divides $U_1$ into two parts $S_1$ and $S_2$ whose intersection is $l_1$. Moreover,
	\begin{equation*}
		\mu(\{x\in I:b(x)\in S_i\})\ge \frac{1}{2}\mu(\{x\in I:b(x)\in U_1\})\ge \frac{1}{4}\mu(I),\quad i\in\{1,2\}.
	\end{equation*}	
	Similarly, in terms of $U_2$, there exists a ray $l_2$ satisfying $l\perp l_2$, where the origin of $l_2$ is $\alpha_2\in l$, such that $l_2$ divides $U_2$ into two parts $S_3$ and $S_4$ whose intersection is $l_2$. Moreover,
	\begin{equation*}
		\mu(\{x\in I:b(x)\in S_i\})\ge \frac{1}{2}\mu(\{x\in I:b(x)\in U_2\})\ge \frac{1}{4}\mu(I),\quad i\in\{3,4\}.
	\end{equation*}	
\end{proof}

\begin{figure}[H]
	\centering 
	\begin{minipage}[b]{0.45\textwidth} 
		\centering 
		\includegraphics[width=1.0\textwidth]{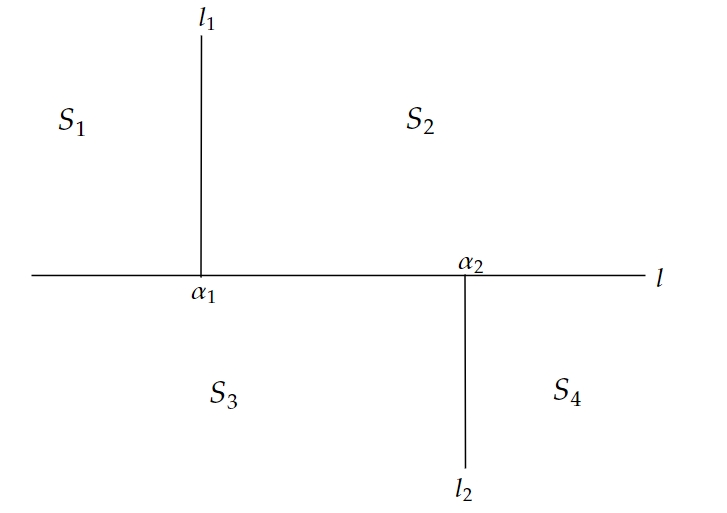} 
		\caption{}
		\label{picture2}
	\end{minipage}
	\begin{minipage}[b]{0.45\textwidth} 
		\centering 
		\includegraphics[width=1.0\textwidth]{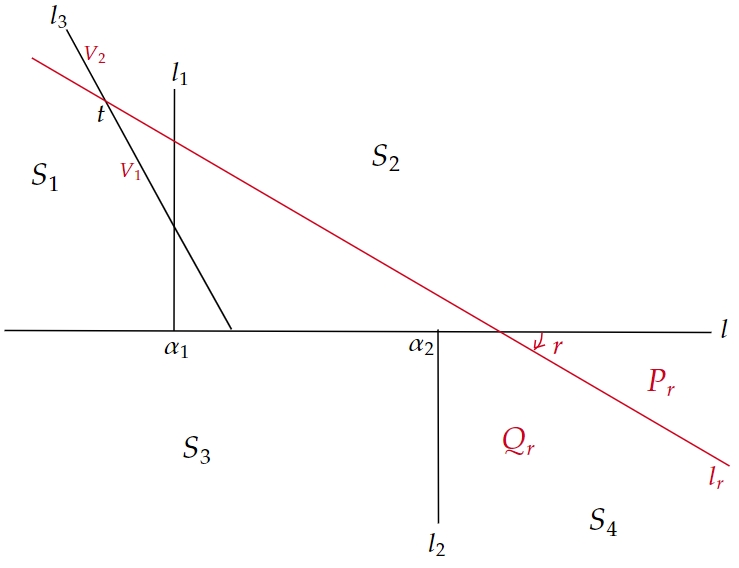}
		\caption{}
		\label{picture7}
	\end{minipage}
\end{figure}

In what follows, we always assume that $\alpha_1\neq \alpha_2$ since Theorem \ref{divideS} clearly holds when $\alpha_1=\alpha_2$. In fact, when $\alpha_1=\alpha_2$, since $l_1\perp l$, we let 
\begin{equation*}
	L_1=l_1,\quad L_2=l,
\end{equation*}
and
\begin{equation*}
	T_j=S_j,\quad j\in\{1,2,3,4\}.
\end{equation*}
Then $L_1$ and $L_2$ are what we need. Besides, we only need to consider the case where $\alpha_1$ is on the left of $\alpha_2$ on the line $l$ as $\alpha_1$ and $\alpha_2$ are symmetric. 

Next, we consider the following particular case which will simplify the proof of Theorem \ref{divideS}.	
\begin{lem}\label{middlecases}
	 Let $S_1$, $S_2$, $S_3$ and $S_4$ be four closed quadrants as in Lemma \ref{S1S2S3S4S}. If there is a ray $l_3$ with origin in $l$ such that 
	\begin{equation*}
		\mu(\{x\in I:b(x)\in l_3\cap S_1\})\ge \frac{1}{2}\mu(\{x\in I:b(x)\in S_1\})
	\end{equation*}
	\begin{equation*}
		(\text{or}\quad	\mu(\{x\in I:b(x)\in l_3\cap S_4\})\ge \frac{1}{2}\mu(\{x\in I:b(x)\in S_4\}),\,\,)
	\end{equation*}
	then there exist two orthogonal lines $l_4$ and $l_5$ such that $l_4$ and $l_5$ divide $\mathbb{C}$ into four closed quadrants $T_1$, $T_2$, $T_3$, $T_4$ (see Figure \ref{picture3}). Moreover,
	\begin{equation*}
		\mu(\{x\in I:b(x)\in T_i\})\ge \frac{1}{16}\mu(I),\quad i\in \{1,2,3,4\}.
	\end{equation*}
\end{lem}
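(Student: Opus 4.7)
My plan is to apply a one-dimensional median argument along the ray $l_3$ and exploit the fact that the desired closed quadrants $T_i$ share their boundaries along $l_5$. Combining Lemma \ref{S1S2S3S4S} with the hypothesis, I first obtain
\[
\mu(\{x\in I : b(x)\in l_3\cap S_1\}) \ge \tfrac{1}{2}\mu(\{x\in I : b(x)\in S_1\}) \ge \tfrac{1}{8}\mu(I).
\]

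Next I parametrize $l_3$ by arc length from its origin on $l$, writing $l_3=\{q_0+t\vec v : t\ge 0\}$, and push forward the restriction of $\mu$ to $\{x\in I : b(x)\in l_3\cap S_1\}$ to a finite Borel measure $\nu$ on $[0,+\infty)$. Because $l_3\cap S_1$ is the intersection of two closed convex sets, $\nu$ is supported in a single subinterval. Repeating the cumulative-distribution argument of Lemma \ref{S1S2S} (right-continuity plus monotonicity), I find a parameter $t_0$ in that subinterval such that both $\nu([0,t_0])$ and $\nu([t_0,+\infty))$ are at least $\tfrac12\nu([0,+\infty))\ge \tfrac{1}{16}\mu(I)$. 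Set $p:=q_0+t_0\vec v \in l_3\cap S_1$.

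I now define $l_5$ to be the full line containing $l_3$, and $l_4$ to be the line through $p$ perpendicular to $l_5$; then $l_4\perp l_5$ and they partition $\mathbb{C}$ into four closed quadrants $T_1,T_2,T_3,T_4$ meeting at $p$. The key observation is that $l_3\cap S_1\subseteq l_5$, so every $b$-value I am tracking lies on $l_5$. The line $l_4$ cuts $l_5$ into two half-lines emanating from $p$; each such half-line is the common boundary of exactly the two closed quadrants lying on the same side of $l_4$. Because each $T_i$ is closed, any $b$-mass on that half-line is counted in \emph{both} of those adjacent quadrants.

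Consequently, the $\ge \mu(I)/16$ of mass on the forward portion of $l_3$ (parameters $t\ge t_0$) is counted in both quadrants on that side of $l_4$, and the $\ge \mu(I)/16$ of mass on the backward portion ($t\le t_0$) is counted in both quadrants on the other side. This yields $\mu(\{x\in I : b(x)\in T_i\}) \ge \mu(I)/16$ for every $i\in\{1,2,3,4\}$. The variant with $S_4$ in place of $S_1$ is handled identically by symmetry. I do not anticipate a genuine obstacle; the only minor subtlety is that $\nu$ may have an atom at $t_0$, but then that atom sits exactly at $p=l_4\cap l_5$, which lies in \emph{all four} $T_i$'s at once, so the bound only improves.
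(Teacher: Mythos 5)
Your proof is correct, and it is in fact precisely the \emph{simpler} alternative that the paper itself acknowledges in the remark immediately following its proof of this lemma. The paper remarks: ``Indeed, we can find a line $l_6$ which passes through the point $t$ and is perpendicular to $l_3$, and then $l_3$ and $l_6$ satisfy Lemma \ref{middlecases}. This is an easy proof.'' That is exactly your construction: $l_5$ is the line extending $l_3$, $l_4$ is the perpendicular at the median point $p$, and since all tracked $b$-mass sits on $l_5$ and the two closed half-lines of $l_5$ emanating from $p$ each serve as common boundary of two of the four closed quadrants, each $T_i$ picks up at least $\mu(I)/16$ of mass.

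The paper deliberately takes a longer route: it repeats the median argument on $l_3\cap S_1$ to get the point $t$ exactly as you do, but then instead of taking the perpendicular through $t$, it performs a separate angular sweep (parametrized by $\beta\in[0,\pi]$) of lines through $t$ to produce a line $l_4$ that simultaneously bisects the mass in $S_4$, and only afterwards drops a perpendicular $l_5$ through a point between $\alpha_1$ and $\alpha_2$. The payoff of the paper's heavier machinery is not in this lemma itself (where it is overkill, as the authors say) but in the proof of Theorem \ref{divideS}, where the same angular-sweep and two-sided median bookkeeping ($r_1,\dots,r_4$ as functions of a sliding base point) is essential and this lemma serves as a warm-up for that technique. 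Your argument is cleaner, self-contained, and correctly handles the atom-at-$p$ case (it sits at the intersection $l_4\cap l_5$, which belongs to all four closed quadrants). The only stylistic caveat is that your phrase ``push forward the restriction of $\mu$'' should more precisely be ``push forward $\mu$ restricted to $\{x\in I : b(x)\in l_3\cap S_1\}$ along the map $x\mapsto$ (arc-length parameter of $b(x)$)'', but the meaning is clear and the argument is sound.
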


\begin{proof}
	We only consider the case that $l_3\cap S_1\neq \emptyset$. The proof of the case that $l_3\cap S_4\neq \emptyset$ is the same. From the proof of Lemma \ref{S1S2S}, there exists a point $t\in l_3$, such that $t$ divides $l_3\cap S_1$ into two parts $V_{1}$ and $V_{2}$, such that
	\begin{equation*}
		\mu(\{x\in I:b(x)\in V_i\})\ge \frac{1}{2}\mu(\{x\in I:b(x)\in l_{3}\cap S_1\})\ge \frac{1}{16}\mu(I),\quad i\in\{1,2\}.
	\end{equation*}
	Without loss of generality, we assume that $l$ is the real axis. Then by our assumption that $\alpha_1$ is on the left of $\alpha_2$ on the line $l$, we see $\alpha_1< \alpha_2$. 
	
	Let $l_r$ be the line passing through $t$, such that the clockwise angle from $l$ to $l_r$ is $r$, where $-\frac{\pi}{2}< r \le \frac{\pi}{2}$ (see Figure \ref{picture7}). The line $l_r$ divides $S_4$ into two closed parts, and denote by $P_r$ the upper part and by $Q_r$ the lower part (see Figure \ref{picture7}). Let
	\begin{equation*}
		f(r)=
		\begin{cases}
			\mu(\{x\in I:b(x)\in P_r\})\big/\mu(\{x\in I:b(x)\in S_4\}),&  0< r \le \frac{\pi}{2},\\
			0, & -\frac{\pi}{2}<r< 0,
		\end{cases}
	\end{equation*}
	and
	\begin{equation*}
		f(0)=\begin{cases}
			\mu(\{x\in I:b(x)\in P_0\})\big/\mu(\{x\in I:b(x)\in S_4\}), &\mathrm{Im}(t)=0,\\
			0, &\mathrm{Im}(t)>0.
			\end{cases}
		\end{equation*}
	It is not hard to check that:\\
     $\bullet$ $f$ is increasing,\\
     $\bullet$ $f$ is right-continuous,\\
	 $\bullet$ $f(\frac{\pi}{2})=1$,

	Define 
	\begin{equation*}
		\beta=\inf \Big\{r\in (-\frac{\pi}{2},\frac{\pi}{2}]: f(r)\ge \frac{1}{2}\Big\}.
	\end{equation*}
	So $0\leq \beta<\frac{\pi}{2}$. On the one hand, since $f(r)$ is right-continuous, we have 
	\begin{equation}\label{S112}
		f(\beta)\ge \frac{1}{2}.
	\end{equation}
	On the other hand, note that for any $0<\varepsilon<\beta+\frac{\pi}{2}$, $f(\beta-\varepsilon)<\frac{1}{2}$. Namely,
	\begin{equation*}
		\mu(\{x\in I:b(x)\in P_{\beta-\varepsilon}\})<\frac{1}{2}\mu(\{x\in I:b(x)\in S_4\}).
	\end{equation*} 
	Let $\varepsilon\to 0$, then
	\begin{equation*}
		\mu(\{x\in I:b(x)\in P_\beta \backslash l_\beta\})\le \frac{1}{2}\mu(\{x\in I:b(x)\in S_4\}).
	\end{equation*}
Thus this implies
	\begin{equation}\label{S212}
		\mu(\{x\in I:b(x)\in Q_\beta \})\ge \frac{1}{2}\mu(\{x\in I:b(x)\in S_4\}).
	\end{equation}
	Let $l_4=l_\beta$, $S_{41}=P_\beta$ and $S_{42}=Q_\beta$. From \eqref{S112} and \eqref{S212}, $l_4$ passes through $t$ and divides $S_4$ into two closed parts $S_{41}$ and $S_{42}$, such that 
	\begin{equation*}
		\mu(\{x\in I:b(x)\in S_{4i}\})\ge \frac{1}{2}\mu(\{x\in I:b(x)\in S_{4}\})\ge \frac{1}{8}\mu(I),\quad i\in\{1,2\}.
	\end{equation*}
	Now if $\mathrm{Im}(t)>0$, then $\beta\neq 0$ as $t\in l_4$. Suppose $c$ is the intersection point of $l_4$ and $l$ (see Figure \ref{picture3}). Besides, for any given $\tilde{c}\in (\alpha_1,\alpha_2)$, we can find another line $l_5$ passing through $\tilde{c}$, such that $l_5\perp l_4$. Then  $l_4$ and $l_5$ divide $\mathbb{C}$ into four closed quadrants $T_1$, $T_2$, $T_3$, $T_4$. Moreover, we have
	\begin{equation*}
		\begin{aligned}
			{}&\mu(\{x\in I:b(x)\in T_1\})\ge \mu(\{x\in I:b(x)\in V_2\})\ge \frac{1}{16}\mu(I),\\
			&\mu(\{x\in I:b(x)\in T_2\})\ge \mu(\{x\in I:b(x)\in S_{41}\})\ge \frac{1}{8}\mu(I),\\
			&\mu(\{x\in I:b(x)\in T_3\})\ge \mu(\{x\in I:b(x)\in V_1\})\ge \frac{1}{16}\mu(I),\\
			&\mu(\{x\in I:b(x)\in T_4\})\ge \mu(\{x\in I:b(x)\in S_{42}\})\ge \frac{1}{8}\mu(I).			
		\end{aligned}
	\end{equation*}
	If $\mathrm{Im}(t)=0$ and $\beta>0$, then Lemma \ref{middlecases} follows in a similar way. Finally, if $\mathrm{Im}(t)=0$ and $\beta=0$, then $l_4=l$ and choose $l_5$ to be the line through the point $t$ and orthogonal to $l$, and this proves Lemma \ref{middlecases}.
\end{proof}

\begin{figure}[H] 
	\centering 
	\includegraphics[width=0.6\textwidth]{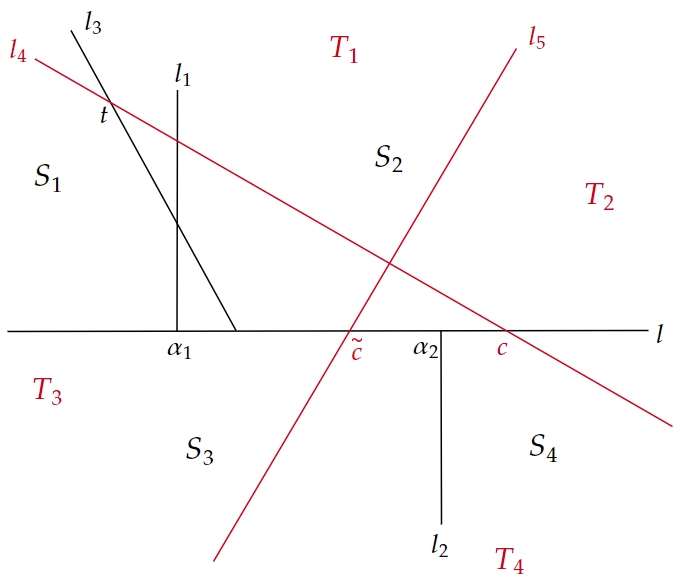} 
	\caption{} 
	\label{picture3} 
\end{figure}

\begin{rem}
	Indeed, we can find a line $l_6$ which passes through the point $t$ and is perpendicular to $l_3$, and then $l_3$ and $l_6$ satisfy Lemma \ref{middlecases}. This is an easy proof of Lemma \ref{middlecases}. But we still keep the previous complicated proof of Lemma \ref{middlecases} since the argument of this complicated proof plays a vital role in the later proof of Theorem \ref{divideS}.
	
	\end{rem}

Now we begin to prove Theorem \ref{divideS}.	
\begin{proof}[Proof of Theorem \ref{divideS}]
	By Lemma \ref{S1S2S3S4S}, there exist a line $l$ and two rays $l_1,l_2$ in $\mathbb{C}$ satisfying $l\perp l_1$ and $l\perp l_2$, and they divide $\mathbb{C}$ into four closed quadrants $S_1$, $S_2$, $S_3$ and $S_4$. Moreover,
	\begin{equation}\label{SiS4}
		\mu(\{x\in I:b(x)\in S_i\})\ge \frac{1}{4}\mu(I),\quad i\in \{1,2,3,4\}.
	\end{equation}
	We denote by $\alpha_1$, $\alpha_2$ the origins of $l_1$ and $l_2$ respectively.
	
	\smallskip
	
	By rotating and translating the axes, we assume that $l$ is the real axis, and $\alpha_1<\alpha_2$. Besides, by the proof of Lemma \ref{S1S2S}, there exists a point $A\le \alpha_1$ in $l$ and a ray $l_A\perp l$, whose origin is $A$, such that $l_A$ divides $S_1$ into two closed parts $R_{1}$ and $R_{2}$ (see Figure \ref{picture4}). Moreover, 
	\begin{equation}\label{muRiS1}
		\mu(\{x\in I:b(x)\in R_{i}\})\ge \frac{1}{2}\mu(\{x\in I:b(x)\in S_1\}),\quad i\in\{1,2\}.
	\end{equation}
	Similarly, there exists a point $B\ge \alpha_2$ in $l$ and a ray $l_B\perp l$, whose origin is $B$, such that $l_B$ divides $S_4$ into two closed parts $R_3$ and $R_4$. Moreover, 
	\begin{equation*}
		\mu(\{x\in I:b(x)\in R_{i}\})\ge \frac{1}{2}\mu(\{x\in I:b(x)\in S_4\}),\quad i\in\{3,4\}.
	\end{equation*}
	Now for any point $A\le x\le B$ and any angle $r\in[0,\pi]$, we
	let $l(x,r)\subset S_1\cup S_2$ be the ray whose origin is $x$, such that the clockwise angle from $l$ to $l(x,r)$ is $r$. Then $l(x, r)$ divides $S_1$ into two closed parts, and denote by $Q_{11}(x,r)$ the lower part and by $Q_{12}(x,r)$ the upper part (see Figure \ref{picture5}).
	
	Similarly, for any point $A\le {x}\le B$ and any angle $\tilde{r}\in[0,\pi]$, let $\tilde{l}({x},\tilde{r})\subset S_3\cup S_4$ be the ray whose origin is ${x}$, such that the clockwise angle from $l$ to $\tilde{l}({x},\tilde{r})$ is $\tilde{r}$. Then $\tilde{l}({x},\tilde{r})$ divides $S_4$ into two closed parts, and denote by $Q_{42}({x},\tilde{r})$ the lower part and by $Q_{41}({x},\tilde{r})$ the upper part (see Figure \ref{picture5}).
	
	\begin{figure}[H]
		\centering 
		\begin{minipage}[b]{0.45\textwidth} 
			\centering 
			\includegraphics[width=1.0\textwidth]{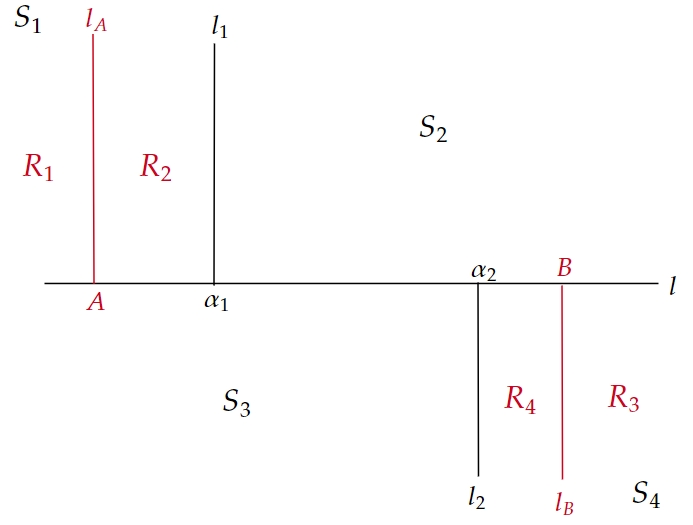} 
			\caption{}
			\label{picture4}
		\end{minipage}
		\begin{minipage}[b]{0.45\textwidth} 
			\centering 
			\includegraphics[width=1.0\textwidth]{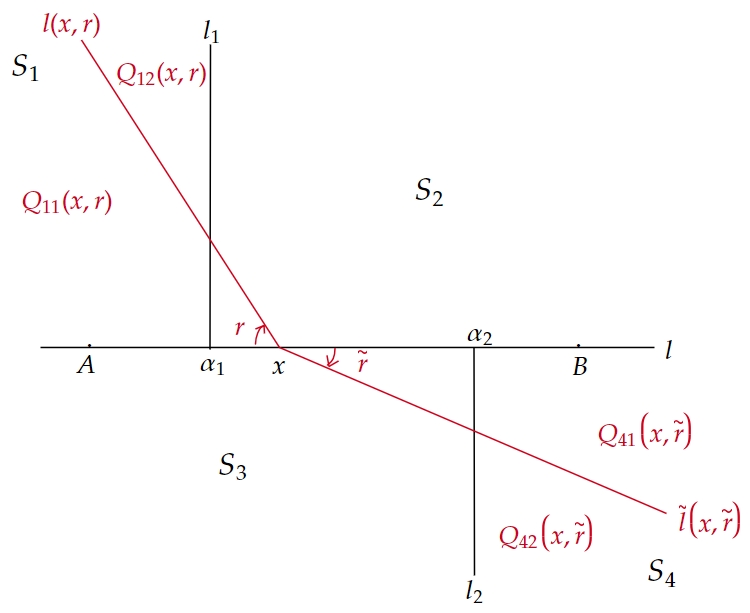}
			\caption{}
			\label{picture5}
		\end{minipage}
	\end{figure}

	Then let
	\begin{equation*}
		f(x,r)=\frac{\mu(\{y\in I:b(y)\in Q_{11}(x,r)\})}{\mu(\{y\in I:b(y)\in S_1\})},\quad g(x,r)=\frac{\mu(\{y\in I:b(y)\in Q_{12}(x,r)\})}{\mu(\{y\in I:b(y)\in S_1\})}
	\end{equation*}
	and
	\begin{equation*}
		\tilde{f}(x,r)=\frac{\mu(\{y\in I:b(y)\in Q_{41}(x,r)\})}{\mu(\{y\in I:b(y)\in S_4\})},\quad \tilde{g}(x,r)=\frac{\mu(\{y\in I:b(y)\in Q_{42}(x,r)\})}{\mu(\{y\in I:b(y)\in S_4\})}.
	\end{equation*}
	It is not hard to check that:\\
	$\bullet$ when the point $x$ is fixed, in terms of the angle $r$, $f$ and $\tilde{f}$ are both increasing and right-continuous functions, $g$ and $\tilde{g}$ are both decreasing and left-continuous functions,\\
	$\bullet$ when the angle $r$ is fixed, in terms of the point $x$, $f$ and $\tilde{g}$ are both increasing and right-continuous functions, $g$ and $\tilde{f}$ are both decreasing and left-continuous functions.

	Besides, define 
	\begin{equation*}
		\begin{aligned}
			r_1(x){}&=\inf\Big\{r:f(x,r)\ge \frac{1}{4}, \ g(x,r)\ge \frac{1}{4}\Big\},\\
			r_2(x)&=\sup\Big\{r:f(x,r)\ge \frac{1}{4}, \ g(x,r)\ge \frac{1}{4}\Big\},\\
			r_3(x)&=\inf\Big\{r:\tilde{f}(x,r)\ge \frac{1}{4}, \  \tilde{g}(x,r)\ge \frac{1}{4}\Big\},\\
			r_4(x)&=\sup\Big\{r:\tilde{f}(x,r)\ge \frac{1}{4}, \  \tilde{g}(x,r)\ge \frac{1}{4}\Big\}.
		\end{aligned}
	\end{equation*}
Note that each $r_i$ is well-defined, since $\Big\{r:f(x,r)\ge \frac{1}{4}, \ g(x,r)\ge \frac{1}{4}\Big\}$ and $\Big\{r:\tilde{f}(x,r)\ge \frac{1}{4}, \  \tilde{g}(x,r)\ge \frac{1}{4}\Big\}$ are not empty by using the same way as in Lemma \ref{S1S2S}. Indeed, these two nonempty sets are even closed intervals. In fact, if $r_1(x)<r_2(x)$, for any $x\in [A,B]$, by the definition of $r_1(x)$, there exists a positive sequence $\{\varepsilon_n\}_{n\ge 1}$ with $\varepsilon_n\to 0$, such that $f(x,r_1(x)+\varepsilon_n)\ge \frac{1}{4}$. Since $f$ is right-continuous with respect to the angle, we then deduce $$f(x,r_1(x))\ge \frac{1}{4}.$$
	Since $r_1(x)< r_2(x)$, one has $$f(x,r_2(x))\ge \frac{1}{4}.$$
	Similarly, we derive that 
	$$g(x,r_2(x))\ge \frac{1}{4} \quad \text{and} \quad g(x,r_1(x))\ge \frac{1}{4}.$$
	Thus for any $x\in [A,B]$,
	\begin{equation}\label{fgr2r1}
		\Big\{r:f(x,r)\ge \frac{1}{4}, \ g(x,r)\ge \frac{1}{4}\Big\}=[r_1(x),r_2(x)].
	\end{equation}
    If $r_1(x)=r_2(x)$, then \eqref{fgr2r1} is trivial.
	Similarly, for any $x\in [A,B]$,
	\begin{equation}\label{fgr3r4}
		\Big\{r:\tilde{f}(x,r)\ge \frac{1}{4}, \ \tilde{g}(x,r)\ge \frac{1}{4}\Big\}=[r_3(x),r_4(x)].
	\end{equation}

	 Furthermore, in the later proof we will always assume that $r_2,r_4< \pi$ and $r_1,r_3>0$ (because the case $r_2=\pi$ or $r_4=\pi$ or $r_1=0$ or $r_3=0$ is the special case as in Lemma \ref{middlecases}, and we omit the details).
	For any given $A\le x_1\le x_2\le B$, since $g$ is decreasing with respect to the angle, by the definition of $r_2(x_1)$, for any $0<\varepsilon<\pi-r_2(x_1)$, we have $g(x_1,r_2(x_1)+\varepsilon)<\frac{1}{4}$. Besides, note that $g$ is decreasing with respect to $x$, then
	\begin{equation*}
		g(x_2,r_2(x_1)+\varepsilon)\le g(x_1,r_2(x_1)+\varepsilon)<\frac{1}{4},
	\end{equation*}
	this implies that $r_2(x_2)\le r_2(x_1)+\varepsilon$. Then by letting $\varepsilon\to 0$, one has $r_2(x_2)\le r_2(x_1)$. Thus $r_2$ is decreasing. Similarly, we obtain:\\
	$\bullet$ $r_1$, $r_2$ are decreasing,\\
	$\bullet$ $r_3$, $r_4$ are increasing.

	\smallskip
	
	In the following we will divide the remaining of the proof into three cases.
	
	\subsubsection*{Case 1: There exists $x_0\in [A,B]$, such that $r_1(x_0)=r_2(x_0)$}	
	Since $f$ is increasing with respect to the angle, by the definition of $r_1(x_0)$, for any $0<\varepsilon<r_1(x_0)$, $f(x_0,r_1(x_0)-\varepsilon)<\frac{1}{4}$. Namely,
	\begin{equation*}
		\mu(\{x\in I:b(x)\in Q_{11}(x_0,r_1(x_0)-\varepsilon)\})<\frac{1}{4}\mu(\{x\in I:b(x)\in S_1\}).
	\end{equation*} 
	Let $\varepsilon\to 0$, then 
	\begin{equation}\label{Q11S1}
		\mu\big(\big\{x\in I:b(x)\in Q_{11}(x_0,r_1(x_0))\big\backslash l(x_0,r_1(x_0))\big\}\big)\le \frac{1}{4}\mu(\{x\in I:b(x)\in S_1\}).
	\end{equation} 
	Similarly, we get
	\begin{equation}\label{Q12S1}
		\mu\big(\big\{x\in I:b(x)\in Q_{12}(x_0,r_2(x_0))\big\backslash l(x_0,r_2(x_0))\big\}\big)\le \frac{1}{4}\mu(\{x\in I:b(x)\in S_1\}).
	\end{equation} 
	Thus from \eqref{Q11S1} and \eqref{Q12S1} one has
	\begin{equation*}
		\mu(\{x\in I:b(x)\in l(x_0,r_1(x_0))\cap S_1 \})\ge \frac{1}{2}\mu(\{x\in I:b(x)\in S_1\}).
	\end{equation*} 
	Hence the desired result is obtained by Lemma \ref{middlecases}.
	
	\bigskip
	
	\subsubsection*{Case 2: There exists $x_0\in [A,B]$, such that $r_3(x_0)=r_4(x_0)$}			
	The proof is the same as in Case 1.
	
	\bigskip
	
	\subsubsection*{Case 3: For any $x\in [A,B]$, $r_1(x)<r_2(x)$ and $r_3(x)<r_4(x)$}
	Our aim is to prove that there exists $y\in [A,B]$, such that
	\begin{equation}\label{achieve}
		\Big\{r:f(y,r)\ge \frac{1}{4}, \ g(y,r)\ge \frac{1}{4}\Big\}\cap \Big\{r:\tilde{f}(y,r)\ge \frac{1}{4}, \  \tilde{g}(y,r)\ge \frac{1}{4}\Big\}\neq \emptyset.
	\end{equation}

	\smallskip
	
	We need the following definition: for any $i\in\{1,2,3,4\}$, define
	\begin{equation*}
		r_i(c-)=\lim_{x\nearrow c}r_i(x),\quad \forall c\in (A,B],
	\end{equation*}
    and
	\begin{equation*}
		r_i(c+)=\lim_{x\searrow c}r_i(x),\quad \forall c\in [A,B).
	\end{equation*}
	Note that $r_i$ is always monotone, thus the above definition makes sense. 
	
	\smallskip
	
	Now we show the following important properties:\\
	$\bullet$ For any $c\in (A,B]$, one has 
	\begin{equation}\label{property1}
		r_1(c-)\le r_2(c)
	\end{equation}
     and 
     \begin{equation}\label{property3}
     	r_3(c)\le r_4(c-),
     \end{equation}
     $\bullet$ For any $c\in [A,B)$, one has
     \begin{equation}\label{property2}
     	r_1(c)\le r_2(c+)
     \end{equation}
     and 
     \begin{equation}\label{property4}
     	r_3(c+)\le r_4(c).
     \end{equation}

	We only prove \eqref{property1}. If $r_1(c-)>r_2(c)$, then for any given $a$ satisfying
	\begin{equation*}
		r_1(c-)>a>r_2(c),
	\end{equation*}
	by the definition of $r_1(c-)$, there exists a sequence $\{x_n\}$ satisfying $x_n\nearrow c$ such that
	\begin{equation*}
		r_1(x_n)>a>r_2(c).
	\end{equation*}
	Since $g$ is decreasing with respect to the angle, by the definition of $r_1(x_n)$, we have $g(x_n,a)\ge \frac{1}{4}$. Then from the fact that $g$ is left-continuous with respect to $x$, one has
	\begin{equation*}
		g(c,a)\ge \frac{1}{4}.
	\end{equation*}
	Besides, since $f$ is increasing with respect to the angle, we have
	\begin{equation*}
		f(c,a)\ge f(c,r_2(c))\ge \frac{1}{4}.
	\end{equation*}
	Thus by the definition of $r_2(c)$, one gets $a\le r_2(c)$. It contradicts $a>r_2(c)$. 
	
	\smallskip

	\bigskip
	
	Now we come back to the proof of \eqref{achieve}. By \eqref{fgr2r1} and \eqref{fgr3r4}, \eqref{achieve} is equivalent to proving that there exists $y\in [A,B]$, such that
	\begin{equation}\label{achieve2}
		[r_1(y),r_2(y)]\cap [r_3(y),r_4(y)]\neq \emptyset. 
	\end{equation}

    \smallskip
    
	If $r_1(x)\le r_4(x)$ for all $x\in [A,B]$, since $l_A$ divides $S_1$ into two closed parts $R_{1}$ and $R_{2}$ satisfying \eqref{muRiS1}, we have $r_2(A)\ge \frac{\pi}{2}$. Besides, note that $A\le \alpha_2$, which implies that $r_3(A)\le \frac{\pi}{2}$. Thus we only have the following four cases:\\
	$\bullet$ $r_3(A)\le r_1(A)\le r_4(A)\le r_2(A)$,\\
	$\bullet$ $r_3(A)\le r_1(A)\le r_2(A)\le r_4(A)$,\\
	$\bullet$ $r_1(A)\le r_3(A)\le r_4(A)\le r_2(A)$,\\
	$\bullet$ $r_1(A)\le r_3(A)\le r_2(A)\le r_4(A)$.\\
	Hence
	\begin{equation*}
		[r_1(A),r_2(A)]\cap [r_3(A),r_4(A)]\neq \emptyset.
	\end{equation*}
    and \eqref{achieve2} is obtained by letting $y=A$.
    
	Now suppose that there exists $x\in [A, B]$ such that $r_1(x)>r_4(x)$.  Then we define 
	\begin{equation*}
		c_0=\sup\{x\in [A,B]:r_1(x)-r_4(x)>0\}.
	\end{equation*}
	We will consider three cases according to the value of $c_0$.
	
	\subsubsection*{Subcase 3.1:  $A<c_0<B$} 
	From the definition of $c_0$ we know that
	\begin{equation}\label{r2r3c0-c0+}
		r_1(c_0-)\ge r_4(c_0-),\quad \text{and}\quad r_1(c_0+)\le r_4(c_0+).
	\end{equation} 
	Besides, for any $c_0<x\le B$, 
	\begin{equation*}
		r_1(x)\le r_4(x).
	\end{equation*}
	There are three subcases in this situation:\\
	(1) if there exists $c_0<x_0\le B$, such that $r_1(x_0)=r_4(x_0)$, then \eqref{achieve2} is obvious by letting $y=x_0$,\\
	(2) if for all $c_0<x\le B$,
		\begin{equation*}
			r_1(x)<r_4(x),
		\end{equation*} 
		but there exists $c_0<x_0\le B$, such that $r_2(x_0)\ge r_3(x_0)$, then \eqref{achieve2} is derived by letting $y=x_0$,\\
	(3) if for any $c_0<x\le B$,
		\begin{equation*}
			r_1(x)<r_4(x)\quad \text{and} \quad r_2(x)<r_3(x),
		\end{equation*} 
		this implies that
		\begin{equation*}
			r_2(c_0+)\le r_3(c_0+).
		\end{equation*}
		On the one hand, from \eqref{property2} and \eqref{property4} one has
		\begin{equation*}
			r_1(c_0)\le r_2(c_0+)\le r_3(c_0+)\le r_4(c_0).
		\end{equation*}
		On the other hand, from \eqref{property1}, \eqref{property3} and \eqref{r2r3c0-c0+} we get
		\begin{equation*}
			r_3(c_0)\le r_4(c_0-)\le r_1(c_0-)\le r_2(c_0).
		\end{equation*}
		Hence \eqref{achieve2} is obtained by letting $y=c_0$.

	\smallskip
	
	\subsubsection*{Subcase 3.2: $c_0=B$} 
	This implies that 
	\begin{equation*}
		r_1(B-)\ge r_4(B-).
	\end{equation*}
	From \eqref{property1} and \eqref{property3} we know that
	\begin{equation*}
		r_3(B)\leq r_4(B-)\le r_1(B-)\leq r_2(B).
	\end{equation*}
	Note that
	\begin{equation*}
		r_1(B)\le \frac{\pi}{2}\le r_4(B).
	\end{equation*}
	Hence \eqref{achieve2} is derived by letting $y=B$.
	
	\smallskip
	
	\subsubsection*{Subcase 3.3:  $c_0=A$} 
	This implies that for any $A<x\le B$,
	\begin{equation*}
		r_1(x)\le r_4(x).
	\end{equation*}
	We need to consider the following three subcases:\\
	(1) if there exists $A<x_0\le B$, such that $r_2(x_0)\ge r_4(x_0)$, then
		\begin{equation*}
			r_1(x_0)\le r_4(x_0)\le r_2(x_0).
		\end{equation*}
		Thus \eqref{achieve2} is obtained by letting $y=x_0$,\\
	(2) if for all $A<x\le B$, 
		\begin{equation*}
			r_2(x)<r_4(x),
		\end{equation*}
		but there exists $A<x_0\le B$, such that $r_2(x_0)\ge r_3(x_0)$, then \eqref{achieve2} is derived by letting $y=x_0$,\\
	(3) if for any $A<x\le B$, 
		\begin{equation*}
			r_2(x)<r_4(x)\quad \text{and} \quad r_2(x)<r_3(x),
		\end{equation*}
		this implies that 
		\begin{equation*}
			r_2(A+)\le r_3(A+).
		\end{equation*}
		Thus from \eqref{property2} and \eqref{property4} we have
		\begin{equation*}
			r_1(A)\leq r_2(A+)\le r_3(A+)\leq r_4(A).
		\end{equation*}
		Note that 
		\begin{equation*}
			r_3(A)\le \frac{\pi}{2}\le r_2(A).
		\end{equation*}
		Hence \eqref{achieve2} is obtained by letting $y=A$.

	Now we have proved \eqref{achieve2}. Thus we choose $r_0(y)\in [r_1(y),r_2(y)]\cap [r_3(y),r_4(y)]\neq \emptyset$, and let the line $L_1$ be the extension of the ray $l({y,r_0(y)})$. Besides, for any given $\tilde{y}\in (\alpha_1,\alpha_2)$, let $L_2$ be the line passing through $\tilde{y}$, such that $L_1\perp L_2$. Then $L_1$ and $L_2$ divide $\mathbb{C}$ into four closed quadrants $T_1$, $T_2$, $T_3$, $T_4$ (see Figure \ref{picture6}). Moreover, from \eqref{SiS4} we have
	\begin{equation*}
		\mu(\{x\in I:b(x)\in T_i\})\ge \frac{1}{4}\mu(\{x\in I:b(x)\in S_i\})\ge \frac{1}{16}\mu(I),\quad i\in \{1,2,3,4\}.
	\end{equation*}
\end{proof}

\begin{figure}[H] 
	\centering 
	\includegraphics[width=0.6\textwidth]{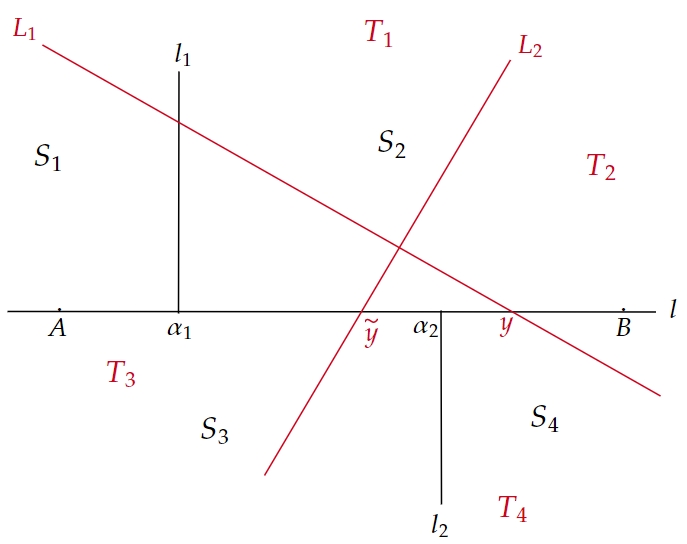} 
	\caption{} 
	\label{picture6} 
\end{figure}

\begin{rem}
	One would try to show Theorem \ref{divideS} first for simple functions, and then use a limit argument for general functions. However, it seems that the proof for simple functions is already complicated. In addition, we do not know how to deal with the limit argument either.
	\end{rem}

\bigskip

\section{Proof of the necessity of Theorem \ref{corollary1.8}}\label{schattenconv}

In order to prove the necessity of Theorem \ref{corollary1.8}, we need to describe the Besov space  $\pmb{B}_p(\mathbb{R}^n)$ in terms of the Schatten class membership of commutators involving singular integral operators. In this section, we deal with singular integral operators associated with non-degenerate kernels in Hyt\"{o}nen's sense. We refer the reader to \cite{TH3} for more details about non-degenerate kernels. At first, we show that when $1<p<\8$, $[T, M_b]\in  S_p(L_2(\mathbb{R}^n))$ implies $b\in \pmb{B}^{\omega,2^n}_p(\mathbb{R}^n)$ (see Lemma \ref{conv1}), where $\pmb{B}^{\omega,2^n}_p(\mathbb{R}^n)$ is defined in \eqref{Bpw2nrnm}. Our proof is based on the complex median method, i.e. Theorem \ref{divideS}. Then we show that for $1<p<\8$, $\pmb{B}_p(\mathbb{R}^n)$ is the intersection of several dyadic martingale Besov spaces associated with different translated dyadic systems (see Proposition \ref{pdayun}). This enables us to transfer the martingale setting to the Euclidean setting. We start with non-degenerate kernels.

\subsection{Non-degenerate kernels}
We first give the definition of non-degeneracy of kernels.
\begin{definition}\label{nondege}
	Let $T\in B(L_2(\mathbb{R}^n))$ be a singular integral operator with kernel $K(x,y)$ satisfying standard kernel estimates \eqref{standard}. $K$ is called non-degenerate, if one of the following conditions holds:
	
	(a) for every $y\in \mathbb{R}^n$ and $r>0$, there exists $x\in B(y,r)^c$ such that
	\begin{equation}\label{nondege1}
		|K(x,y)|\ge \frac{1}{c_0r^n},
	\end{equation}
	where $c_0$ is a fixed positive constant.
	
	(b) if $K$ is a homogeneous kernel with
	\begin{equation}\label{nondege2}
		K(x,y)=\frac{\Omega(x-y)}{|x-y|^n},
	\end{equation}
	where $\Omega\in L_1(\mathbb{S}^{n-1})\backslash\{0\}$ and $\Omega(tx)=\Omega(x)$ for all $t>0$ and $x\in \mathbb{R}^n$ (here $\mathbb{S}^{n-1}$ is the sphere of $\mathbb{R}^n$), then there exists a Lebesgue point $\theta_0\in \mathbb{S}^{n-1}$ of $\Omega$ such that
	\begin{equation*}
		\Omega(\theta_0)\neq 0.
	\end{equation*}
	
\end{definition}	

\begin{remark}
	Let $T$ be a Calder\'{o}n-Zygmund transform with the convolution kernel $K(x,y)=\phi(x-y)$. Then the non-degeneracy condition (a) in Definition \ref{nondege} is simplified into the following form: for every $r>0$, there exists $x\in B(0,r)^c$ with
	\begin{equation*}
		|\phi(x)|\ge \frac{1}{c_0r^n},
	\end{equation*}
	We refer to \cite{TH3} for more details.
\end{remark}

In the rest of this section, the kernel $K$ will be always assumed to satisfy \eqref{standard} and to be non-degenerate. From the above definition, we obtain the following property for $K$.
\begin{lem}\label{ball}
	For every $A\ge 3$ and every ball $B=B(x_0,r)$, there is a disjoint ball $\tilde{B}=B(y_0,r)$ at distance $\mathrm{dist}(B,\tilde{B})\approx Ar$ such that
	\begin{equation}\label{kx0y0an}
		|K(y_0,x_0)|\approx \frac{1}{A^nr^n},
	\end{equation}
	and for all $x_1\in B$ and $y_1\in \tilde{B}$, 
	\begin{equation}\label{kx1y1x0y0}
		|K(y_1,x_1)-K(y_0,x_0)|\lesssim\frac{1}{A^{n+\alpha}r^n}.
	\end{equation}
	Furthermore, if $K(y_0,x_0)$ is real and $A$ is sufficiently large,
	then there exists a positive number $ \varrho$ which depends on $A$, $\alpha$ and $n$ and is much less than $1$ such that
	\begin{equation}\label{88888}
		|\mathrm{Im}(K(y_1,x_1))|\le \varrho\mathrm{Re}(K(y_1,x_1))
	\quad\text{and}\quad
		|K(y_1,x_1)|\le 2\mathrm{Re}(K(y_1,x_1))
	\end{equation}
	for any $x_1\in B$ and $y_1\in \tilde{B}$, where $\mathrm{Re}\big(K(y_1,x_1)\big)$ and $\mathrm{Im}\big(K(y_1,x_1)\big)$ are the real and imaginary parts of $K(y_1,x_1)$ respectively.
\end{lem}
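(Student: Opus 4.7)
The plan is to apply the non-degeneracy assumption (a) of Definition~\ref{nondege} at the point $x_0$ with radius $Ar$: this produces a point $y_0$ with $|y_0-x_0|\ge Ar$ and $|K(y_0,x_0)|\ge 1/(c_0(Ar)^n)$. Pairing this lower bound with the standard upper bound $|K(y_0,x_0)|\le C/|y_0-x_0|^n$ from \eqref{standard} forces $|y_0-x_0|\le (c_0 C)^{1/n}Ar$, so $|y_0-x_0|\approx Ar$. Setting $\tilde B:=B(y_0,r)$, disjointness from $B$ is immediate because $|y_0-x_0|\ge Ar\ge 3r>2r$, and $\mathrm{dist}(B,\tilde B)=|y_0-x_0|-2r$ lies between $(A-2)r$ and $(c_0 C)^{1/n}Ar$; for $A\ge 3$ this gives $\mathrm{dist}(B,\tilde B)\approx Ar$. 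Thus \eqref{kx0y0an} and the distance claim are obtained simultaneously.

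For \eqref{kx1y1x0y0}, I will split
\[ K(y_1,x_1)-K(y_0,x_0)=\bigl[K(y_1,x_1)-K(y_1,x_0)\bigr]+\bigl[K(y_1,x_0)-K(y_0,x_0)\bigr] \]
and apply the H\"older-type estimate in the second line of \eqref{standard} to each bracket. In both brackets the perturbation has size at most $r$, the separation is $\approx Ar$, and the admissibility condition $|x-y|>2|x-x'|$ holds since $Ar-r>2r$. Each term is therefore bounded by $Cr^\alpha/(Ar)^{n+\alpha}=C/(A^{n+\alpha}r^n)$, and the triangle inequality closes the estimate.

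For the final assertion, assume without loss of generality $K(y_0,x_0)>0$ (otherwise replace $K$ by $-K$). From \eqref{kx0y0an}, $K(y_0,x_0)\gtrsim A^{-n}r^{-n}$, while the just-established \eqref{kx1y1x0y0} gives $|K(y_1,x_1)-K(y_0,x_0)|\lesssim A^{-\alpha}\cdot A^{-n}r^{-n}\lesssim A^{-\alpha}K(y_0,x_0)$. Hence, for $A$ sufficiently large,
\[ \mathrm{Re}\bigl(K(y_1,x_1)\bigr)\ge \bigl(1-CA^{-\alpha}\bigr)K(y_0,x_0)\ge \tfrac12 K(y_0,x_0), \]
and $|\mathrm{Im}(K(y_1,x_1))|\le CA^{-\alpha}K(y_0,x_0)\le 2CA^{-\alpha}\mathrm{Re}(K(y_1,x_1))$. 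Taking $\varrho:=2CA^{-\alpha}$ (which depends only on $A,\alpha,n$ and tends to $0$ as $A\to\infty$) yields the first inequality; the second then follows from $|K(y_1,x_1)|^2\le(1+\varrho^2)\mathrm{Re}(K(y_1,x_1))^2$, which gives $|K(y_1,x_1)|\le 2\mathrm{Re}(K(y_1,x_1))$ as soon as $\varrho\le\sqrt3$.

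No step poses a serious obstacle; the only piece of bookkeeping that requires care is the argument in the first paragraph pinning $|y_0-x_0|$ to an order-$Ar$ quantity (rather than merely to a lower bound), which is achieved by combining the non-degeneracy lower bound with the standard kernel upper bound.
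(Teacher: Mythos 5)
Your argument for non-degeneracy case (a) is essentially the paper's: extract $y_0$ from the pointwise lower bound at scale $Ar$, pin $|y_0-x_0|$ to order $Ar$ via the two-sided kernel bound, and then estimate $K(y_1,x_1)-K(y_0,x_0)$ by a two-step telescoping split controlled by the H\"older smoothness of the kernel. The split you chose (perturbing the $x$-variable first, then the $y$-variable) is the mirror image of the paper's (which perturbs $y$ first, via $K(y_1,x_1)-K(y_0,x_1)+K(y_0,x_1)-K(y_0,x_0)$), but this is immaterial; both use the two lines of \eqref{standard} symmetrically.

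The genuine gap is that you only treat non-degeneracy alternative (a) in Definition~\ref{nondege}. The hypothesis of the lemma is that $K$ is non-degenerate, which by the paper's definition means \emph{either} (a) holds \emph{or} (b) holds (the homogeneous-kernel case with $K(x,y)=\Omega(x-y)/|x-y|^n$ and a Lebesgue point $\theta_0$ of $\Omega$ with $\Omega(\theta_0)\neq 0$). Alternative (b) needs its own construction of $y_0$: one cannot simply invoke a pointwise lower bound at scale $Ar$, since (b) only guarantees a nonzero Lebesgue value of $\Omega$ at a single direction $\theta_0$. The paper handles this by setting $y_0=x_0+Ar\theta_0$ and using $0$-homogeneity to compute $|K(y_0,x_0)|=|\Omega(\theta_0)|/(Ar)^n$ directly, after which the rest of the argument (the distance bookkeeping and the H\"older perturbation estimate) proceeds as in case (a). You should add this second case, or else justify why (b) reduces to (a) under your reading of the Lebesgue-point hypothesis.

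One further small caution: your ``WLOG $K(y_0,x_0)>0$ by replacing $K$ with $-K$'' does not quite work as stated, because the conclusion \eqref{88888} is formulated for $K$ itself, not $-K$; replacing $K$ by $-K$ changes the sign of $\mathrm{Re}\,K(y_1,x_1)$ on the right-hand side, so the inequality you derive is for a different operator. As the paper phrases it, the positivity of $K(y_0,x_0)$ is implicit in the hypothesis (and in the application one arranges it by multiplying by a unimodular $e^{\mathrm i\theta_1}$); the proof should simply work under that hypothesis rather than try to normalize by sign.
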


\begin{proof}
	(1) Assume that $K$ is as in Definition \ref{nondege} (a). For a fixed ball $B=B(x_0,r)$ and $A\ge 3$, thanks to the standard estimate of $K$ and \eqref{nondege1}, there exists a point $y_0\in B(x_0,Ar)^c$ such that
	\begin{equation*}
		\frac{1}{c_0(Ar)^n}\le |K(y_0,x_0)|\le \frac{C}{|x_0-y_0|^n}.
	\end{equation*}
	This implies that 
	\begin{equation*}
		Ar\le |x_0-y_0|\le (c_0C)^{\frac{1}{n}}Ar,\quad |K(y_0,x_0)|\approx \frac{1}{A^nr^n}.
	\end{equation*}
	Let $\tilde{B}=B(y_0,r)$. Since $A\ge 3$, we have $\mathrm{dist}(B,\tilde{B})\approx |x_0-y_0|$. Thus
	$\mathrm{dist}(B,\tilde{B})\approx Ar$.
	
	(2) Assume that $K$ is as in Definition \ref{nondege} (b). For a fixed ball $B=B(x_0,r)$ and $A\ge 3$, let $y_0=x_0+Ar\theta_0$ and $\tilde{B}=B(y_0,r)$. It is not hard to check that $\mathrm{dist}(B,\tilde{B})\approx Ar$ and
	\begin{equation*}
		|K(y_0,x_0)|=\frac{|\Omega(y_0-x_0)|}{|y_0-x_0|^n}=\frac{|\Omega(Ar\theta_0)|}{|Ar\theta_0|^n}=\frac{|\Omega(\theta_0)|}{(Ar)^n}\approx \frac{1}{(Ar)^n}.
	\end{equation*}
This finishes the proof of \eqref{kx0y0an}. 

Now we show \eqref{kx1y1x0y0} and \eqref{88888}. For $x_1\in B$ and $y_1\in \tilde{B}$, from \eqref{standard} we have  
	\begin{equation*}
		\begin{aligned}
			|K(y_1,x_1)-K(y_0,x_0)|{}&\le |K(y_1,x_1)-K(y_0,x_1)|+|K(y_0,x_1)-K(y_0,x_0)|     \\
			&\lesssim \frac{|y_1-y_0|^\alpha}{|y_1-x_1|^{n+\alpha}}+\frac{|x_1-x_0|^\alpha}{|x_1-y_0|^{n+\alpha}}\\
			&\lesssim \frac{r^{\alpha}}{(Ar)^{n+\alpha} }+\frac{r^{\alpha}}{(Ar)^{n+\alpha} }\approx \frac{1}{A^{n+\alpha}r^n}.
		\end{aligned}
	\end{equation*}
	When $A$ is sufficiently large, $K(y_1,x_1)$ will be very close to $K(y_0,x_0)$. Hence if $K(y_0,x_0)$ is real, we deduce that
	\begin{equation*}
		A^nr^n|\mathrm{Re}(K(y_1,x_1))-K(y_0,x_0)|\lesssim\frac{1}{A^{\alpha}}
	\end{equation*}
and
\begin{equation*}
	A^nr^n|\mathrm{Im}(K(y_1,x_1))|\lesssim\frac{1}{A^{\alpha}}.
\end{equation*}
Note that by \eqref{kx0y0an} that
\begin{equation*}
	A^nr^n|K(y_0,x_0)|\approx 1.
\end{equation*}	
Therefore, we deduce that
	\begin{equation*}
		|\mathrm{Im}(K(y_1,x_1))|\le \varrho\mathrm{Re}(K(y_1,x_1))
	\quad \text{and} \quad
		|K(y_1,x_1)|\le 2\mathrm{Re}(K(y_1,x_1))
	\end{equation*}
	for sufficiently small $\varrho$.
\end{proof}

\subsection{A key lemma}
This subsection establishes a key lemma for the proof of the necessity of Theorem \ref{corollary1.8}. We will need the following lemma due to Rochberg and Semmes in \cite{RSe}.
\begin{lemma}\label{RSNWO}
	Let $1<p<\infty$. Assume that $\{e_I\}_{I\in\mathcal{D}}$ and $\{f_I\}_{I\in\mathcal{D}}$ are function sequences in $L_2(\mathbb{R}^n)$ satisfying $\mathrm{supp}e_I, \,\mathrm{supp}f_I\subseteq I$ and
	$\|e_I\|_{\infty},\,\|f_I\|_{\infty}\le |I|^{-\frac{1}{2}}$. For any bounded compact operator $V$ on $L_2(\mathbb{R}^n)$, one has
	\begin{equation*}
		\begin{aligned}
			\sum_{I\in\mathcal{D}}\big|\langle e_I,V(f_I)\rangle\big|^p\lesssim_{n,p} \|V\|^p_{S_p(L_2(\mathbb{R}^n))}.
		\end{aligned}
	\end{equation*}
\end{lemma}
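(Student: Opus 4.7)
The plan is to prove the lemma by trace duality between the Schatten classes $S_p$ and $S_{p'}$, reducing everything to Lemma \ref{WEAKnonNWOpre1}. For any finite subset $\mathcal{F}\subset\mathcal{D}$ and any scalar sequence $\{\lambda_I\}_{I\in\mathcal{F}}$, I would introduce the finite-rank operator
\[
A_\lambda(g) := \sum_{I\in\mathcal{F}} \lambda_I \langle e_I, g\rangle f_I,\qquad g\in L_2(\mathbb{R}^n),
\]
and note that the trace identity $\mathrm{Tr}\bigl(V\cdot(\langle e_I,\cdot\rangle f_I)\bigr) = \langle e_I, V(f_I)\rangle$ for each rank-one summand yields the pairing
\[
\mathrm{Tr}(V A_\lambda) = \sum_{I\in\mathcal{F}} \lambda_I \langle e_I, V(f_I)\rangle.
\]

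Next, the Schatten-class H\"older inequality combined with Lemma \ref{WEAKnonNWOpre1} applied with exponents $p=q=p'$ (which is permissible since $1<p'<\infty$ and $\{e_I\},\{f_I\}$ satisfy the requisite support and $L_\infty$ bounds) gives
\[
\Bigl|\sum_{I\in\mathcal{F}} \lambda_I \langle e_I, V(f_I)\rangle\Bigr| \le \|V\|_{S_p(L_2(\mathbb{R}^n))}\,\|A_\lambda\|_{S_{p'}(L_2(\mathbb{R}^n))} \lesssim_{n,p} \|V\|_{S_p(L_2(\mathbb{R}^n))}\,\|\{\lambda_I\}_{I\in\mathcal{F}}\|_{\ell_{p'}}.
\]
Taking the supremum over all sequences $\lambda$ with $\ell_{p'}$-norm at most $1$ and invoking the $\ell_p$--$\ell_{p'}$ duality on the finite index set $\mathcal{F}$ yields
\[
\Bigl(\sum_{I\in\mathcal{F}} |\langle e_I, V(f_I)\rangle|^p\Bigr)^{1/p} \lesssim_{n,p} \|V\|_{S_p(L_2(\mathbb{R}^n))}.
\]
Since this bound is uniform in $\mathcal{F}$, letting $\mathcal{F}$ exhaust $\mathcal{D}$ via monotone convergence completes the proof.

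No serious obstacle is anticipated here: the analytic content is essentially already packaged in Lemma \ref{WEAKnonNWOpre1}, so the argument is a clean and standard application of the trace-duality principle for Schatten classes. The only technical points to be careful about are the reduction to finite $\mathcal{F}$, which makes $A_\lambda$ literally finite rank so that the trace identity is unambiguous, and the final passage to the limit, which is automatic once the estimate is known with a constant independent of $\mathcal{F}$.
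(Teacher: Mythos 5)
The paper does not actually prove Lemma \ref{RSNWO}; it cites it to Rochberg and Semmes \cite{RSe} and remarks that a different proof via martingale paraproducts appears in \cite{WZ2024}. So there is no in-text proof to compare against. Your argument is correct as written: the trace identity for the rank-one operator $g\mapsto\langle e_I,g\rangle f_I$ gives $\mathrm{Tr}\bigl(V\cdot(\langle e_I,\cdot\rangle f_I)\bigr)=\langle e_I,V(f_I)\rangle$; H\"older's inequality on Schatten classes gives $|\mathrm{Tr}(VA_\lambda)|\le\|V\|_{S_p}\|A_\lambda\|_{S_{p'}}$; Lemma \ref{WEAKnonNWOpre1} with $p=q=p'$ (applied to the dyadic system on $\mathbb{R}^n$ viewed as a $2^n$-adic system, which is legitimate since that lemma depends only on the martingale structure) bounds $\|A_\lambda\|_{S_{p'}}$ by $\|\{\lambda_I\}\|_{\ell_{p'}}$; and $\ell_p$--$\ell_{p'}$ duality over the finite set $\mathcal{F}$ followed by monotone convergence finishes. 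This is exactly the dual formulation of the NWO upper bound, and it is precisely the route one would expect. One small remark: when you write ``bounded compact operator,'' note that the estimate is trivial unless $V\in S_p$, so you may as well assume that from the start; and the restriction $1<p<\infty$ is exactly what makes the Schatten H\"older pairing with $S_{p'}$ available. I see no gap.
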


We discover another different proof of Lemma \ref{RSNWO} by virtue of martingale paraproducts and also extend it to the semicommutative setting (see \cite[Theorem 10.1]{WZ2024}).

Now we come to the following lemma, which is vital for the proof of the necessity of Theorem \ref{corollary1.8}. It describes the relation between $\|b\|_{\pmb{B}_p^{\omega,2^n}(\mathbb{R}^n)}$ and $\|[T,M_b]\|_{S_p(L_2(\mathbb{R}^n))}$.

\begin{lemma}\label{conv1}
	Let $1<p<\infty$. Suppose that $b$ is a locally integrable complex-valued function. If $[T,M_b]\in S_p(L_2(\mathbb{R}^n))$, then for any $\omega\in(\{0,1\}^n)^\mathbb{Z}$,
	\begin{equation*}
		\begin{aligned}
			\|b\|_{\pmb{B}_p^{\omega,2^n}(\mathbb{R}^n)}\lesssim_{n,p,T}\|[T,M_b]\|_{S_p(L_2(\mathbb{R}^n))}.
		\end{aligned}
	\end{equation*}	
\end{lemma}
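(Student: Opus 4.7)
The goal is, for each $I \in \mathcal{D}^\omega$ and each Haar label $\eta \in \{0,1\}^n_0$, to produce test functions $f_I^\eta, g_I^\eta$ with $\mathrm{supp}\,f_I^\eta, \mathrm{supp}\,g_I^\eta$ contained in a bounded dilate $I^*$ of $I$ and $\|f_I^\eta\|_\infty, \|g_I^\eta\|_\infty \lesssim_n |I|^{-1/2}$, satisfying the pointwise lower bound
$$\big|\langle g_I^\eta, [T, M_b](f_I^\eta)\rangle\big| \;\gtrsim_{n,T}\; |I|^{-1/2}\,|\langle H_I^\eta, b\rangle|.$$
Granting this, applying the Rochberg--Semmes estimate (Lemma \ref{RSNWO}) separately for each of the finitely many $\eta$ gives
$$\|b\|_{\pmb{B}_p^{\omega,2^n}(\mathbb{R}^n)}^p \;=\; \sum_{I,\eta}\frac{|\langle H_I^\eta,b\rangle|^p}{|I|^{p/2}} \;\lesssim\; \sum_\eta\sum_I \big|\langle g_I^\eta, [T,M_b](f_I^\eta)\rangle\big|^p \;\lesssim_{n,p,T}\; \big\|[T,M_b]\big\|_{S_p(L_2(\mathbb{R}^n))}^p.$$

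To build the test functions for a fixed $I$, I first use Lemma \ref{ball} to locate two disjoint balls $B_I, \tilde B_I \subset 3I$ of radius $\sim \ell(I)/A$ with centers separated by $\sim \ell(I)/2$, together with a scalar $c_I \in \mathbb{C}$ with $|c_I| \approx \ell(I)^{-n}$, such that $K(x,y) = c_I(1 + O(A^{-\alpha}))$ uniformly on $B_I \times \tilde B_I$; here $A$ is a large constant depending only on $n$ and $\alpha$. I then apply Theorem \ref{divideS} to $b$ on the disjoint union $B_I \sqcup \tilde B_I$, producing a four-quadrant decomposition of $\mathbb{C}$ by orthogonal lines $L_1^I, L_2^I$ and measurable sets $F_1^I, \ldots, F_4^I$ with $|F_i^I| \gtrsim |B_I|$ and $b(F_i^I)$ in the $i$-th closed quadrant. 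A bounded pigeonhole (sixteen cases) selects indices $i_I, j_I$ so that $E := F_{i_I}^I \cap B_I$ and $\tilde E := F_{j_I}^I \cap \tilde B_I$ both have measure $\gtrsim |B_I|$, and so that the ``difference direction'' of the two opposite quadrants $T_{i_I}, T_{j_I}$, rotated by $\arg c_I$, lies within $\pi/4$ of the positive real axis. Setting $f_I^\eta := |E|^{-1/2}\mathbbm{1}_E$ and $g_I^\eta := \lambda_I |\tilde E|^{-1/2}\mathbbm{1}_{\tilde E}$ for a unimodular constant $\lambda_I$ cancelling the phase of $c_I$, and expanding
$$\langle g_I^\eta, [T,M_b](f_I^\eta)\rangle \;=\; \overline{\lambda_I}\,(|E||\tilde E|)^{-1/2}\int_{\tilde E}\int_E K(x,y)\big(b(y)-b(x)\big)\,dx\,dy,$$
the near-constancy of $K$ on $B_I \times \tilde B_I$ together with the quadrant-based phase control yield a lower bound of the form $A^{-n}|\langle b\rangle_{\tilde E} - \langle b\rangle_E|$. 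An averaging over admissible positions of $(B_I, \tilde B_I)$ inside the pair of dyadic children of $I$ singled out by $\eta$ then converts this into the required $\gtrsim |I|^{-1/2}|\langle H_I^\eta, b\rangle|$.

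The principal obstacle, and the point at which the complex median method is indispensable, is the pigeonhole step aligning the median frame with the phase of $c_I$. The real median applied to $b$ would control only a single scalar direction of oscillation of $b$ in $\mathbb{C}$; if the phase of $c_I$ happened to be orthogonal to that direction, the inner product could cancel and yield no useful lower bound. Theorem \ref{divideS} instead supplies four pairs of opposite quadrants, and among them one always makes an angle of at most $\pi/4$ with the positive real axis after rotation by $\arg c_I$, which is exactly what is needed to extract a definite-sign real contribution from $c_I(b(y)-b(x))$ and thereby prevent cancellation in the inner product.
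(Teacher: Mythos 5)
There is a genuine gap. The central problem is that your terminal quantity does not control the Haar coefficient. You aim at a lower bound of the form $\big|\langle g_I^\eta, [T,M_b]f_I^\eta\rangle\big|\gtrsim A^{-n}\big|\langle b\rangle_{\tilde E}-\langle b\rangle_E\big|$ for small subsets $E,\tilde E$ of balls $B_I,\tilde B_I$ of radius $\sim\ell(I)/A$, and then appeal to ``averaging over admissible positions'' to pass to $|I|^{-1/2}|\langle H_I^\eta,b\rangle|$. But that final step is not a proof: the Rochberg--Semmes bound (Lemma \ref{RSNWO}) demands one fixed test-function pair per $(I,\eta)$, the bilinear forms at different positions could carry cancelling phases, and one can have $b$ constant on a neighborhood of the ball centers yet with nonzero Haar coefficient on $I$, so the pointwise inequality you would need to harvest from averaging is simply false. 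The paper's proof avoids this by working in the opposite direction: it begins from the elementary \emph{upper} bound $|I|^{-1/2}|\langle h_I^i,b\rangle|\le\frac{1}{|I|}\int_I|b(x)-\alpha_{\hat I}(b)|\,dx$, valid because $\int_I\overline{h_I^i}=0$ for \emph{any} constant, and takes that constant to be the complex median of $b$ on a \emph{remote} cube $\hat I$ at distance $\approx A\ell(I)$. Partitioning $I$ into sets $E_s^I$ by which opposite quadrant $b(x)-\alpha_{\hat I}(b)$ falls in yields the transfer inequality $|b(x)-\alpha_{\hat I}(b)|\le 2|b(x)-b(\hat x)|$ for $x\in E_s^I$, $\hat x\in F_s^I$, which is exactly what converts the mean-oscillation bound into a sum of commutator bilinear forms. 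Your version, which runs the median on $B_I\sqcup\tilde B_I$ and pigeonholes for a good opposite pair, bypasses this reduction and lands on a quantity that is not comparable to the Haar coefficient.

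A secondary difficulty in your scheme: the pigeonhole must simultaneously produce an opposite pair $(T_{i_I},T_{j_I})$ with large mass in both balls \emph{and} with difference direction within $\pi/4$ of the positive real axis after rotation by $\arg c_I$. Theorem \ref{divideS} guarantees all four quadrants carry mass $\ge\frac{1}{16}$, but it does not guarantee a pair satisfying these two independent constraints at once. The paper sidesteps this entirely by retaining \emph{all four} contributions $M_1^I,\dots,M_4^I$ and handling each by a separate rotation: the phase $\theta$ of the median frame on $b$ and the phase $\theta_1$ of the kernel $K(y_0,c(I))$ are rotated out independently, so the argument of $e^{\mathrm i\theta}(b(x)-b(\hat x))$ lies in $[-\pi/4,\pi/4]$ for $s=1$ (and in the corresponding rotated sector for $s=2,3,4$), while $e^{\mathrm i\theta_1}K(\hat x,x)$ is uniformly close to the positive reals. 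No alignment pigeonhole is needed, because the quadrant decomposition is applied to $I$, not chosen ad hoc.
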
	

\begin{proof}
Without loss of generality, we assume that $\omega=0$. Recall that
	\begin{equation*}
		\begin{aligned}
			\|b\|^p_{\pmb{B}_p^{0,2^n}(\mathbb{R}^n)}{}&=\sum_{I\in\mathcal{D}^0}\sum_{i=1}^{2^n-1}|I|^{-\frac{p}{2}}|\langle h_I^i,b\rangle|^p.
		\end{aligned}
	\end{equation*}
	For any given $k\in \mathbb{Z}$ and $I\in \mathcal{D}_k^0$, let $c(I)$ be the center of $I$. Let $B=B(c(I),2^{-k}\sqrt{n})$, then $I\subset B$. From Lemma \ref{ball}, for any given $A$ which is much greater than $n$, there is a disjoint ball $\tilde{B}=B(y_0,2^{-k}\sqrt{n})$ at distance $\mathrm{dist}(B,\tilde{B})\approx_n 2^{-k}A$, such that
	\begin{equation}\label{minus1}
		|K(y_0,c(I))|\approx_{n,T} \frac{1}{A^n|I|}.
	\end{equation}
	Then we choose a cube $\hat{I}\in \mathcal{D}^0_k$ such that $\hat{I}\subset \tilde{B}$ and $y_0\in \hat{I}$. It is not hard to find that $\mathrm{dist}(I,\hat{I})\approx_n 2^{-k}A$. In the following, we will always assume that $A$ is a sufficiently large number.
	
	By Theorem \ref{divideS}, there exist $\theta={\theta}(\hat{I},b)\in [0,2\pi)$ and $\alpha_{\hat{I}}(b)\in\mathbb{C}$ such that
	if we denote
	\begin{equation}\label{FSI}
		\begin{aligned}
		F_s^I=\bigg\{{x}\in \hat{I}: -\frac{\pi}{4}+\frac{(s-1)\pi}{2}\le \mathrm{arg}\Big(e^{\mathrm{i}\theta}\big(\alpha_{\hat{I}}(b)-b({x})\big)\Big)\le -\frac{\pi}{4}+\frac{s\pi}{2} \quad \text{or}\quad b({x})=\alpha_{
		\hat{I}}(b)\bigg\},
	    \end{aligned}
	\end{equation}
	where $s\in\{1,2,3,4\}$, and $\mathrm{arg}(z)$ is the argument of a complex number $z$, then $|F_s^I|\ge \frac{1}{16}|\hat{I}|$, and $F_1^I \cup F_2^I \cup F_3^I \cup F_4^I=\hat{I}$.  Note that for any $1\le i\le 2^n-1$,
	\begin{equation*}
		\begin{aligned}
			|I|^{-\frac{1}{2}} |\langle h_I^i,b\rangle|{}
			&=|I|^{-\frac{1}{2}}\bigg|\int_I \overline{h_I^i(x)}b(x)dx\bigg|\\
			&=|I|^{-\frac{1}{2}}\bigg|\int_I \overline{h_I^i(x)}\big(b(x)-\alpha_{\hat{I}}(b)\big)dx\bigg|\\
			&\le \frac{1}{|I|}\int_I |b(x)-\alpha_{\hat{I}}(b)|dx\\
			&=\frac{1}{|I|}\sum_{q=1}^{2^n}\int_{I(q)} |b(x)-\alpha_{\hat{I}}(b)|dx,
		\end{aligned}
	\end{equation*}
    where $I(q)$ is the $q$-th subinterval of $I$.
	Similarly, we define
\begin{equation}\label{ESI}
		E_s^I=\bigg\{x\in I: -\frac{\pi}{4}+\frac{(s-1)\pi}{2}\le \mathrm{arg}\Big(e^{\mathrm{i}{\theta}}\big(b({x})-\alpha_{\hat{I}}(b)\big)\Big)\le -\frac{\pi}{4}+\frac{s\pi}{2} \quad \text{or}\quad b({x})=\alpha_{
			\hat{I}}(b)\bigg\},
	\end{equation}
	then for any $1\le i\le 2^n-1$,
	\begin{equation}\label{M1M2}
		\begin{aligned}
			|I|^{-\frac{1}{2}} |\langle h_I^i,b\rangle|{}
			&\le\frac{1}{|I|}\sum_{q=1}^{2^n}\int_{I(q)} |b(x)-\alpha_{\hat{I}}(b)|dx\\
			&\le \frac{1}{|I|}\sum_{q=1}^{2^n}\sum_{s=1}^4\int_{I(q)\cap E_s^I} |b(x)-\alpha_{\hat{I}}(b)|dx\\
			&=:\sum_{s=1}^4M_s^I.
		\end{aligned}
	\end{equation}
	Note that for any $s\in \{1,2,3,4\}$, if $x\in I(q)\cap E_s^I$ and $\hat{x}\in F_s^I$, then
	\begin{equation}\label{bxal}
		\begin{aligned}
			|b(x)-\alpha_{\hat{I}}(b)|{}&\le \big|e^{\mathrm{i}{\theta}}\big(b(x)-\alpha_{\hat{I}}(b)\big)\big|+\big|e^{\mathrm{i}{\theta}}\big(\alpha_{\hat{I}}(b)-b(\hat{x})\big)\big|\\
			&\le 2\big|e^{\mathrm{i}{\theta}}\big(b(x)-b(\hat{x})\big)\big|
			= 2|b(x)-b(\hat{x})|.
		\end{aligned}
	\end{equation}
	Thus by \eqref{minus1} and \eqref{bxal} one has
	\begin{equation*}
		\begin{aligned}
			M_s^I{}&\approx \frac{1}{|I|}\sum_{q=1}^{2^n}\int_{I(q)\cap E_s^I} |b(x)-\alpha_{\hat{I}}(b)|dx \cdot \frac{|F_s^I|}{|I|}\\
			&\approx_{n,T} \frac{A^n}{|I|}\sum_{q=1}^{2^n}\int_{I(q)\cap E_s^I} |b(x)-\alpha_{\hat{I}}(b)||K(y_0,c(I))||F_s^I|dx\\
			&= \frac{A^n}{|I|}\sum_{q=1}^{2^n}\int_{I(q)\cap E_s^I}\int_{F_s^I} |b(x)-\alpha_{\hat{I}}(b)||K(y_0,c(I))|d\hat{x}dx\\
			&\le \frac{2A^n}{|I|}\sum_{q=1}^{2^n}\int_{I(q)\cap E_s^I}\int_{F_s^I} |b(x)-b(\hat{x})||K(y_0,c(I))|d\hat{x}dx.
		\end{aligned}
	\end{equation*}
	For any $s\in \{1,2,3,4\}$, $x\in I(q)\cap E_s^I$ and $\hat{x}\in F_s^I$, from Lemma \ref{ball} we have
	\begin{equation*}
		\begin{aligned}
			|K(\hat{x},x)-K(y_0,c(I))|\lesssim_{n,T}  \frac{1}{A^{n+\alpha}|I|}.
		\end{aligned}
	\end{equation*}
	Hence $|K(\hat{x},x)|\ge \frac{1}{2}|K(y_0,c(I))|$ thanks to \eqref{minus1}. Thus
	\begin{equation*}
		\begin{aligned}
			M_s^I{}&\lesssim_{n,T} \frac{4A^n}{|I|}\sum_{q=1}^{2^n}\int_{I(q)\cap E_s^I}\int_{F_s^I} |b(x)-b(\hat{x})||K(\hat{x},x)|d\hat{x}dx.
		\end{aligned}
	\end{equation*}
	
	We first estimate $M_1^I$.
	Let $\theta_1\in [0,2\pi)$ such that $e^{i\theta_1}K(y_0,c(I))$ is  positive. Thus from Lemma \ref{ball} we obtain
	\begin{equation}\label{theta1K}
		|K(\hat{x},x)| \le 2\text{Re}\big(e^{i\theta_1}K(\hat{x},x)\big).
	\end{equation}
	Moreover, note that for any $x\in I(q)\cap E_1^I$ and $\hat{x}\in F_1^I$, the arguments of $e^{\mathrm{i}{\theta}}\big(b({x})-\alpha_{\hat{I}}(b)\big)$ and $e^{\mathrm{i}{\theta}}\big(\alpha_{\hat{I}}(b)-b(\hat{x})\big)$ both belong to $[-\frac{\pi}{4},\frac{\pi}{4}]$, thus
	\begin{equation*}
		-\frac{\pi}{4}\le \mathrm{arg}\Big(e^{\mathrm{i}{\theta}}\big(b(x)-b(\hat{x})\big)\Big)\le \frac{\pi}{4}.
	\end{equation*}
	This implies that
	\begin{equation}\label{b-b1}
		\begin{aligned}
			\big|e^{\mathrm{i}{\theta}}\big(b(x)-b(\hat{x})\big)\big|{}&
			\le 2\mathrm{Re}\Big(e^{\mathrm{i}{\theta}}\big(b(x)-b(\hat{x})\big)\Big)
		\end{aligned}
	\end{equation} 
	and
	\begin{equation}\label{imreb1}
		\Big|\mathrm{Im}\Big(e^{\mathrm{i}{\theta}}\big(b(x)-b(\hat{x})\big)\Big)\Big|\le \mathrm{Re}\Big(e^{\mathrm{i}{\theta}}\big(b(x)-b(\hat{x})\big)\Big).
	\end{equation}
	Thus from \eqref{theta1K} and \eqref{b-b1} we deduce that
	\begin{equation*}
		\begin{aligned}
			M_1^I{}&
			\lesssim_{n,T}\frac{4A^n}{|I|}\sum_{q=1}^{2^n}\int_{I(q)\cap E_1^I}\int_{F_1^I} \big|e^{\mathrm{i}{\theta}}\big(b(x)-b(\hat{x})\big)\big||K(\hat{x},x)|d\hat{x}dx\\
			&\le \frac{8A^n}{|I|}\sum_{q=1}^{2^n}\int_{I(q)\cap E_1^I}\int_{F_1^I} \mathrm{Re}\Big(e^{\mathrm{i}{\theta}}\big(b(x)-b(\hat{x})\big)\Big)|K(\hat{x},x)|d\hat{x}dx\\
			&\le  \frac{16 A^n}{|I|}\sum_{q=1}^{2^n}\int_{I(q)\cap E_1^I}\int_{F_1^I} \mathrm{Re}\Big(e^{\mathrm{i}{\theta}}\big(b(x)-b(\hat{x})\big)\Big)\cdot\text{Re}\big(e^{i\theta_1}K(\hat{x},x)\big)d\hat{x}dx.
		\end{aligned}
	\end{equation*}
	Notice that from Lemma \ref{ball} one has 
	\begin{equation}\label{eitheta1}
		\big|\mathrm{Im}\big(e^{i\theta_1}K(\hat{x},x)\big)\big|\le \varrho\mathrm{Re}\big(e^{i\theta_1}K(\hat{x},x)\big),
	\end{equation}
	where $\varrho$ is a positive number which depends on $A$, $\alpha$ and $n$ and is much less than $1$.
	Then from \eqref{imreb1} and \eqref{eitheta1} we derive that
	\begin{equation*}
		\begin{aligned}
			\mathrm{Re}\Big(e^{\mathrm{i}{\theta}}\big(b(x)-b(\hat{x})\big)\Big)\cdot\text{Re}\big(e^{i\theta_1}K(\hat{x},x)\big)
			&\le 2\mathrm{Re}\Big(e^{\mathrm{i}{\theta}}\big(b(x)-b(\hat{x})\big)\Big)\cdot\text{Re}\big(e^{i\theta_1}K(\hat{x},x)\big)\\
			& \quad \quad \quad \quad-2\mathrm{Im}\Big(e^{\mathrm{i}{\theta}}\big(b(x)-b(\hat{x})\big)\Big)\cdot\text{Im}\big(e^{i\theta_1}K(\hat{x},x)\big)\\
			&=2\mathrm{Re}\Big(e^{\mathrm{i}{\theta}}(b(x)-b(\hat{x}))\cdot e^{i\theta_1}K(\hat{x},x)\Big).
		\end{aligned}
	\end{equation*}
	Hence
	\begin{equation*}\label{M1I}
		\begin{aligned}
			M_1^I{}&
			\lesssim_{n,T} \frac{32A^n}{|I|}\sum_{q=1}^{2^n}\int_{I(q)\cap E_1^I}\int_{F_1^I} \mathrm{Re}\Big(e^{\mathrm{i}{\theta}}(b(x)-b(\hat{x}))\cdot e^{i\theta_1}K(\hat{x},x)\Big)d\hat{x}dx\\
			&\le \frac{32A^n}{|I|}\sum_{q=1}^{2^n}\bigg|\int_{I(q)\cap E_1^I}\int_{F_1^I} e^{\mathrm{i}{\theta}}(b(x)-b(\hat{x}))\cdot e^{i\theta_1}K(\hat{x},x)d\hat{x}dx\bigg|\\
			&=\frac{32A^n}{|I|}\sum_{q=1}^{2^n}\bigg|\int_{I(q)\cap E_1^I}\int_{F_1^I} (b(x)-b(\hat{x}))K(\hat{x},x)d\hat{x}dx\bigg|.
		\end{aligned}
	\end{equation*}
Similarly, the other three terms $M_2^I$, $M_3^I$ and $M_4^I$ can be dealt with in the same way as $M_1^I$ by rotation, and we obtain
	\begin{equation}\label{M2I}
		\begin{aligned}
			M_s^I{}&
			\lesssim_{n,T} \frac{A^n}{|I|}\sum_{q=1}^{2^n}\bigg|\int_{I(q)\cap E_s^I}\int_{F_s^I} (b(x)-b(\hat{x}))K(\hat{x},x)d\hat{x}dx\bigg|,\quad s\in\{1,2,3,4\}.
		\end{aligned}
	\end{equation}
	Hence from \eqref{M1M2} and \eqref{M2I} one has
	\begin{equation}\label{second2}
		\begin{aligned}
			\|b\|^p_{\pmb{B}_p^{0,2^n}(\mathbb{R}^n)}{}
			&= \sum_{k\in\mathbb{Z}}\sum_{I\in\mathcal{D}^0_k}\sum_{i=1}^{2^n-1} |I|^{-\frac{p}{2}} |\langle h_I^i,b\rangle|^p\\
			&\le (2^n-1) \cdot 4^{p-1} \sum_{I\in\mathcal{D}^0}\sum_{s=1}^4 (M_s^I)^p\\
			&\lesssim_{n,p,T} A^{np}\sum_{s=1}^4\sum_{q=1}^{2^n}\sum_{I\in\mathcal{D}^0}\bigg|\Big\langle \frac{|I(q)|^{\frac{1}{2}}\mathbbm{1}_{F_s^I}}{|I|}, [T,M_b]\frac{\mathbbm{1}_{I(q)\cap E_s^I}}{|I(q)|^{\frac{1}{2}}}\Big\rangle\bigg|^p.
		\end{aligned}
	\end{equation}
	For any $I\in\mathcal{D}^0_k$, note that $\mathrm{dist}(I,\hat{I})=2^{-k}C_nA$, where $C_n$ is a constant only depending on $n$. Let $c(\hat{I})$ be the center of $\hat{I}$. We consider the cube $Q(I)$, where the center of $Q(I)$ is $\frac{c(I)+c(\hat{I})}{2}$, and the length of $Q(I)$ is $2^{-k+1}C_nA$. This implies that $I,\hat{I}\subset Q(I)$. Besides, from Lemma \ref{Domegan1} we know that there exists some cube $J(I)\in\bigcup\limits_{i=1}^{n+1} \mathcal{D}^{\omega(i)}$ such that
	\begin{equation*}
		Q(I)\subseteq J(I)\subseteq c_n Q(I).
	\end{equation*}
	Notice that $I\subseteq J(I)$ and
	\begin{equation*}
		\ell(I)\leq \ell(J(I))\le c_n\ell(Q(I))\le 2c_nC_nA\ell(I).
	\end{equation*}
    Now for any $s\in\{1,2,3,4\}$ and $1\le q\le 2^n$, let
	\begin{equation*}
		e_{J(I),s,q}=\frac{|I(q)|^{\frac{1}{2}}\mathbbm{1}_{F_s^I}}{|I|} \quad \text{and} \quad f_{J(I),s,q}=\frac{\mathbbm{1}_{I(q)\cap E_s^I}}{|I(q)|^{\frac{1}{2}}}.
	\end{equation*}
	Then  $\mathrm{supp}e_{J(I),s,q},\,\mathrm{supp}f_{J(I),s,q}\subseteq J(I)$ and $\|e_{J(I),s,q}\|_{\infty},\,\|f_{J(I),s,q}\|_{\infty}\le C|J(I)|^{-\frac{1}{2}}$, where 
	the constant $C$ only depends on $n$ and $A$. Note that from \eqref{second2} one has
	\begin{align*}
		\|b\|^p_{\pmb{B}_p^{0,2^n}(\mathbb{R}^n)}&\lesssim_{n,p,T}A^{np}\sum_{s=1}^4\sum_{q=1}^{2^n}\sum_{I\in\mathcal{D}^0}\Big|\big\langle e_{J(I),s,q}, [T,M_b](f_{J(I),s,q})\big\rangle\Big|^p.
	\end{align*}
	From Lemma \ref{Domegan2}, each $J(I)$ contains only a finite number of dyadic cubes in $\mathcal{D}_k^0$, and this number only depends on $n$ and $A$. Therefore, from Lemma \ref{RSNWO} we get
	\begin{equation*}
		\begin{aligned}
			\|b\|_{\pmb{B}_p^{0,2^n}(\mathbb{R}^n)}
			\lesssim_{n,p,T} \|[T,M_b]\|_{S_p(L_2(\mathbb{R}^n))},
		\end{aligned}
	\end{equation*}
	as long as we fix $A$. (Here we apply Lemma \ref{RSNWO} for dyadic systems $ \mathcal{D}^{\omega(i)}$.)
\end{proof}

\subsection{Proof of the necessity of Theorem \ref{corollary1.8}}


As mentioned before, we will show that for $1<p<\infty$, the Besov space $\pmb{B}_p(\mathbb{R}^n)$ is the intersection of finite well-chosen martingale Besov spaces, where the number of chosen martingales only depends on $n$. We will use the dyadic covering result in Lemma \ref{Domegan1}.
\begin{prop}\label{pdayun}
	Let $1<p<\infty$. Let $\mathcal{D}^{\omega(1)}, \mathcal{D}^{\omega(2)}, \cdots, \mathcal{D}^{\omega(n+1)}$ be $n+1$ dyadic systems as in Lemma \ref{Domegan1}. Then
	$$  \pmb{B}_p(\mathbb{R}^n)=\bigcap_{i=1}^{n+1}  \pmb{B}_p^{\omega(i),2^n}(\mathbb{R}^n), $$
	where the norm for the intersection on the right hand side is the maximum of the involved norms.
\end{prop}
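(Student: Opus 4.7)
The plan is to prove the two inclusions separately. The easy direction, $\pmb{B}_p(\mathbb{R}^n) \subseteq \bigcap_{i=1}^{n+1} \pmb{B}_p^{\omega(i),2^n}(\mathbb{R}^n)$ with the corresponding norm bound, follows immediately from Lemma \ref{Comparison}, which already gives $\|b\|_{\pmb{B}_p^{\omega,2^n}(\mathbb{R}^n)} \lesssim_n \|b\|_{\pmb{B}_p(\mathbb{R}^n)}$ for every single $\omega$, and hence for the maximum over the $n+1$ chosen ones.

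For the reverse inclusion, I would work with the equivalent characterization
$$\|b\|_{\pmb{B}_p^{\omega(i),2^n}(\mathbb{R}^n)}^p \approx_{n,p} \sum_{k'\in\mathbb{Z}} 2^{nk'} \|b - b^{\omega(i)}_{k'}\|_{L_p(\mathbb{R}^n)}^p$$
furnished by Proposition \ref{equbbk}, and with the obvious dyadic decomposition
$$\|b\|_{\pmb{B}_p(\mathbb{R}^n)}^p \approx \sum_{k\in\mathbb{Z}} 2^{2nk} \iint_{2^{-k-1}<|x-y|\le 2^{-k}} |b(x)-b(y)|^p\, dxdy.$$
The key geometric input is Lemma \ref{Domegan1}: for any pair $(x,y)$ with $|x-y|\approx 2^{-k}$, applied to the ball $B(x,|x-y|)$ it produces an index $i\in\{1,\dots,n+1\}$ and a cube $Q \in \mathcal{D}^{\omega(i)}$ with $B \subseteq Q \subseteq c_n B$, so that $Q\in \mathcal{D}^{\omega(i)}_{k'}$ for some $k' = k + O_n(1)$ and both $x,y$ lie in the same cube $Q$. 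Since $b^{\omega(i)}_{k'}$ is constant on $Q$, this gives the pointwise bound
$$|b(x)-b(y)| \le |b(x) - b^{\omega(i)}_{k'}(x)| + |b(y) - b^{\omega(i)}_{k'}(y)|.$$

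Summing over the (bounded number of) admissible pairs $(i,k')$ at each scale and integrating the variable not appearing in the right-hand side over the cube $Q$ (which contributes a factor $|Q|=2^{-nk'}$), one gets
$$\iint_{2^{-k-1}<|x-y|\le 2^{-k}} |b(x)-b(y)|^p\, dxdy \lesssim_{n,p} \sum_{i=1}^{n+1} \sum_{|k'-k|\le c_n} 2^{-nk'}\|b-b^{\omega(i)}_{k'}\|_{L_p(\mathbb{R}^n)}^p.$$
Multiplying by $2^{2nk}$ and summing over $k$ (the shifted inner sum only introduces a factor depending on $n$), the estimate collapses via Proposition \ref{equbbk} to $\sum_{i=1}^{n+1}\|b\|_{\pmb{B}_p^{\omega(i),2^n}(\mathbb{R}^n)}^p$, which is exactly what is needed.

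The main obstacle is purely combinatorial: one must verify that the cube produced by Lemma \ref{Domegan1} really lives in a generation $k'$ within a controlled distance of $k$ uniformly in $(x,y)$, so that each scale $k$ is hit by only $O_n(1)$ scales $k'$ on the dyadic side. This is a bookkeeping exercise using that $\ell(Q) \approx_n |x-y|$, and once made precise it guarantees the uniform control of constants in the final inequality.
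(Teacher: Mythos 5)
Your proof is correct and takes essentially the same route as the paper's: both reduce the Besov seminorm to a sum over dyadic annuli, both invoke Lemma \ref{Domegan1} to place each relevant pair $(x,y)$ inside a single cube $Q$ from one of the $n+1$ translated systems at a nearby dyadic scale, and both then close via the triangle inequality with $b^{\omega(i)}_{k'}$ and the equivalent description of $\pmb{B}_p^{\omega(i),2^n}$ from Proposition \ref{equbbk}. The only structural difference is organizational: the paper first sums over cubes $I\in\mathcal{D}^0_k$ of a fixed reference system and chooses $Q(I)$ once per $I$, while you choose the covering cube per pair $(x,y)$; both give the same bounded-multiplicity count (your $O_n(1)$ scales $k'$, the paper's Lemma \ref{Domegan2}). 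One cosmetic slip: Lemma \ref{Domegan1} is stated for cubes, not balls, so you should apply it to (say) the axis-parallel cube centered at $x$ of sidelength $2|x-y|$ rather than to $B(x,|x-y|)$; the estimate $\ell(Q)\approx_n |x-y|$ and hence $|k'-k|\le c_n$ then goes through exactly as you sketch.
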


\begin{proof}
	By the standard limit argument, it suffices to show that if $b$ is a locally integrable complex-valued function, and $b\in \pmb{B}_p^{\omega(i),2^n}(\mathbb{R}^n)$ for any $1\le i\le n+1$, then
	\begin{equation*}
		\|b\|_{\pmb{B}_p(\mathbb{R}^n)}\approx_{n,p} \sum_{i=1}^{n+1} \|b\|_{\pmb{B}_p^{\omega(i),2^n}(\mathbb{R}^n)}.
	\end{equation*}
	Note that
	\begin{equation*}
		\begin{aligned}
			\|b\|^p_{\pmb{B}_p(\mathbb{R}^n)}{}&=\int_{\mathbb{R}^n\times\mathbb{R}^n} \frac{|b(x)-b(y)|^p}{|x-y|^{2n}}dxdy\\
			&= \sum_{k\in \mathbb{Z}}\int_{2^{-(k+2)}<|x-y|\le 2^{-(k+1)}} \frac{|b(x)-b(y)|^p}{|x-y|^{2n}}dxdy\\
			&\approx_{n} \sum_{k\in \mathbb{Z}}\int_{2^{-(k+2)}<|x-y|\le 2^{-(k+1)}} 2^{2nk}|b(x)-b(y)|^pdxdy\\
			&=\sum_{k\in \mathbb{Z}}\sum_{I\in \mathcal{D}^0_k}\frac{1}{|I|^2}\int_{I}\int_{2^{-(k+2)}<|x-y|\le 2^{-(k+1)}} |b(x)-b(y)|^pdxdy.
		\end{aligned}
	\end{equation*}
	For any given $k\in \mathbb{Z}$ and $I\in\mathcal{D}^0_k$, we consider the cube $2I$. From Lemma \ref{Domegan1}, we know that there exists $Q(I)\in\bigcup\limits_{i=1}^{n+1} \mathcal{D}^{\omega(i)}$ such that
	\begin{equation*}
		2I\subseteq Q(I)\subseteq c_n\cdot 2I.
	\end{equation*}
	Besides, note that
	\begin{equation*}
		\ell(Q(I))\le 2c_n\ell(I)=c_n2^{-k+1}.
	\end{equation*}
From Lemma \ref{Domegan2}, it implies that $Q(I)$ contains only a finite number of dyadic cubes in $\mathcal{D}_k^0$, and the number only depends on $n$.
	Thus
	\begin{equation*}
		\begin{aligned}
			{}&\frac{1}{|I|^2}\int_{I}\int_{2^{-(k+2)}<|x-y|\le 2^{-(k+1)}} |b(x)-b(y)|^pdxdy\\
			&\lesssim_n \frac{1}{|Q(I)|^2}\int_{Q(I)\times Q(I)} |b(x)-b(y)|^pdxdy.
		\end{aligned}
	\end{equation*}
	Then
	\begin{equation*}
		\begin{aligned}
			\|b\|^p_{\pmb{B}_p(\mathbb{R}^n)}{}&
			\lesssim_n \sum_{k\in \mathbb{Z}}\sum_{I\in \mathcal{D}^0_k}\frac{1}{|Q(I)|^2}\int_{Q(I)\times Q(I)} |b(x)-b(y)|^pdxdy\\
			&\lesssim_n\sum_{i=1}^{n+1}\sum_{Q\in \mathcal{D}^{\omega(i)}}\frac{1}{|Q|^2}\int_{Q\times Q} |b(x)-b(y)|^pdxdy.
		\end{aligned}
	\end{equation*}
	Note that for any given $1\le i\le n+1$ and $Q\in \mathcal{D}^{\omega(i)}$,
	\begin{equation*}
		\begin{aligned}
			  {}&\int_{Q\times Q} |b(x)-b(y)|^pdxdy\\
			  &=\int_{Q\times Q} \bigg|b(x)-\bigg\langle \frac{\mathbbm{1}_Q}{|Q|},b\bigg\rangle+\bigg\langle \frac{\mathbbm{1}_Q}{|Q|},b\bigg\rangle-b(y)\bigg|^pdxdy\\
			  &\lesssim_p |Q|\int_{Q} \bigg|b(x)-\bigg\langle \frac{\mathbbm{1}_Q}{|Q|},b\bigg\rangle\bigg|^pdx.
		\end{aligned}
	\end{equation*}
	Then from Proposition \ref{equbbk}, we obtain
	\begin{equation*}
		\begin{aligned}
			\|b\|^p_{\pmb{B}_p(\mathbb{R}^n)}
			&\lesssim_{n,p} \sum_{i=1}^{n+1}\sum_{Q\in \mathcal{D}^{\omega(i)}}\frac{1}{|Q|}\int_{Q} \bigg|b(x)-\bigg\langle \frac{\mathbbm{1}_Q}{|Q|},b\bigg\rangle\bigg|^pdx\\
			&=\sum_{i=1}^{n+1}\sum_{k\in\mathbb{Z}}\sum_{Q\in \mathcal{D}_k^{\omega(i)}}\frac{1}{|Q|}\int_{Q} |b(x)-b_k^{\omega(i)}(x)|^pdx\\
			&=\sum_{i=1}^{n+1}\sum_{k\in\mathbb{Z}} 2^{nk}\|b-b_k^{\omega(i)}\|^p_{L_p(\mathbb{R}^n)}\approx_{n,p} \sum_{i=1}^{n+1}\|b\|^p_{\pmb{B}_p^{\omega(i),2^n}(\mathbb{R}^n)},
		\end{aligned}
	\end{equation*}
where $$b_k^{\omega(i)}=\sum_{Q\in\mathcal{D}_k^{\omega(i)}}\bigg\langle \frac{\mathbbm{1}_Q}{|Q|},b\bigg\rangle \mathbbm{1}_Q.$$
Therefore, from Lemma \ref{Comparison} and the above inequality, we derive
    \begin{equation*}
	    \|b\|_{\pmb{B}_p(\mathbb{R}^n)}\approx_{n,p} \sum_{i=1}^{n+1} \|b\|_{\pmb{B}_p^{\omega(i),2^n}(\mathbb{R}^n)},
    \end{equation*}
    as desired.
\end{proof}

	 
Now we give the proof of the necessity of Theorem \ref{corollary1.8}.
\begin{proof}[Proof of the necessity of Theorem \ref{corollary1.8}]
 For any $\omega\in(\{0,1\}^n)^\mathbb{Z}$, from Lemma \ref{conv1} one has
	\begin{equation*}
		\|b\|_{\pmb{B}_p^{\omega,2^n}(\mathbb{R}^n)}\lesssim_{n,p,T} \|[T,M_{b}]\|_{S_p(L_2(\mathbb{R}^n))}<\infty.
	\end{equation*}
	Hence from Proposition \ref{pdayun} we obtain that $b\in \pmb{B}_p(\mathbb{R}^n)$ and
	\begin{equation*}
		\|b\|_{\pmb{B}_p(\mathbb{R}^n)}\lesssim_{n,p,T} \|[T,M_{b}]\|_{S_p(L_2(\mathbb{R}^n))}<\infty.
	\end{equation*}
	In particular, when $n\geq 2$ and $0<p\le n$, if $[T,M_{b}]\in S_p(L_2(\mathbb{R}^n))$, then $[T,M_{b}]\in S_n(L_2(\mathbb{R}^n))$ as $S_p(L_2(\mathbb{R}^n))\subset S_q(L_2(\mathbb{R}^n))$ for $0<p\leq q\leq \8$. This implies that $b\in \pmb{B}_n(\mathbb{R}^n)$. Hence by Proposition \ref{0pn}, we see that $b$ is constant.	
\end{proof}

\bigskip

\section{Weak-type Schatten class of commutators}\label{WEAK}

This section is devoted to the proof of Theorem \ref{corollary1.10}. We divide this section into two parts, which concern the sufficiency and the necessity respectively. By Lemma \ref{WBNW1N}, it suffices to estimate the $S_{n, \infty}$-norm of commutators in terms of the $\pmb{WB}_{n,\infty}$-norm for $n\geq 2$. In fact, we will prove similar results for all $n\leq p<\infty$, instead of just $p=n$. Finally, we will present the Schatten-Lorentz class of commutators from real interpolation.

\subsection{Proof of the sufficiency of Theorem \ref{corollary1.10}} We will show the sufficiency part of Theorem \ref{corollary1.10} for all $2\leq n\leq p<\infty$. We also consider the case $n=1$. We divide the proof into three cases: 
\begin{enumerate}
	\item $p> 2$ for $n\geq 1$,
	\item $p= 2$ for $n=1$ or $n=2$,
	\item $1< p< 2$ and $1/2\le \alpha\le 1$ for $n=1$.
\end{enumerate}
The last two ones are more difficult. In addition, for the third case, we will prove a stronger result, that is, the sufficiency part of Theorem \ref{corollary1.10} holds for $1< p< 2$ and $1/p-1/2< \alpha\le 1$ when $n=1$.
\begin{proof}[Proof of the sufficiency of Theorem \ref{corollary1.10}]

 Similarly to the proof of the sufficiency of Theorem \ref{corollary1.8}, in the following we will mainly show that $\|[S^{ij}_\omega,R_b]\|_{S_{p,\infty}(L_2(\mathbb{R}^n))}$ increases with polynomial growth with respect to $i$ and $j$ uniformly on $\omega\in(\{0,1\}^n)^\mathbb{Z}$.
 
  Without loss of generality, we assume $\omega=0$. For any given $i, j\in \mathbb{N}\cup\{0\}$, let $\varPhi=[S^{ij}_0,R_b]$. From the proof of the sufficiency of Theorem \ref{corollary1.8}, we know that $\varPhi^*\varPhi$ is a block diagonal matrix with blocks $B_K^*B_K$ for all $K\in \mathcal{ D}^0$.
Now fix $K\in\mathcal{D}^0$. Let the eigenvalues of $B_K^*B_K$ be
\begin{equation*}
	s_1(K)\ge s_2(K)\ge \cdots\ge s_{2^{in}(2^n-1)}(K)\ge 0.
\end{equation*}
Then by \eqref{Trbkbk} for $n\ge 1$, 
\begin{equation*}
	\begin{aligned}
		\mathrm{Tr}(B_K^*B_K){}
		&\le \frac{4(2^n-1)^2}{|K|}\|(b-b_{k})\mathbbm{1}_K\|_{L_2(\mathbb{R}^n)}^2:=M_K.
	\end{aligned}
\end{equation*}
From \eqref{smTrb}, it implies that 
\begin{equation}\label{smK}
	0\le s_m(K)\le \frac{M_K}{m},\quad \forall 1\le m\le 2^{in}(2^n-1).
\end{equation}
	Now we divide the proof into three cases:\\
	$\bullet$ $p> 2$ for $n\geq 1$;\\
	$\bullet$ $p= 2$ for $n=1$ or $n=2$;\\
	$\bullet$ $1< p< 2$ and $1/p-1/2< \alpha\le 1$ for $n=1$.\\
	
\noindent	(1) When $p> 2$ and $n\ge 1$, by \eqref{smK}, we obtain
\begin{equation}\label{phiphi}
	\begin{aligned}
		\|\varPhi^*\varPhi\|_{S_{p/2,\infty}(L_2(\mathbb{R}^n))}{}&=\bigg\|\Big\{s_1(K),s_2(K),\cdots,s_{2^{in}(2^n-1)}(K)\Big\}_{K\in\mathcal{D}^0}\bigg\|_{\ell_{p/2,\infty}}\\
		&\le \bigg\|\Big\{M_K,\frac{M_K}{2},\cdots,\frac{M_K}{2^{in}(2^n-1)}\Big\}_{K\in\mathcal{D}^0}\bigg\|_{\ell_{p/2,\infty}}\\
		&\lesssim_p\sum_{m=1}^{2^{in}(2^n-1)}\frac{1}{m}\big\|\{M_K\}_{K\in\mathcal{D}^0}\big\|_{\ell_{p/2,\infty}}.
	\end{aligned}
\end{equation}
From Lemma \ref{TECHRS}, one has
\begin{equation}\label{phiphi222}
	\begin{aligned}
		\big\|\{M_K\}_{K\in\mathcal{D}^0}\big\|_{\ell_{p/2,\infty}}{}
		&=4(2^n-1)^2\Big\|\big\{(MO_2(b;K))^2\big\}_{K\in\mathcal{D}^0}\Big\|_{\ell_{p/2,\infty}}\\
		&= 4(2^n-1)^2\Big\|\big\{MO_2(b;K)\big\}_{K\in\mathcal{D}^0}\Big\|_{\ell_{p,\infty}}^2
		\lesssim_{n,p}\|b\|_{\pmb{WB}_{p,\infty}(\mathbb{R}^n)}^{2}.
	\end{aligned}
\end{equation}
Thus
\begin{equation*}
	\begin{aligned}
		\|\varPhi^*\varPhi\|_{S_{p/2,\infty}(L_2(\mathbb{R}^n))}\lesssim_{n,p}\sum_{m=1}^{2^{in}(2^n-1)}\frac{1}{m}\|b\|_{\pmb{WB}_{p,\infty}(\mathbb{R}^n)}^{2}{}
		&\lesssim_n i\|b\|_{\pmb{WB}_{p,\infty}(\mathbb{R}^n)}^{2}.
	\end{aligned}
\end{equation*}
It implies that 
\begin{equation*}
	\|\varPhi\|_{S_{p,\infty}(L_2(\mathbb{R}^n))}=	\|\varPhi^*\varPhi\|_{S_{p/2,\infty}(L_2(\mathbb{R}^n))}^{1/2}\lesssim_{n,p}i^{1/2}\|b\|_{\pmb{WB}_{p,\infty}(\mathbb{R}^n)}. 
\end{equation*}
Since the above estimation is independent of the choice of $\omega$, one has
$$ \|[S_\omega^{ij}, R_b]\|_{S_{p,\infty}(L_2(\mathbb{R}^n))}\lesssim_{n, p} i^{1/2}  \|b\|_{\pmb{WB}_{p,\infty}(\mathbb{R}^n)}. $$
Thus from Theorem \ref{lem2.1}, Lemma \ref{TLambdab} and Lemma \ref{IHbxiaolemma},
\begin{equation*}
	\begin{aligned}
		\|[S_\omega^{ij},M_b]\|_{S_{p,\infty}(L_2(\mathbb{R}^n))}{}&\lesssim \|[S_\omega^{ij},\pi_b]\|_{S_{p,\infty}(L_2(\mathbb{R}^n))}+\|[S_\omega^{ij},\varLambda_b]\|_{S_{p,\infty}(L_2(\mathbb{R}^n))}
		+\|[S_\omega^{ij},R_b]\|_{S_{p,\infty}(L_2(\mathbb{R}^n))}\\
		&\lesssim_{n, p} (i^{1/2}+1)  \|b\|_{\pmb{WB}_{p,\infty}(\mathbb{R}^n)}.
	\end{aligned}
\end{equation*} 
Therefore by Lemma \ref{weakconver}, Proposition \ref{T0est} and Lemma \ref{IHbxiaolemma},
\begin{equation*}
	\begin{aligned}
		{}&\|[T,M_b]\|_{S_{p,\infty}(L_2(\mathbb{R}^n))}
		\lesssim_{n,p, T} \big(1+\|T(1)\|_{BMO(\mathbb{R}^n)}+\|T^*(1)\|_{BMO(\mathbb{R}^n)}\big) \|b\|_{\pmb{WB}_{p,\infty}(\mathbb{R}^n)}.
	\end{aligned}
\end{equation*}

\noindent (2) When $p= 2$ and $n=1$ or $n=2$, note that from \eqref{phiphi222},
\begin{equation*}
	\begin{aligned}
		\|\{M_K\}_{K\in\mathcal{D}^0}\|_{\ell_{1,\infty}}<\infty.
	\end{aligned}
\end{equation*}
We rearrange $\{M_K\}_{K\in\mathcal{D}^0}$ in an non-increasing order:
\begin{equation*}
	M_{K_1}\ge M_{K_2}\ge M_{K_3}\ge \cdots,
\end{equation*}
and define
\begin{equation}\label{matrixAB}
	\begin{aligned}
		A=\begin{pmatrix}
			M_{K_1} & \,  &   0 \\
			\,     &  M_{K_2} &   \,\\
			0      & \,  &   \ddots
		\end{pmatrix}
		\quad \text{and}\quad
		B=\begin{pmatrix}
			\ddots & \,  &   0 \\
			\,     &  t^{-1} &   \,\\
			0      & \,  &   \ddots
		\end{pmatrix}_{1\le t\le 2^{in}(2^n-1)}.
	\end{aligned}
\end{equation}
Then from \eqref{phiphi} one has
\begin{equation*}
	\begin{aligned}
		\|\varPhi^*\varPhi\|_{S_{1,\infty}(L_2(\mathbb{R}^n))}{}&
		\le \bigg\|\Big\{M_{K_s},\frac{M_{K_s}}{2},\cdots,\frac{M_{K_s}}{2^{in}(2^n-1)}\Big\}_{s\in\mathbb{N}}\bigg\|_{\ell_{1,\infty}}
		=\|A\otimes B\|_{S_{1,\infty}},
	\end{aligned}
\end{equation*}
where $\|A\otimes B\|_{S_{1,\infty}}$ is the $S_{1,\infty}$ quasi-norm of the matrix $A\otimes B$.
Since
\begin{equation*}
	\begin{aligned}
		\|A\|_{S_{1,\infty}}=\sup_{s\in\mathbb{N}}s\cdot M_{K_s},
	\end{aligned}
\end{equation*}
we have $0\le M_{K_s}\le s^{-1}\|A\|_{S_{1,\infty}}$.
Let
\begin{equation*}
	\begin{aligned}
		C=\|A\|_{S_{1,\infty}}\begin{pmatrix}
			\ddots & \,  &   0 \\
			\,     &  s^{-1} &   \,\\
			0      & \,  &   \ddots
		\end{pmatrix}_{s\in\mathbb{N}}.
	\end{aligned}
\end{equation*}
Then $0\le A\le C$ and
\begin{equation}\label{2step1}
	\begin{aligned}
		 \|\varPhi^*\varPhi\|_{S_{1,\infty}(L_2(\mathbb{R}^n))}\le\|A\otimes B\|_{S_{1,\infty}}\le \|C\otimes B\|_{S_{1,\infty}}.
	\end{aligned}
\end{equation}
Note that
\begin{equation*}
	\begin{aligned}
		C\otimes B{}&=\|A\|_{S_{1,\infty}}\begin{pmatrix}
			\ddots & \,  &   0 \\
			\,     &  (st)^{-1} &   \,\\
			0      & \,  &   \ddots
		\end{pmatrix}_{s\in\mathbb{N},1\le t\le 2^{in}(2^n-1)}.
	\end{aligned}
\end{equation*}
We rearrange $\{(st)^{-1}\}_{s\in\mathbb{N},1\le t\le 2^{in}(2^n-1)}$ in a non-increasing order, and let $a_{st}=(st)^{-1}$. 
Then
\begin{equation*}\label{2step2}
	\begin{aligned}
		\|C\otimes B\|_{S_{1,\infty}}=\|A\|_{S_{1,\infty}}\sup_{s_0\in\mathbb{N},1\le t_0\le 2^{in}(2^n-1)}a_{s_0t_0}\cdot\mathscr{P}\big(a_{s_0t_0}\big),
	\end{aligned}
\end{equation*}
where $\mathscr{P}\big(a_{s_0t_0}\big)$ is the position of $a_{s_0t_0}$ in the non-increasing sequence $\{a_{st}\}_{s\in\mathbb{N},1\le t\le 2^{in}(2^n-1)}$.

Our aim is to estimate $\mathscr{P}\big(a_{s_0t_0}\big)$. 
For any $s_0\in\mathbb{N}$ and $1\le t_0\le 2^{in}(2^n-1)$, Note that if there exists $a_{st}\ge a_{s_0t_0}$, then $t\le s_0t_0$. Then we define the following set: 
$$L_{s_0,t_0,t}=\{s\in\mathbb{N}:a_{st}\ge a_{s_0t_0}\},\quad \forall 1\le t\le \min\big\{2^{in}(2^n-1),s_0t_0\big\}. $$
Then $\#L_{s_0,t_0,t}\le \frac{s_0t_0}{t}$, and  
\begin{equation*}
	\begin{aligned}
		\mathscr{P}\big(a_{s_0t_0}\big)\le\sum_{t=1}^{\min\big\{2^{in}(2^n-1),s_0t_0\big\}} \#L_{s_0,t_0,t}\le\sum_{t=1}^{\min\big\{2^{in}(2^n-1),s_0t_0\big\}}\frac{s_0t_0}{t}
	    {}
	    &\lesssim i\cdot s_0t_0.
	\end{aligned}
\end{equation*}
It implies that
\begin{equation}\label{2step3}
	\begin{aligned}
		\|C\otimes B\|_{S_{1,\infty}}{}&=\|A\|_{S_{1,\infty}}\sup_{s_0\in\mathbb{N},1\le t_0\le 2^{in}(2^n-1)}a_{s_0t_0}\cdot\mathscr{P}\big(a_{s_0t_0}\big)\\
		&\lesssim \|A\|_{S_{1,\infty}}\sup_{s_0\in\mathbb{N},1\le t_0\le 2^{in}(2^n-1)}(s_0t_0)^{-1}\cdot i\cdot s_0t_0\\
		&=i\|A\|_{S_{1,\infty}}=i\|\{M_K\}_{K\in\mathcal{D}^0}\|_{\ell_{1,\infty}}.
	\end{aligned}
\end{equation}
Thus by \eqref{2step1}, \eqref{2step3} and \eqref{phiphi222} one gets
\begin{equation*}
	\|\varPhi\|_{S_{2,\infty}(L_2(\mathbb{R}^n))}=	\|\varPhi^*\varPhi\|_{S_{1,\infty}(L_2(\mathbb{R}^n))}^{1/2}\lesssim i^{1/2}\|\{M_K\}_{K\in\mathcal{D}^0}\|_{\ell_{1,\infty}}^{1/2} \lesssim i^{1/2}\|b\|_{\pmb{WB}_{2,\infty}(\mathbb{R}^n)}. 
\end{equation*}
Since the above estimation is independent of the choice of $\omega$, one has
$$ \|[S_\omega^{ij}, R_b]\|_{S_{2,\infty}(L_2(\mathbb{R}^n))}\lesssim   \|b\|_{\pmb{WB}_{2,\infty}(\mathbb{R}^n)}, $$
which yields
$$ \|[S_\omega^{ij}, M_b]\|_{S_{2,\infty}(L_2(\mathbb{R}^n))}\lesssim  \|b\|_{\pmb{WB}_{2,\infty}(\mathbb{R}^n)}. $$
Therefore by Lemma \ref{weakconver} and Remark \ref{Spqquasi},
\begin{equation*}
	\begin{aligned}
		{}&\|[T,M_b]\|_{S_{2,\infty}(L_2(\mathbb{R}^n))}
	     \lesssim_{T} \big(1+\|T(1)\|_{BMO(\mathbb{R}^n)}+\|T^*(1)\|_{BMO(\mathbb{R}^n)}\big) \|b\|_{\pmb{WB}_{2,\infty}(\mathbb{R}^n)}.
	\end{aligned}
\end{equation*}

\noindent (3) When $1<p< 2$, $1/p-1/2<\alpha\le 1$ and $n=1$, we use the same method as in the second case. From \eqref{phiphi} we know
\begin{equation*}
	\begin{aligned}
		\|\varPhi\|_{S_{p,\infty}(L_2(\mathbb{R}))}=\|\varPhi^*\varPhi\|_{S_{p/2,\infty}(L_2(\mathbb{R}))}^{1/2}{}
		&\le \bigg\|\Big\{M_{K_s},\frac{M_{K_s}}{2},\cdots,\frac{M_{K_s}}{2^{i}}\Big\}_{s\in\mathbb{N}}\bigg\|_{\ell_{p/2,\infty}}^{1/2}=\|A\otimes B\|_{S_{p/2,\infty}}^{1/2},
	\end{aligned}
\end{equation*}
where $A$ and $B$ are two matrices defined in \eqref{matrixAB} for $n=1$, and $\|A\otimes B\|_{S_{p/2,\infty}}$ is the $S_{p/2,\infty}$ quasi-norm of the matrix $A\otimes B$. Since
\begin{equation*}
	\begin{aligned}
		\|A\|_{S_{p/2,\infty}}=\sup_{s\in\mathbb{N}}s^{2/p}\cdot M_{K_s},
	\end{aligned}
\end{equation*}
we have $0\le M_{K_s}\le s^{-2/p}\|A\|_{S_{p/2,\infty}}$.
Let
\begin{equation*}
	\begin{aligned}
		C=\|A\|_{S_{p/2,\infty}}\begin{pmatrix}
			\ddots & \,  &   0 \\
			\,     &  s^{-2/p} &   \,\\
			0      & \,  &   \ddots
		\end{pmatrix}_{s\in\mathbb{N}}.
	\end{aligned}
\end{equation*}
Then $0\le A\le C$ and
\begin{equation}\label{2step1n=1}
	\begin{aligned}
		\|\varPhi\|_{S_{p,\infty}(L_2(\mathbb{R}))}\le\|A\otimes B\|_{S_{p/2,\infty}}^{1/2}\le \|C\otimes B\|_{S_{p/2,\infty}}^{1/2}.
	\end{aligned}
\end{equation}
Note that
\begin{equation*}
	\begin{aligned}
		C\otimes B{}&=\|A\|_{S_{p/2,\infty}}\begin{pmatrix}
			\ddots & \,  &   0 \\
			\,     &  s^{-2/p}t^{-1} &   \,\\
			0      & \,  &   \ddots
		\end{pmatrix}_{s\in\mathbb{N},1\le t\le 2^i}.
	\end{aligned}
\end{equation*}
We rearrange $\{s^{-2/p}t^{-1}\}_{s\in\mathbb{N},1\le t\le 2^i}$ in a non-increasing order, and let $a_{st}=s^{-2/p}t^{-1}$. 
Then
\begin{equation*}\label{2step2}
	\begin{aligned}
		\|C\otimes B\|_{S_{p/2,\infty}}=\|A\|_{S_{p/2,\infty}}\sup_{s_0\in\mathbb{N},1\le t_0\le 2^i}a_{s_0t_0}\cdot\mathscr{P}\big(a_{s_0t_0}\big)^{2/p},
	\end{aligned}
\end{equation*}
where $\mathscr{P}\big(a_{s_0t_0}\big)$ is the position of $a_{s_0t_0}$ in the non-increasing sequence $\{a_{st}\}_{s\in\mathbb{N},1\le t\le 2^i}$.

Our aim is to estimate $\mathscr{P}\big(a_{s_0t_0}\big)$. 
For any $s_0\in\mathbb{N}$ and $1\le t_0\le 2^i$, we define the following set: 
$$L_{s_0,t_0,t}=\{s\in\mathbb{N}:a_{st}\ge a_{s_0t_0}\},\quad \forall 1\le t\le 2^i. $$
Then $\#L_{s_0,t_0,t}\le s_0t_0^{p/2}t^{-p/2}$, and  
\begin{equation*}
	\begin{aligned}
		\mathscr{P}\big(a_{s_0t_0}\big)\le\sum_{t=1}^{2^i} \#L_{s_0,t_0,t}\le\sum_{t=1}^{2^i}s_0t_0^{p/2}t^{-p/2}
		{}&	\lesssim s_0t_0^{p/2}\cdot 2^{i(1-p/2)}.
	\end{aligned}
\end{equation*}
It implies that
\begin{equation}\label{2step3n=1}
	\begin{aligned}
		\|C\otimes B\|_{S_{p/2,\infty}}{}&=\|A\|_{S_{p/2,\infty}}\sup_{s_0\in\mathbb{N},1\le t_0\le 2^i}a_{s_0t_0}\cdot\mathscr{P}\big(a_{s_0t_0}\big)^{2/p}\\
		&\lesssim \|A\|_{S_{p/2,\infty}}\sup_{s_0\in\mathbb{N},1\le t_0\le 2^i}s_0^{-2/p}t_0^{-1}\cdot (s_0t_0^{p/2})^{2/p}\cdot 2^{i(2/p-1)}\\
		&=2^{i(2/p-1)}\|A\|_{S_{p/2,\infty}}=2^{i(2/p-1)}\|\{M_K\}_{K\in\mathcal{D}^0}\|_{\ell_{p/2,\infty}}.
	\end{aligned}
\end{equation}
Thus by \eqref{2step1n=1}, \eqref{2step3n=1} and \eqref{phiphi222} one gets
\begin{equation*}
	\|\varPhi\|_{S_{p,\infty}(L_2(\mathbb{R}))}\lesssim 2^{i(1/p-1/2)}\|\{M_K\}_{K\in\mathcal{D}^0}\|_{\ell_{p/2,\infty}}^{1/2} \lesssim_p 2^{i(1/p-1/2)}\|b\|_{\pmb{WB}_{p,\infty}(\mathbb{R})}. 
\end{equation*}
Since the above estimation is independent of the choice of $\omega$, one has
$$ \|[S_\omega^{ij}, R_b]\|_{S_{p,\infty}(L_2(\mathbb{R}))}\lesssim_p 2^{i(1/p-1/2)} \|b\|_{\pmb{WB}_{p,\infty}(\mathbb{R})}, $$
which yields
$$ \|[S_\omega^{ij}, M_b]\|_{S_{p,\infty}(L_2(\mathbb{R}))}\lesssim_{p} 2^{i(1/p-1/2)}  \|b\|_{\pmb{WB}_{p,\infty}(\mathbb{R})}. $$
Since $1/p-1/2<\alpha\le 1$, we get
\begin{equation*}
	\begin{aligned}
		\sum_{i,j=0}^\infty &\tau(i,j)\|[S_\omega^{ij}, M_b]\|_{S_{p,\infty}(L_2(\mathbb{R}))}\\
		&\lesssim_{p} \sum_{i,j=0}^\infty (1+\max\{i,j\})^{2(1+\alpha)}2^{i(1/p-1/2)-\alpha\max\{i,j\}}\|b\|_{\pmb{WB}_{p,\infty}(\mathbb{R})}<\infty.
	\end{aligned}
\end{equation*}
Therefore by Lemma \ref{weakconver} and Remark \ref{Spqquasi},
\begin{equation*}
	\begin{aligned}
		{}&\|[T,M_b]\|_{S_{p,\infty}(L_2(\mathbb{R}))}\lesssim_{p, T} \big(1+\|T(1)\|_{BMO(\mathbb{R})}+\|T^*(1)\|_{BMO(\mathbb{R})}\big) \|b\|_{\pmb{WB}_{p,\infty}(\mathbb{R})}.
	\end{aligned}
\end{equation*}
\end{proof}

\subsection{Proof of the necessity of Theorem \ref{corollary1.10}}
We will show that the necessity part of Theorem \ref{corollary1.10} indeed holds for  $1< p<\infty$ and $0<\alpha\le 1$. Our main ingredient is also the complex method.

	
\begin{proof}[Proof of the necessity of Theorem \ref{corollary1.10}]
Suppose that $1<p<\infty$. Fix $\omega\in(\{0,1\}^n)^\mathbb{Z}$. For any given $k\in \mathbb{Z}$ and $I\in \mathcal{D}_k^\omega$, 
on the one hand, from \eqref{M1M2} and \eqref{M2I} one has	
\begin{equation}\label{comp1}
	\begin{aligned}
		MO_1(b;I){}&\le \frac{2}{|I|}\sum_{q=1}^{2^n}\int_{I(q)} |b(x)-\alpha_{\hat{I}}(b)|dx\\
		&\lesssim_{n,T} \sum_{s=1}^4\frac{A^n}{|I|}\bigg|\int_{ E_s^I}\int_{F_s^I} (b(x)-b(\hat{x}))K(\hat{x},x)d\hat{x}dx\bigg|,
	\end{aligned}
\end{equation}
where $A$ is a sufficiently large number, and $E_s^I$ and $F_s^I$ are given in \eqref{ESI} and \eqref{FSI} respectively.
On the other hand, for any $1\le s\le 4$, let
\begin{equation*}
	\phi_{I,s}(f)
	=\bigg\langle\frac{\mathbbm{1}_{F_s^I}}{|I|},f\bigg\rangle \mathbbm{1}_{E_s^I}.
\end{equation*}
Then for any $\hat{x}\in\mathbb{R}^n$,
\begin{equation*}
	\begin{aligned}
		[T,M_b](\phi_{I,s})(f)(\hat{x}){}&=\int_{\mathbb{R}^n}(b(x)-b(\hat{x}))K(\hat{x},x)\phi_{I,s}(f)(x)dx\\
		&=\frac{1}{|I|}\int_{E_s^I}\int_{F_s^I}(b(x)-b(\hat{x}))K(\hat{x},x)f(y)dydx.
	\end{aligned}
\end{equation*}
It implies that 
\begin{equation}\label{comp2}
	\begin{aligned}
		\mathrm{Tr}([T,M_b]\phi_{I,s}){}&=\frac{1}{|I|}\int_{E_s^I}\int_{F_s^I}(b(x)-b(\hat{x}))K(\hat{x},x)d\hat{x}dx.
	\end{aligned}
\end{equation}
By \eqref{comp1} and \eqref{comp2} one has
\begin{equation*}
	MO_1(b;I)\lesssim_{n,T} \sum_{s=1}^4\big|\mathrm{Tr}([T,M_b]\phi_{I,s})\big|.
\end{equation*}
It implies that
\begin{equation*}
	\Big\|\big\{MO_1(b;I)\big\}_{I\in\mathcal{D}^\omega}\Big\|_{\ell_{p,\infty}}
	\lesssim_{n,p,T} \sum_{s=1}^4\bigg\|\Big\{\mathrm{Tr}([T,M_b]\phi_{I,s})\Big\}_{I\in\mathcal{D}^\omega}\bigg\|_{\ell_{p,\infty}}.
\end{equation*}
Now for any given $1\le s\le 4$ and any sequence $\{a_{I,s}\}_{I\in\mathcal{D}^\omega}$ satisfying $\|\{a_{I,s}\}_{I\in\mathcal{D}^\omega}\|_{\ell_{p',1}}=1$, one has
\begin{equation*}
	\begin{aligned}
		\bigg|\sum_{I\in\mathcal{D}^\omega}\mathrm{Tr}([T,M_b]\phi_{I,s})\cdot a_{I,s}\bigg|{}&
		=\bigg|\mathrm{Tr}\Big([T,M_b]\cdot\sum_{I\in\mathcal{D}^\omega}a_{I,s}\phi_{I,s}\Big)\bigg|\\
		&\lesssim_p \big\|[T,M_b]\big\|_{S_{p,\infty}(L_2(\mathbb{R}^n))}\bigg\|\sum_{I\in\mathcal{D}^\omega}a_{I,s}\phi_{I,s}\bigg\|_{S_{p',1}(L_2(\mathbb{R}^n))}.
	\end{aligned}
\end{equation*}
Note that
\begin{equation*}
	\begin{aligned}
		\sum_{I\in\mathcal{D}^\omega}a_{I,s}\phi_{I,s}(f){}
		&=\sum_{I\in\mathcal{D}^\omega}a_{I,s}\bigg\langle \frac{\mathbbm{1}_{F_s^I}}{|I|^{1/2}},f\bigg\rangle\frac{\mathbbm{1}_{E_s^I}}{|I|^{1/2}}.
	\end{aligned}
\end{equation*}
Using the same method as in the end of the proof of Lemma \ref{conv1},
 from Lemma \ref{WEAKnonNWOpre1} one has
\begin{equation*}
	\begin{aligned}
		\bigg\|\sum_{I\in\mathcal{D}^\omega}a_{I,s}\phi_{I,s}\bigg\|_{S_{p',1}(L_2(\mathbb{R}^n))}{}
		&\lesssim_{n,p} \sum_{i=1}^{n+1}\big\|\{a_{I,s}\}_{J(I)\in\mathcal{D}^{\omega(i)}}\big\|_{\ell_{p',1}}\\
		&\le \sum_{i=1}^{n+1}\big\|\{a_{I,s}\}_{I\in\mathcal{D}^{\omega}}\big\|_{\ell_{p',1}}=n+1.
	\end{aligned}
\end{equation*}
It implies that
\begin{equation*}
	\begin{aligned}
		\bigg|\sum_{I\in\mathcal{D}^\omega}\mathrm{Tr}([T,M_b]\phi_{I,s})\cdot a_{I,s}\bigg|{}&
	    \lesssim_{n,p}\big\|[T,M_b]\big\|_{S_{p,\infty}(L_2(\mathbb{R}^n))}.
	\end{aligned}
\end{equation*}
Thus by duality,
\begin{equation*}
	\bigg\|\Big\{\mathrm{Tr}([T,M_b]\phi_{I,s})\Big\}_{I\in\mathcal{D}^\omega}\bigg\|_{\ell_{p,\infty}}\lesssim_{n,p,T} \big\|[T,M_b]\big\|_{S_{p,\infty}(L_2(\mathbb{R}^n))},
\end{equation*}
and then
\begin{equation*}
	\Big\|\big\{MO_1(b;I)\big\}_{I\in\mathcal{D}^\omega}\Big\|_{\ell_{p,\infty}}\lesssim_{n,p,T} \big\|[T,M_b]\big\|_{S_{p,\infty}(L_2(\mathbb{R}^n))}.
\end{equation*}
Since the choice of $\omega$ is arbitrary, we conclude that
\begin{equation*}
	\|b\|_{\pmb{WB}_{p,\infty}(\mathbb{R}^n)}\lesssim_{n,p,T} \|[T,M_b]\|_{S_{p,\infty}(L_2(\mathbb{R}^n))},
\end{equation*}
as desired.
\end{proof}

We conclude this section by some remarks on the Schatten-Lorentz class of commutators. It is well-known that
if $0<p_0<p_1< \infty$, $0\le q_0,q_1,q\le\infty$ and $0<\theta<1$, then
	\begin{equation*}
		\big(S_{p_0,q_0}(L_2(\mathbb{R}^n)),S_{p_1,q_1}(L_2(\mathbb{R}^n))\big)_{\theta,q}=S_{p,q}(L_2(\mathbb{R}^n))
	\end{equation*} 
with equivalent quasi-norms, where $1/p=(1-\theta)/p_0+\theta/p_1$.

From real interpolation, Theorem \ref{corollary1.8}, Theorem \ref{corollary1.10} and Lemma \ref{RS4.10}, we obtain the following two remarks.
\begin{rem}\label{WEAKthm6.4pq}
	Suppose that $n\le p<\infty$, $0<\alpha\le 1$ when $n\ge 2$, or $2\le p<\infty$, $0<\alpha\le 1$ when $n= 1$, or $1< p<2$, $1/2\le \alpha\le 1$ when $n=1$.
	Suppose that $1\le q\le \infty$. Let $T\in B(L_2(\mathbb{R}^n))$ be a singular integral operator with kernel $K(x,y)$ satisfying \eqref{standard}. Let $b$ be a locally integrable complex-valued function. If $b\in \pmb{WB}_{p,q}(\mathbb{R}^n)$, then $ C_{T, b}\in S_{p,q}(L_2(\mathbb{R}^n))$ and 
	$$ \| C_{T, b}\|_{S_{p,q}(L_2(\mathbb{R}^n))}\lesssim_{n, p,q,T}\big(1+\|T(1)\|_{BMO(\mathbb{R}^n)}+\|T^*(1)\|_{BMO(\mathbb{R}^n)}\big)\|b\|_{\pmb{WB}_{p,q}(\mathbb{R}^n)}. $$
\end{rem}

\begin{rem}\label{WEAKthm6.4pq24}
	Suppose that $1< p<\infty$, $1\le q\le \infty$ and $0<\alpha\le 1$. Let $T\in B(L_2(\mathbb{R}^n))$ be a singular integral operator with a non-degenerate kernel $K(x,y)$ satisfying \eqref{standard}. Suppose that $b$ is a locally integrable complex-valued function. If $C_{T, b}\in S_{p,q}(L_2(\mathbb{R}^n))$, then $b\in \pmb{WB}_{p,q}(\mathbb{R}^n)$ and
	\begin{equation*}
		\|b\|_{\pmb{WB}_{p,q}(\mathbb{R}^n)}\lesssim_{n,p,q,T} \|C_{T, b}\|_{S_{p,q}(L_2(\mathbb{R}^n))}.
	\end{equation*}
\end{rem}

 Combine Remark \ref{WEAKthm6.4pq} and Remark \ref{WEAKthm6.4pq24}, and we get:
\begin{rem}
	Suppose that $n\le p<\infty$, $0<\alpha\le 1$ when $n\ge 2$, or $2\le p<\infty$, $0<\alpha\le 1$ when $n= 1$, or $1< p<2$, $1/2\le \alpha\le 1$ when $n=1$.
	Suppose that $1\le q\le \infty$. Let $T\in B(L_2(\mathbb{R}^n))$ be a singular integral operator with a nondegenerate kernel $K(x,y)$ satisfying \eqref{standard}. Let $b$ be a locally integrable complex-valued function. Then $ C_{T, b}\in S_{p,q}(L_2(\mathbb{R}^n))$ if and only if $b\in \pmb{WB}_{p,q}(\mathbb{R}^n)$. Moreover, in this case
	$$ \| C_{T, b}\|_{S_{p,q}(L_2(\mathbb{R}^n))}\approx_{n, p,q,T}\big(1+\|T(1)\|_{BMO(\mathbb{R}^n)}+\|T^*(1)\|_{BMO(\mathbb{R}^n)}\big)\|b\|_{\pmb{WB}_{p,q}(\mathbb{R}^n)}. $$
\end{rem}

\bigskip {\textbf{Acknowledgments.}} This work is part of Hao Zhang and Zhenguo Wei's theses defended respectively in June 2023 and June 2024 at Universit{\'e} de Franche-Comt{\'e}. We thank Professor Quanhua Xu for proposing this subject to us and for many helpful discussions and suggestions.

Hao Zhang would like to express his gratitude to Professor \'{E}ric Ricard for his kind invitation and hospitality, and helpful discussions during the visit to Caen in March and April 2023. During this visit, several results of this work were carried out.

We are very grateful to Professor \'{E}ric Ricard for sharing the new proof of Schatten class of martingale paraproducts in \cite{ERZH2024} with us, which inspires us a lot. We also thank Professor Tuomas Hyt\"{o}nen and Professor \'{E}ric Ricard for valuable comments and suggestions.

This work is partially supported by the French ANR project (No. ANR-19-CE40-0002).

\bibliographystyle{myrefstyle}
\bibliography{ref999}
\end{document}